\newtheorem{lemma}{Lemma}[section]
\newtheorem{theorem}[lemma]{Theorem}
\newtheorem{corollary}[lemma]{Corollary}
\newtheorem{proposition}[lemma]{Proposition}
\theoremstyle{definition}
\newtheorem{definition}[lemma]{Definition}
\theoremstyle{remark}
\newtheorem{remark}[lemma]{Remark}
\newcommand{\C}{\mathbb{C}}
\newcommand{\D}{\mathbb{D}}
\newcommand{\N}{\mathbb{N}}
\newcommand{\Q}{\mathbb{Q}}
\newcommand{\R}{\mathbb{R}}
\newcommand{\Z}{\mathbb{Z}}
\newcommand{\cC}{\mathcal{C}}
\DeclareMathOperator{\re}{Re}
\DeclareMathOperator{\Int}{int}
\renewcommand{\epsilon}{\varepsilon}
\renewcommand{\phi}{\varphi}
\newcommand*\circcled[1]{\tikz[baseline=(char.base)]{
\node[shape=circle,draw,inner sep=1.3pt] (char) {#1};}}
\newcommand{\ciq}{\circcled{?}}
\date{\today}
\begin{document}

\title[Dynamics of Schwarz reflections]{Dynamics of Schwarz reflections:\\ The Mating Phenomena}

\author[S.-Y. Lee]{Seung-Yeop Lee}
\address{Department of Mathematics and Statistics, University of South Florida, Tampa, FL 33620, USA}
\email{lees3@usf.edu}

\author[M. Lyubich]{Mikhail Lyubich}
\address{Institute for Mathematical Sciences, Stony Brook University, NY, 11794, USA}
\email{mlyubich@math.stonybrook.edu}

\author[N. G. Makarov]{Nikolai G. Makarov}
\address{Department of Mathematics, California Institute of Technology, Pasadena, California 91125, USA}
\email{makarov@caltech.edu}

\author[S. Mukherjee]{Sabyasachi Mukherjee}
\address{School of Mathematics, Tata Institute of Fundamental Research, 1 Homi Bhabha Road, Mumbai 400005, India}
\email{sabya@math.tifr.res.in}

\thanks{2020 \emph{Mathematics Subject Classification.} 37F10, 37F20, 37F31, 37F32, 37F34, 37F46, 30D05, 30C45.}

\maketitle

\begin{abstract}
We initiate the exploration of a new class of anti-holomorphic dynamical systems generated by Schwarz reflection maps associated with quadrature domains. More precisely, we study Schwarz reflection with respect to the deltoid, and Schwarz reflections with respect to the cardioid and a family of circumscribing circles. We describe the dynamical planes of the maps in question, and show that in many cases, they arise as unique conformal matings of quadratic anti-holomorphic polynomials and the ideal triangle group. \\

{\sc Titre.} Dynamique des r{\'e}flexions de Schwarz: un ph{\'e}nom{\`e}ne d'accouplement.\\

{\sc R\'esum\'e.} Nous entamons l'exploration d'une nouvelle classe de syst{\`e}mes dynamiques anti-holomorphes engendr{\'e}s par des r{\'e}flexions de Schwarz associ{\'e}s {\`a} des domaines {\`a} quadrature. Plus pr{\'e}cis{\'e}ment, nous {\'e}tudions des r{\'e}flexions de Schwarz par rapport {\`a} une delto{\"i}de, par rapport {\`a} une cardioide et {\`a} une famille de cercle circonscrits. Nous d{\'e}crivons le plan dynamique des applications en questions, et montrons que dans beaucoup de cas, elles sont obtenues {\`a} partir d'un unique accouplement conforme d'un polyn{\^o}me anti-holomorphe quadratique avec le groupe de r{\'e}flexion d'un triangle id{\'e}al.
\end{abstract}

\section{Introduction}\label{intro}

Schwarz reflections associated with quadrature domains (or disjoint unions of quadrature domains) provide an interesting class of dynamical systems. In some cases such systems combine the features of the dynamics of rational maps and reflection groups.

A domain in the complex plane is called a quadrature domain if the Schwarz reflection map with respect to its boundary extends meromorphically to its interior. They first appeared in the work of Davis \cite{Dav74}, and independently in the work of Aharonov and Shapiro \cite{AS1,AS2,AS}. Since then, quadrature domains have played an important role in various areas of complex analysis and fluid dynamics (see \cite{QD} and the references therein).

It is well known that except for a finite number of {\it singular} points (cusps and double points), the boundary of a quadrature domain consists of finitely many disjoint real analytic curves. Every non-singular boundary point has a neighborhood where the local reflection in $\partial\Omega$ is well-defined. The (global) Schwarz reflection $\sigma$ is an anti-holomorphic continuation of all such local reflections.

\begin{figure}[ht!]
\centering
\includegraphics[scale=0.18]{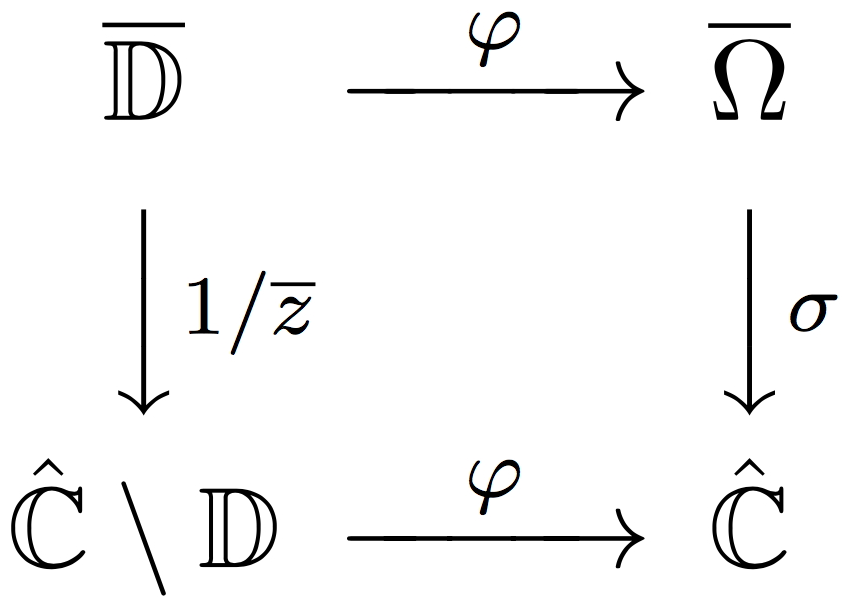}
\caption{The rational map $\phi$ semi-conjugates the reflection map $1/\overline{z}$ of $\D$ to the Schwarz reflection map 
$\sigma$ of $\Omega$ .}
\label{comm_diag_schwarz}
\end{figure}

Round discs on the Riemann sphere are the simplest examples of quadrature domains. Their Schwarz reflections are just the usual circle reflections. Further examples can be constructed using univalent polynomials or rational functions. Namely, if $\Omega$ is a {\it simply connected} domain and $\phi : \D \to\Omega$ is a univalent map from the unit disc onto $\Omega$, then $\Omega$ is a quadrature domain if and only if $\phi$ is a rational function. In this case, the Schwarz reflection $\sigma$ associated with $\Omega$ is semi-conjugate by $\phi$ to reflection in the unit circle.

Let us mention two specific examples: the interior of the {\it cardioid} curve and the exterior of the {\it deltoid} curve,
$$\left\{\frac z2-\frac{z^2}4:~|z|<1\right\}\quad{\rm and}\quad
\left\{\frac{1}{z}+\frac{z^2}{2}:~|z|<1\right\}.$$

In \cite{LM}, questions on equilibrium states of certain $2$-dimensional Coulomb gas models were answered using iteration of Schwarz reflection maps associated with quadrature domains (see \cite[\S 1]{LLMM2} for a brief account of this connection). It transpired from their work that these maps give rise to dynamical systems that are interesting in their own right. One of the principal goals of the current paper is to take a closer look at this class of maps and develop a general method of producing conformal matings between groups and anti-polynomials using Schwarz reflection maps associated with disjoint union of quadrature domains. In particular, we will prove that the Schwarz reflection map of the deltoid is a mating of the \emph{ideal triangle group} and the anti-polynomial $\overline{z}^2$.

The \emph{ideal triangle group} $\mathcal{G}$ is generated by the reflections in the sides of a hyperbolic triangle $\Pi$ in the open unit disk $\D$ with zero angles. Denoting the anti-M{\"o}bius reflection maps in the three sides of $\Pi$ by $\rho_1$, $\rho_2$, and $\rho_3$, we have $$\mathcal{G}=\langle\rho_1, \rho_2, \rho_3: \rho_1^2=\rho_2^2=\rho_3^2=\mathrm{id}\rangle<\mathrm{Aut}(\D).$$ $\Pi$ is a fundamental domain of the group. The tessellation of $\D$ by images of the fundamental domain under the group elements are shown in Figure~\ref{tessellation_pic}.
In order to model the dynamics of Schwarz reflection maps, we define a map $$\rho:\D\setminus\Int{\Pi}\to\D$$ by setting it equal to $\rho_k$ in the connected component of $\D\setminus\Int{\Pi}$ containing $\rho_k(\Pi)$ (for $k=1,2,3$). The map $\rho$ extends to an orientation-reversing double covering of $\mathbb{T}=\partial\D$ admitting a Markov partition $\mathbb{T}=[1,e^{2\pi i/3}]\cup[e^{2\pi i/3},e^{4\pi i/3}]\cup[e^{4\pi i/3},1]$ with transition matrix $$M:=\begin{bmatrix} 0 & 1 & 1\\ 1 & 0 & 1\\ 1 & 1 & 0\end{bmatrix}.$$  

\begin{figure}[ht!]
\begin{center}
\includegraphics[scale=0.06]{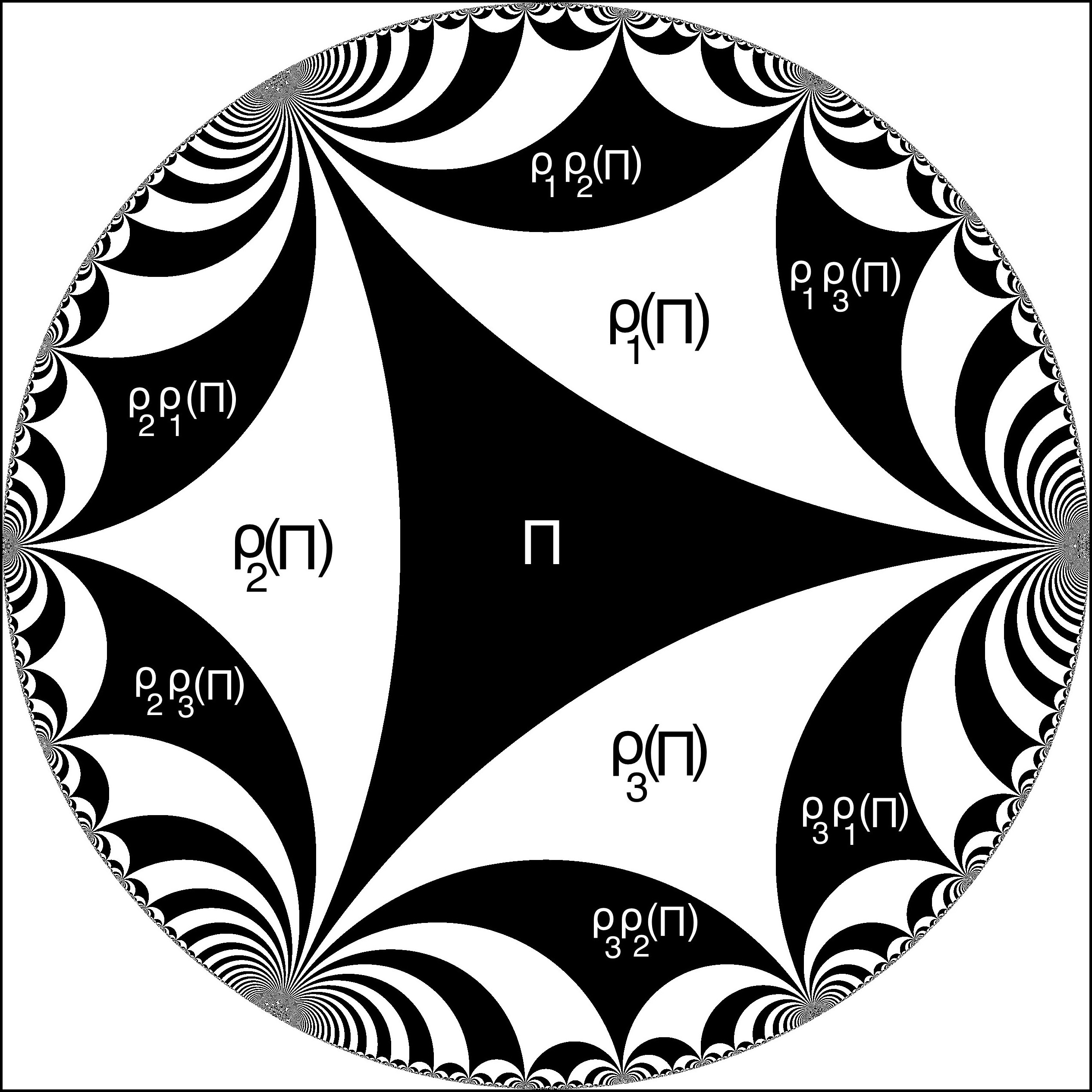} \includegraphics[scale=0.06]{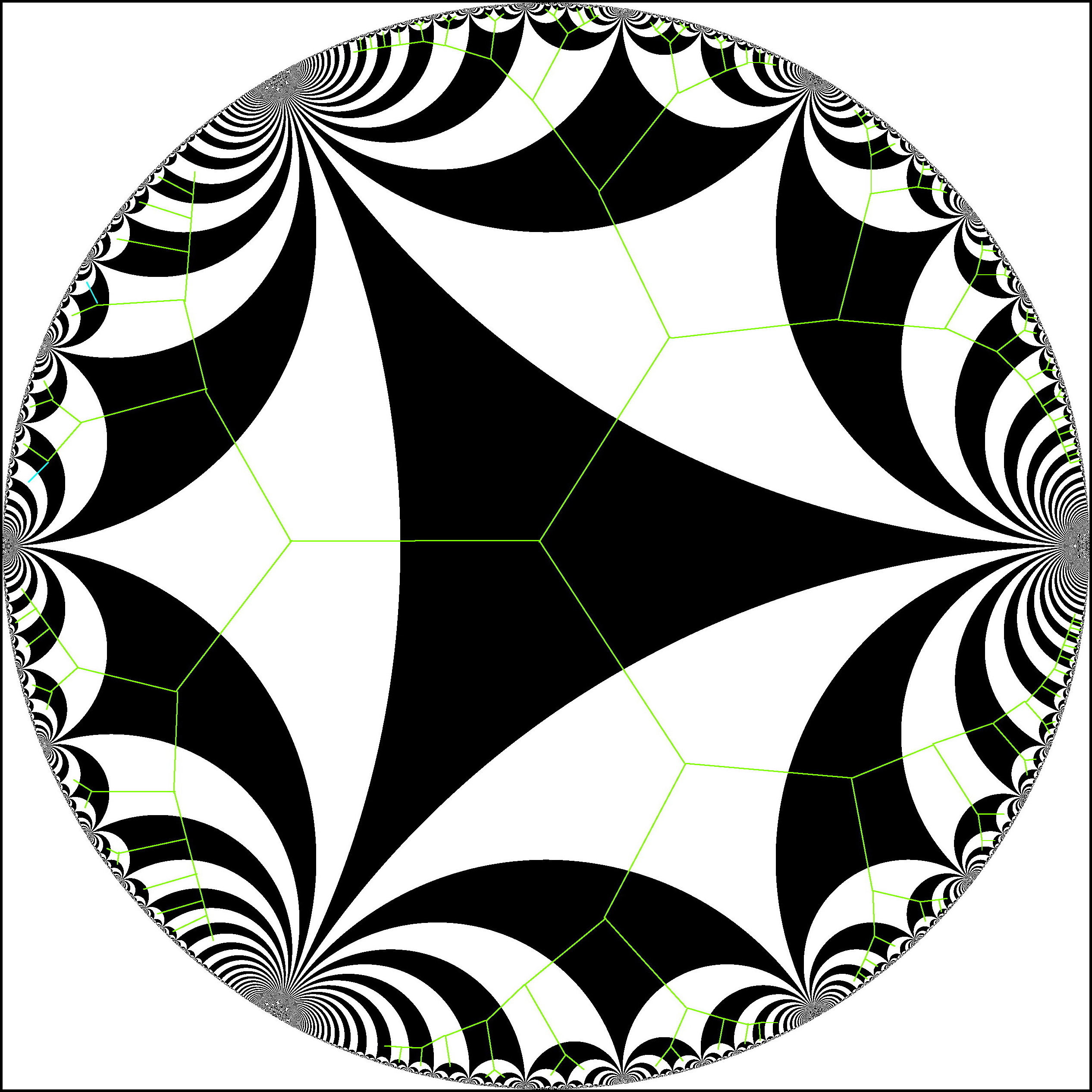}
\end{center}
\caption{Left: The $\mathcal{G}$-tessellation of $\D$ and the formation of a few initial tiles are shown. Right: The dual tree to the $\mathcal{G}$-tessellation of $\D$.}
\label{tessellation_pic}
\end{figure}

Since the Schwarz reflection maps (which are anti-holomorphic) studied in this paper have a unique, simple critical point, it is not surprising that their dynamics is closely related to the dynamics of quadratic anti-holomorphic polynomials (anti-polynomials for short). The dynamics of quadratic anti-polynomials and their connectedness locus, the Tricorn, was first studied in \cite{CHRS} (note that they called it the Mandelbar set). Their numerical experiments showed structural differences between the Mandelbrot set and the Tricorn. However, it was Milnor who first observed the importance of the Tricorn; he found little Tricorn-like sets as prototypical objects in the parameter space of real cubic polynomials \cite{M3}, and in the real slices of rational maps with two critical points \cite{M4}. Since then, dynamics of anti-holomorphic polynomials and the topological structure of the associated connectedness loci (in particular, the Tricorn) have been studied by various people. We refer the readers to \cite[\S 2]{LLMM2} for a survey on this topic.

The connection between quadratic anti-polynomials and the ideal triangle group comes from the fact that the anti-doubling map $$m_{-2}:\R/\Z\to\R/\Z,\ \theta\mapsto-2\theta$$ (which models the `external' dynamics of quadratic anti-polynomials) and the map $\rho$ described above admit the same Markov partition with the same transition matrix. This allows one to construct a circle homeomorphism $\mathcal{E}:\mathbb{T}\to\mathbb{T}$ that conjugates the reflection map $\rho$ to the anti-doubling map $m_{-2}$. The conjugacy $\mathcal{E}$, which is a version of the Minkowski question mark function, serves as a connecting link between the dynamics of Schwarz reflections and that of quadratic anti-polynomials (see the article by Shaun Bullett in \cite[\S 7.8]{BrFa} for a detailed exposition of the Minkowski question mark function, and Subsection~\ref{question_mark_subsec} for an explicit relation between $\mathcal{E}$ and the Minkowski question mark function). The conjugacy $\mathcal{E}$ plays a crucial role in the paper (see Section~\ref{ideal_triangle} for details).

Let us now describe the basic dynamical objects associated with iteration of Schwarz reflection maps. Given a disjoint collection of quadrature domains, we call the complement of their union a \emph{droplet}. Removing the finitely many singular points from the boundary of a droplet yields the \emph{desingularized droplet} or the \emph{fundamental tile}. One can then look at a partially defined anti-holomorphic dynamical system $\sigma$ that acts on the closure of each quadrature domain as its Schwarz reflection map. Under this dynamical system, the Riemann sphere $\widehat{\C}$ admits a dynamically invariant partition. The first one is an open set called the {\it escaping/tiling set}, it is the set of all points that eventually escape to the fundamental tile (on the interior of which $\sigma$ is not defined). Alternatively, the tiling set is the union of all ``tiles", the {\it fundamental tile} $T$ and the components of all its preimages under the iterations of $\sigma$ (the tiling structure is reminiscent of tessellations of the unit disk under reflection groups). The second invariant set is the {\it non-escaping} set, the complement of the tiling set; or equivalently, the set of all points on which $\sigma$ can be iterated forever (the non-escaping set is analogous to the filled Julia set in polynomial dynamics; i.e., the set of points with bounded forward orbits under a polynomial). When the tiling set contains no critical points of $\sigma$, it is often the case that the dynamics of $\sigma$ on its non-escaping set resembles that of an anti-polynomial on its filled Julia set, while the $\sigma-$action on the tiling set exhibits features of reflection groups.

This is precisely the case if $T$ is the deltoid. Figure~\ref{deltoid_reflection_pic} (right) shows the tiling and the non-escaping sets as well as their common boundary, which is simultaneously analogous to the Julia set of an anti-polynomial (i.e., the boundary of the filled Julia set) and to the limit set of a group. In fact, the Schwarz reflection $\sigma$ of the deltoid is the ``mating" of the anti-polynomial $z\mapsto \overline{z}^2$ and the reflection map $\rho$ in the following sense. The conformal dynamical systems
$$\rho: \overline{\mathbb D}\setminus\Int{\Pi}\to  \overline{\mathbb D}\quad \mathrm{and}\quad f_0: \widehat\C\setminus\mathbb D\to \widehat\C\setminus\mathbb D,\ z\mapsto\overline{z}^2$$ can be glued together by the circle homeomorphism $\mathcal{E}$ (which conjugates $\rho$ to $f_0$ on $\mathbb{T}$) to yield a partially defined topological map $\eta$ on a topological $2$-sphere. There exists a unique conformal structure on this $2$-sphere which makes $\eta$ an anti-holomorphic map conformally conjugate to $\sigma$.

In Section~\ref{deltoid_reflection}, we study in detail the dynamics of the Schwarz reflection map $\sigma$ of the deltoid (which is one of the simplest non-trivial dynamical systems generated by Schwarz reflections), and prove the following theorem:

\begin{theorem}[Dynamics of deltoid reflection]\label{deltoid_mating_dynamical_partition} 
1) The dynamical plane of the Schwarz reflection $\sigma$ of the deltoid can be partitioned as
$$\widehat {\mathbb C}=T^\infty\sqcup \Gamma\sqcup A(\infty),$$
where $T^\infty$ is the tiling set, $A(\infty)$ is the basin of infinity, and $\Gamma$ is their common boundary (which we call the limit set). Moreover, $\Gamma$ is a conformally removable Jordan curve. 

2) $\sigma$ is the unique conformal mating of the reflection map $\rho: \overline{\mathbb D}\setminus\Int{\Pi}\to  \overline{\mathbb D}$ and the anti-polynomial $f_0: \widehat\C\setminus\mathbb D\to \widehat\C\setminus\mathbb D,\ z\mapsto\overline{z}^2$. 
\end{theorem}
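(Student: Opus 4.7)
The plan is to decompose Theorem~\ref{deltoid_mating_dynamical_partition} into three parts: a super-attracting Böttcher analysis at $\infty$, an inductive identification of the tile structure with the $\mathcal{G}$-tessellation of $\D$, and a conformal removability argument along the common boundary $\Gamma$ that promotes the resulting topological conjugacy to the asserted conformal mating.

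For the first part, I would exploit the univalent parametrization $\phi(w) = 1/w + w^2/2$ of $\Omega$ together with the semiconjugacy $\sigma\circ\phi = \phi\circ\iota$, where $\iota(w) = 1/\overline{w}$. Since $\phi$ has a simple pole at $0$ and its three critical points lie on $\mathbb{T}$ (mapping to the cusps of $\partial\Omega$), a direct Laurent expansion yields $\sigma(z) = \overline{z}^2/2 + O(1)$ near $\infty$, identifying $\infty$ as a super-attracting fixed point of anti-holomorphic local degree two and as the unique critical point of $\sigma$ on $\overline{\Omega}$. A standard anti-holomorphic Böttcher construction then produces a conformal isomorphism $B:A(\infty)\to\hat\C\setminus\overline{\D}$ conjugating $\sigma$ to $f_0$; in particular, $A(\infty)$ is simply connected. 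For the second part, I would build a tile-by-tile correspondence. The fundamental tile $\Int(T)$ is the unique rank-$0$ tile, and counting preimages of $\Int(T)$ under $\sigma$ (using that $\sigma$ is $3$-to-$1$ over $T$ and $2$-to-$1$ over $\Omega$, which matches the $3$-to-$1$ and $2$-to-$1$ structure of $\rho$ over $\Pi$ and over its $\mathcal{G}$-translates) yields $3\cdot 2^{n-1}$ tiles at rank $n\geq 1$. Using $\sigma\circ\phi = \phi\circ\iota$ and the explicit cusp behavior of $\phi$, I would inductively uniformize each rank-$n$ tile by the corresponding rank-$n$ triangle of the $\mathcal{G}$-tessellation of $\D$, producing an anti-holomorphic bijection $\Phi$ on the open tilings that conjugates $\rho$ to $\sigma$.

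Finally, set $\Gamma := \hat\C\setminus(T^\infty\cup A(\infty))$. Because the only critical orbit of $\sigma$ is absorbed by $\infty$, $\Gamma$ is a hyperbolic set on which $\sigma$ is uniformly expanding; consequently both $T^\infty$ and $A(\infty)$ are John domains. By Carath\'eodory's theorem, both $B^{-1}$ and $\Phi$ extend continuously to $\mathbb{T}$, producing two surjections $\mathbb{T}\to\Gamma$ that semiconjugate $m_{-2}$ and $\rho|_\mathbb{T}$, respectively, to $\sigma|_\Gamma$; these parametrizations must therefore differ by the canonical conjugacy $\mathcal{E}$ of Section~\ref{ideal_triangle}. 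It follows that $\Gamma$ is a Jordan curve, and that gluing $B^{-1}$ with $\Phi$ along $\mathcal{E}$ realizes $\sigma$ as the topological mating $\eta$ of $f_0$ and $\rho$. Jones' criterion for John domains then yields conformal removability of $\Gamma$, which simultaneously upgrades the gluing to a conformal mating and forces uniqueness: any two conformal matings agree on $A(\infty)$ and on each tile, and removability propagates this across $\Gamma$. The principal technical obstacle I anticipate is establishing the John property near the accumulation of tile-cusps on $\Gamma$; this should require a careful Koebe-distortion argument exploiting the explicit quadratic critical behavior of $\phi$ at the cube roots of unity on $\mathbb{T}$.
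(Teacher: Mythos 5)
Your high-level decomposition --- uniformize $A(\infty)$ by a B\"ottcher-type map onto $\hat\C\setminus\overline{\D}$, uniformize $T^\infty$ by $\D$ via a tile-by-tile lift of $\rho$, glue along $\mathcal E$, and use conformal removability of $\Gamma$ to conclude uniqueness --- matches the paper's structure. However, there is a genuine gap at the step you treat as a consequence of a general principle.

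You assert: \emph{``Because the only critical orbit of $\sigma$ is absorbed by $\infty$, $\Gamma$ is a hyperbolic set on which $\sigma$ is uniformly expanding; consequently both $T^\infty$ and $A(\infty)$ are John domains.''} This is false. The cusp points $\tfrac32,\tfrac32\omega,\tfrac32\omega^2$ of $\partial T$ belong to $S\subset\Gamma$, and $\sigma$ has \emph{parabolic-like} (not hyperbolic) behavior there. The expansion identity $\bigl|\overline\partial\sigma(\phi(w))\bigr|=|w|$ degenerates to $1$ as $|w|\to 1$, and the Puiseux expansion at $\tfrac32$ reads $\sigma(\tfrac32+\delta)=\tfrac32+\overline\delta+k\overline\delta^{3/2}+O(\overline\delta^2)$, so the cusps are degenerate fixed points with no definite multiplier gap. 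The standard ``all critical orbits fall into attracting cycles $\Rightarrow$ hyperbolic Julia set'' dichotomy does not apply here because the cusps of the droplet are singularities of the dynamical system itself, invisible to the critical orbit. Consequently the John property of $T^\infty$ cannot be obtained from uniform expansion on $\Gamma$; it has to be extracted from the \emph{geometry of the parabolic petals}. The paper proves this with a dedicated argument: wedge estimates (Proposition~\ref{wedge_at_cusp}), a shadowing lemma comparing orbits of radial quasi-rays to genuine geodesics (Lemma~\ref{geodesic_distortion_rho}), and a stopping-time decomposition separating the hyperbolic excursion from the passage through a ``cheese slice'' near a cusp (Subsection~\ref{proof_John_sec}). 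Your closing remark that the technical obstacle is a ``Koebe-distortion argument exploiting the quadratic critical behavior of $\phi$'' misidentifies the source of the difficulty: the issue is not that $\phi$ has critical points on $\mathbb T$, but that those critical values give rise to parabolic fixed points of $\sigma$ on $\Gamma$. Your plan as stated would prove uniform hyperbolicity, contradict it at the cusps, and stall precisely where the work is.

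Two smaller points. First, deducing simple connectivity of $A(\infty)$ from the global B\"ottcher conjugacy is circular; the order must be reversed (Riemann--Hurwitz on the increasing union of $\sigma^{-n}(U)$, as in Proposition~\ref{basin_topology}, and then the Riemann map). Second, you do not need $A(\infty)$ to be a John domain and should not claim it; the paper proves it only for $T^\infty$, and local connectivity of $\Gamma$ plus the argument that $\partial A=\partial T^\infty$ (a normal-family argument ruling out ``stray'' boundary points of $A$) already gives the Jordan curve, after which removability of $\Gamma=\partial T^\infty$ finishes the uniqueness. The rest of your proposal --- the semiconjugacy $\sigma\circ\phi=\phi\circ\iota$, the tile counting $3\cdot 2^{n-1}$, the identification of $\mathcal E$ as the unique circle conjugacy, and the removability-to-uniqueness step --- is sound and agrees with the paper.
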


This is a new occasion of a phenomenon discovered by Bullett and Penrose \cite{BP} (and more recently studied by Bullett and Lomonaco \cite{BuLo1}), where matings of holomorphic quadratic polynomials and the modular group were realized as {\it holomorphic correspondences}. 

Schwarz reflections associated with quadrature domains provide us with a general method of constructing such matings. In this paper, we initiate the study of the following one-parameter family of Schwarz reflection maps, which give rise to conformal matings between the ideal triangle group and quadratic anti-polynomials $\overline{z}^2+c$. We consider a fixed cardioid, and for each complex number $a$, we consider the circle centered at $a$ circumscribing the cardioid (see Figure~\ref{real_slit_double} and Figure~\ref{various_limit_sets}). Let $T_a$ denote the resulting droplet (the closed disc minus the open cardioid), and let $F_a$ denote the corresponding Schwarz reflection (the circle reflection $\sigma_a$ in its exterior, and the reflection $\sigma$ with respect to the cardioid in its interior). We denote this family of Schwarz reflections maps $F_a$ by $\mathcal{S}$ and call it the circle-and-cardioid family.

Note that the droplet $T_a$ has two singular points on its boundary. Removing these two singular points from $T_a$, we obtain the \emph{desingularized droplet}/\emph{fundamental tile} $T_a^0$. Recall that the \emph{non-escaping set} of $F_a$ (denoted by $K_a$) consists of all points that do not escape to the fundamental tile $T_a^0$ under iterates of $F_a$, while the \emph{tiling set} of $F_a$ (denoted by $T_a^\infty$) is the set of points that eventually escape to $T_a^0$. We call the components of the iterated preimages of $T_a^0$ \emph{tiles} of $F_a$. The boundary of the tiling set is called the \emph{limit set}, and is denoted by $\Gamma_a$.

The Schwarz reflection map $F_a$ is unicritical; indeed, the circle reflection map $\sigma_a$ is univalent, while the cardioid reflection map $\sigma$ has a unique critical point at the origin. As in the case of quadratic polynomials, the non-escaping set of $F_a$ is {\it connected} if and only if it contains the unique critical point of $F_a$; i.e. the critical point does not escape to the fundamental tile. On the other hand, if the critical point escapes to the fundamental tile, the corresponding non-escaping set is totally disconnected (see Figure~\ref{various_limit_sets} for a connected non-escaping set and a totally disconnected non-escaping set).

\begin{theorem}[Connectivity of the non-escaping set]\label{non_escaping_connected_intro}
1) If the critical point of $F_a$ does not escape to the fundamental tile $T_a^0$, then the conformal map $\psi_a$ from $T_a^0$ onto $\Pi$ extends to a biholomorphism between the tiling set $T_a^\infty$ and the unit disk $\D$. Moreover, the extended map $\psi_a$ conjugates $F_a$ to the reflection map $\rho$. In particular, $K_a$ is connected.

2) If the critical point of $F_a$ escapes to the fundamental tile, then the corresponding non-escaping set $K_a$ is a Cantor set.
\end{theorem}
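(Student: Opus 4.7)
The overall plan is to prove Part 1 by promoting the Riemann uniformization $\psi_a:T_a^0\to\Pi$ to a global conformal conjugacy, extended tile by tile across the Schwarz tiling of $T_a^\infty$; and to prove Part 2 by constructing a nested sequence of preimage neighborhoods of $K_a$ whose connected components shrink in diameter to zero.

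For Part 1, I assume the critical point $0$ never escapes to $T_a^0$. Then for every $n\ge 0$ and every connected component $U$ of $F_a^{-n}(T_a^0)$, no point in $U$ is critical for any forward iterate of $F_a$, so $F_a^n:U\to T_a^0$ is an unramified cover, hence a biholomorphism onto the simply connected set $T_a^0$. Because the Schwarz pullback tiling and the ideal-triangle $\rho$-tessellation of $\D$ are both governed by the same Markov transition matrix $M$, their tiles are in natural bijection via admissible itineraries in $\{1,2,3\}$. I realize this bijection conformally by induction on rank. Assuming $\psi_a$ is already defined and biholomorphic on the union of tiles of rank $\le n$, let $\gamma$ be a non-singular boundary arc of a rank-$n$ tile and $U'$ the adjacent rank-$(n+1)$ tile. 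On $\gamma$ the Schwarz reflection $F_a$ is the identity, and on the matching side $s=\psi_a(\gamma)$ of the corresponding rank-$n$ $\rho$-tile, the side-reflection $\rho_k$ fixes $s$ pointwise. Setting
\[
\psi_a|_{U'}(z):=\rho_k\bigl(\psi_a(F_a(z))\bigr)
\]
gives a map which is holomorphic on $U'$ (being the composition of two anti-holomorphic maps and one holomorphic map), continuous up to $\gamma$, and agreeing with $\psi_a$ on $\gamma$ by the fixed-point properties just noted. The Schwarz reflection principle then supplies the holomorphic extension across $\gamma$. Exhausting all tiles this way, and using that the $\mathcal{G}$-tessellation covers $\D$, I obtain a biholomorphism $\psi_a:T_a^\infty\to\D$ conjugating $F_a$ to $\rho$. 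Since $T_a^\infty$ is thus simply connected in $\hat\C$, its complement $K_a$ is a continuum, and in particular $K_a$ is connected.

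For Part 2, suppose $F_a^{n_0}(0)\in T_a^0$ for some minimal $n_0\ge 0$. The forward critical orbit $\{F_a^k(0):1\le k\le n_0\}$ then lies entirely in the tiling set $T_a^\infty$ and is separated from $K_a$. I choose an open neighborhood $W$ of $K_a$ whose closure is disjoint from this critical orbit. Then no iterate of $F_a$ has a critical point in any preimage $F_a^{-n}(W)$, so every connected component of $F_a^{-n}(W)\cap W$ is mapped univalently onto a subset of $W$ by $F_a^n$. Equipping the complement in $\hat\C$ of a suitable finite set (containing the forward critical orbit and at least one extra point) with its hyperbolic metric, each of these inverse branches of $F_a^n$ is a strict contraction by Schwarz--Pick, with a contraction factor that can be taken uniform on $K_a$. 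Consequently the diameters of the level-$n$ components tend uniformly to zero. Since $K_a$ is the intersection of these shrinking nested families of topological disks, it is compact, totally disconnected, and perfect -- that is, a Cantor set.

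The main difficulty will be in Part 2, namely setting up a uniformly contracting hyperbolic metric in a neighborhood of $K_a$. This will require a careful choice of $W$ and of the finite set to remove from $\hat\C$ so that the backward dynamics of $F_a$ strictly contracts in the resulting hyperbolic metric, while still capturing $K_a$ inside the nested preimages. By comparison, Part 1 is largely bookkeeping once the combinatorial matching of Markov partitions is in place; the only subtlety there is to check that the inductive extension remains consistent where multiple tiles meet, which follows from the uniqueness of analytic continuation.
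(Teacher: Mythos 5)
Your Part 1 argument is essentially the paper's proof of Proposition~\ref{schwarz_group}(1): under the non-escaping hypothesis every tile is an unramified copy of $T_a^0$, the tilings of $T_a^\infty$ and of $\D$ have the same Markov combinatorics, and the conjugacy is produced by lifting $\psi_a$ rank by rank, with consistency coming from the fact that $F_a$ and $\rho$ fix the relevant boundary arcs pointwise. That part is fine.

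Part 2, however, has a genuine gap, and it is exactly the one the paper flags at the start of the proof of Proposition~\ref{cantor_outside}: ``lack of uniform contraction of the inverse branches of $F_a$ (around the singular points).'' The points $\tfrac14$ and $\alpha_a$ are fixed points of $F_a$ lying in $K_a$ (they are removed from $T_a^0$ precisely because $F_a$ has no anti-holomorphic extension there, and they never escape), and the local dynamics of $F_a$ at $\tfrac14$ is cusp-type (Proposition~\ref{cusp_asymp}) while $F_a^{\circ 2}$ at $\alpha_a$ is parabolic (Proposition~\ref{double_asymp}). Consequently the inverse branches $g_1,g_2$ of $F_a$ defined off $E_a^{n(a)}$ have derivative of modulus $1$ at these fixed points, and the hyperbolic-metric contraction factor you invoke from Schwarz--Pick degenerates to $1$ as you approach them. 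There is no choice of $W$ and of a finite puncture set that repairs this: the parabolic/cusp fixed points are inside $K_a$ itself, so any neighborhood $W\supset K_a$ contains them, and the domain and range of the inverse branches share these boundary points, killing compact containment. Thus your claim of a contraction factor ``uniform on $K_a$'' fails, and with it the conclusion that all component diameters shrink to zero.

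The paper circumvents this by splitting $K_a$ into two kinds of components. Components meeting the grand orbit of $\{\tfrac14,\alpha_a\}$ are shown to be singletons by ad hoc estimates tailored to the degenerate local dynamics: Corollary~\ref{iterated_pre_image_cardioid} shows $\sigma^{-n}(\heartsuit)$ shrink to $\tfrac14$, and Relation~(\ref{petals_shrink}) gives shrinking of pulled-back petals at $\alpha_a$. For the remaining components, a Montel normal-family argument applied to the compositions $g_{i_1}\circ\cdots\circ g_{i_k}$ on $\Int(\hat\C\setminus E_a^{n(a)})$ (which omits a non-degenerate open set) shows the limit is a constant, giving singleton components without any uniform contraction. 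If you want to salvage your own route, you would have to replace ``uniform contraction near $K_a$'' with a case analysis of this kind, or else establish pointwise shrinking of nested component diameters by a parabolic-dynamics argument (Fatou coordinates / petal estimates) near $\tfrac14$ and $\alpha_a$; the Schwarz--Pick bound alone does not do it.
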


This leads to the notion of the {\it connectedness locus} $\cC(\mathcal{S})$ as the set of parameters with connected non-escaping sets. Equivalently, $\cC(\mathcal{S})$ is exactly the set of parameters for which the tiling set is unramified; i.e., the map $F_a$ restricted to each tile is an unramified covering. As a slight abuse of terminology, we will often refer to a tile of $F_a$ as ramified/unramified if the restriction of $F_a$ to that tile is a ramified/unramified covering.

In order to study the maps $F_a$ in the connectedness locus $\cC(\mathcal{S})$, we carry out a detailed analysis of their Fatou components, non-repelling periodic points and their relationship with critical orbits. In his context, the next theorem shows that many aspects of the classical Fatou-Julia theory carry over to these partially defined dynamical systems.

\begin{theorem}[Fatou components and critical orbits]\label{fatou_mega_thm}
Let $a\in\C\setminus(-\infty,-\frac{1}{12}]$. Then the following hold true.
\begin{enumerate}
\item Every Fatou component of $F_a$ is eventually preperiodic. Every periodic Fatou component of $F_a$ is either the (immediate) basin of attraction of an attracting cycle, or the (immediate) basin of attraction of a parabolic cycle, or a Siegel disk (i.e., a Fatou component containing an irrationally neutral, linearizable periodic point).

\item If $F_a$ has an attracting or parabolic cycle, then the forward orbit of the critical point $0$ converges to this cycle. Moreover, the basin of attraction of this attracting or parabolic cycle is equal to $\Int{K_a}$.

\item If $U$ is a Siegel disk of $F_a$, then $\partial U\subset\overline{\left\{F_a^{\circ n}(0)\right\}_{n\geq0}}$. Moreover, every Fatou component of $F_a$ eventually maps to this cycle of Siegel disks.

\item Every Cremer point (i.e. an irrationally neutral, non-linearizable periodic point) of $F_a$ is also contained in $\overline{\left\{F_a^{\circ n}(0)\right\}_{n\geq0}}$. Moreover, if $F_a$ has a Cremer point, then $\Int{K_a}=\emptyset$; i.e. $K_a=\Gamma_a$.
\end{enumerate}
\end{theorem}

The geometrically finite maps (i.e. maps with attracting/parabolic cycles, and maps with non-escaping, strictly preperiodic critical point) of $\mathcal{S}$ are of particular importance. They belong to the connectedness locus $\cC(\mathcal{S})$, and their topological and analytic properties are more tractable.

\begin{theorem}[Limit sets of geometrically finite maps]\label{geom_finite_limit_set}
Let $F_a$ be geometrically finite. 

1) The limit set $\Gamma_a$ of $F_a$ is locally connected. Moreover, the area of $\Gamma_a$ is zero.

2) The set of iterated preimages of the cardioid cusp is dense in the limit set $\Gamma_a$. Moreover, the set of repelling periodic points of $F_a$ is dense in $\Gamma_a$.
\end{theorem}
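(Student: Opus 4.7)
The plan is to adapt geometrically-finite techniques from (anti-)rational dynamics to the Schwarz-reflection setting, leveraging throughout the conformal conjugacy $\psi_a:T_a^\infty\to\D$ between $F_a$ and the ideal-triangle reflection $\rho$ supplied by Theorem~\ref{non_escaping_connected_intro}. This conjugacy identifies each connected component of $F_a^{-n}(T_a^0)$ (a tile of rank $n$) with a tile of the $\mathcal{G}$-tessellation of $\D$ at depth $n$, so the combinatorics on $T_a^\infty$ are entirely rigid and everything reduces to controlling tile geometry as $n\to\infty$. For the local-connectivity assertion in part~(1), Carath\'eodory's theorem reduces local connectivity of $\Gamma_a$ to continuous extension of $\psi_a^{-1}:\D\to T_a^\infty$ to $\overline{\D}$, which in turn follows once the Euclidean diameters of tiles of rank $n$ tend to zero. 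I would set up a Yoccoz-type puzzle by thickening the tiling into a nest of topological discs around the finitely many accumulation points of the postcritical set on $\Gamma_a$. Away from the postcritical set, Koebe distortion applied along the $F_a^2$-orbits gives bounded shape and geometric shrinking of pullbacks; at the cardioid cusp, which is a persistent parabolic point of $F_a$, and at any additional parabolic cycle, uniform Koebe fails, so I would invoke Fatou coordinates together with parabolic sector estimates, in the spirit of Tan--Yin's proof of local connectivity for geometrically finite rational Julia sets, to conclude that tiles accumulating at a parabolic cycle also shrink.

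For the zero-area statement, I would run a Lebesgue density-point argument: assume $\Gamma_a$ has positive area, pick a density point $z_0\in\Gamma_a$, and pull back a reference round neighborhood along the forward $F_a^2$-orbit of $z_0$ using the distortion estimates from part~(1). Each pullback neighborhood must then contain a definite proportion of $T_a^\infty$, since by the semiconjugacy every puzzle piece near $\Gamma_a$ carries a bounded-geometry cluster of tiles, and this contradicts the density property at $z_0$. The hyperbolic and Misiurewicz cases are clean applications of orbifold-hyperbolic expansion of $F_a$ off the postcritical set; the parabolic case uses the same sector estimates as before.

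For part~(2), the continuous boundary extension of $\psi_a^{-1}$ supplied by part~(1) restricts to a surjection $\mathbb T\to\Gamma_a$ that semiconjugates $\rho|_{\mathbb T}$ to $F_a|_{\Gamma_a}$. The cardioid cusp, being a singular boundary point of $T_a^0$, corresponds under $\psi_a$ to one of the cusps of $\Pi$ on $\mathbb T$; and iterated $\rho$-preimages of any cusp of $\Pi$ are dense in $\mathbb T$, which is immediate from the Markov partition and the expansivity of $\rho$. Pushing this density through the semiconjugacy yields density of iterated $F_a$-preimages of the cardioid cusp in $\Gamma_a$. For density of repelling periodic points, $F_a^2$ is holomorphic of degree at least $4$ on a neighborhood of any point of $\Gamma_a$ outside the finite postcritical set, so the classical Montel--Fatou argument produces periodic points in every such neighborhood; geometric finiteness bounds the number of non-repelling cycles, so all but finitely many of these periodic points are repelling, and density follows. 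The principal obstacle throughout is the parabolic dynamics at the cardioid cusp: it persists across the whole family $\mathcal{S}$, destroys uniform Koebe bounds in both the local-connectivity and area arguments, and requires careful parabolic sector analysis coupled with a puzzle construction adapted to the cusp geometry.
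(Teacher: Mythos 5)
Your argument for density of repelling periodic points via Montel--Fatou contains a genuine gap, which the paper explicitly flags in the remark following Proposition~\ref{density_loc_conn}. The classical argument requires the whole sequence $\{F_a^{\circ n}\}_{n\geq 1}$ to be defined and holomorphic on a fixed open neighborhood $U$ of a limit-set point $w_0$, so Montel can be applied. But every $B(w_0,\epsilon)$ with $w_0\in\Gamma_a$ meets the tiling set $T_a^\infty$, and those tiling points land in the fundamental tile $T_a^0$ after finitely many steps; since $F_a$ is undefined on $T_a^0$, no high iterate $F_a^{\circ n}$ is defined on all of $B(w_0,\epsilon)$, and the family $\{F_a^{\circ n}|_{B(w_0,\epsilon)}\}_{n\geq 1}$ simply does not exist. (Your observation that $F_a^{\circ 2}$ extends holomorphically near a generic point of $\Gamma_a$ is correct but does not supply a fixed domain on which \emph{all} iterates live.) The fix is exactly the tool you already deploy for density of cusp preimages: push density of $\rho$-periodic points on $\mathbb{T}$ through the boundary extension of $\psi_a^{-1}$ to get density of periodic points of $F_a$ in $\Gamma_a$, and then invoke Proposition~\ref{non_repelling_count}, which says $F_a$ has at most one non-repelling cycle.

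For local connectivity your plan is a genuinely different route from the paper's, not a gap per se. You propose a Yoccoz-type puzzle with Koebe distortion off the postcritical set and Fatou-coordinate/parabolic-sector estimates at the cusp and parabolic cycles, Tan--Yin style. The paper instead follows the Douady--Hubbard/Orsay expanding-metric scheme (Propositions~\ref{geom_fin_local_conn} and~\ref{misi_loc_conn}): parametrize $\partial E_a^n$ by Carath\'eodory loops $\gamma_n$, build a conformal metric $\mu$ combining the hyperbolic metric off the postcritical set with a Euclidean bound $M|dw|$ near $\tfrac14$ and $\alpha_a$ (upgraded to an orbifold metric in the Misiurewicz case, where the critical orbit sits on $\Gamma_a$), and show $\{\gamma_n\}$ is $d_\mu$-Cauchy. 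The puzzle approach is essentially what the paper does for the deltoid (Lemma~\ref{deltoid_lc}), but there a superattracting basin of infinity with a Green's function and external rays furnishes the puzzle boundaries; $F_a$ has no such basin, so your proposal would have to manufacture puzzle pieces from tile edges and dynamical rays without equipotentials --- a nontrivial geometric ingredient your sketch does not supply. Your zero-area argument and your density-of-cusp-preimages argument coincide with the paper's.
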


A good understanding of the dynamics of geometrically finite maps in the circle-and-cardioid family allow us to produce plenty of examples of conformal matings between the reflection map $\rho$ and quadratic anti-polynomials. While this matter will be pursued in greater detail in a sequel to this work \cite{LLMM2}, here we will elucidate how the simplest map in the circle-and-cardioid family arises as a conformal mating of a quadratic anti-polynomial and the reflection map $\rho$. Let us briefly mention the precise meaning of conformal mating in this context. For any $c_0$ in the Tricorn with a locally connected Julia set, one can glue the conformal dynamical systems
$$\rho: \overline{\mathbb D}\setminus\Int{\Pi}\to  \overline{\mathbb D}\quad \mathrm{and}\quad f_{c_0}: \mathcal{K}_{c_0}\to\mathcal{K}_{c_0},\ z\mapsto\overline{z}^2+c_0$$ (where $\mathcal{K}_{c_0}$ is the filled Julia set of $f_{c_0}$; i.e., the set of points with bounded forward orbits under $f_{c_0}$) by a factor of the circle homeomorphism $\mathcal{E}$ yielding a partially defined topological map $\eta$ on a topological $2$-sphere. We say that $F_{a_0}$ is a conformal mating of $\rho$ and the quadratic anti-polynomial $f_{c_0}$ if this topological $2$-sphere admits a conformal structure that turns $\eta$ into an anti-holomorphic map conformally conjugate to $F_{a_0}$. If such a conformal structure is unique up to a M{\"o}bius map, then $F_{a_0}$ will be called the unique conformal mating of $\rho$ and $f_{c_0}$.

We show in \cite{LLMM2} that there exists a natural combinatorial bijection between the geometrically finite maps in the family $\mathcal{S}$ and those in the {\it basilica limb} $\mathcal{L}$ of the Tricorn (see \cite[\S 2.2.11]{LLMM2}). The combinatorial models of the corresponding maps are related by the circle homeomorphism $\mathcal{E}$. This allows us to demonstrate that every geometrically finite map in $\mathcal{S}$ is a conformal mating of a unique geometrically finite quadratic anti-polynomial and the reflection map $\rho$. Using the combinatorial bijection between geometrically finite maps mentioned above, we further establish that the locally connected topological model of $\cC(\mathcal{S})$ is naturally homeomorphic to that of the basilica limb $\mathcal{L}$ (where the homeomorphism is induced by the map $\mathcal{E}$).

Let us now detail the organization of the paper. In Section~\ref{ideal_triangle}, we give a self-contained description of the ideal triangle group, the associated tessellation of the unit disk, and the reflection map $\rho$. Here we also define the topological conjugacy $\mathcal{E}$ between $\rho$ and the anti-doubling map $\theta\mapsto-2\theta$ on the circle. 
In Section~\ref{sec_quad_domain}, we briefly review some general properties of quadrature domains and Schwarz reflection maps. Section~\ref{deltoid_reflection} is devoted to the study of the dynamics of Schwarz reflection with respect to the deltoid. The principal goal of this section is to prove Theorem~\ref{deltoid_mating_dynamical_partition} by interpreting the deltoid reflection map as the unique conformal mating of the anti-polynomial $\overline{z}^2$ and the reflection map $\rho$. 
In Section~\ref{sec_main}, we turn our attention to the circle-and-cardioid family $\mathcal{S}$. After establishing some basic mapping properties of the Schwarz reflection map associated to the cardioid in Subsection~\ref{sec_cardioid}, we carry out a detailed discussion of the elementary dynamical properties of $F_a$ and the associated dynamically invariant sets in Subsection~\ref{dynamical_plane}. Here we prove Theorem~\ref{fatou_mega_thm} by establishing a classification theorem for Fatou components (connected components of the interior of the non-escaping set) and studying the interaction between various types of Fatou components and the post-critical orbit of $F_a$ (see Propositions~\ref{fatou_classification}, \ref{fatou_critical} and Corollaries~\ref{att_para_count}, \ref{siegel_cremer_complete}). Subsection~\ref{escape_set_group} concerns the dynamics of $F_a$ on its tiling set; we show in Proposition~\ref{schwarz_group} that for maps in the connectedness locus, the dynamics on the tiling set (which is simply connected) is conformally conjugate to the reflection map $\rho$, while for maps outside $\cC(\mathcal{S})$, such a conjugacy still exists on a subset of the tiling set containing the critical value. Finally we prove in Proposition~\ref{cantor_outside} that maps outside $\cC(\mathcal{S})$ have totally disconnected non-escaping sets. This completes the proof of Theorem~\ref{non_escaping_connected_intro}. It is worth mentioning that for $a\in\cC(\mathcal{S})$, the dynamical uniformization of the tiling set of $F_a$ leads to a combinatorial model (quotient of the unit disk by a geodesic lamination) of the non-escaping set. In Section~\ref{geom_fin_sec}, we study geometrically finite maps in the family $\mathcal{S}$. We discuss some basic topological, analytic and measure-theoretic properties of hyperbolic and parabolic maps in Subsection~\ref{hyp_para_sec} and of Misiurewicz maps in Subsection~\ref{misi_maps}. These results immediately imply Theorem~\ref{geom_finite_limit_set}. In the final Section~\ref{basilica_mating_sec}, we use our knowledge of geometrically finite maps in $\mathcal{S}$ to illustrate the mating phenomena in the circle-and-cardioid family with a concrete example.
\vspace{2mm}

\noindent\textbf{Acknowledgements.} The second author was partially supported by NSF grants DMS-1600519 and 1901357. The fourth author was supported by the Institute for Mathematical Sciences at Stony Brook
University, an endowment from Infosys Foundation, and SERB research grant SRG/2020/000018 during parts of the work on this project. The second and the fourth author would also like to acknowledge the support of the Institute for Theoretical Studies at ETH Z{\"u}rich.

We thank the anonymous referees for their instructive comments that improved the exposition of the paper. All pictures of Schwarz reflection dynamical planes appearing in this paper were produced using the Wolfram Mathematica software.

\section{Ideal triangle group}\label{ideal_triangle}

The goal of this section is to review some basic properties of the ideal triangle group and its boundary extension. This will play an important role in our study of the ``escaping'' dynamics of Schwarz reflection maps.

Consider the open unit disk $\D$ in the complex plane. Let $\widetilde{C}_1$, $\widetilde{C}_2$, and $\widetilde{C}_3$ be the hyperbolic geodesics in $\D$ connecting the third roots of unity. These geodesics bound a closed ideal triangle (in the topology of $\D$), which we call $\Pi$.

Reflections with respect to the circular arcs $\widetilde{C}_i$ are anti-conformal involutions (hence automorphisms) of $\D$, and we call them $\rho_1$, $\rho_2$, and $\rho_3$. The maps $\rho_1$, $\rho_2$, and $\rho_3$ generate a subgroup $\mathcal{G}$ of $\mathrm{Aut}(\D)$. The group $\mathcal{G}$ is called the \emph{ideal triangle group}. As an abstract group, it is given by the generators and relations $\langle\rho_1, \rho_2, \rho_3: \rho_1^2=\rho_2^2=\rho_3^2=\mathrm{id}\rangle.$

We will denote the connected component of $\D\setminus \Pi$ containing $\Int{\rho_i(\Pi)}$ by $\D_i$. Note that $\D_1\cup \D_2\cup \D_3=\D\setminus\Pi$.

\subsection{The reflection map $\rho$, and symbolic dynamics}\label{rho_sec}
We now define the reflection map $\rho:\D\setminus\Int{\Pi}\to\D$ as:
$$ z \mapsto         \rho_i(z)\ \mathrm{if}\ z\in \D_i\cup\widetilde{C}_i,\ \mathrm{for}\ i=1,2,3.$$
Then $\rho(\D_1)\supset\D_2\cup\D_3$, $\rho(\D_2)\supset\D_3\cup\D_1$, $\rho(\D_3)\supset\D_2\cup\D_1$, and $\rho(\D_i)\cap\D_i=\emptyset$, for $i=1,2,3$. Thus, the dynamics of $\rho$ on $\D\setminus\Int{\Pi}$ is encoded by the transition matrix $M=\mathbf{1} -\mathrm{Id}$, where $\mathbf{1}$ is the $3\times 3$ matrix with all entries equal to $1$, and $\mathrm{Id}$ is the $3\times 3$ identity matrix. (The Markov map $\rho$ is analogous to the so-called Bowen-Series boundary maps associated with Fuchsian groups, compare \cite{BoSe}.)

Let $W:=\lbrace 1,2,3\rbrace$. An element $(i_1,i_2,\cdots)\in W^{\mathbb{N}}$ is called $M$-admissible if $M_{i_k,i_{k+1}}=1$, for all $k\in\mathbb{N}$. We denote the set of all $M$-admissible words in $W^{\mathbb{N}}$ by $M^\infty$. One can similarly define $M$-admissibility of finite words.

Note that $\Pi$ is a fundamental domain of $\mathcal{G}$. The tessellation of $\D$ arising from $\mathcal{G}$ will play an important role in this paper (see Figure~\ref{tessellation_pic}). 

\begin{definition}[Tiles]\label{def_tiles}
The images of the fundamental domain $\Pi$ under the elements of $\mathcal{G}$ are called \emph{tiles}. More precisely, for any $M$-admissible word $(i_1,\cdots,i_k)$, we define the tile $$T^{i_1,\cdots,i_k}:=\rho_{i_1}\circ\cdots\circ\rho_{i_k}(\Pi).$$
\end{definition}

It follows from the definition that $T^{i_1,\cdots,i_k}$ consists of all those $z\in\D$ such that $\rho^{\circ (n-1)}(z)\in\D_{i_n}$, for $n=1,\cdots,k$ and $\rho^{\circ k}(z)\in\Pi$. In other words, $$T^{i_1,\cdots,i_k}=\bigcap_{n=1}^{k}\rho^{-(n-1)}(\D_{i_n})\cap\rho^{-k}(\Pi).$$

Clearly, $\rho$ extends as an orientation-reversing $C^1$ double covering of $\mathbb{T}$ (real-analytic away from the fixed points $1$, $e^{2\pi i/3}$, and $e^{4\pi i/3}$) with associated Markov partition $\mathbb{T}=(\partial\D_1\cap\mathbb{T})\cup(\partial\D_2\cap\mathbb{T})\cup(\partial\D_3\cap\mathbb{T})$. The corresponding transition matrix is $M$ as above. Since $\vert\rho'\vert\geq 1$ on $\mathbb{T}$ with equality only at the third roots of unity, it follows that $\rho\vert_{\mathbb{T}}$ is an expansive map.

Recall that the set of all $M$-admissible words in $W^{\mathbb{N}}$ is denoted by $M^\infty$. It follows from expansiveness of $\rho\vert_{\mathbb{T}}$ that for any element of $M^\infty$, the corresponding infinite sequence of tiles shrinks to a single point of $\partial\D$. This allows us to define a continuous surjection 
$$Q:M^\infty\to\mathbb{T},\quad (i_1,i_2\cdots)\mapsto \bigcap_{n\in\mathbb{N}}\ \rho^{-(n-1)}(\partial\D_{i_n}\cap\mathbb{T})$$ 
which semi-conjugates the (left-)shift map on $M^\infty$ to the map $\rho$ on $\mathbb{T}$.

For any $M$-admissible sequence $(i_1,i_2,\cdots)$, let us consider the sequence $\{0, \rho_{i_1}(0), \rho_{i_1}\circ\rho_{i_2}(0), \cdots\}$. Since $d_{\D}(0,\rho_{1}(0))=d_{\D}(0,\rho_{2}(0))=d_{\D}(0,\rho_{3}(0))$, the hyperbolic distance (in $\D$) between any two consecutive points in this sequence is constant. Connecting consecutive points of this sequence by hyperbolic geodesics of $\D$, we obtain a curve in $\D$ that lands at $Q((i_1,i_2,\cdots))$.

\begin{definition}[Rays]\label{rays_triangle}
The curve constructed above is called a $\mathcal{G}$-ray at angle $Q((i_1,i_2,\cdots))$ (here, we identify $\mathbb{T}$ with $\R/\Z$).
\end{definition}

\begin{remark}\label{dual_tree}
The set of all $\mathcal{G}$-rays form a dual tree to the $\mathcal{G}$-tessellation of $\D$. As an abstract graph, it is isomorphic to the undirected Cayley graph of $\mathcal{G}$ with respect to the generating set $\{\rho_1,\rho_2,\rho_3\}.$
\end{remark}

\subsection{The conjugacy $\mathcal{E}$}\label{conjugacy_anti_doubling_rho_sec}

The expanding double covering of the circle $m_{-2}:\R/\Z\to\R/\Z,\ \theta\mapsto-2\theta,$ which is the action of quadratic anti-polynomials on angles of external dynamical rays (compare \cite[\S 2]{Sa}), is closely related to $\rho$. The general fact that two homotopic expansive maps of compact manifolds are topologically conjugate yields a topological conjugacy between $\rho\vert_{\mathbb{T}}$ and $\overline{z}^2\vert_{\mathbb{T}}$ (see \cite{CR}). However, in the current setup, one can construct such a conjugacy explicitly using the symbolic dynamics of the maps.

The map $m_{-2}$ admits the same Markov partition as $\rho$; namely $\R/\Z=I_1\cup I_2\cup I_3$, where $\displaystyle I_1=[0,1/3], I_2=[1/3,2/3], I_3=[2/3,1]$. Moreover, the associated transition matrix coincides with that of $\rho$. This allows us to define a continuous surjection 
$$P:M^\infty\to\R/\Z,\quad (i_1,i_2\cdots)\mapsto \bigcap_{n\in\mathbb{N}}\ m_{-2}^{-(n-1)}(I_{i_n})$$ 
which semi-conjugates the (left-)shift map on $M^\infty$ to the map $m_{-2}$ on $\R/\Z$. It is easy to see that the two coding maps $Q$ and $P$ have precisely the same fibers. It follows that $P\circ Q^{-1}$ induces a homeomorphism of the circle conjugating $\rho$ to $m_{-2}$. We will denote this conjugacy by $\mathcal{E}$.

\section{Quadrature domains and Schwarz reflections}\label{sec_quad_domain}

We now come to a general description of the main objects of this paper. While Schwarz reflections with respect to real-analytic curves are only locally defined, it is possible to extend this reflection map to a semi-global map in certain cases (e.g. reflection with respect to a circle extends to $\widehat{\C}$). 

Throughout this section, we let $\Omega\subsetneq\widehat{\C}$ be a domain such that $\infty\notin\partial\Omega$ and $\Int{\overline{\Omega}}=\Omega$. 
We will denote the complex conjugation map by $\iota$. 

\begin{definition}[Schwarz functions and quadrature domains]\label{quad_domain_def}
A \emph{Schwarz function} of $\Omega$ is a meromorphic extension of $\iota\vert_{\partial\Omega}$ to all of $\Omega$. More precisely, a continuous function $S:\overline{\Omega}\to\widehat{\C}$ of $\Omega$ is called a Schwarz function of $\Omega$ if it satisfies the following two properties:
\begin{enumerate}
\item $S$ is meromorphic on $\Omega$,

\item $S=\iota$ on $\partial \Omega$.
\end{enumerate}
The domain $\Omega$ is called a \emph{quadrature domain} if it admits a Schwarz function.
\end{definition}

It follows from the definition that a Schwarz function of a quadrature domain is unique. 
Therefore, for a quadrature domain $\Omega$, the map $\sigma:=\iota\circ S:\overline{\Omega}\to\widehat{\C}$ is the unique anti-meromorphic extension of the Schwarz reflection map with respect to $\partial \Omega$ (the reflection map fixes $\partial\Omega$ pointwise). We will call $\sigma$ the \emph{Schwarz reflection map of} $\Omega$.

We now define the notion of a quadrature function for a domain. 

\begin{definition}[Quadrature functions]\label{quad_func_def}
Let $\Omega$ be a quadrature domain. Functions in $H(\Omega)\cap C(\overline{\Omega})$ are called \emph{test functions} for $\Omega$ (if $\Omega$ is unbounded, we further require test functions to vanish at $\infty$). A rational map $R_\Omega$ is called a \emph{quadrature function} of $\Omega$ if all poles of $R_\Omega$ are inside $\Omega$ (with $R_\Omega(\infty)=0$ if $\Omega$ is bounded), and the identity $$\displaystyle \int_{\Omega} f dA=\frac{1}{2i} \oint_{\partial\Omega} f(z) R_{\Omega}(z) dz$$ holds for every test function of $\Omega$.
\end{definition}

The following theorem is classical (see \cite[Lemma~2.3]{AS}, \cite[Lemma~3.1]{LM}).

\begin{theorem}[Characterization of quadrature domains]\label{characterization}
The following are equivalent.
\begin{enumerate}
\item $\Omega$ is a quadrature domain; i.e. it admits a Schwarz function,

\item $\Omega$ admits a quadrature function $R_\Omega$,

\item The Cauchy transform $\widehat{\chi}_{\Omega}$ of the characteristic function $\chi_{\Omega}$ (of $\Omega$) is rational outside $\Omega$. 
\end{enumerate}
\end{theorem}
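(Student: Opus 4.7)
The plan is to close the cycle $(1) \Rightarrow (2) \Rightarrow (3) \Rightarrow (1)$ using two elementary tools. The first is the Green--Stokes identity $\iint_\Omega f\, dA = \frac{1}{2i}\oint_{\partial\Omega} f(z)\bar z\, dz$, valid for test functions $f$ since $d(f\bar z\, dz) = 2i\, f\, dA$ when $\partial f/\partial \bar z = 0$. The second is the distributional identity $\partial \hat\chi_\Omega/\partial \bar z = \chi_\Omega$, combined with the standard facts that $\hat\chi_\Omega$ is continuous on $\C$, holomorphic on $\Omega^c$, and vanishes at $\infty$.

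For $(1)\Rightarrow(2)$, given the Schwarz function $S$, continuity of $S$ on the compact set $\overline\Omega$ forces only finitely many poles in $\Omega$. Let $R_\Omega$ be a rational function matching the principal parts of $S$ at these poles, normalized so that $R_\Omega(\infty)=0$ in the bounded case (and absorbing the pole of $S$ at $\infty$ when $\Omega$ is unbounded). Then $S - R_\Omega$ is holomorphic on $\Omega$ and continuous on $\overline\Omega$, so Cauchy's theorem gives $\oint_{\partial\Omega} f(S-R_\Omega)\,dz = 0$ for any test function $f$. Combined with the Green--Stokes identity, which rewrites the area integral as $\frac{1}{2i}\oint f \bar z \,dz = \frac{1}{2i}\oint f S\,dz$, this yields the quadrature identity with $R_\Omega$.

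For $(2)\Rightarrow(3)$, fix $z \in \Omega^c$ and apply the quadrature identity to the admissible test function $\zeta \mapsto 1/(\zeta-z)$. This gives
\[
-\pi\,\hat\chi_\Omega(z) \;=\; \frac{1}{2i}\oint_{\partial\Omega} \frac{R_\Omega(\zeta)}{\zeta - z}\, d\zeta,
\]
and the residue theorem applied inside $\Omega$ (using that $R_\Omega$ is rational with poles in $\Omega$) shows the right-hand side depends rationally on $z$, so $\hat\chi_\Omega|_{\Omega^c}$ extends rationally to $\hat\C$. For $(3)\Rightarrow(1)$, let $R$ be this rational extension. On $\Omega$ the function $h(z) := \hat\chi_\Omega(z) + \bar z$ is holomorphic by the distributional identity, so $S(z) := h(z) - R(z)$ is meromorphic on $\Omega$. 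Continuity of $\hat\chi_\Omega$ across $\partial\Omega$ together with the equality $R = \hat\chi_\Omega$ on $\Omega^c$ forces $S = \bar z$ on $\partial\Omega$. Continuity of $S$ on $\overline\Omega$ valued in $\hat\C$ is automatic once we note that $R$ has no poles on $\partial\Omega$, because $R|_{\partial\Omega} = \hat\chi_\Omega|_{\partial\Omega}$ is finite.

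The main subtlety throughout is the careful bookkeeping at $\infty$: the vanishing-at-$\infty$ hypothesis on test functions is what makes $\zeta\mapsto 1/(\zeta-z)$ admissible in $(2)\Rightarrow(3)$ when $\Omega$ is unbounded, and dually one must absorb the pole of the Schwarz function at $\infty$ into $R_\Omega$ when building the quadrature function in $(1)\Rightarrow(2)$. The only genuinely nontrivial analytic input is the distributional identity $\partial\hat\chi_\Omega/\partial \bar z = \chi_\Omega$; everything else reduces to Cauchy's theorem and residue calculus.
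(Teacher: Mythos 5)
The paper does not actually prove this theorem; it records it as classical and cites \cite[Lemma~2.3]{AS} and \cite[Lemma~3.1]{LM}. Your proof is the standard argument those references use (Green--Stokes to pass between area and boundary integrals, the test function $\zeta\mapsto 1/(\zeta-z)$, and the $\overline{\partial}$-identity for the Cauchy transform), so structurally you are doing the right thing.

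There is, however, a sign error in $(3)\Rightarrow(1)$ that breaks the step as written. Your stated normalization $\overline{\partial}\hat\chi_\Omega=\chi_\Omega$ is the one consistent with your formula $-\pi\hat\chi_\Omega(z)=\frac{1}{2i}\oint_{\partial\Omega}\frac{R_\Omega(\zeta)}{\zeta-z}\,d\zeta$. But with that normalization, $h:=\hat\chi_\Omega+\bar z$ satisfies $\overline{\partial}h=\chi_\Omega+1=2$ on $\Omega$, so $h$ is not holomorphic there. The correct combination is $S:=\bar z-\hat\chi_\Omega+R$ (equivalently, work with $\bar z-\hat\chi_\Omega$, not $\hat\chi_\Omega+\bar z$): then $\overline{\partial}S=1-\chi_\Omega=0$ on $\Omega$, $S$ is meromorphic there with poles only at the poles of $R$, $S$ is continuous on $\overline\Omega$ valued in $\hat\C$, and on $\partial\Omega\subset\Omega^c$ one has $\hat\chi_\Omega=R$, so $S=\bar z$ there as required.

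Beyond that, you flag but do not resolve the unbounded case, and for this theorem that is more than bookkeeping. If $\infty\in\Omega$, a test function vanishing at $\infty$ only gives $f=O(|z|^{-1})$, so $\iint_\Omega f\,dA$ generally diverges and the Green--Stokes identity must be taken with a cutoff at $|z|=R$, where the boundary term does not vanish; Cauchy's theorem for $\oint_{\partial\Omega}f(S-R_\Omega)\,dz=0$ in $(1)\Rightarrow(2)$ likewise requires controlling the residue at $\infty$; and the Cauchy transform $\hat\chi_\Omega$ is not absolutely convergent, so condition (3) itself needs reinterpretation (e.g., via $\hat\chi_{\Omega^c}$). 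The cited references carry out exactly this accounting, which is why the theorem can cover both the cardioid (bounded) and the deltoid exterior (unbounded) as used later in the paper.
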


We call $d_\Omega:=\mathrm{deg}(R_\Omega)$ the \emph{order} of the quadrature domain $\Omega$. The poles of $R_\Omega$ are called \emph{nodes} of $\Omega$.

We now state an important theorem which guarantees that boundaries of quadrature domains are real-algebraic \cite[Theorem~5]{Gus}. 

\begin{theorem}[Real-algebraicity of boundaries of quadrature domains]\label{real_algebraic}
If $\Omega$ is a  quadrature domain, then $\partial \Omega$ is a real-algebraic curve, possibly minus finitely many isolated points. In particular, the only singularities on $\partial \Omega$ are double points and cusps.
\end{theorem}

The next proposition characterizes simply connected quadrature domains (see \cite[Theorem~1]{AS}).

\begin{proposition}[Simply connected quadrature domains]\label{simp_conn_quad}
Let $\Omega$ be simply connected. Then, $\Omega$ is a quadrature domain if and only if the Riemann uniformization $\phi:\mathbb{D}\to\Omega$ is rational.
\end{proposition}

For a simply connected quadrature domain $\Omega$, the rational map $\phi$ that restricts to a biholomorphism $\phi:\D\to\Omega$ of $\Omega$ semi-conjugates the reflection map $1/\overline{z}$ of $\D$ to the Schwarz reflection map 
$\sigma$ of $\Omega$ (see the commutative diagram in Figure~\ref{comm_diag_schwarz}). This allows us to compute the Schwarz reflection map of a simply connected quadrature domain. Moreover, the order of a simply connected quadrature domain is equal to $\mathrm{deg}\ \phi$.

For a comprehensive account on quadrature domains and their applications, we refer the readers to \cite{QD}. 

\section{Dynamics of deltoid reflection}\label{deltoid_reflection}

The goal of this section is to carry out a detailed study of the dynamics of Schwarz reflection with respect to the deltoid. 

\subsection{Schwarz reflection with respect to the deltoid}\label{deltoid_schwarz_partition}

In this subsection, we introduce the Schwarz reflection map of the deltoid, study its basic mapping properties, and prove some fundamental topological properties of the dynamically meaningful sets associated with the Schwarz reflection map.

\subsubsection{Deltoid}
\begin{proposition}\label{f_univalent}
The map $\phi(w)=w+\frac1{2w^2}$ is univalent in $\widehat\C\setminus\D.$
\end{proposition}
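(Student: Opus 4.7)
My plan is to rule out $\phi(w_1)=\phi(w_2)$ with $w_1\neq w_2$ directly on the exterior of the disk. First, I would factor the difference:
$$\phi(w_1)-\phi(w_2)=(w_1-w_2)+\tfrac{1}{2}\left(\tfrac{1}{w_1^2}-\tfrac{1}{w_2^2}\right)=(w_1-w_2)\left(1-\frac{w_1+w_2}{2w_1^2w_2^2}\right).$$
So if $w_1,w_2\in\hat{\mathbb C}\setminus\overline{\mathbb D}$ are distinct and $\phi(w_1)=\phi(w_2)$, they must satisfy the equation
$$2w_1^2w_2^2=w_1+w_2. \qquad (\ast)$$
The task then reduces to showing that $(\ast)$ has no solution with $|w_1|,|w_2|>1$.

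The heart of the argument is an elementary modulus estimate. For $|w_1|,|w_2|>1$, I would combine two facts: first, $|w_1||w_2|>1$ gives $2|w_1|^2|w_2|^2>2|w_1||w_2|$; second, for real numbers $a,b>1$ one has $2ab>a+b$ (dividing by $ab$, this is equivalent to $\frac1a+\frac1b<2$, which is immediate). Chaining these with the triangle inequality yields
$$|2w_1^2w_2^2|=2|w_1|^2|w_2|^2>2|w_1||w_2|>|w_1|+|w_2|\geq|w_1+w_2|,$$
which contradicts $(\ast)$. Hence $\phi$ is injective on $\mathbb C\setminus\overline{\mathbb D}$.

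Finally, I would handle $\infty$ by noting that $\phi(\infty)=\infty$, and the only finite preimage of $\infty$ under $\phi$ is the pole $w=0$, which is not in the exterior of the closed disk. This completes univalence on the full domain $\hat{\mathbb C}\setminus\overline{\mathbb D}$.

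There is essentially no obstacle here; the whole content is the trivial inequality $2ab>a+b$ for $a,b>1$. An alternative approach would be to verify that $\phi'(w)=1-w^{-3}$ has no zeros in $|w|>1$ (its zeros are the cube roots of unity, all on the unit circle) and then use a boundary/argument-principle argument, but the direct computation above seems shortest and cleanest.
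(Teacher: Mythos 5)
Your proof is correct, and in fact the paper states Proposition~\ref{f_univalent} without giving any proof, so there is no argument of the authors' to compare against; presumably they treat it as a classical fact (the map $w\mapsto w+c/w^n$ is univalent on $|w|>1$ exactly when $|c|\le 1/n$, and $c=\tfrac12$, $n=2$ is the extremal case, which is why the image boundary has cusps). Your factorization $\phi(w_1)-\phi(w_2)=(w_1-w_2)\bigl(1-\tfrac{w_1+w_2}{2w_1^2w_2^2}\bigr)$ and the modulus chain $2|w_1|^2|w_2|^2>2|w_1||w_2|>|w_1|+|w_2|\ge|w_1+w_2|$ are both valid, with the middle step $2ab>a+b$ for $a,b>1$ following from $\tfrac1a+\tfrac1b<2$ as you say, and you dispose of $\infty$ correctly. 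One thing worth noting explicitly: the strict inequalities genuinely use the \emph{open} exterior $|w|>1$; on $|w|=1$ equality can occur throughout, which is consistent with $\phi'(w)=1-w^{-3}$ vanishing at the cube roots of unity, precisely where the deltoid develops its cusps. Your suggested alternative via $\phi'\neq 0$ in $|w|>1$ plus a boundary/degree argument would also go through, but it requires some care at exactly those boundary zeros of $\phi'$, so the direct computation you chose is the cleaner route.
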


We define
$$\Omega:=\phi(\widehat\C\setminus\overline{\D}),\qquad T:=\Omega^c.$$
By Proposition~\ref{simp_conn_quad}, $\Omega$ is an unbounded quadrature domain with associated Schwarz reflection map $\sigma:=\phi\circ(1/\overline{z})\circ (\phi\vert_{\widehat{\C}\setminus\D})^{-1}:\overline{\Omega}\to\widehat{\C}$.
Since $\phi$ commutes with multiplication by $\omega$ (where $\omega=e^{\frac{2\pi i}{3}}$), it follows that $T$ is symmetric under rotation by $\frac{2\pi}{3}$. Moreover, as $\phi$ has simple critical points at $\omega^k$ (for $k=0,1,2$), $\partial T$ has three $\frac32$-cusp points (see Figure~\ref{deltoid_reflection_pic} (right)). It is also worth mentioning that $\partial T$ is the classical Euler's deltoid curve, that is the locus of a point on the circumference of a circle of radius $1/2$ as it rolls inside a circle of radius $3/2$. 

\begin{figure}[ht!]
\begin{center}
\includegraphics[width=0.4\linewidth]{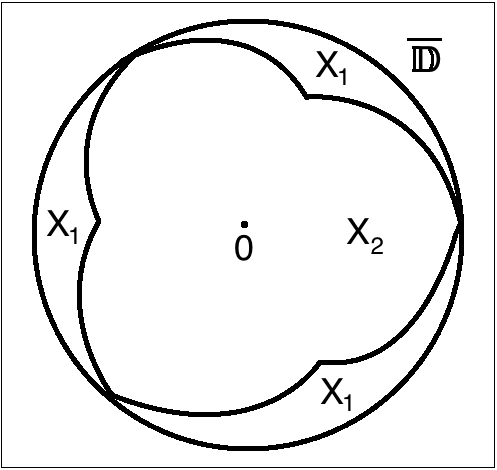}\quad \includegraphics[width=0.382\linewidth]{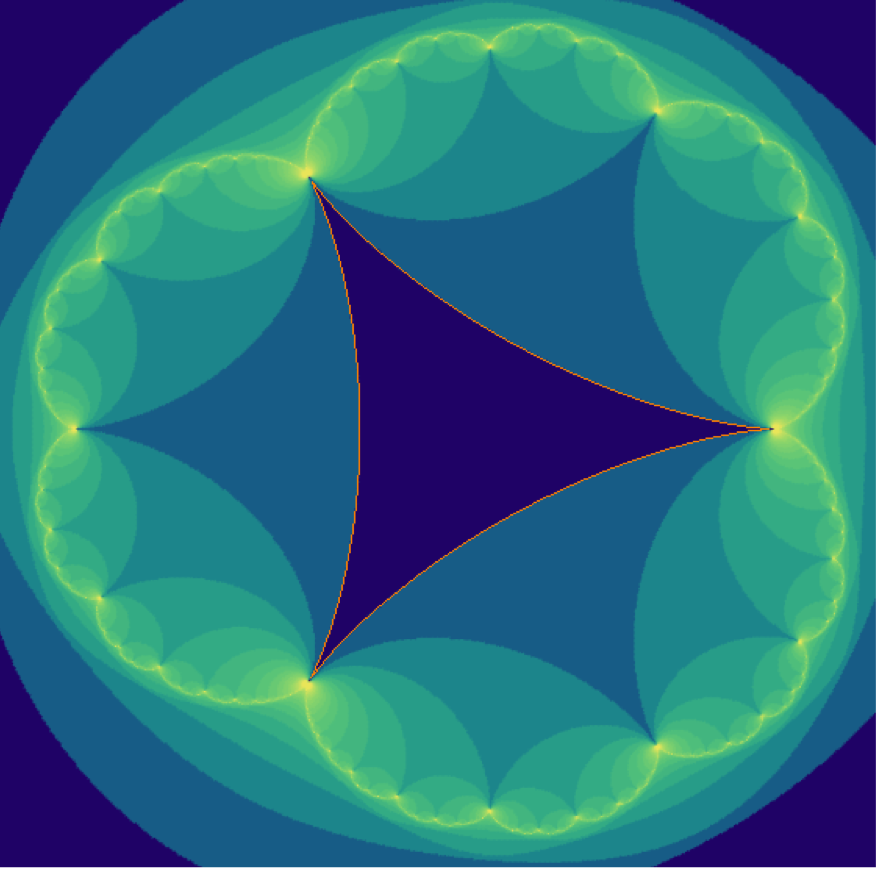}
\end{center}
\caption{Left: Each connected component of $X_1=\phi^{-1}(T^0)\cap\overline{\D}$ is mapped univalently onto $T^0$ by $\phi$. Moreover, $\phi$ is a two-to-one branched covering from the connected set $X_2$ onto $\Omega$, and the only ramification point is the origin. Right: Schwarz dynamics of the deltoid with the tiles of various ranks shaded.}
\label{deltoid_reflection_pic}
\end{figure}

\medskip \noindent Notation: $T^0$ is $T$ with the three cusp points removed.

\subsubsection{Schwarz reflection}

\begin{proposition}\label{deltoid_schwarz_critical}
The Schwarz reflection map $\sigma$ of $\Omega$ has a double pole at $\infty$, but no other critical points in $\Omega$.
\end{proposition}
\begin{proof}
This is a straightforward consequence of the definition of $\sigma$ and the fact that the only critical point of $\phi$ in $\D$ is at the origin.
\end{proof}

\subsubsection{Covering properties of $\sigma$}

\begin{proposition}\label{deltoid_schwarz_covering}
$\sigma:\sigma^{-1}(\Omega)\to\Omega$ is a proper branched $2$-cover branched only at $\infty$. On the other hand, $\sigma:\sigma^{-1}(T^0)\to T^0$ is a $3$-cover.
\end{proposition}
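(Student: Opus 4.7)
My plan is to reduce both claims to corresponding statements about the rational map $\phi(w)=(2w^3+1)/(2w^2)$, via the semi-conjugacy $\sigma=\phi\circ\tau\circ\phi^{-1}$ of Figure~\ref{comm_diag_schwarz_1}, where $\tau(w)=1/\overline{w}$ is inversion in the unit circle and $\phi^{-1}:\overline{\Omega}\to\overline{\hat\C\setminus\D}$ is the inverse of the univalent restriction from Proposition~\ref{f_univalent}.

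First, I will inventory the branched structure of $\phi$. As a rational map, $\phi$ has degree $3$, with critical points at the cube roots of unity $\{1,\omega,\omega^2\}\subset\partial\D$ (mapping to the three cusps of $\partial T$) and at $0\in\D$ (a double pole, mapping to $\infty\in\Omega$); Riemann--Hurwitz confirms there are no further critical points. Since $\phi$ is a bijection $\hat\C\setminus\overline{\D}\to\Omega$ and $\phi(\partial\D)=\partial T$, every remaining preimage of a point in $\Omega\cup T$ must lie in $\D$. Counting then yields: every $z\in\Omega$ has exactly two further preimages in $\D$, so setting $U:=\phi^{-1}(\Omega)\cap\D$ the map $\phi|_U:U\to\Omega$ is $2$-to-$1$; and every $z\in T^0$ has all three $\phi$-preimages in $\overline\D$, so setting $V:=\phi^{-1}(T^0)\subset\overline\D$ the map $\phi|_V:V\to T^0$ is $3$-to-$1$.

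Next, I will locate the branching of these restrictions within the critical set of $\phi$. The critical point $0$ belongs to $U$ (since $\phi(0)=\infty\in\Omega$), and is the unique branch point of $\phi|_U$, with critical value $\infty$; thus $\phi|_U$ is a proper branched $2$-cover branched only over $\infty$. The remaining critical points $\{1,\omega,\omega^2\}$ map to the cusps of $\partial T$, which are \emph{precisely} the points removed from $T$ to form $T^0$; hence none of them lies in $V$, and so $\phi|_V:V\to T^0$ is unbranched.

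Finally, I will transport these conclusions back to $\sigma$. Because $\phi|_{\overline{\hat\C\setminus\D}}$ and $\tau$ are both homeomorphisms onto their images, the identity $\sigma=\phi\circ\tau\circ\phi^{-1}$ gives $\sigma^{-1}(\Omega)=\phi(\tau(U))$ and $\sigma^{-1}(T^0)=\phi(\tau(V))$, and exhibits $\sigma$ on each of these sets as topologically conjugate to the corresponding restriction of $\phi$ via the homeomorphism $\phi\circ\tau$. Tracking the branch point through the conjugacy sends $0\in U$ to $\phi(\tau(0))=\phi(\infty)=\infty$, consistent with Proposition~\ref{deltoid_schwarz_critical}. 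Propriety of the first map follows from propriety of $\phi$ on $\hat\C$ combined with $\sigma=\mathrm{id}$ on $\partial\Omega$. The main subtlety I expect is confirming that $\sigma:\sigma^{-1}(T^0)\to T^0$ is a genuine (topological) $3$-sheeted covering at those preimages which sit on $\partial T\setminus\{\text{cusps}\}$; this is handled by recalling that at any non-cusp boundary point the Schwarz reflection extends anti-conformally across $\partial T$, making it a local anti-diffeomorphism there.
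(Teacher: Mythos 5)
Your proposal is correct and follows essentially the same route as the paper: identify the four simple critical points of the degree-$3$ rational map $\phi$, note that $\infty$ is the only critical value in $\Omega$ while the cusps are the critical values on $\partial T$ (and are removed from $T^0$), restrict $\phi$ to $\phi^{-1}(\Omega)\cap\D$ and $\phi^{-1}(T^0)\cap\overline\D$, and transport the covering data through the semi-conjugacy $\sigma=\phi\circ\tau\circ\phi^{-1}$. Your sets $U,V$ are precisely the paper's $X_2,X_1$, and your closing remark about the anti-conformal extension across $\partial T\setminus\{\text{cusps}\}$ is a harmless (and welcome) elaboration.
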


\begin{proof}
This follows from \cite[Lemma 4.1]{LM} that uses the extension of the Schwarz reflection map $\sigma$ to the Schottky double of $\Omega$ (see \cite{Gus}). Here is a more direct proof.

The degree $3$ rational map $\phi$ maps $\widehat\C\setminus\overline{\D}$ univalently onto $\Omega$. Note that the critical points of $\phi$ (all of which are simple) are $1, \omega, \omega^2,$ and $0$. They are mapped by $\phi$ to $\frac32, \frac32\omega, \frac32\omega^2,$ and $\infty$ respectively. In particular, $\Omega$ contains exactly one critical value of $\phi$, while $T^0$ does not contain any critical value of $\phi$. Let us set $X_1:=\phi^{-1}(T^0)\cap\overline{\D}$, and $X_2=\phi^{-1}(\Omega)\cap\overline{\D}$ (see Figure~\ref{deltoid_reflection_pic} (left)). It now readily follows that $\phi:X_1\rightarrow T^0$ is a proper covering of degree $3$, and $\phi:X_2\rightarrow \Omega$ is proper branched covering of degree $2$ branched only at $0$.

Finally, the definition of $\sigma$ implies that $\sigma^{-1}(T^0)=(\phi\circ\widetilde{\iota})(X_1)$, and $\sigma:(\phi\circ\widetilde{\iota})(X_1)\rightarrow T^0$ is a proper $3$-cover (where $\widetilde{\iota}$ is reflection in the unit circle). The same description also shows that $\sigma^{-1}(\Omega)=(\phi\circ\widetilde{\iota})(X_2)$, and $\sigma:(\phi\circ\widetilde{\iota})(X_2)\rightarrow\Omega$ is proper branched $2$-cover branched only at $\infty$.
\end{proof}

\begin{corollary}\label{deltoid_schwarz_iterate_covering}
The maps $\sigma^{\circ n}:\sigma^{-n}(\Omega)\to\Omega$ are proper branched covers of degree $2^n$ branched only at $\infty$.
\end{corollary}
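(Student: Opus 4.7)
The plan is induction on $n$, with Proposition~\ref{deltoid_schwarz_covering} providing the base case $n=1$. For the inductive step I would factor $\sigma^{\circ(n+1)}=\sigma\circ\sigma^{\circ n}$ and track three things: (i) the nested structure of the preimage sets, (ii) properness and degree multiplicativity under composition, and (iii) the location of critical points.

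For (i), since $\sigma$ fixes $\partial\Omega$ pointwise, $\sigma^{-1}(\Omega)$ is disjoint from $\partial\Omega$, so $\sigma^{-1}(\Omega)\subset\Omega$. This gives $\sigma^{-(n+1)}(\Omega)=(\sigma^{\circ n})^{-1}(\sigma^{-1}(\Omega))\subset\sigma^{-n}(\Omega)$. By the inductive hypothesis $\sigma^{\circ n}:\sigma^{-n}(\Omega)\to\Omega$ is a proper branched cover of degree $2^n$, so its restriction to the preimage of the open subset $\sigma^{-1}(\Omega)\subset\Omega$ is again a proper branched cover of degree $2^n$ onto $\sigma^{-1}(\Omega)$. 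Post-composing with the proper branched $2$-cover $\sigma:\sigma^{-1}(\Omega)\to\Omega$ from Proposition~\ref{deltoid_schwarz_covering} yields a proper branched cover of degree $2\cdot 2^n=2^{n+1}$, settling (ii).

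For (iii), the chain rule identifies the critical points of $\sigma\circ\sigma^{\circ n}$ as the union of the critical points of the inner map $\sigma^{\circ n}$ (inductively, only $\infty$) and the $\sigma^{\circ n}$-preimages of critical points of $\sigma$ (by Proposition~\ref{deltoid_schwarz_critical}, only the pole at $\infty$). The key observation is that $\sigma$ fixes $\infty$ with local degree $2$, so it is totally ramified there; by induction $\sigma^{\circ n}$ has local degree $2^n$ at $\infty$, which already accounts for the full degree, forcing $(\sigma^{\circ n})^{-1}(\infty)\cap\sigma^{-n}(\Omega)=\{\infty\}$. Hence $\infty$ is the sole critical point of $\sigma^{\circ(n+1)}$ in $\sigma^{-(n+1)}(\Omega)$, closing the induction. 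I do not expect a serious obstacle here; the only subtle point is this local-degree accounting at $\infty$, and it is immediate once one notes that $\sigma(\infty)=\infty$ and that the local degree at $\infty$ saturates the total covering degree at each stage.
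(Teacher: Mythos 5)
Your induction is correct and supplies the details the paper treats as immediate (the corollary is stated without proof, as a direct consequence of Proposition~\ref{deltoid_schwarz_covering}). The key accounting in your step (iii) — that $\sigma$ fixes $\infty$ with local degree $2$, so $\sigma^{\circ n}$ has local degree $2^n$ at $\infty$, which saturates the total covering degree $2^n$ and forces $(\sigma^{\circ n})^{-1}(\infty)\cap\sigma^{-n}(\Omega)=\{\infty\}$ — is exactly right and closes the induction.
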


\subsubsection{Tiling set} By definition,
$T^\infty :=\bigcup_{n\geq0} \sigma^{-n}(T^0).$ In Figure~\ref{deltoid_reflection_pic} (right), the bounded complementary component of the green curve is $T^\infty$. We will call it the \emph{tiling set}. We will call the components of $\sigma^{-n} T^0$ tiles of rank $n$. The next proposition directly follows from the construction of tiles.

\begin{proposition}\label{tiles_rank_n}
There are $3\cdot 2^{n-1}$ tiles of rank $n$. The union of tiles of rank $\le n$ is a ``polygon" with $3\cdot 2^n$ vertices (cusps).
\end{proposition}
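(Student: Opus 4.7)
The plan is to prove both assertions by induction on $n$. Let $U_n$ denote the closure of the union of tiles of rank $\le n$; so $U_0=T$ has $3 = 3\cdot 2^0$ cusps, verifying the base case. I write $N_n$ for the number of rank-$n$ tiles and $c_n$ for the number of cusps on $\partial U_n$.

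For the tile count, I apply Proposition~\ref{deltoid_schwarz_covering}. At rank $1$: the degree-$3$ map $\sigma:\sigma^{-1}(T^0)\to T^0$ is unramified (the only critical point $\infty$ is a fixed point in $\Omega$, hence not in $T^0$), so over the simply connected $\Int T$ the cover trivialises into three sheets; adjoining the three arcs of $\partial T\setminus\{p_1,p_2,p_3\}$ gives $N_1 = 3$. For $n\ge 2$: by induction each rank-$(n-1)$ tile $\tau$ is a topological disk in $\overline\Omega$ not containing $\infty$ (since $\sigma(\infty)=\infty\notin T^0$, $\infty$ lies in no positive-rank tile). The degree-$2$ branched cover $\sigma:\sigma^{-1}(\Omega)\to\Omega$, being unramified away from $\infty$, trivialises over $\tau$ into two sheets, each a rank-$n$ tile mapped homeomorphically to $\tau$. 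Hence $N_n = 2N_{n-1} = 3\cdot 2^{n-1}$. As a byproduct, every rank-$n$ tile (for $n\ge 1$) is anti-conformally equivalent to $T^0$ via $\sigma^{\circ n}$, and therefore has exactly $3$ boundary cusps.

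For the cusp count, I aim to establish the recursion $c_n = c_{n-1}+N_n$, which telescopes with $c_0=3$ to $c_n = 3 + 3(2^n-1) = 3\cdot 2^n$. The key topological claims are: (i) each rank-$n$ tile has exactly $2$ of its $3$ cusps on $\partial U_{n-1}$ (attachment points) and exactly $1$ new cusp off $U_{n-1}$; (ii) each cusp of $\partial U_{n-1}$ is the attachment point of exactly $2$ rank-$n$ tiles; and (iii) each such attachment preserves the cusp structure, so no cusp of $\partial U_{n-1}$ is smoothed out in $\partial U_n$. For $n=1$ these are checked directly at the three cusps of $\partial T$. For $n\ge 2$, I trace the three cusps of a rank-$n$ tile $\tau$ as $\sigma$-preimages of the three cusps of $\sigma(\tau)$: by induction two of those are boundary cusps of $\partial U_{n-2}$ (with exactly two preimages in $\overline\Omega$ each, split between the two sheets of $\sigma^{-1}(\sigma(\tau))$) and one is a new cusp on $\partial U_{n-1}$ lying in $\Omega$ (with two preimages in $\Omega$ from the degree-$2$ cover, again one per sheet); this accounting yields (i) and (ii).

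The main obstacle I anticipate is claim (iii): verifying that two rank-$n$ tiles meeting at an old cusp of $\partial U_{n-1}$ genuinely produce a $3/2$-cusp on $\partial U_n$, rather than a smooth point or a tacnode. The model computation is the base step, where the local picture near a cusp of $\partial T$ is read off from the uniformisation $\phi(w)=w+\tfrac{1}{2w^2}$ of Proposition~\ref{f_univalent} (whose critical points $1,\omega,\omega^2$ on $\partial\D$ are precisely the points producing the deltoid cusps); this $3/2$-cusp structure is then transported to higher rank through the anti-conformal identifications $\sigma^{\circ n}:\tau\to T^0$, which act as diffeomorphisms on a neighbourhood of each outer cusp of $\partial U_n$ in $\Omega$.
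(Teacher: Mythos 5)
Your proof is correct; the paper treats this proposition as immediate from the construction of tiles and offers no explicit argument, so you are supplying the details the authors regard as routine. The inductive tile count via the covering properties of Proposition~\ref{deltoid_schwarz_covering} is exactly the intended reasoning, and your attention to claim (iii) — the persistence of $3/2$-cusps under the attachment of reflected tiles, read off from the uniformization $\phi$ — is well placed, since that is the only point with genuine geometric content.

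One small remark on your cusp bookkeeping for $n\ge 2$. The $\sigma$-preimage trace you use is more delicate than it needs to be, because the $\sigma$-image $\sigma(\tau)$ of a rank-$n$ tile $\tau$ is in general \emph{not} the rank-$(n-1)$ tile to which $\tau$ is attached along its base (in the group model, $\rho_1\rho_2(\Pi)$ is attached to $\rho_1(\Pi)$ but $\rho$-maps to $\rho_2(\Pi)$), so tracking cusps through $\sigma$-preimages and tracking them through geometric attachment are two different fibrations over the same count. Your accounting happens to be correct — the two old cusps of $\sigma(\tau)$ do pull back, one per sheet, to cusps on $\partial U_{n-1}$, and the new cusp of $\sigma(\tau)$ pulls back to the new vertex of $\tau$ — but it is cleaner to argue purely geometrically: by induction the base of $\tau$ is one of the two free sides of some rank-$(n-1)$ tile, its endpoints are already cusps of $\partial U_{n-1}$, the third vertex is new, and each of the $c_{n-1}=N_n$ cusps of $\partial U_{n-1}$ is the meeting point of exactly two free sides and hence of exactly two rank-$n$ tiles. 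Either route gives $c_n=c_{n-1}+N_n=3\cdot 2^n$.
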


\begin{proposition}\label{tiling_set_topology}
$T^\infty$ is a simply connected domain.
\end{proposition}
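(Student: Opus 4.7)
The plan is to exhibit $T^\infty$ as an increasing union of open simply connected ``polygons'' $U_n$, each realized as the interior (in $\hat\C$) of the union of the closures of all tiles of rank at most $n$, and then appeal to the elementary fact that a nested union of open simply connected subsets of a surface is simply connected (any loop in the union lies in some $U_n$ by compactness, and is already null-homotopic there).

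The base case is immediate: $\overline{U_0} = T$ is the closed deltoid, a topological closed disk. For the inductive step, I would use Propositions~\ref{deltoid_schwarz_critical} and~\ref{deltoid_schwarz_covering}. Since the only critical point of $\sigma$ on $\Omega$ is at $\infty$, and $\infty$ belongs to no tile of positive rank, the degree-$2$ branched cover $\sigma\colon \sigma^{-1}(\Omega)\to\Omega$ restricts to an unramified $2$-sheeted cover over the interior of every rank $n \geq 1$ tile; combined with the degree-$3$ cover $\sigma\colon \sigma^{-1}(T^0)\to T^0$ for $n=0$, this produces exactly three rank $1$ tiles and exactly two rank $n+1$ children for each rank $n \geq 1$ tile. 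Each child $\tau'$ maps homeomorphically onto its parent $\tau$ via $\sigma$, so $\tau'$ is itself a topological closed disk bounded by three analytic arcs, precisely one of which is identified with a boundary arc of $\tau$ on $\partial U_n$. Distinct children of the same parent are disjoint (being distinct components of an unramified cover), and children of distinct parents sit in disjoint components of $\sigma^{-1}$ of disjoint parent interiors.

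Consequently $\overline{U_{n+1}}$ is obtained from $\overline{U_n}$ by gluing, along each of the $3\cdot 2^n$ free outer boundary arcs identified in Proposition~\ref{tiles_rank_n}, a topological closed disk to $\overline{U_n}$. Since gluing a closed disk to a closed disk along a single boundary arc again yields a closed disk, $\overline{U_{n+1}}$ is a topological closed disk; in particular $U_{n+1}$ is open and simply connected. This completes the induction.

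Finally, $T^\infty$ agrees with $\bigcup_n U_n$ away from a countable family of analytic arcs and cusp pre-images lying in the interior of $T^\infty$, and such lower-dimensional strata do not affect simple connectivity. As $\bigcup_n U_n$ is an increasing union of open simply connected subsets of $\hat\C$, it is open, connected, and simply connected, and the same passes to $T^\infty$. The step I expect to be the main obstacle is the rigorous verification that children of distinct parents are disjoint and that no rank $n+1$ tile doubles back through $\partial U_n$ into the interior of $U_n$ or overlaps a sibling; this reduces to the absence of critical values of $\sigma$ in $\Omega\setminus\{\infty\}$ (Proposition~\ref{deltoid_schwarz_critical}), so that the decomposition of $\sigma^{-n}(T^0)$ into connected components is clean at every stage.
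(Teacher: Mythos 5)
Your approach is essentially the same as the paper's: both express $T^\infty$ as a nested union of the simply connected open sets $U_n = \Int\left(\bigcup_{k\le n}\sigma^{-k}T^0\right)$ and appeal to the fact that an increasing union of simply connected opens is simply connected; your inductive gluing argument is a welcome unpacking of what the paper treats as immediate via Proposition~\ref{tiles_rank_n}, and the subtlety you flag about children not overlapping is indeed the content of that proposition.

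One imprecision in your closing paragraph is worth fixing, because it bears on the word ``domain'' in the statement. You assert that $T^\infty$ ``agrees with $\bigcup_n U_n$ away from a countable family of analytic arcs and cusp pre-images lying in the interior of $T^\infty$'' and that such strata do not affect simple connectivity. But adding even a single boundary arc to an open set destroys openness, so this handwave does not by itself give that $T^\infty$ is a domain. In fact $T^\infty = \bigcup_n U_n$ exactly, and verifying this is precisely the first paragraph of the paper's proof: any non-vertex boundary point $z$ of a rank-$n$ tile lies on an edge shared with an adjacent tile of rank $n\pm 1$, so a full neighborhood of $z$ is covered by those two tiles and $z\in\Int\left(\bigcup_{k\le n+1}\sigma^{-k}T^0\right)$. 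Moreover, the vertices of tiles (cusps of $T$ and their iterated $\sigma$-pre-images) are \emph{not} in $T^\infty$ at all — they form the singular set $S$, which Proposition~\ref{singular_in_basin_boundary} places in $\partial T^\infty$ — so the phrase ``cusp pre-images lying in the interior of $T^\infty$'' is incorrect and there is nothing to wave away. Replacing your final paragraph with the observation that $T^\infty=\bigcup_n U_n$ (via the adjacency argument above) closes the argument cleanly.
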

\begin{proof}
If $z\in T^\infty$ belongs to the interior of a tile, then it clearly belongs to $\Int{T^\infty}$. On the other hand, if $z\in T^\infty$ belongs to the boundary of a tile of rank $n$, then $z$ lies in $\Int{\left(\bigcup_{k=0}^{n+1} \sigma^{-k} T^0\right)}\subset\Int{T^\infty}$. Hence, $T^\infty$ is open.

The above argument also shows that $T^\infty$ is the increasing union of the connected, simply connected domains $\left\{\Int\left(\bigcup_{k=0}^{n} \sigma^{-k} T^0\right)\right\}_{n\geq 1}$. Hence, $T^\infty$ itself is connected and simply connected.
\end{proof}

Note that $\sigma:T^\infty\setminus T^0\to T^\infty$ is not a covering map as the degree of $\sigma~:~\sigma^{-1}(T^0)\to~T^0$ is three, while the degree of $\sigma:T^\infty\setminus\left(T^0\cup\sigma^{-1}(T^0)\right)\to T^\infty\setminus T^0$ is two. For this reason, some care should be taken to define inverse branches of the iterates of $\sigma$ on $T^\infty$.

For a subset $X$ of the plane, we denote by $N_\epsilon(X)$ the $\epsilon$-neighborhood of $X$. Let us fix some small $\epsilon>0$ and $K>1$ such that the set $$T^{\mathrm{hyp}}:=N_\epsilon\left(\overline{T^\infty}\right)\setminus\overline{N_{K\epsilon}\left(T\right)}$$ consists of three disjoint simply connected domains. The next result, which follows from the mapping properties of $\sigma$, tells us that $\sigma$ is hyperbolic near the boundary of the tiling set away from the cusp points.

\begin{proposition}\label{domain_hyperbolicity}
All inverse branches of $\sigma^{\circ n}$ (for $n\geq1$) are well-defined locally on $T^{\mathrm{hyp}}$. Moreover, $\sigma$ is hyperbolic on $T^{\mathrm{hyp}}$.
\end{proposition}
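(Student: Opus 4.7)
The proposition has two parts, and I would tackle them in order.

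For the first part, the argument is a critical-point count. By Proposition~\ref{deltoid_schwarz_critical}, the only interior critical point of $\sigma$ in $\Omega$ is the double pole at $\infty$, and the only other places where $\sigma$ fails to be a local anti-holomorphic diffeomorphism are the three cusps on $\partial T$, each of which is a (parabolic) fixed point of $\sigma$. By construction, $T^{\mathrm{hyp}} = N_\epsilon(\overline{T^\infty})\setminus\overline{N_{2\epsilon}(T)}$ is bounded (since $\overline{T^\infty}$ is) and disjoint from the $2\epsilon$-neighborhood of $T$, so it avoids both $\infty$ and the cusps. Combining $\sigma^{-1}(\infty)=\{\infty\}$ with the fact that each cusp is fixed by $\sigma$, I conclude that for any preimage $z$ of $w\in T^{\mathrm{hyp}}$ under $\sigma^{\circ n}$, no intermediate iterate $\sigma^{\circ k}(z)$ with $0\le k<n$ can be $\infty$ or a cusp; otherwise $w=\sigma^{\circ n}(z)$ would itself be such a point. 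Hence $z$ is a regular point of $\sigma^{\circ n}$, and a local anti-holomorphic inverse is well defined near $w$.

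For the hyperbolicity, I would invoke Schwarz--Pick in an appropriate Poincar\'e metric. Choose an open set $V$ slightly enlarging $T^{\mathrm{hyp}}$ (for instance $V := N_{3\epsilon/2}(\overline{T^\infty})\setminus\overline{N_{3\epsilon/2}(T)}$) that still avoids the cusps and $\infty$; then $V$ is a hyperbolic subdomain of $\hat{\mathbb{C}}$, and its Poincar\'e metric $\rho_V$ is comparable to the Euclidean metric on $\overline{T^{\mathrm{hyp}}}$. By the first part, $\sigma\colon\sigma^{-1}(V)\to V$ is unbranched; the covering description of Proposition~\ref{deltoid_schwarz_covering} together with the cusp-buffer ensures $\sigma^{-1}(V)\cap\overline{T^{\mathrm{hyp}}}\Subset V$. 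The inclusion $\sigma^{-1}(V)\hookrightarrow V$ of hyperbolic surfaces is then a strict contraction of Poincar\'e metrics; composing with the local isometry $\sigma\colon(\sigma^{-1}(V),\rho_{\sigma^{-1}(V)})\to(V,\rho_V)$, the inverse branches of $\sigma$ uniformly contract $\rho_V$ on $\overline{T^{\mathrm{hyp}}}$. Passing back to the Euclidean metric (and an iterate, if needed) gives uniform expansion of $\sigma$ on $T^{\mathrm{hyp}}$.

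The main obstacle, as one should expect, is the cusp geometry: each cusp $c\in\partial T$ is a parabolic fixed point of $\sigma$, so inverse branches near $c$ do not contract uniformly. The whole point of the $2\epsilon$-buffer around $T$ in the definition of $T^{\mathrm{hyp}}$ is to keep it a definite distance away from these parabolic points, so that pulling back by $\sigma$ does not drag points into the cusp neighborhoods removed in forming $V$. I would verify this quantitatively using local normal forms for $\sigma$ near each cusp (coming from the factorization $\sigma=\phi\circ\widetilde{\iota}\circ\phi^{-1}$ at the critical values $\tfrac{3}{2}\omega^k$ of $\phi$); this local analysis, rather than the Schwarz--Pick step, is the technical heart of the argument.
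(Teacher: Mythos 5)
Your argument for the first part reaches the right conclusion, but the discussion of the cusps is a red herring: the cusps lie on $\partial\Omega$, and since $\sigma$ fixes $\partial\Omega$ pointwise we have $\sigma^{-1}(\Omega)\subset\Omega$, so no backward iterate of a point $w\in T^{\mathrm{hyp}}\subset\Omega$ can land on a cusp in the first place. The only genuine obstruction is the critical point at $\infty$; the post-critical set of $\sigma$ is just $\{\infty\}$, and $T^{\mathrm{hyp}}$ is bounded and hence misses it, so the claim follows from Proposition~\ref{deltoid_schwarz_covering} exactly as the paper records.

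For the expansion estimate you take a genuinely different route from the paper --- Schwarz--Pick rather than a direct computation --- and the paper's remark immediately after the proposition does acknowledge that such a shrinking-lemma alternative exists. But the version you sketch has a concrete gap. The Schwarz--Pick step requires the inclusion $\sigma^{-1}(V)\subset V$, whereas what you actually verify is the logically different (and trivially true) statement $\sigma^{-1}(V)\cap\overline{T^{\mathrm{hyp}}}\Subset V$, which holds simply because $\overline{T^{\mathrm{hyp}}}\Subset V$. For a collar $V$ of the type you chose, the containment $\sigma^{-1}(V)\subset V$ essentially encodes the very expansion you are trying to prove, so the argument as written is circular. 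Switching to the natural hyperbolic domain $W=\Omega\setminus\{\infty\}$ makes $\sigma^{-1}(W)\subsetneq W$ immediate from Proposition~\ref{deltoid_schwarz_covering}, but then $T^{\mathrm{hyp}}\not\subset\sigma^{-1}(W)$ (for $\epsilon$ small, $T^{\mathrm{hyp}}$ meets the rank-one tiles $\sigma^{-1}(T^0)$, which $\sigma$ carries into $T$), so the Poincar\'e estimate still fails to reach all of $T^{\mathrm{hyp}}$. The paper's own proof avoids all of this with a one-line computation: differentiating the semiconjugacy $\sigma\circ\phi=\phi\circ\widetilde{\iota}$ gives $\vert\overline{\partial}\sigma(\phi(w))\vert=\vert w\vert$ for $\vert w\vert>1$, whence $\vert\overline{\partial}\sigma\vert\geq\lambda_0>1$ on the compactly contained set $\overline{T^{\mathrm{hyp}}}\subset\Omega$, with no need to track where $\sigma$ sends $T^{\mathrm{hyp}}$ or to introduce any auxiliary domain.
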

\begin{proof}
Note that $\overline{T^{\mathrm{hyp}}}\subset\Omega$. The first part of the proposition follows from the fact that $\sigma:\sigma^{-1}(\Omega)\to\Omega$ is a branched covering (see Proposition~\ref{deltoid_schwarz_covering}) and $T^{\mathrm{hyp}}$ does not intersect the post-critical set of $\sigma$.

For the second assertion, first recall that $\sigma(\phi(w))=\phi(\frac{1}{\overline{w}})$, $\vert w\vert>1$. Taking the $\overline\partial$-derivative of this relation yields 
\begin{equation}
\overline{\partial}\sigma\left(\phi(w)\right)\cdot\left(1-\frac{1}{\overline{w}^3}\right)=-\frac{1}{\overline{w}^2}+\overline{w},
\label{deltoid_schwarz_derivative}
\end{equation} 
for $\vert w\vert>1$. It follows from Relation~(\ref{deltoid_schwarz_derivative}) that 
\begin{equation}
\left\vert\overline{\partial}\sigma\left(\phi(w)\right)\right\vert=\vert w\vert,
\label{schwarz_multiplier}
\end{equation}
for $\vert w\vert>1$. Since $T^{\mathrm{hyp}}$ is compactly contained in $\Omega$, it follows that there exists $\lambda_0>1$ such that $\vert\overline{\partial}\sigma\vert>\lambda_0>1$ on $T^{\mathrm{hyp}}$.
\end{proof}

\subsubsection{Non-escaping set, basin of infinity}

The openness and complete invariance of the tiling set yields the following corollary.

\begin{corollary}\label{complement_tiling_set}
$\widehat\C\setminus T^\infty$ is a closed, completely invariant set.
\end{corollary}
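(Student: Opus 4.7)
The plan is to exploit the explicit description $T^\infty = \bigcup_{n\geq 0} \sigma^{-n}(T^0)$ and the basic topological fact already in hand. First, closedness of $\mathbb C\setminus T^\infty$ is immediate: Proposition~\ref{tiling_set_topology} establishes that $T^\infty$ is (open and) simply connected, so its complement is closed. Before attacking invariance, I would spend one line reconciling domains: $\sigma$ is defined on $\overline\Omega=\hat{\mathbb C}\setminus T^0$, and since $T^0\subset T^\infty$, the map $\sigma$ is defined on all of $\mathbb C\setminus T^\infty$. Also, the only pole of $\sigma$ is at $\infty$ with $\sigma(\infty)=\infty$ (Proposition~\ref{deltoid_schwarz_critical}), so $\sigma$ maps finite points to finite points, and it is sensible to speak of invariance of a subset of $\mathbb C$ rather than $\hat{\mathbb C}$.

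The next step is to recast $T^\infty$ in orbit language: $z\in T^\infty$ if and only if some forward iterate of $z$ under $\sigma$ lands in $T^0$, equivalently $z\in\mathbb C\setminus T^\infty$ if and only if $\sigma^{\circ n}(z)$ is defined and lies outside $T^0$ for every $n\ge 0$. From this, forward invariance is automatic: if $z\in\mathbb C\setminus T^\infty$, the orbit of $\sigma(z)$ is a tail of the orbit of $z$ and therefore also avoids $T^0$, so $\sigma(z)\in\mathbb C\setminus T^\infty$. For backward invariance, let $w\in\sigma^{-1}(\mathbb C\setminus T^\infty)$; then $w\notin T^0$ (else $\sigma(w)$ would be undefined), and for $n\ge 1$ the identity $\sigma^{\circ n}(w)=\sigma^{\circ(n-1)}(\sigma(w))$ shows $\sigma^{\circ n}(w)$ avoids $T^0$. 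Hence the full orbit of $w$ avoids $T^0$, so $w\in\mathbb C\setminus T^\infty$. The two inclusions combined give $\sigma^{-1}(\mathbb C\setminus T^\infty)=\mathbb C\setminus T^\infty$, which is complete invariance.

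There is no real obstacle here; the content of the corollary is set-theoretic bookkeeping once the definition of $T^\infty$ as a nested pre-image union is unpacked. The only point requiring a moment's care is the domain issue noted above (that $\sigma$ is not globally defined), and this is handled by the observation $T^0\subset T^\infty$, which guarantees $\sigma$ is defined wherever we need to apply it.
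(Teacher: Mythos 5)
Your proof is correct and fills in what the paper leaves implicit; the paper's ``proof'' is just the remark preceding the corollary, that openness and complete invariance of $T^\infty$ transfer to the complement, and your orbit-avoidance argument is the natural way to unpack that. One small imprecision worth fixing: in the backward-invariance step you justify $w\notin T^0$ by writing ``(else $\sigma(w)$ would be undefined).'' That reason is incomplete, since $\sigma$ \emph{is} defined on $\partial T\setminus\{\text{cusps}\}\subset T^0$, where it acts as the identity. The correct dichotomy is: if $w\in\Int T$ then $\sigma(w)$ is indeed undefined, while if $w\in\partial T\setminus\{\text{cusps}\}$ then $\sigma(w)=w\in T^0\subset T^\infty$, contradicting $\sigma(w)\in\C\setminus T^\infty$. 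Either way $w\notin T^0$, so your conclusion and the rest of the argument stand unchanged.
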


Since every point in $T^\infty\setminus T^0$ escapes to $T^0$ under some iterate of $\sigma$, we can think of $T^\infty$ as the \emph{escaping set} of $\sigma$. On the other hand, $\widehat\C\setminus T^\infty$ consists of points that never land in $T^0$ and we call it the \emph{non-escaping set} of $\sigma$. 

By Proposition~\ref{deltoid_schwarz_critical}, $\sigma(\infty)=\infty$ and $(D\sigma)(\infty)=0$; i.e., $\infty$ is a super-attracting fixed point of $\sigma$ (see \cite[\S 9]{M1new} for a description of local dynamics of holomorphic germs near super-attracting fixed points, which also applies to the anti-holomorphic case). We denote the basin of attraction of $\infty$ by $A\equiv A(\infty)$. Clearly, $A\subset \widehat\C\setminus\overline{T^\infty}.$

\begin{remark}
Note that the tiling set (where points escape to $T^0$) of $\sigma$ plays the role of the basin of infinity (where points escape to infinity) of a polynomial. Moreover, the non-escaping set (where points never escape to $T^0$) of $\sigma$ is the analogue of the filled Julia set (where points never escape to infinity) of a polynomial.
\end{remark}

\begin{proposition}\label{basin_topology}
$A$ is a simply connected, completely invariant domain.
\end{proposition}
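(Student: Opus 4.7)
The plan is to verify openness, complete invariance, and simple connectedness in turn, with the last being the main content. First I would record that $A \subset \Omega \cup \{\infty\}$: the interior of $T$ is excluded because $\sigma$ is undefined there, and $\partial\Omega$ is excluded because $\sigma$ fixes it pointwise, so no boundary point can tend to $\infty$. Given this containment, openness of $A$ is immediate from continuity of the iterates $\sigma^{\circ n}$ on $\Omega$, and complete invariance $\sigma^{-1}(A) = A = \sigma(A)$ follows because $\sigma$ shifts well-defined orbits by one step and fixes $\infty$.

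The substantive step is simple connectedness, and I plan to obtain it via a Böttcher-type coordinate at $\infty$. From the commutative diagram $\sigma \circ \phi = \phi \circ \widetilde{\iota}$ with $\widetilde{\iota}(w) = 1/\overline{w}$, combined with the expansion $\phi(w) = w + \tfrac{1}{2w^2}$, a direct substitution yields $\sigma(z) \sim \overline{z}^2/2$ as $z \to \infty$, exhibiting $\infty$ as an anti-holomorphic super-attracting fixed point of local degree $2$. The anti-holomorphic analogue of Böttcher's theorem then furnishes a forward-invariant simply connected neighborhood $U_0 \subset A$ of $\infty$ and a conformal map $B \colon U_0 \to \{|w|>r\}$ intertwining $\sigma|_{U_0}$ with $w \mapsto \overline{w}^2$.

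I would then extend $B$ by repeated pullback. Let $U_n$ denote the connected component of $\sigma^{-n}(U_0)$ containing $\infty$. By Proposition~\ref{deltoid_schwarz_critical}, $\infty$ is the only critical point of $\sigma$ in $\Omega$, and it already lies in $U_0$; consequently each restriction $\sigma \colon U_n \to U_{n-1}$ is a proper double cover ramified only at $\infty$, and an induction using Riemann--Hurwitz (giving $\chi(U_n) = 2\chi(U_{n-1}) - 1 = 1$) shows every $U_n$ is a topological disk. The nested union $W := \bigcup_n U_n$ is then simply connected, and because every orbit in $A$ eventually enters $U_0$ while the forward trajectory stays connected within $A$, one obtains $W = A$.

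The delicate point I expect to grind over is verifying that the inductively defined components $U_n$ really do exhaust $A$, i.e., that no ``extra'' component of $\sigma^{-n}(U_0)$ meets $A$ outside of $U_n$. Two routes look viable: either shrink $U_0$ until the local Böttcher model together with $\sigma^{-1}(\infty) = \{\infty\}$ force $\sigma^{-1}(U_0)$ to be connected, with connectedness then propagating up the tower since $U_n$ captures all critical points of $\sigma^{\circ n}$; or, more structurally, extend $B$ directly to a global conformal isomorphism $A \to \hat{\mathbb{C}} \setminus \overline{\mathbb{D}}$, using the absence of critical values in $A \setminus \{\infty\}$ to rule out all obstructions to the pullback. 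The latter has the bonus of supplying a global coordinate on $A$ that will be useful for the subsequent analysis of the limit set $\Gamma$.
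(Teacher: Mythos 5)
Your proposal is essentially the paper's argument: both exhaust $A$ by preimages of a small neighborhood $U_0$ of $\infty$ and apply the Riemann--Hurwitz formula, using that $\infty$ is the only critical point. The paper adds a separate preliminary connectedness step (any component of $A$ other than the one containing $\infty$ would have to contain an iterated preimage of $\infty$, but $\sigma^{-1}(\infty)=\{\infty\}$, a contradiction), and then observes that $A$ is the increasing union of the simply connected domains $\sigma^{-k}(U_0)$.

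The one place you hesitate --- whether the components $U_n$ containing $\infty$ really exhaust $A$ --- closes in a single line, and it is worth recording the cleanest version rather than either of your two more roundabout routes. Since $\sigma^{-1}(\infty)=\{\infty\}$, the only critical point of $\sigma^{\circ n}$ in $\sigma^{-n}(\Omega)$ is $\infty$, and it has full local degree $2^n$ there. Hence the proper degree-$2^n$ branched cover $\sigma^{\circ n}\colon (\sigma^{\circ n})^{-1}(U_0)\to U_0$ has a single branch point whose local degree already accounts for the whole covering degree; this forces $(\sigma^{\circ n})^{-1}(U_0)$ to be connected, i.e.\ $U_n=\sigma^{-n}(U_0)$. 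Then for any $z\in A$ some iterate $\sigma^{\circ n}(z)$ lies in $U_0$, so $z\in\sigma^{-n}(U_0)=U_n$, giving $W=A$; Riemann--Hurwitz now reads $\chi(U_n)=2^n\cdot 1-(2^n-1)=1$. The Böttcher coordinate is fine but not needed for this proposition: any small round disk around the superattracting fixed point $\infty$ is already a forward-invariant simply connected $U_0$. (Your global extension of $B$ is exactly what the paper establishes slightly later, in Proposition~\ref{conjugacy_filled_Julia}, so deriving it here would be proving more than the statement requires.)
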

\begin{proof}
Since $A$ is the basin of attraction of a super-attracting fixed point, it is necessarily open and completely invariant. As $\sigma^{-1}(\infty)=\{\infty\}$ (by Proposition~\ref{deltoid_schwarz_covering}), $A$ is connected.

It remains to prove simple connectivity of $A$. Let $U\subset A$ be a small neighborhood of $\infty$. Clearly, $A$ is the increasing union of the domains $\left\{\sigma^{-k}(U)\right\}_{k\geq0}$. Since $\infty$ is the only critical point of $\sigma$, it follows from the Riemann-Hurwitz formula that each $\sigma^{-k}(U)$ is simply connected. Thus, $A$ is an increasing union of simply connected domains, and hence itself is such.
\end{proof}

\subsubsection{Singular points} We define $S:=\bigcup_{n\geq 0} \sigma^{-n}(T\setminus T^0),$
so $S$ is the collection of the cusp points of all tiles. It is clear, that $S\subset\partial T^\infty$. We also have

\begin{proposition}\label{singular_in_basin_boundary}
$S\subset \partial A$.
\end{proposition}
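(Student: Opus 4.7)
The plan is to first prove that the three cusps $\tfrac{3}{2}\omega^k$ ($k=0,1,2$) of the fundamental tile $T$ lie in $\partial A$, and then to propagate this to all of $S=\bigcup_{n\ge 0}\sigma^{-n}(T\setminus T^0)$ by a standard backward-invariance argument at non-critical points. Throughout, I use that $A$ is open and completely invariant, which is immediate from its definition as the basin of the super-attracting fixed point $\infty$ (Proposition~\ref{basin_topology}).

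By the $\omega$-rotational symmetry of $\sigma$, it suffices to handle the cusp $p=\tfrac{3}{2}$. I claim that the open real ray $R=(\tfrac{3}{2},\infty)$ is entirely contained in $A$. Parametrize $R$ by $r=\phi(w)=w+\tfrac{1}{2w^2}$ with real $w>1$; the commutative diagram of Figure~\ref{comm_diag_schwarz_1} then gives $\sigma(r)=\phi(1/\bar{w})=\phi(1/w)=\tfrac{1}{w}+\tfrac{w^2}{2}$, which is again real and $>\tfrac{3}{2}$, so $R\subset\Omega$ is $\sigma$-invariant. A direct calculation produces the explicit factorization
\[
\sigma(r)-r \;=\; \frac{(w-1)^3(w+1)}{2w^2} \;>\; 0 \qquad (w>1).
\]
Hence the orbit $r_n=\sigma^n(r_0)$ is strictly increasing on $R$, and because this same formula shows $\sigma$ has no fixed point on $R$, the orbit cannot converge to a finite limit and must therefore diverge to $+\infty$. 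Thus $R\subset A$, whence $p\in\overline{A}$; since $p$ is fixed by $\sigma$ and $p\neq\infty$, we have $p\notin A$, so $p\in\partial A$.

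For the propagation, the key observation is: if $z\in\partial A$ and $z'\in\sigma^{-1}(z)\cap(\Omega\setminus\{\infty\})$, then $z'\in\partial A$. Indeed, $\sigma$ is anti-holomorphic and non-critical at $z'$ (recall from Proposition~\ref{deltoid_schwarz_critical} that $\infty$ is its only critical point in $\Omega$), so it admits a local inverse branch $\tau$ with $\tau(z)=z'$; pulling back a sequence $A\ni p_i\to z$ gives $\tau(p_i)\to z'$ with $\tau(p_i)\in A$ by complete invariance, so $z'\in\overline{A}$, while $\sigma(z')=z\notin A$ forces $z'\notin A$. Now let $q\in S$ and choose $n\ge 0$ minimal with $\sigma^n(q)$ a cusp of $T$; the case $n=0$ is the first step. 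For $n\ge 1$ and each $0\le k<n$, the point $\sigma^k(q)$ must lie in $\Omega\setminus\{\infty\}$: if it were on $\partial\Omega$ then, since $\sigma$ fixes $\partial\Omega$ pointwise, $\sigma^k(q)$ would itself be a cusp of $T$, contradicting the minimality of $n$; and it cannot equal $\infty$, since $\infty\in A$ has no forward iterate landing on a cusp. Applying the observation successively along the finite orbit $\sigma^n(q),\sigma^{n-1}(q),\ldots,q$ then yields $q\in\partial A$.

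The main technical content is the orbit-escape argument on the invariant ray $R$ in the first step; the explicit factorization $(w-1)^3(w+1)$ in the numerator makes both the monotonicity $\sigma(r)>r$ and the absence of any finite fixed point on $R$ immediate, and without this one would have to rule out the existence of a neutral attractor sitting above $\tfrac{3}{2}$ by hand. Everything else is a soft consequence of the openness and complete invariance of $A$ together with the local anti-holomorphic structure of $\sigma$ on $\Omega\setminus\{\infty\}$.
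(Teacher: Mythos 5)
Your proof is correct and follows essentially the same route as the paper: establish $\tfrac32\in\partial A$ via the invariant real ray $(\tfrac32,\infty)\subset A$ using the monotonicity $\sigma(r)>r$ (your direct factorization $\sigma(r)-r=\frac{(w-1)^3(w+1)}{2w^2}$ is a slightly cleaner verification than the paper's derivative comparison), and then pull back along the backward orbit. The paper leaves the propagation to all of $S$ implicit (``It is enough to show that $\tfrac32$\ldots is in $\partial A$''), whereas you spell it out carefully, including the ruling-out of $\partial\Omega$ and $\infty$ along the intermediate orbit; this is a welcome expansion, not a different argument.
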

\begin{proof} 
Since $A$ is completely invariant under $\sigma$, so is $\partial A$. Thus, in light of the $2\pi/3$-rotational symmetry of the deltoid, it is enough to show that $\frac32$, a cusp point of $\Omega$, is in $\partial A$. For a real $x>1$ we have
(from $\sigma(\phi(w))=\phi(1/\overline w)$)
\begin{equation}\sigma\left(x+\frac1{2x^2}\right)=\frac1x+\frac{x^2}2>x+\frac1{2x^2}
\label{sigma_increasing}
\end{equation}
because at $x=1$ we have equality  in the last relation, and for the corresponding derivatives we have $-x^{-2}+x>1-x^{-3}$ for $x>1.$
Thus, the forward $\sigma$-orbit of any real $w>\frac32$ must converge to $\infty$ (otherwise, the orbit would converge to a fixed point of $\sigma$ in $(\frac{3}{2},+\infty)$, which contradicts Inequality~(\ref{sigma_increasing})). Hence, $(\frac32,+\infty)\subset A$, and we are done.
\end{proof}

\subsubsection{Main results}

\begin{theorem}\label{deltoid_dynamical_partition} 
We have
$$\partial T^\infty=\partial A=\overline{S},$$
and this set, which we denote by $\Gamma$, is a Jordan curve. Also,
$$\widehat {\mathbb C}=T^\infty\sqcup \Gamma\sqcup A.$$
\end{theorem}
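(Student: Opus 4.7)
The plan is to first establish the set-theoretic identities $\partial T^\infty = \partial A = \overline{S}$ together with the partition $\hat{\mathbb{C}} = T^\infty \sqcup \Gamma \sqcup A$, and then prove the Jordan curve property by uniformizing both $T^\infty$ and $A$ via dynamically-defined conformal maps whose boundary behavior I can control using the tile structure and the hyperbolicity of $\sigma$ near $\partial T^\infty$.

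For the boundary identities, the inclusions $\overline{S} \subset \partial T^\infty$ and $\overline{S} \subset \partial A$ are immediate: every cusp of every tile lies in $\partial T^\infty$ by construction of $S$, and $\overline{S} \subset \partial A$ follows from Proposition~\ref{singular_in_basin_boundary} together with the closedness of $\partial A$. For the reverse inclusion $\partial T^\infty \subset \overline{S}$, I would fix $z \in \partial T^\infty$, choose a sequence of tiles $T_n$ accumulating at $z$, and apply the shrinking lemma to inverse branches of $\sigma^{\circ n}$ on the hyperbolic locus $T^{\mathrm{hyp}}$ (Proposition~\ref{domain_hyperbolicity}). This forces $\mathrm{diam}(T_n) \to 0$, so the cusps of $T_n$ — which lie in $S$ — converge to $z$, giving $z \in \overline{S}$. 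Points near the fixed low-rank cusps of $T$, where hyperbolicity fails, are themselves in $S$ and pose no difficulty.

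For the partition, $T^\infty$ and $A$ are disjoint open sets (forward orbits escape to $T^0$ versus converge to $\infty$). Any point of $\partial A$ lying in $T^\infty$ would contradict openness of $T^\infty$, and any point of $\partial A$ in the interior of $\hat{\mathbb{C}} \setminus T^\infty$ would lie in some Fatou component of the non-escaping set distinct from $A$. But $\sigma$ has a unique critical point, the super-attracting fixed point $\infty$, whose orbit lies entirely in $A$ (Proposition~\ref{deltoid_schwarz_critical}); a standard Fatou-type classification (applied to $\sigma^{\circ 2}$ to rule out rotation domains) then excludes any other invariant Fatou component of the non-escaping set. Hence $\partial A \subset \Gamma$, and $\hat{\mathbb{C}} = T^\infty \sqcup \Gamma \sqcup A$.

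The most substantial step is showing that $\Gamma$ is a Jordan curve. I would construct two dynamical uniformizations: a Riemann map $\psi : T^\infty \to \mathbb{D}$ sending $T^0$ to the ideal triangle $\Pi$ and semi-conjugating $\sigma$ to the reflection map $\rho$ of Section~\ref{ideal_triangle} (obtained by matching the tile structure of $T^\infty$ with the $\mathcal{G}$-tessellation of $\mathbb{D}$), and a B\"ottcher-type map $B : A \to \hat{\mathbb{C}} \setminus \overline{\mathbb{D}}$ defined near $\infty$ and extended to all of $A$ by pullback (there being no other critical point), which conjugates $\sigma$ to $\bar z^2$. Local connectedness of $\Gamma$ would then follow from continuous boundary extensions of $\psi$ and $B$: on the $\psi$ side, the $\psi$-preimage of any $\mathcal{G}$-ray lands at a single point of $\Gamma$ by the same shrinking-of-tiles argument combined with the coding $Q$ of Subsection~\ref{rho_sec}; on the $B$ side, continuity at $\Gamma$ is a standard consequence of hyperbolic expansion on a neighborhood of $\Gamma$. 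Injectivity of the two boundary parametrizations, matched by the circle homeomorphism $\mathcal{E}$ of Subsection~\ref{conjugacy_anti_doubling_rho_sec}, together with simple connectivity of both $T^\infty$ and $A$, forces $\Gamma$ to be a Jordan curve. The main obstacle is this last step, in particular the careful treatment of boundary behavior near the cusp points where the natural conjugacies degenerate, and the verification that distinct $\mathcal{G}$-rays (respectively, distinct $\bar z^2$-rays) pull back to curves landing at distinct points of $\Gamma$.
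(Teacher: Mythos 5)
Your overall outline is close in spirit to the paper's, but the logical center of gravity is misplaced, and the two places you wave through are exactly where the paper has to work hardest.

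The paper's proof isolates a single main lemma, Lemma~\ref{deltoid_lc}: $\partial T^\infty$ is \emph{locally connected}. All of Subsection~\ref{deltoid_local_conn} is devoted to it, with a puzzle-piece construction, a normal-family/Koebe shrinking argument at ``radial'' points (Proposition~\ref{radial_lc}), and a \emph{separate} parabolic estimate near the cusp $\tfrac32$ using Proposition~\ref{dynamics_near_cusp} and Denjoy--Wolff. Once local connectivity is in hand, $\mathrm{clos}(S)=\psi^{\mathrm{in}}(\mathbb T)=\partial T^\infty$ is one line, $\partial T^\infty\subset\partial A$ comes from Proposition~\ref{singular_in_basin_boundary}, $\partial A\subset\partial T^\infty$ follows from a short normal-family argument, and the Jordan-curve statement is then a purely topological consequence of two disjoint simply connected domains sharing a locally connected boundary.

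There are two genuine gaps in your proposal. First, your step ``apply the shrinking lemma on $T^{\mathrm{hyp}}$ to conclude $\mathrm{diam}(T_n)\to0$, hence $\partial T^\infty\subset\overline S$, hence rays land'' does not work as stated: the tiles accumulating at a boundary point are \emph{not} contained in $T^{\mathrm{hyp}}$ (they touch $\partial T^\infty$, and the high-rank tiles sharing a vertex with $\tfrac32$ sit inside the $2\epsilon$-neighborhood of $T$ that $T^{\mathrm{hyp}}$ explicitly excludes). Your remark that points ``near the fixed low-rank cusps~\ldots are themselves in $S$ and pose no difficulty'' dismisses the hard case: it is precisely the tiles clustering at $\tfrac32$ (and at iterated preimages of cusps) whose shrinking cannot be extracted from hyperbolicity; this requires the parabolic local dynamics of Propositions~\ref{dynamics_near_cusp} and~\ref{temporary_near_cusp}. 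The paper's puzzle pieces $P(\Delta_n)$ (built from external rays in $A(\infty)$, not from tiles alone) exist precisely to convert normality of inverse branches into shrinking of connected neighborhoods \emph{in $\partial T^\infty$} without assuming membership in $T^{\mathrm{hyp}}$. Second, the implication ``every $\mathcal G$-ray lands $\Rightarrow$ continuous boundary extension of $\psi$'' is false: radial landing is strictly weaker than local connectivity (comb-type domains give counterexamples), and Caratheodory's theorem requires the latter. What one actually needs is a \emph{uniform} shrinking statement (for every $\epsilon>0$ only finitely many tiles have diameter $\ge\epsilon$), and proving that is in substance the proof of Lemma~\ref{deltoid_lc}, not a shortcut around it.

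Two smaller points. Your claim that continuity of the B\"ottcher-type map $B$ near $\Gamma$ follows from ``hyperbolic expansion on a neighborhood of $\Gamma$'' is wrong: $\sigma$ has $\overline\partial$-derivative $1$ at $\tfrac32$ on the $A$-side (it is parabolic-repelling there, as in the proof of Proposition~\ref{singular_in_basin_boundary}), so there is no uniform expansion near the cusps. The paper sidesteps this entirely by never needing the boundary extension of $B$: once $\partial T^\infty=\partial A$ is locally connected, the Jordan-curve conclusion for two disjoint simply connected complementary domains is a standard topological fact, and there is no need to match $\psi$ and $B$ via $\mathcal E$ or to verify injectivity by hand. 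Finally, your Fatou-classification route to $\partial A\subset\partial T^\infty$ (no-wandering, periodic component classification, critical point count) is correct in outcome but heavier than needed; the paper's argument is a one-line normal-family observation: an open neighborhood $U$ of $a\in\partial A\setminus\partial T^\infty$ lies in $\hat{\mathbb C}\setminus T^\infty$, so all iterates are defined on $U$ and avoid $T$, giving normality, and since $U$ meets $A$, the subsequential limits are identically $\infty$, forcing $U\subset A$.
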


\begin{lemma}\label{deltoid_lc}
$\partial T^\infty$ is locally connected.
\end{lemma}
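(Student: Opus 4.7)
My approach is to verify the hypotheses of Carath\'eodory's continuity theorem by constructing a conjugacy between the tiling of $T^\infty$ and the tessellation of $\D$, and then showing that individual tiles shrink. Once established, this will give a continuous extension of the Riemann map $\D\to T^\infty$ to $\overline{\D}$, forcing $\partial T^\infty$ to be locally connected.

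First I would build a conformal isomorphism $\psi: T^\infty\to \D$ conjugating the Schwarz dynamics to the reflection map, so that $\rho\circ\psi=\psi\circ\sigma$ on $T^\infty\setminus T^0$. Fix a Riemann map $\psi_0: T^0\to \Pi$ sending the three cusps of $T$ to the three ideal vertices of $\Pi$ (matched as prime ends). By Proposition~\ref{deltoid_schwarz_covering} the three components of $\sigma^{-1}(T^0)$ each map univalently onto $T^0$, and the three components $\D_1,\D_2,\D_3$ of $\D\setminus\overline{\Pi}$ likewise map univalently onto $\D$ under $\rho$. Matching components along the shared transition matrix $M$ and pulling $\psi_0$ back inductively produces a conformal bijection $\psi: T^\infty\to \D$ that carries each rank-$n$ tile in $T^\infty$ onto the corresponding tile $T^{i_1,\ldots,i_n}$ of the $\mathcal{G}$-tessellation of $\D$.

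The heart of the argument is to prove the shrinking property
\begin{equation*}
\sup\{\operatorname{diam}(\tau) \,:\, \tau \text{ a tile of } T^\infty \text{ of rank } \geq n\}\longrightarrow 0 \quad \text{as } n\to\infty,
\end{equation*}
with diameter measured in the spherical metric. Outside any fixed neighborhood of the singular set $S$, Proposition~\ref{domain_hyperbolicity} provides uniform expansion $|\bar\partial\sigma|\geq \lambda_0>1$, and the shrinking lemma applied to the well-defined inverse branches of $\sigma^{\circ n}$ on $T^{\mathrm{hyp}}$ forces geometric contraction of any tile that stays in that region. Near a cusp $p\in S$ the hyperbolic estimate degenerates; here I would conjugate $\sigma$ locally via $\phi$ as in Figure~\ref{comm_diag_schwarz_1}, where $p$ corresponds to a critical point of $\phi$ on $\partial\D$. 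After straightening, the first-return dynamics on the tile-chain converging to $p$ becomes a parabolic germ with a $\frac32$-cusp attracting direction, and a Leau--Fatou normal form shows that the successive tiles in the chain have diameters tending to $0$.

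Once the null-chain property is in hand, any nested sequence $T^{i_1}\supset T^{i_1,i_2}\supset\cdots$ of tiles shrinks to a single point of $\partial T^\infty$, and a standard combinatorial argument extends $\psi^{-1}$ continuously across $\partial\D$; by Carath\'eodory's theorem this is equivalent to local connectivity of $\partial T^\infty$. The main obstacle is the parabolic analysis near the cusps: the hyperbolic expansion of Proposition~\ref{domain_hyperbolicity} breaks down at $S$, so a separate petal-style estimate using the explicit $\frac32$-cusp geometry inherited from the critical points of $\phi$ on $\partial\D$ is required to control the infinite chains of tiles accumulating at singular points.
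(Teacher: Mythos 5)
Your strategy --- show uniform shrinking of tiles and then invoke Carath\'eodory's continuity theorem to extend the uniformizing map $\psi^{-1}:\D\to T^\infty$ continuously to $\overline{\D}$ --- is a legitimate and genuinely different route from the one taken in the paper. The paper instead proves local connectivity \emph{pointwise}: at ``radial'' points $x\in\partial T^\infty\setminus S$ it chooses a puzzle piece $P$ around a subsequential limit $\zeta$ of the forward orbit and pulls $\Int{P}\cap\partial T^\infty$ back by a normal family of inverse branches whose derivatives tend to zero (via Koebe distortion and disjointness of pullbacks), while at the cusp $\tfrac32$ it uses shrinking of the sets $\widetilde P_n$ by parabolic/Denjoy--Wolff dynamics. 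Your approach, if completed, gives a stronger conclusion at once (a continuous Carath\'eodory extension), whereas the paper's pointwise argument is more local and avoids having to control \emph{all} rank-$n$ tiles simultaneously.

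That last point is precisely where your sketch has a gap. You verify tile shrinking in two extreme cases --- tiles whose orbit stays in $T^{\mathrm{hyp}}$ (geometric contraction) and the chain of tiles $\Delta_n^\pm$ accumulating on a single cusp (parabolic/petal shrinking) --- but a general rank-$n$ tile has an orbit that enters and leaves neighborhoods of the cusps repeatedly, and can also stay for a long but bounded stretch near an iterated pre-image of a cusp before moving on. Near a repelling cusp the inverse branches contract only at a rate $1-O(|w-\tfrac32|^{1/2})$, which is not bounded away from $1$, so the contraction acquired in hyperbolic excursions does not automatically dominate: one has to set up a stopping-time decomposition of the orbit and show that the hyperbolic excursions contribute a uniformly summable amount of contraction independent of where the parabolic excursions occur. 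This is exactly the bookkeeping that the paper's puzzle-piece construction is designed to sidestep (by only looking at returns to a fixed compact subset away from the cusps, one never has to track a whole orbit through the parabolic region). A secondary point: the local model at a cusp has fractional exponent (Puiseux series $\sigma(\tfrac32+\delta)=\tfrac32+\bar\delta+k\bar\delta^{3/2}+\cdots$), so the Leau--Fatou flower theorem does not apply literally; one must pass to a branched double cover (or argue directly from the slit-domain Puiseux expansion, as the paper does in Proposition~\ref{dynamics_near_cusp}) to get the petal estimates. Neither issue is fatal to your strategy, but as written the ``heart of the argument'' --- uniform tile shrinking --- is asserted rather than proved.
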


We will give a detailed proof of Lemma~\ref{deltoid_lc} in Subsection~\ref{deltoid_local_conn}. We remark that this lemma is crucial and the techniques that will go into the proof will be used elsewhere in the paper too. The derivation of the main theorem from the main lemma is explained below.

\subsubsection{Proof of Theorem~\ref{deltoid_dynamical_partition}} Let $\psi^{\mathrm{in}}:\Pi\to T^0$ be the homeomorphic extension of a conformal isomorphism such that $\psi^{\mathrm{in}}(0)=0$, $\psi^{\mathrm{in}}(1)=\frac32$ (see Section~\ref{ideal_triangle} for the definition of the ideal triangle $\Pi$) .

Since $\sigma$ has no critical point in its tiling set $T^\infty$, the tiles of all rank of $\sigma$ map diffeomorphically onto $T^0$ under iterates of $\sigma$. Similarly, the tiles of the tessellation of $\D$ arising from the ideal triangle group $\mathcal{G}$ map diffeomorphically onto $\Pi$ under iterates of $\rho$. Furthermore, $\sigma$ and $\rho$ act as identity maps on $\partial T^0$ and $\partial \Pi$ respectively. This allows us to lift $\psi^{\mathrm{in}}$ to a conformal isomorphism from $\D$ (which is the union of all iterated preimages of $\Pi$ under $\rho$) onto $T^\infty$ (which is the union of all iterated preimages of $T^0$ under $\sigma$). Note that the trivial actions of $\sigma$ and $\rho$ on $\partial T^0$ and $\partial \Pi$ (respectively) ensure that the iterated lifts match on the boundaries of the tiles. By construction, the conformal map $\psi^{\mathrm{in}}$ conjugates $\rho:\D\setminus\Int{\Pi}\to\D$ to $\sigma:T^\infty\setminus \Int{T^0}\to T^\infty$. 
Since the cusp points of the ideal triangle group are dense in $\mathbb{T}$, we have $\overline{S}=\psi^{\mathrm{in}}(\mathbb{T})=\partial T^\infty,$
where we used local connectivity of $\partial T^\infty$.

By Proposition~\ref{singular_in_basin_boundary}, $S\subset \partial A$. Therefore
$\partial T^\infty=\overline{S}\subset \partial A.$
It remains to prove the opposite inclusion, $\partial A\subset \partial T^\infty.$ (Then we would have $\partial A=\partial T^\infty$ for two disjoint simply connected domains with a locally connected boundary, so the domains are Jordan.)
Let $a\in\partial A\setminus \partial T^\infty$. Then $a\not \in\overline{T^\infty}$, and there is an open set $U$ such that $a\in U\subset\C\setminus T^\infty$. All iterates of $\sigma$ are defined in $U$ and form a normal family (since they avoid $T$). It follows that $a\in A$, a contradiction. $\square$

\begin{remark}
One can also prove the statement that $\widehat{\mathbb C}~=~\overline{T^\infty}~\sqcup~A$ using more general arguments as will be done for Schwarz reflection maps in the circle-and-cardioid family.
\end{remark}

\begin{definition}[The limit set]\label{def_deltoid_limit_set}
The Jordan curve $\Gamma$, which is the common boundary of the tiling set $T^\infty$ and the basin of infinity $A$, is called the \emph{limit set} of $\sigma$.
\end{definition}

\subsection{Local connectivity of the boundary of the tiling set}\label{deltoid_local_conn}

\noindent Here we discuss the proof of Lemma~\ref{deltoid_lc}. We refer the reader to \cite[Expos{\'e} IX]{orsay}, \cite[\S 10]{M1new}, \cite[\S 23.7]{L6} for background on local dynamical properties of holomorphic maps near parabolic fixed points (i.e., fixed points with derivative equal to a root of unity), which will be extensively used in the following proof and elsewhere in the paper.

\subsubsection{Local dynamics near cusp points}\label{local_dyn_near_cusps}
The local dynamics of $\sigma$ near $\frac32$ and the other two cusps of $T$ are reminiscent of dynamics of parabolic germs. For $\epsilon>0$ small enough, let us denote
$$B:=B\left(\frac32,\epsilon\right),\qquad B^-:=B\cap \left\{\re(z)<\frac32\right\},\qquad B^+:=B\setminus B^-.$$
On the domain $B\cap\Omega$, we have the following Puiseux series expansion 
\begin{equation}
\sigma(\frac32+\delta)=\frac32+\overline{\delta}+k\overline{\delta}^{\frac32}+O(\overline{\delta}^2),
\label{puiseux_eqn}
\end{equation}
where $k$ is a positive constant, and the chosen branch of square root sends positive reals to positive reals. The Puiseux series expansion of $\sigma$ shows that $(\frac{3}{2},\frac32+\epsilon)$ is the unique repelling direction of $\sigma$ at $\frac32$ (compare with the proof of Proposition~\ref{singular_in_basin_boundary}). 

Moreover, one can conjugate the Puiseux series expansion given in Equation~\eqref{puiseux_eqn} by a change of coordinate $\kappa: w\mapsto \frac{k_1}{\sqrt{w-\frac32}}$ to obtain an asymptotic expansion of the form $\zeta\mapsto \overline{\zeta}+\frac12+O\left(\frac{1}{\overline{\zeta}}\right)$ on $\kappa(B\cap\Omega)$ (where $k_1<0$, and the branch of square root sends positive reals to positive reals). Note that $\kappa$ sends the unique repelling direction of $\sigma$ to the negative real axis near $\infty$, and the domain $\kappa(B\cap\Omega)$ subtends an angle $\pi$ at $\infty$. It follows from the above asymptotics that for any $\alpha\in(0,\pi/2)$, points with sufficiently large absolute value and lying between the boundary curves $\kappa\left(B\cap\partial\Omega\right)$ and the infinite rays $\kappa\left(\frac32+[0,\epsilon)e^{\pm i\alpha}\right)$ eventually escape to $\kappa(B\setminus\Omega)$. In the original coordinate, this means that sufficiently close to $\frac32$, the iterated preimages of the fundamental tile $T^0$ occupy a circular wedge with angle arbitrary close to $2\pi$. We record these observations below.

\begin{proposition}\label{dynamics_near_cusp}
If $\epsilon$ is sufficiently small, then $B^-\subset T^\infty,$ and $\sigma^{-n} B^+\to \left\{\frac32\right\}.$
\end{proposition}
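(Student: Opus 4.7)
The plan is to exploit the fact that $3/2$ is a degenerate parabolic fixed point of $\sigma$, the leading non-trivial behaviour being of order $3/2$ because of the cusp of $\partial T$. Since $\sigma$ is anti-holomorphic it is cleaner to work with the holomorphic second iterate: a direct computation from the given Puiseux expansion yields
$$
\sigma^{\circ 2}(3/2+\delta)=3/2+\delta+2k\,\delta^{3/2}+O(\delta^2)
$$
on the slit neighbourhood $\{|\delta|<\epsilon\}\setminus(-\infty,0]$ of the origin, the removed slit corresponding to the cusp region $T\cap B$ (which is tangent to the negative real $\delta$-axis). Writing $\delta_n=r_n e^{i\alpha_n}$ with $\alpha_n\in(-\pi,\pi)$, the iteration reads
$$
\tfrac{r_{n+1}}{r_n}=1+2kr_n^{1/2}\cos(\alpha_n/2)+O(r_n),\qquad \alpha_{n+1}-\alpha_n=2kr_n^{1/2}\sin(\alpha_n/2)+O(r_n).
$$
Since $\alpha_n/2\in(-\pi/2,\pi/2)$ the factor $\cos(\alpha_n/2)$ is strictly positive, so the magnitude strictly increases; meanwhile $\sin(\alpha_n/2)$ has the same sign as $\alpha_n$, so $|\alpha_n|$ strictly increases toward $\pm\pi$. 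Orbits are thus pushed outward and simultaneously toward the cusp direction.

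For the first statement, note first that $B^-\cap T\subset T^0\subset T^\infty$ since the only cusp of $T$ in $\overline B$ is $3/2$, which is not in $B^-$. It remains to show that every $\delta_0\in B^-\cap\Omega$ eventually lands in $T$. For such $\delta_0$ one has $|\alpha_0|\in(\pi/2,\pi)$; setting $\tau_n=\pi-|\alpha_n|\in(0,\pi/2)$, the angular recurrence becomes $\tau_{n+1}=\tau_n-2kr_n^{1/2}\cos(\tau_n/2)+O(r_n)$, and the uniform bound $\cos(\tau_n/2)\geq\sqrt 2/2$ forces $\tau_n$ to hit $0$ in some $N=O(r_0^{-1/2})$ steps. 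A telescoping comparison of the $r$- and $\tau$-recurrences (essentially the Riemann sum $2k\sum r_n^{1/2}\sin(\tau_n/2)\approx\int_0^{\tau_0}\tan(\tau/2)\,d\tau=-2\log\cos(\tau_0/2)$) yields $\log(r_n/r_0)\leq-2\log\cos(\tau_0/2)\leq\log 2$ for all $n\leq N$, so $r_n\leq 2r_0$ throughout. Hence for $\epsilon$ small enough the whole orbit stays in the region where the expansion is valid until the angle enters the cusp region, at which moment the orbit has landed in $T^0\subset T^\infty$. Thus $\delta_0\in\sigma^{-2n}(T^0)\subset T^\infty$.

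For the second statement, the same analysis applied to $\delta_0\in B^+\cap\Omega$ (so $\tau_0\in(\pi/2,\pi]$) gives $r_N/r_0\approx 1/\cos^2(\tau_0/2)$, which grows without bound as $\tau_0\to\pi$ (the repelling direction at $3/2$). In particular the orbit must exit $B$ after at most $O\bigl(r_0^{-1/2}\log(\epsilon/r_0)\bigr)$ steps. Running this estimate in reverse: the set of $\delta_0\in B^+$ whose forward $\sigma^{\circ 2}$-orbit remains in $B^+$ for $n$ steps is contained in a disk of radius $O(n^{-2})$ around $3/2$. Combined with the first statement (which eliminates starting points in $B^-$ after finitely many iterates), this shows that the preimages $\sigma^{-n}(B^+)$, restricted to a neighbourhood of $3/2$, shrink to $\{3/2\}$.

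The main technical difficulty is the competition between the parabolically slow angular drift (rate $\sim r^{1/2}\cos(\tau/2)$) and the parabolically slow magnitude growth (rate $\sim r^{1/2}\sin(\tau/2)$): both are of order $r^{1/2}$ and one needs the telescoping identity above to see that $r$ does not grow appreciably during the $O(r_0^{-1/2})$ iterates required for $\tau$ to reach $0$. A more conceptual alternative is to pass to the Puiseux cover $u^2=2\delta/3$, under which $\sigma^{\circ 2}$ lifts to a standard simple parabolic map $F(u)=u+ku^2+O(u^3)$ on the $u$-plane; the two claims then reduce to the classical Leau--Fatou flower theorem once one identifies the cusp region in $\delta$ with the imaginary axis in the $u$-cover.
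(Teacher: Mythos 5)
Your proof is correct in substance and takes exactly the route the paper intends: the paper gives no argument for this proposition at all, asserting the two facts are ``straightforward consequences of the local dynamical description,'' and your proof fills in precisely what the authors leave implicit (the Puiseux expansion of $\sigma^{\circ 2}$, the identification of the single repelling direction along $(\tfrac32,\tfrac32+\epsilon)$, and the parabolic escape estimate). The closing remark about lifting to the double cover $u^2 = 2\delta/3$ and invoking Leau--Fatou is in fact the cleanest way to package both assertions, and I would suggest leading with it rather than with the polar recurrence.

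One point to tighten in the recurrence argument: your telescoping
\[
\log(r_N/r_0)\;\approx\;\sum_{n=0}^{N-1}\tan(\tau_n/2)\,(\tau_n-\tau_{n+1})
\]
is a \emph{left}-endpoint Riemann sum for an increasing integrand over a decreasing partition, so it \emph{over}estimates $\int_{\tau_N}^{\tau_0}\tan(\tau/2)\,d\tau = -2\log\cos(\tau_0/2)$; the inequality $\log(r_N/r_0)\le -2\log\cos(\tau_0/2)$ as written therefore goes the wrong way. This does not break the proof: the overshoot, together with the discarded $O(r_n)$ terms, contributes $\sum_n O(r_n) = O(N r_0) = O(r_0^{1/2})$, which is negligible once $\epsilon$ (hence $r_0$) is small. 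So the honest bound is $r_N \le (2+O(\epsilon^{1/2}))\,r_0$, which still keeps the orbit inside $B$ after shrinking $\epsilon$. You should also allow $\tau_0\in[\pi/2,\pi]$ rather than $(\pi/2,\pi]$ for $B^+$, since $B^+$ is defined as $B\setminus B^-$ and thus contains the vertical diameter; this does not affect the conclusion.

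One more small remark: for the first assertion you must also dispose of $B^-\cap\partial\Omega$ and of the possibility that an \emph{odd} iterate lands in $T^0$ before the even ones do. Both are harmless ($\partial\Omega\subset T$, and any iterate landing in $T^0$ suffices), but worth a sentence so the reduction to the $\sigma^{\circ 2}$-recurrence is watertight.
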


\noindent (Here $\sigma^{-n}$ is the branch in $B^+$ which fixes $\frac32$; convergence is in the Hausdorff topology.)

\begin{proposition}\label{wedge_at_cusp}
For every $\theta\in[0,\pi/2)$, there exists $\epsilon>0$ such that $$W:=U\cup\omega U\cup\omega^2 U\subset T^\infty,\quad \mathrm{where}$$ 
 $$U:=\left\{\frac32+re^{i\theta'}: 0<r<\epsilon,\ \frac{\pi}{2}-\theta<\theta'<\frac{3\pi}{2}+\theta\right\},$$ and $\omega$ is a primitive third root of unity. 
\end{proposition}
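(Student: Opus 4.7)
The plan is to use the parabolic structure of $\sigma^2$ at $\frac{3}{2}$ coming from Subsubsection~\ref{local_dyn_near_cusps} to show that the forward $\sigma$-orbit of any $\zeta_0\in U$ enters the fundamental tile $T^0$ in finitely many iterations. Since this is equivalent to $\zeta_0\in T^\infty$, this gives $U\subset T^\infty$, and the rotated statements follow by the three-fold rotational symmetry of $\sigma$.

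First I would analyze the continuous holomorphic flow approximating $\sigma^2$, namely $\dot\zeta=2k\zeta^{3/2}$ coming from $\sigma^2(\frac{3}{2}+\zeta)=\frac{3}{2}+\zeta+2k\zeta^{3/2}+O(\zeta^2)$ with $k>0$. In polar coordinates $\zeta=re^{i\theta}$ with $\theta\in(-\pi,\pi)$, this decouples as $\dot r=2kr^{3/2}\cos(\theta/2)$ and $\dot\theta=2kr^{1/2}\sin(\theta/2)$, both positive for $\theta\in(0,\pi)$, so $r$ grows while $\theta$ monotonically increases toward $\pi$ (the cuspidal direction of $\partial T$ at $\frac{3}{2}$). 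Integrating $d\theta/dr=\tan(\theta/2)/r$ yields the integral curves $\sin^2(\theta/2)=cr$ with $c=\sin^2(\theta_0/2)/r_0$, which reach $\theta=\pi$ at radius $r^*=r_0/\sin^2(\theta_0/2)\le r_0/\sin^2(\delta/2)$ for $\theta_0\in(\delta,\pi)$. At that moment the orbit lies on the negative real ray from $\frac{3}{2}$, hence strictly inside the deltoid (provided $r^*$ is less than the distance from $\frac{3}{2}$ to the opposite side of $\partial T$).

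Next, to upgrade this continuous picture to the discrete iteration, I would pass to the standard Fatou coordinate $w=-1/(k\zeta^{1/2})$ (using the principal branch of $\zeta^{1/2}$ on the slit neighborhood $\Omega\cap B(\frac{3}{2},\epsilon)$), which conjugates $\sigma^2$ to the near-translation $w\mapsto w+1+O(1/w)$. The Leau--Fatou flower theorem then guarantees that, for $|\zeta_0|$ small enough, the discrete orbit $\{\sigma^{2n}(\zeta_0)\}$ shadows the continuous flow, inheriting the properties $\arg\zeta_n\to\pi$ and $|\zeta_n|\le r^*$.

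Finally, the cuspidal region of the deltoid at $\zeta$-distance $r$ from $\frac{3}{2}$ has angular width of order $r^{1/2}$ about $\theta=\pi$, by the local equation $\im(\zeta)^2=-\frac{2}{27}\re(\zeta)^3$ of $\partial T$. Picking $R$ small enough so that $R/\sin^2(\delta/2)$ is safely inside $T$, the discrete orbit enters $T^0$ at some finite iterate $n$, whence $\zeta_0\in\sigma^{-n}(T^0)\subset T^\infty$. The range $\theta_0\in(\pi,2\pi-\delta)$ is handled identically by the complex-conjugation symmetry of $\sigma$ across the real axis. The hardest part will be the rigorous comparison between the discrete and continuous dynamics near the cusp: I must rule out the possibility that the discrete orbit ``overshoots'' the cuspidal region of angular width only $\sim r^{1/2}$, and this requires careful control of the $O(1/w)$ error terms in the Fatou coordinate as the orbit progresses.
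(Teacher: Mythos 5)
The paper does not actually supply a proof of this proposition (it is simply recorded for future use), so your proposal can only be compared against what a correct argument should contain.

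Your overall strategy — exploit the parabolic Puiseux structure $\sigma^2(\tfrac32+\zeta)=\tfrac32+\zeta+2k\zeta^{3/2}+O(\zeta^2)$ and show that the $\sigma^2$-orbit of a point of $U$ sweeps toward the cuspidal direction and into $T^0$ — is sound, and you correctly compute the invariant curves of the associated flow. But the difficulty you flag at the end is not a technical error-term issue; it is a genuine gap, and it cannot be closed by ``careful control of $O(1/w)$'' alone. Your own numbers already show why: the $\sigma^2$-step displaces $\im\zeta$ by about $2k\,r^{3/2}$ near $\theta=\pi$, while the cuspidal sliver $T^0$ has half-width $\sqrt{2/27}\,r^{3/2}$ in $\im\zeta$ at that radius. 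Since $k=(2/3)^{3/2}$, one has $2k=4\sqrt{2/27}$, so the step is roughly four times the half-width. At that generic level the orbit could perfectly well jump straight across the cusp; the Leau--Fatou shadowing by the continuous flow does not rule this out, because the discrete and continuous orbits only agree up to a fixed hyperbolic distance, which here is larger than the cusp's transverse extent.

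The missing ingredient is structural, not analytic: $\sigma$ restricts to the identity on $\partial T$, and hence, away from the cusp point, $\sigma$ is the local Schwarz reflection across $\partial T$. Equating $\sigma(\zeta)=\zeta$ with the Puiseux expansion forces $\partial T$ near $\tfrac32$ to satisfy $\im\zeta=\pm\tfrac{k}{2}|\re\zeta|^{3/2}\bigl(1+o(1)\bigr)$, i.e.\ your constant $\sqrt{2/27}$ is not independent data but is \emph{exactly} $k/2$. Working with the single anti-holomorphic iterate $\sigma$ rather than $\sigma^2$, one finds that near $\theta=\pi$, $\sigma$ acts (to leading order) by $\beta\mapsto \mathrm{sgn}(\beta)\,k r^{3/2}-\beta$ where $\beta=\im\zeta$, $r\approx|\re\zeta|$ — that is, $\sigma$ is reflection of $\beta$ about the nearer cusp boundary $\pm\tfrac{k}{2}r^{3/2}$. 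Setting $\gamma_n:=|\beta_n|-\tfrac{k}{2}r_n^{3/2}$ one checks: if $\gamma_n\ge \tfrac{k}{2}r_n^{3/2}$ then $\gamma_{n+1}=\gamma_n-\tfrac{k}{2}(r_n^{3/2}+r_{n+1}^{3/2})$, while if $0<\gamma_n<\tfrac{k}{2}r_n^{3/2}$ then $\gamma_{n+1}=-\gamma_n-\tfrac{k}{2}(r_{n+1}^{3/2}-r_n^{3/2})<0$, i.e.\ $\zeta_{n+1}\in T^0$. Since $\gamma_n$ is strictly decreasing while $r_n$ stays bounded (by your $r^\ast\le R/\sin^2(\delta/2)$ estimate), the orbit must eventually fall into the second case; no skipping is possible. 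Equivalently: the consecutive pullbacks of $T^0$ near $\tfrac32$ — the tiles $T^0, \sigma^{-1}(T^0), \sigma^{-2}(T^0),\dots$ — share boundaries (because $\sigma|_{\partial T}=\mathrm{id}$) and so tile a full horocyclic region with no gaps; the orbit, moving monotonically through the tiling, cannot avoid $T^0$.

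So: your flow analysis and the reduction to a local question near the cuspidal direction are correct, but the argument does not close without the identity $\sigma|_{\partial T}=\mathrm{id}$ forcing the exact match between the Puiseux coefficient and the cusp opening, and the passage from $\sigma^2$-dynamics to $\sigma$-dynamics (where the reflection structure is visible). As written, the proposal is incomplete.
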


The following proposition essentially follows from the observation that locally near the cusps, $\partial T^\infty$ is contained in the ``repelling petals" of the cusp points.

\begin{proposition}\label{temporary_near_cusp}
$\exists \epsilon>0$ such that if an orbit in $\partial T^\infty$ stays $\epsilon$-close to a cusp of $T$, then the orbit lands on this cusp.
\end{proposition}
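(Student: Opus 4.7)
The plan is to exploit the parabolic nature of $\sigma$ at the cusp $c_0:=3/2$. From the Puiseux expansion recorded in Subsection~\ref{local_dyn_near_cusps},
\[
\sigma(c_0+\delta)=c_0+\overline\delta+k\,\overline\delta^{3/2}+O(\overline\delta^2),\qquad k>0,
\]
valid on the slit disk $B(c_0,\epsilon)\setminus(c_0-\epsilon,c_0)$, one obtains the holomorphic parabolic germ
\[
\sigma^{\circ 2}(c_0+\delta)=c_0+\delta+2k\,\delta^{3/2}+O(\delta^2).
\]
Introducing the Fatou coordinate $w=-1/(k\,\delta^{1/2})$ (principal branch of the square root) on the slit disk conjugates $\sigma^{\circ 2}$ to the small perturbation $w\mapsto w+1+O(w^{-1})$ of the unit translation. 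Consequently, any forward $\sigma^{\circ 2}$-orbit that remains in $B(c_0,\epsilon)$ for all times while staying in the slit disk must be identically $c_0$.

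Two geometric facts link this local picture to an orbit $\{z_n\}\subset\Gamma=\partial T^\infty$ staying in $B(c_0,\epsilon)$. First, every smooth arc of $\partial T$ separates $T^0$ from an adjacent rank-$1$ tile and therefore lies in the interior of $T^\infty$; consequently $\Gamma\cap B(c_0,\epsilon)\subset(\Omega\cap B(c_0,\epsilon))\cup\{c_0\}$, so the Puiseux expansion applies along the orbit. Second, the branch cut $(c_0-\epsilon,c_0)$ lies inside the cusp-shaped region of $T^0$ at $c_0$, hence inside $T^\infty$, so an orbit on $\Gamma$ never meets it. Hence the even iterates $z_0,z_2,z_4,\dots$ constitute a forward $\sigma^{\circ 2}$-orbit in the slit disk that is confined to $B(c_0,\epsilon)$; the previous paragraph forces $z_{2n}=c_0$ for every $n$, and then $z_{2n+1}=\sigma(c_0)=c_0$ automatically, so the orbit is identically $c_0$, which is the desired conclusion.

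The principal technical obstacle is that the arcs of $\Gamma$ emerging from $c_0$ are tangent to the branch cut $(c_0-\epsilon,c_0)$, so the naive pointwise radial expansion $2k\rho^{3/2}\cos(\theta/2)$ degenerates along these arcs: the angle $\theta$ there is within $O(\rho^{1/2})$ of $\pm\pi$, which would give only a weak $\rho^2$ pointwise estimate and complicate the bookkeeping of cumulative expansion. The Fatou-coordinate reformulation bypasses this angular analysis by replacing pointwise estimates with a single uniform translation structure valid on the whole slit, which directly rules out any nontrivial bounded forward orbit and hence establishes the proposition.
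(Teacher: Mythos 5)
Your strategy — parabolic escape at the $\frac32$-cusp made precise via a Fatou coordinate for $\sigma^{\circ 2}$ — is the same as the paper's, which invokes the observation that $\Gamma$ lies locally in the repelling petal (Proposition~\ref{dynamics_near_cusp}: $B^-\subset T^\infty$ and $\sigma^{-n}B^+\to\{\frac32\}$). So the core mechanism is right and the conclusion follows. However, two parts of the write-up should be corrected.

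The final paragraph has the local geometry of $\Gamma$ backwards. The arcs of $\Gamma$ emerging from $c_0=\frac32$ are \emph{not} tangent to the branch cut $(c_0-\epsilon,c_0)$: the cusp-shaped spike of $T$ (which contains the slit) lies inside $T^0\subset T^\infty$, the ray $(c_0,\infty)$ lies in $A$ by Proposition~\ref{singular_in_basin_boundary}, and $B^-\subset T^\infty$ by Proposition~\ref{dynamics_near_cusp}. Hence $\Gamma\cap B\subset B^+$, so $\Gamma$ approaches $c_0$ tangentially to the \emph{positive} real axis — exactly the repelling direction of the parabolic germ. Along $\Gamma$ one therefore has $\theta$ near $0$, so $\cos(\theta/2)$ is near $1$, and the "degenerate radial expansion" obstacle you describe does not exist; the naive radial estimate works there. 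This is worth fixing because, as written, it suggests the proof needed the Fatou coordinate to circumvent a difficulty that is in fact absent.

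A second, smaller imprecision: the translation structure $w\mapsto w+1+O(w^{-1})$ should not be claimed "on the whole slit disk." The doubly-composed Puiseux expansion $\sigma^{\circ 2}(c_0+\delta)=c_0+\delta+2k\delta^{3/2}+O(\delta^2)$ is obtained by applying the one-step expansion twice, and when $|\arg\delta\mp\pi|\lesssim|\delta|^{1/2}$ the intermediate iterate can cross the slit and the branch bookkeeping fails; moreover $\sigma^{\circ 2}$ is not even defined as dynamics on the spike of $T$. What rescues the argument is precisely that the orbit lies in $\Gamma\cap B\subset B^+\cap\Omega$ (your own Fact~1), where $\sigma^{\circ 2}$ is the genuine Schwarz dynamics and all branch choices are unambiguous. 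Stated on that region — equivalently on $\Omega\cap\sigma^{-1}(\Omega)\cap B$ — the Fatou-coordinate escape argument is correct and yields the proposition.
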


\subsubsection{Puzzle pieces} Let us denote the Green function of $A$ with pole at infinity by $G$ ($G\equiv0$ on $A^c$). On $A$, the Green function $G$ can be explicitly written as
$$
G(z)= \lim_{n\to+\infty} 2^{-n} \log \vert\sigma^{\circ n}(z)\vert
$$ 
(cf. \cite[\S 9]{M1new}). It follows from the above formula of the Green function that for any $\rho>0$, $\sigma$ maps the equipotential (or level curve) $\{G=\rho\}$ to the equipotential $\{G=2\rho\}$. 

Let $\Delta_n$ be a tile of rank $n\ge1$. It is a ``triangle"; one of its sides is a side of a tile of rank $n-1$; we will call it (the side) the \emph{base} of $\Delta_n$. The vertices of the base are iterated preimages of the cusps of $T$. Since $\frac32$ is accessible from $\infty$ (see the proof of Proposition~\ref{singular_in_basin_boundary}), it follows that the vertices of the base of $\Delta_n$ are landing points of two external rays in $A(\infty)$. We denote by $P(\Delta_n)$ the closed region bounded by the base of $\Delta_n$, the two rays, and the equipotential $\{G=\frac{1}{2^n}\}$ (so $\Delta_n\subset P(\Delta_n)$), and call it a \emph{puzzle piece} of rank $n$.

\begin{remark}
A similar construction of puzzles in the context of polynomials with a parabolic fixed point can be found in \cite{Roe1,PR1}.
\end{remark}

The following propositions are immediate from the previous construction (see Figure~\ref{deltoid_reflection_pic} (right)), and are first indications of the usefulness of puzzle pieces.

\begin{proposition}\label{puzzle_connected}
For each $n\geq1$, the sets $\partial T^\infty\cap P(\Delta_n)$ are connected.
\end{proposition}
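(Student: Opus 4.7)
My plan is to show that $\partial T^\infty \cap P(\Delta_n) = \Gamma \cap P(\Delta_n)$ is a closed sub-arc of the Jordan curve $\Gamma$, and therefore connected.

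First, I would describe $\partial P(\Delta_n)$ as a Jordan curve and determine precisely how it meets $\Gamma$. By construction it is the concatenation of four pieces: the base of $\Delta_n$, two segments of external rays in $A$, and a sub-arc of the equipotential $\{G=1/2^n\}$. Since the base is the side shared between $\Delta_n$ and its parent tile of rank $n-1$, it is contained in the open set $T^\infty$ except at its two endpoints, which are the cusps $c_1, c_2 \in \Gamma$. The two ray segments and the equipotential arc lie in $A = \{G>0\}$, whereas $\Gamma \subset \{G=0\}$; hence the only possible intersections of these three pieces with $\Gamma$ are the landing points $c_1, c_2$ of the two rays. Altogether, $\partial P(\Delta_n) \cap \Gamma = \{c_1, c_2\}$.

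Next, applying the Jordan curve theorem to $\partial P(\Delta_n)$, the set $\Gamma \setminus \{c_1, c_2\}$ splits into two open arcs, each of which is connected and disjoint from $\partial P(\Delta_n)$, and hence is contained in exactly one of the two components of $\hat{\mathbb C}\setminus\partial P(\Delta_n)$, namely $\Int P(\Delta_n)$ or its complementary region.

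I would then argue that exactly one of the two arcs lies in $\Int P(\Delta_n)$. The tile $\Delta_n$ has a third cusp vertex $c_3$ (the one opposite the base); since $\Delta_n \subset P(\Delta_n)$ and $P(\Delta_n)$ is closed, one has $\overline{\Delta_n} \subset P(\Delta_n)$, so $c_3 \in \Gamma \cap P(\Delta_n)$, forcing the open arc containing $c_3$ to lie in $\Int P(\Delta_n)$. On the other hand, not both arcs can lie in $P(\Delta_n)$: otherwise $\Gamma \subset P(\Delta_n)$, which by Theorem~\ref{deltoid_dynamical_partition} would force either $T^\infty$ or $A$ to be entirely contained in $P(\Delta_n)$. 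But the parent tile of $\Delta_n$, which sits in $T^\infty$ on the opposite side of the base from $\Delta_n$, lies outside $P(\Delta_n)$, and $\infty \in A$ is separated from $P(\Delta_n)$ by the equipotential $\{G=1/2^n\}$, giving a contradiction.

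Combining these observations, $\Gamma \cap P(\Delta_n)$ equals the closed sub-arc of $\Gamma$ joining $c_1$ and $c_2$ and containing $c_3$, which is connected. Since $\partial T^\infty = \Gamma$, this is the desired conclusion. The only mildly delicate point is the first step, namely the verification that the base (off its cusp endpoints) lies in the interior of $T^\infty$ and that the rays and equipotential meet $\Gamma$ solely at the two landing cusps; once this is granted, everything else is elementary planar topology.
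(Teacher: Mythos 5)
Your proof is topologically correct, but it rests on Theorem~\ref{deltoid_dynamical_partition} --- specifically, that $\Gamma=\partial T^\infty=\partial A$ is a Jordan curve and that $\hat{\C}=T^\infty\sqcup\Gamma\sqcup A$. This is machinery the paper has not yet earned at the point where Proposition~\ref{puzzle_connected} appears: Theorem~\ref{deltoid_dynamical_partition} is derived from Lemma~\ref{deltoid_lc} (local connectivity), and the present proposition is stated in Subsection~\ref{deltoid_local_conn}, precisely the part of the text that builds the puzzle machinery \emph{in order to} prove that lemma. As it happens, only the companion Proposition~\ref{puzzles_separate_impression} is invoked in the proof of Proposition~\ref{radial_lc}, so your argument is not literally circular --- but it is in tension with the paper's framing of both propositions as ``immediate from the previous construction,'' and a reader would reasonably worry about the logic the first time through.

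The intended elementary argument avoids the Jordan-curve conclusion entirely and uses only Proposition~\ref{tiling_set_topology} (that $T^\infty$ is open, connected, and simply connected). The base $\beta$ of $\Delta_n$ is a crosscut of $T^\infty$, so $T^\infty\setminus\beta$ has exactly two components; as you correctly observed, the rays and the equipotential arc are disjoint from $\overline{T^\infty}$ while $\beta^\circ\subset T^\infty$, so $T^\infty\cap\Int P(\Delta_n)$ is precisely the component $U$ on the $\Delta_n$ side. Then $\partial T^\infty\cap P(\Delta_n)$ is exactly $\partial U\cap\partial T^\infty$. Transporting via the Riemann map $\psi^{\mathrm{in}}:\D\to T^\infty$, this set is the cluster set of $\psi^{\mathrm{in}}$ along the closed sub-arc of $\mathbb{T}$ cut off by $(\psi^{\mathrm{in}})^{-1}(\beta)$, which is a nested intersection of continua and hence connected. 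This route is genuinely ``immediate'' and available before Lemma~\ref{deltoid_lc} is established. Your proof gets the right answer, but you should be explicit that it is a posteriori (assuming the main theorem), or else replace the appeal to $\Gamma$ being a Jordan curve with the crosscut/cluster-set argument above.
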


\begin{proposition}\label{puzzles_separate_impression}
The puzzle pieces separate the impressions of the internal rays of $T^\infty$ (images of $\mathcal{G}$-rays under $\psi^{\mathrm{in}}$) landing at points of $S$ from each other.
\end{proposition}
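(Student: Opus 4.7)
The plan is to transport the problem to the $\mathcal{G}$-tessellation of $\D$ via the Riemann map $\psi^{\mathrm{in}}:\D\to T^\infty$. By its construction through repeated Schwarz reflection starting from a conformal equivalence $\Pi\to T^0$, this map restricts to a bijection between tiles of rank $n$ in the $\mathcal{G}$-tessellation and tiles of rank $n$ in $T^\infty$, preserving the tree of adjacencies (parents and children) and carrying parabolic fixed points of $\mathcal{G}$ on $\partial\D$ bijectively onto $S$. Consequently, internal rays of $T^\infty$ at angle $\theta$ pull back to radial segments in $\D$ at the same angle.

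Let $R_1,R_2$ be two distinct internal rays of $T^\infty$ landing at points $s_1,s_2\in S$, corresponding to distinct angles $\theta_1\neq\theta_2$ in $\mathbb{T}$. The hyperbolic geodesic in $\D$ joining $e^{2\pi i\theta_1}$ to $e^{2\pi i\theta_2}$ passes through infinitely many tiles of the $\mathcal{G}$-tessellation and hence crosses some edge $e$. By density of the parabolic fixed points of $\mathcal{G}$ on $\partial\D$, we may choose such an $e$ whose ideal endpoints $p_1,p_2\in\partial\D$ are distinct from $e^{2\pi i\theta_1}$ and $e^{2\pi i\theta_2}$; in particular the chord $[p_1,p_2]$ strictly separates $\theta_1$ from $\theta_2$ on $\mathbb{T}$.

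Every edge of the $\mathcal{G}$-tessellation is the base of a unique tile of rank $n\geq 1$, so $e$ corresponds via $\psi^{\mathrm{in}}$ to the base of some tile $\Delta_n$ of rank $n$ in $T^\infty$, with endpoints $s_1',s_2'\in S\setminus\{s_1,s_2\}$. The puzzle piece $P(\Delta_n)$ is a closed Jordan disk in $\hat{\mathbb C}$ bounded by this base, two external rays of $A(\infty)$ landing at $s_1',s_2'$, and an arc of the equipotential $\{G=1/2^n\}$; its boundary meets $\partial T^\infty$ only at $s_1',s_2'$. Past a compact set, the radial segment at $\theta_1$ in $\D$ lies in the component of $\D\setminus e$ whose boundary arc in $\mathbb{T}$ contains $e^{2\pi i\theta_1}$; transporting via $\psi^{\mathrm{in}}$, the tail of $R_1$ lies in the corresponding cap of $T^\infty$ bounded by the base of $\Delta_n$ and by the arc of $\partial T^\infty$ enclosed in $P(\Delta_n)$. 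Hence the impression of $R_1$ lies in $P(\Delta_n)$, and the symmetric argument places the impression of $R_2$ in $\hat{\mathbb C}\setminus\Int{P(\Delta_n)}$; combined with $s_1,s_2\notin\{s_1',s_2'\}$, this exhibits the desired separation.

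The main technical obstacle is the faithful translation between the $\mathcal{G}$-tessellation of $\D$ and the tile decomposition of $T^\infty$: specifically, the claims that $\psi^{\mathrm{in}}$ sends the edge $e$ precisely onto the base of $\Delta_n$, and that the two components of $\D\setminus e$ are mapped onto the two corresponding caps of $T^\infty$ across this base. Both assertions follow from the fact that $\psi^{\mathrm{in}}$ is constructed by iterated reflection across tile sides, and therefore preserves the entire combinatorial and topological structure of the tile tree.
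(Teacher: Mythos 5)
Your plan --- transport to the $\mathcal G$-tessellation via $\psi^{\mathrm{in}}$, find a separating tessellation edge $e$, identify it with the base of a tile $\Delta_n$, and use the crosscut $\partial P(\Delta_n)$ to separate prime-end impressions --- is the natural way to make the proposition precise, and the edge-to-base translation you perform is correct. For two points of $S$ corresponding to \emph{non-adjacent} parabolic points of $\partial\D$ the argument is essentially sound.

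The gap lies in the step you take for granted: that the geodesic from $e^{2\pi i\theta_1}$ to $e^{2\pi i\theta_2}$ crosses an edge whose ideal endpoints avoid $\{e^{2\pi i\theta_1},e^{2\pi i\theta_2}\}$. This fails exactly when the two points are \emph{adjacent}, i.e.\ when $e^{2\pi i\theta_1}$ and $e^{2\pi i\theta_2}$ are the two ideal endpoints of a single tessellation edge $e_0$. In that case the geodesic joining them is $e_0$ itself, which crosses no other edge, and no edge of the tessellation can strictly separate $\theta_1$ from $\theta_2$ on $\mathbb T$: such an edge would have to cross $e_0$ in $\D$, but distinct edges of the $\mathcal G$-tessellation are disjoint there. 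So no puzzle piece $P(\Delta_n)$ of the form you are seeking exists. This is not a peripheral corner case: $0,\tfrac13,\tfrac23$ are the three ideal vertices of the fundamental tile $\Pi$, hence pairwise adjacent, and these are exactly the angles for which the proposition is invoked in the proof of Proposition~\ref{radial_lc}; the nontrivial $\rho$-preimage $\tfrac12$ of $0$ is again adjacent to $\tfrac13$ and $\tfrac23$, being the third ideal vertex of the tile $T^2$. To handle adjacent cusps one must replace the single separating edge by the nested ``fan'' crosscuts around one of the two cusps --- the two rank-$n$ tiles $\Delta_n^{\pm}$ sharing that cusp as a common vertex and the regions $\widetilde P_n=\Int\bigl(P(\Delta_n^+)\cup P(\Delta_n^-)\bigr)$ --- together with Proposition~\ref{dynamics_near_cusp} (and Denjoy--Wolff) to see that these shrink to the cusp, which localizes its impression away from any other. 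As written, your proof covers only the non-adjacent case.
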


\begin{remark}
The internal rays of $T^\infty$ play the role of external dynamical rays of a polynomial with connected Julia set.
\end{remark}

\subsubsection{Local connectivity at ``radial" points.}

\begin{proposition}\label{radial_lc}
If $x\in \partial T^\infty\setminus S$, then $\partial T^\infty$ is locally connected at $x$.
\end{proposition}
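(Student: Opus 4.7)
I would deduce local connectivity from a Yoccoz-style puzzle argument adapted to the parabolic cusp structure of $\sigma$. For each $n\ge 1$, let $Q_n(x)$ denote the union of the closed rank-$n$ puzzle pieces $P(\Delta_n)$ whose closure contains $x$. By Proposition~\ref{puzzle_connected} each $P(\Delta_n)\cap\partial T^\infty$ is connected, and since the pieces contributing to $Q_n(x)$ share the common point $x$, the intersection $Q_n(x)\cap\partial T^\infty$ is connected. Moreover, because a rank-$(n+1)$ puzzle piece is a component of $\sigma^{-1}$ applied to a rank-$n$ puzzle piece (the equipotentials and external rays pull back consistently, and tile bases pull back to tile bases of one higher rank), the sets $Q_n(x)$ form a decreasing sequence of neighborhoods of $x$ in $\hat\C$. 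Thus, once I establish that $\operatorname{diam}(Q_n(x))\to 0$, the traces $Q_n(x)\cap\partial T^\infty$ will furnish a connected neighborhood basis at $x$, proving local connectivity at $x$.

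The rank-$n$ puzzle piece containing $x$ is the component of $\sigma^{-n}(P_1)$ containing $x$, where $P_1$ is the rank-$1$ puzzle piece at $x_n:=\sigma^{\circ n}(x)$. Since there are only finitely many rank-$1$ puzzle pieces, they have uniformly bounded diameter, and the question reduces to showing that the univalent inverse branches of $\sigma^{\circ n}$ carrying $x_n\mapsto x$ contract. I would split into two cases depending on whether the forward orbit of $x$ accumulates at the cusps $\frac32,\frac32\omega,\frac32\omega^2$ of $T$. If there exists $\epsilon>0$ and a subsequence of times along which $x_k$ stays $\epsilon$-far from every cusp, then at those times $x_k$ lies in the hyperbolic region $T^{\mathrm{hyp}}$ of Proposition~\ref{domain_hyperbolicity}, where $|\overline\partial\sigma|>\lambda_0>1$, and the classical shrinking lemma (Koebe distortion applied to a bounded-size pullback) yields exponential decay of $\operatorname{diam}(Q_n(x))$.

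The delicate case is when the orbit returns to arbitrarily small neighborhoods of some cusp, say $\frac32$. The decisive input here is Proposition~\ref{temporary_near_cusp}: since $x\notin S$, the orbit of $x$ never lands on $\frac32$, hence cannot remain $\epsilon$-close to $\frac32$ forever, so each excursion into $B(\frac32,\epsilon)$ is of finite length. Inside such an excursion I would use the Puiseux expansion $\sigma(\frac32+\delta)=\frac32+\overline\delta+k\overline\delta^{3/2}+O(\overline\delta^2)$ from Subsection~\ref{local_dyn_near_cusps}: a Fatou coordinate change converts $\sigma$ into a near-translation and shows that $\partial T^\infty$ lies in the repelling petal at $\frac32$, so the derivative of $\sigma^{\circ m}$ across an excursion of length $m$ grows polynomially in $m$, while the intermediate hyperbolic stretches contribute genuine exponential growth. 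Multiplying these contributions along the pullback chain of length $n$ still forces $\operatorname{diam}(Q_n(x))\to 0$. The main technical obstacle is this bookkeeping between parabolic and hyperbolic estimates along a single orbit; this is a standard semi-parabolic puzzle computation in the spirit of Roesch \cite{Roe1} and Petersen--Roesch \cite{PR1}, and transfers to our anti-holomorphic setting because all three cusps of $T$ are fixed by $\sigma$ with identical leading Puiseux behavior.
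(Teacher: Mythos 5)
Your first case (a subsequence of $\{x_n\}$ staying $\epsilon$-far from every cusp) is the correct idea, and in fact it is the \emph{only} case you need: since the cusps $\tfrac32,\tfrac32\omega,\tfrac32\omega^2$ are fixed points of $\sigma$, continuity of $\sigma$ near them shows that if $\mathrm{dist}(x_n,T)\to 0$ then the orbit converges to a single cusp, and Proposition~\ref{temporary_near_cusp} then forces the orbit to land on that cusp, i.e.\ $x\in S$. So for $x\notin S$ one always has a subsequence at positive distance from $T$. Your ``delicate case'' is thus a phantom; you quote Proposition~\ref{temporary_near_cusp} only to bound excursion lengths, when its real content is to eliminate the case outright.

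Two further issues. First, even in your Case~1 the appeal to ``hyperbolicity plus Koebe gives exponential decay of $\mathrm{diam}(Q_n(x))$'' overstates what is available: the pullback chain from $x_{n_k}$ back to $x$ passes through the cuspidal excursions, where $|\overline\partial\sigma|$ is not bounded below by $\lambda_0>1$, so you do not get a uniform contraction factor per step. The paper sidesteps this by a normal-family argument: once you fix a puzzle piece $P$ (of high rank, compactly contained in a rank-one piece $P_1$) around a cluster point $\zeta$ of the good subsequence, the inverse branches $\sigma^{-n}:\Int{P_1}\to\C$ sending $x_n\mapsto x$ form a normal family, and disjointness of the pullbacks of a small disc in the rank-one tile (they sit in tiles of different ranks, so areas sum and diameters $\to 0$) forces $(\sigma^{-n})'\to 0$ along a subsequence. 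Decay to zero along a subsequence suffices for local connectivity; exponential decay along the full orbit is neither needed nor true. Second, even setting aside that Case~2 never arises, the parabolic bookkeeping you propose is not ``standard'': the germ at $\tfrac32$ has Puiseux expansion $\overline\delta+k\overline\delta^{3/2}+O(\overline\delta^2)$ on a slit disc, which is a $\tfrac32$-cusp, not a classical parabolic fixed point, so the Fatou-coordinate machinery of \cite{Roe1,PR1} would require nontrivial adaptation before it could justify the claimed polynomial derivative growth. Finally, your claim that the sets $Q_n(x)$ (unions of rank-$n$ puzzle pieces adjacent to $x$) form a nested neighborhood basis is not immediate from the puzzle construction and is quietly avoided in the paper, which instead pulls back a single fixed piece $P\subset P_1$ by the inverse branches along the orbit.
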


\begin{proof}
Note first that if $\partial T^\infty$ is locally connected at $x$, then $\partial T^\infty$ is locally connected at $\sigma (x)$ and at every preimage of $x$.

Consider the orbit $x_n=\sigma^{\circ n}(x)$ of some $x\in \partial T^\infty\setminus S$. If dist$(x_n, T)\to 0$, then dist$(x_n, T\setminus T^0)\to 0$, and therefore the orbit converges to one of the cusps of $T$ (by continuity of $\sigma$). By Proposition~\ref{temporary_near_cusp}, this would imply that $x\in S$. But this contradicts our choice of $x$.

Thus there is a subsequence of $\{x_n\}$ at a positive distance from $T$. Let
$\zeta$ be a cluster point of this subsequence, so $\zeta\in \partial T^\infty$ is not a cusp of $T$. Thanks to Proposition~\ref{puzzles_separate_impression}, we can assume that $\zeta$ does not lie in the impression of the rays at angles $0, \frac13$, and $\frac23$ (possibly after replacing $\zeta$ by one of its iterated preimages, which is also a subsequential limit of $\{x_n\}$).

The above assumption on $\zeta$ allows us to choose a puzzle piece $P$ of sufficiently high rank such that
 $\zeta\in\Int{P}\subset P\subset\Int{P_1},$ where $P_1$ is a rank one puzzle piece. By construction, we have $x_n\in\Int{P}$ for infinitely many $n$'s. We now proceed to show that suitably chosen iterated preimages of $\Int{P}$ produce a basis of open, connected neighborhoods of $x$ in $\partial T^\infty$.

In order to achieve this, we use Proposition~\ref{domain_hyperbolicity} to define (for each $n$ with $x_n\in\Int{P}$) the inverse branches
 $\sigma^{-n}: \Int{P_1}\to \mathbb C, \ x_n\mapsto x.$ These inverse branches form a normal family on $\Int{P_1}$. We claim that $(\sigma^{-n})' \to 0 $ (along a subsequence) locally uniformly on $\Int{P_1}$, so uniformly on $P\cap \partial T^\infty$. Indeed, we have $\sigma^{-n_k}\to g$ on $\Int{P_1}$, and we need to show that $g'=0$ on some open set $V\subset P_1$. For $V$ we can take any small disc inside the rank one tile contained in $P_1$. Note that the preimages $V_k:=\sigma^{-n_k} V$ are disjoint (they sit inside tiles of different rank), and that by Koebe distortion,
 $${\rm area}(V_k)\asymp [{\rm diam}(V_k)]^2,$$
 so diam$(V_k)\to 0.$ This proves the claim.
 
 Thus, ${\rm diam}[ \sigma^{-n}(\Int{P}\cap\partial T^\infty)] \to 0$ along a subsequence. The sets $\sigma^{-n}(\Int{P}\cap\partial T^\infty)$ are open connected neighborhoods of $x$ in $\partial T^\infty$, and we have local connectivity at $x$.
\end{proof}

\subsubsection{Local connectivity at the cusp point $\frac32$}
To finish the proof of Lemma~\ref{deltoid_lc} it remains to check local connectivity at the point $\frac32$. Let $\Delta_n^\pm$ (for $n\geq 1$) be the two tiles of rank $n$ which have $\frac32$ as a common vertex. Let
$$\widetilde{P}_n:=\Int{(P(\Delta_n^+)\cup P(\Delta_n^-))}. $$
Note that for $n\geq 2$, the map $\sigma:\widetilde{P}_n\to \widetilde{P}_{n-1}$
is a bijection. In what follows, we will work with the corresponding inverse branch of $\sigma$ defined on $\widetilde{P}_1$.

We have $\sigma^{-1}(\widetilde{P}_1)\subset \widetilde{P}_1$, and $\frac32$ is the only common boundary point. The sets $(\partial T^\infty\cap\widetilde{P}_n)\cup\lbrace\frac32\rbrace$ are open (in $\partial T^\infty$) and connected. Moreover, their diameters go to zero by Proposition~\ref{dynamics_near_cusp} (and Denjoy-Wolff). Hence, they form a basis of open, connected neighborhoods of $\frac32$ (in the relative topology of $\partial T^\infty$).

This completes the proof of Lemma~\ref{deltoid_lc}.

\subsection{Conformal removability of the limit set}\label{conf_remo_sec}
By Lemma~\ref{deltoid_lc}, the Riemann uniformization $\psi^{\mathrm{in}}:\mathbb D \to T^\infty$ (constructed in Theorem~\ref{deltoid_dynamical_partition}) is continuous up to the boundary. We now prove a stronger version of this result.

\begin{definition}[John domain]
A domain $D\subset\C$ is called a John domain if there exists $c>0$ such that for any $z_0\in D$, there exists an arc $\gamma$ joining $z_0$ to some fixed reference point $w_0\in D$ satisfying 
\begin{equation}
\delta(z)\geq c\vert z-z_0\vert,\quad z\in \gamma,
\label{John_condition}
\end{equation}
where $\delta(z)$ stands for the Euclidean distance between $z$ and $\partial D$.
\end{definition}

Intuitively, Condition~(\ref{John_condition}) means that the arc $\gamma$ is ``protected" from the boundary of the domain $D$.

For us, the importance of John domains stems from the fact that boundaries of John domains are conformally removable  \cite[Corollary 1]{Jones} implying uniqueness in the mating theory (see Subsection~\ref{what_is_mating}). Moreover, the Riemann uniformization of a John domain extends as a H\"{o}lder continuous map to $\partial\D$. 

Let us now state a condition that is equivalent to the John condition for a simply connected domain $D$ (see \cite[p. 11]{CJY}, also compare \cite[Theorem, p. 263]{Gama} for equivalent formulations). For $z\in D$, we denote by $\gamma^z$ the part of the hyperbolic geodesic of $D$ passing through $z$ and a fixed base-point $w_0$ that runs from $z$ to $\partial D$. A simply connected domain $D$ is a John domain if and only if there exists $M>0$ such that for all $z\in D$, 
\begin{equation}
w\in\gamma^z,\ d_D(z,w)\geq M\quad \implies\quad \delta(w)\leq \frac12\delta(z),
\label{John_condition_equiv}
\end{equation}
where $d_D$ is the hyperbolic distance in $D$.

\begin{theorem}\label{tiling_set_John}
$T^\infty$ is a John domain. 
\end{theorem}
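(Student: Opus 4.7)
The plan is to verify the equivalent formulation~(\ref{John_condition_equiv}) with base-point $w_0 = 0 \in T^0$. The two key tools are the uniform hyperbolic expansion of $\sigma$ on $T^{\mathrm{hyp}}$ (Proposition~\ref{domain_hyperbolicity}) and the wedge structure at cusps (Proposition~\ref{wedge_at_cusp}), together with the Koebe distortion theorem; the first controls pullback through the bulk of the tessellation, while the second handles the parabolic points where pullback degenerates.

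First I would establish uniform bounded shape for tiles that are bounded away from the cusp set. Fix a small $\epsilon > 0$ and consider any tile $\Delta \subset T^\infty$ of rank $n \geq 1$ whose closure is at Euclidean distance $\geq \epsilon$ from the three deltoid cusps. The tile $\Delta$ is the image of $T^0$ under a univalent branch of $\sigma^{-n}$, and Proposition~\ref{domain_hyperbolicity} ensures this branch extends univalently to a fixed Jordan neighborhood of $T^0$ contained in $T^{\mathrm{hyp}}$. The Koebe distortion theorem applied on this enlarged Jordan domain then gives a uniform shape bound: $\Delta$ contains a Euclidean disc of radius comparable to $\mathrm{diam}(\Delta)$ with a ratio depending only on $\epsilon$. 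Combining this with the exponential contraction $|(\sigma^{-n})'| \lesssim \lambda_0^{-n}$ (from Proposition~\ref{domain_hyperbolicity}) yields the John condition for all $z$ lying in such tiles: one runs a short curve from $z$ to the central disc of $\Delta$ and then through parent tiles of lower rank back to $w_0$, with the distance to $\partial T^\infty$ growing geometrically along the way.

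Next I would handle points $z$ near a cusp pre-image $\zeta \in S$. Here the pullback argument breaks down because $\sigma$ is only parabolic at $\zeta$, so no uniform Koebe bounds are available. Instead, I would use that $\Gamma = \partial T^\infty$ is a Jordan curve with an outward-pointing cusp at every $\zeta \in S$, with a well-defined tangent direction inherited from the local behavior at the three deltoid cusps via iterated univalent pullbacks of Proposition~\ref{wedge_at_cusp}. Consequently $T^\infty$ contains a wedge $W_\zeta$ of opening arbitrarily close to $2\pi$ at each $\zeta \in S$. For $z \in W_\zeta$ a short radial arc inside $W_\zeta$ moving away from $\zeta$ satisfies $\mathrm{dist}(w, \partial T^\infty) \asymp |w - \zeta| \gtrsim |w - z|$ at every point $w$ of the arc, which is exactly the John condition on the cusp-escape segment.

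Concatenating a cusp-escape arc (when $z$ is close to some $\zeta \in S$) with a tile-based arc from the preceding step produces the required arc from $z$ to $w_0$. The main anticipated obstacle is precisely the coexistence of uniform hyperbolic expansion on $T^{\mathrm{hyp}}$ with parabolic dynamics at the cusps, which defeats any naive global pullback strategy; the delicate point is that iterates of $\sigma$ pass repeatedly near pre-images of the cusps, and Koebe distortion bounds blow up there. The resolution is exactly the separation into the two regimes above: apply pullback with Koebe only in the uniformly hyperbolic region, and switch to the direct geometric wedge escape whenever the orbit enters a fixed Euclidean neighborhood of $S$. Once the John property is established, conformal removability of $\Gamma$ follows from Jones' criterion \cite{Jones}, feeding into the uniqueness part of Theorem~\ref{deltoid_mating_dynamical_partition}.
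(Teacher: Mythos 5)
You correctly identify the two-regime structure that drives the paper's proof as well: hyperbolic expansion on $T^{\mathrm{hyp}}$ with Koebe control in the bulk, and the wedge geometry of Proposition~\ref{wedge_at_cusp} at the parabolic points. But the execution you sketch has genuine gaps, and the route is different from the one the paper actually takes.

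The first problem is your claim that tiles whose closure is $\epsilon$-far from the three deltoid cusps have uniformly bounded shape. This is false: such a tile can be arbitrarily deep in the parabolic fan of an iterated pre-image $\zeta\in S$ (itself far from the deltoid cusps), and the inverse branch $\sigma^{-n}:T^0\to\Delta$ need not have bounded distortion, because at some intermediate time the forward orbit of $\Delta$ passes arbitrarily close to a deltoid cusp. (Relatedly, the phrase ``a Jordan neighborhood of $T^0$ contained in $T^{\mathrm{hyp}}$'' is empty, since $T^{\mathrm{hyp}}$ is by definition disjoint from a neighborhood of $T$.) The correct hypothesis for Koebe control is that the \emph{entire forward orbit} of the tile stays in $T^{\mathrm{hyp}}$, and controlling for how long that lasts is exactly what requires a stopping-time construction. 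For the same reason, the assertion that $\delta$ grows geometrically along the parent chain is false whenever the chain passes through the fan of a cusp of intermediate rank, where consecutive tile diameters grow only sub-geometrically; your proposal of a single cusp-escape followed by a single tile-traversal therefore does not yield a John arc, since the chain may enter cusp fans repeatedly, and the constants under the repeated concatenation are not tracked.

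The paper's proof sidesteps this accumulation problem. It verifies the one-step criterion~(\ref{John_condition_equiv}) directly for the hyperbolic geodesic $\gamma^z$: fix $\eta>0$, define the stopping time $N(z)$ as the last iterate at which the forward orbit of $z$ has stayed $\eta$-close to $\partial T^\infty$ without entering the wedge $W$, push $z$ and the relevant point $w\in\gamma^z$ forward by $\sigma^{\circ N}$, verify the decay of $\delta$ at the moderate scale there (via Lemma~\ref{estimate_lc} if the orbit has escaped to distance $\asymp\eta$, or via Lemma~\ref{estimate_wedge_parabolic} if it has entered $W$), and then pull back once by Koebe. The crucial tool making this transfer possible is the shadowing Lemma~\ref{geodesic_distortion_rho}, which guarantees that $\sigma^{\circ N}\circ\gamma^z$ stays a bounded hyperbolic distance from the actual geodesic $\gamma^{z_N'}$, with a constant independent of $z$ and $N$. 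Your proposal has no analogue of the shadowing lemma or the stopping time, and it is precisely these that control the repeated passages near the cusps that you correctly flag as the main obstacle.
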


\begin{corollary}\label{tiling_set_removable}
$\partial T^\infty$ is conformally removable, and the Riemann uniformization $\psi^{\mathrm{in}}$ is H\"{o}lder continuous up to the boundary.
\end{corollary}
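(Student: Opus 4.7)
The plan is to deduce both assertions of the corollary as standard consequences of Theorem~\ref{tiling_set_John} (the John property of $T^\infty$), bringing no new dynamical input: the dynamical work is already packaged in the John condition~(\ref{John_condition}), and what remains is to import two well-known results from one-dimensional complex analysis.

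For the H\"older continuity of $\psi^{\mathrm{in}}$, my plan is to invoke Pommerenke's characterization of John domains by boundary regularity of the Riemann map. The John arc condition, combined with the Koebe distortion theorem applied to $\psi^{\mathrm{in}}$, yields a uniform estimate of the form $|(\psi^{\mathrm{in}})'(w)|\leq C(1-|w|)^{\alpha-1}$ on $\D$, where $\alpha\in(0,1)$ depends only on the John constant $c$ of Theorem~\ref{tiling_set_John}. The key point is that the characterization~(\ref{John_condition_equiv}) guarantees that every geodesic descending from $0$ to $\partial\D$ is mapped by $\psi^{\mathrm{in}}$ into a curve whose Euclidean distance to $\partial T^\infty$ decays at least geometrically in hyperbolic parameter; transferring this decay through Koebe's theorem gives the derivative bound. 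Integrating this bound along a segment joining $w_1,w_2\in\overline{\D}$ (decomposed into a short radial piece and a circular piece at height $\max(|w_1|,|w_2|)$) then produces $|\psi^{\mathrm{in}}(w_1)-\psi^{\mathrm{in}}(w_2)|\leq C'|w_1-w_2|^\alpha$, which is the desired H\"older estimate and in particular refines the continuous boundary extension that Lemma~\ref{deltoid_lc} already provides.

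For the conformal removability of $\Gamma=\partial T^\infty$, my plan is to quote Jones's removability theorem \cite[Corollary~1]{Jones}, which asserts exactly that the boundary of a John domain is a removable set for $W^{1,2}$-Sobolev functions on the plane, and consequently for any continuous map which is conformal off the boundary. Concretely, suppose $h:\hat{\C}\to\hat{\C}$ is a homeomorphism that is conformal on $\hat{\C}\setminus\Gamma$. The John property of $T^\infty$ implies that $\Gamma$ has zero area (this is a by-product of the H\"older bound of the previous paragraph, since $\psi^{\mathrm{in}}(\mathbb{T})=\Gamma$ has Hausdorff dimension $<2$). Hence $h$ has locally $W^{1,2}$ derivatives off $\Gamma$, and Jones's extension theorem upgrades these to locally $W^{1,2}$ derivatives on the full plane. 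Weyl's lemma, applied to $\bar\partial h$, then forces $h$ to be holomorphic across $\Gamma$, hence a M\"obius transformation.

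I do not expect a genuine obstacle here: the difficult step was proving the John property in Theorem~\ref{tiling_set_John}. The only minor points to verify when citing are that Jones's removability applies without assuming the complementary domain $A$ is John (it does, since Jones's criterion concerns only $\partial D$ for a single John $D$), and that the H\"older exponent extracted from Pommerenke's theorem depends solely on the John constant and not on any finer geometry of $\Gamma$.
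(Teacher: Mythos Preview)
Your proposal is correct and matches the paper's approach exactly: the paper treats this corollary as an immediate consequence of Theorem~\ref{tiling_set_John}, citing \cite[Corollary~1]{Jones} for removability and the standard fact that Riemann maps onto John domains extend H\"older continuously to the boundary. You have simply fleshed out the details of these two citations, which the paper leaves implicit.
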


To prove Theorem~\ref{tiling_set_John}, we will sometimes use the conformal model $\rho\vert_{\D}$ of $\sigma\vert_{T^\infty}$. To study the dynamics near cusp points, it will be useful to consider the following wedges. In the dynamical plane of $\sigma$, we define for $\theta\in[0,\pi/2)$,
$$ 
W_\theta:=U_\theta\cup\omega U_\theta\cup\omega^2 U_\theta\subset T^\infty,\quad \mathrm{where}
$$ 
$$
U_\theta:=\left\{\frac32+re^{i\theta'}: 0<r<\frac{1}{100}, \frac{\pi}{2}-\theta<\theta'< \frac{3\pi}{2}+\theta\right\},
$$ 
and $\omega$ is a primitive third root of unity (compare Proposition~\ref{wedge_at_cusp}). Then, $W_0$ is a union of three disjoint sectors each of angle $\pi$. In the dynamical plane of $\rho$, the wedges are similarly defined with angles that are twice smaller. More precisely, for $\theta\in[0,\pi/4)$,
$$ 
\mathbf{W}_\theta:=\mathbf{U}_\theta\cup\omega \mathbf{U}_\theta\cup\omega^2 \mathbf{U}_\theta\subset \D,\quad \mathrm{where}
$$ 
$$
\mathbf{U}_\theta:=\left\{1+re^{i\theta'}: 0<r<\frac{1}{100}, \frac{3\pi}{4}-\theta<\theta'< \frac{5\pi}{4}+\theta\right\}.
$$ 
In this case, $\mathbf{W}_0$ is a union of three disjoint sectors each of angle $\pi/2$. If necessary, we modify $W_\theta$ and $\mathbf{W}_\theta$ in an obvious way so that the slices are invariant under $\sigma$ or $\rho$ (on the part where the maps are defined).

\subsubsection{Quasi-rays} Let $\pmb{\gamma}_1(\tau)$ and $\pmb{\gamma}_2(\tau)$ be paths in $\mathbb{D}$, $0\leq\tau<\infty$, and let $C>0$. We say that the paths \emph{shadow} each other (with a constant $C$) if $$\forall\tau>0,\quad d_{\mathbb{D}}(\pmb{\gamma}_1(\tau),\pmb{\gamma}_2(\tau))\leq C;$$ and we denote it by: $\pmb{\gamma}_1\approx\pmb{\gamma}_2$.

Let us fix a small number $\eta>0$ for the rest of the proof of Theorem~\ref{tiling_set_John}. We will now state a ``shadowing" lemma for the model map $\rho$. For a fixed $\theta_0\in[0,\pi/4)$, let $\mathbf{z}_0\in\D\setminus \mathbf{W}_{\theta_0}$ with $d(\mathbf{z}_0,\mathbb{T})<\eta$, and $\pmb{\gamma}^{\mathbf{z}_0}$ be the segment $[\mathbf{z}_0,\zeta_0)$, where $\zeta_0=\frac{\mathbf{z}_0}{\vert \mathbf{z}_0\vert}$. We parametrize $\pmb{\gamma}^{\mathbf{z}_0}$ so that $d_{\mathbb{D}}(\mathbf{z}_0,\pmb{\gamma}^{\mathbf{z}_0}(\tau))=\tau.$ We denote $\mathbf{z}_n:=\rho^{\circ n}(\mathbf{z}_0)$ (assuming that it is defined), $\zeta_n:= \rho^{\circ n}(\zeta_0)$, and $\mathbf{z}_n'=\vert \mathbf{z}_n\vert\zeta_n$ (see Figure~\ref{john_proof_pic}). Let $N$ be a positive integer such that $d(\mathbf{z}_k,\mathbb{T})<\eta$ and $\mathbf{z}_k\notin \mathbf{W}:=\mathbf{W}_{\theta_0}$, for $k=1,\cdots,N$.

\begin{lemma}\label{geodesic_distortion_rho}
For a fixed $\theta_0\in[0,\pi/4)$, we have that $\rho^{\circ N}\circ\pmb{\gamma}^{\mathbf{z}_0}\approx\pmb{\gamma}^{\mathbf{z}_N'},$ where the shadowing constant $C=C(\theta_0)$ is independent of $\mathbf{z}_0$ and $N$.
\end{lemma}

\begin{figure}[ht!]
\centering
\includegraphics[scale=0.28]{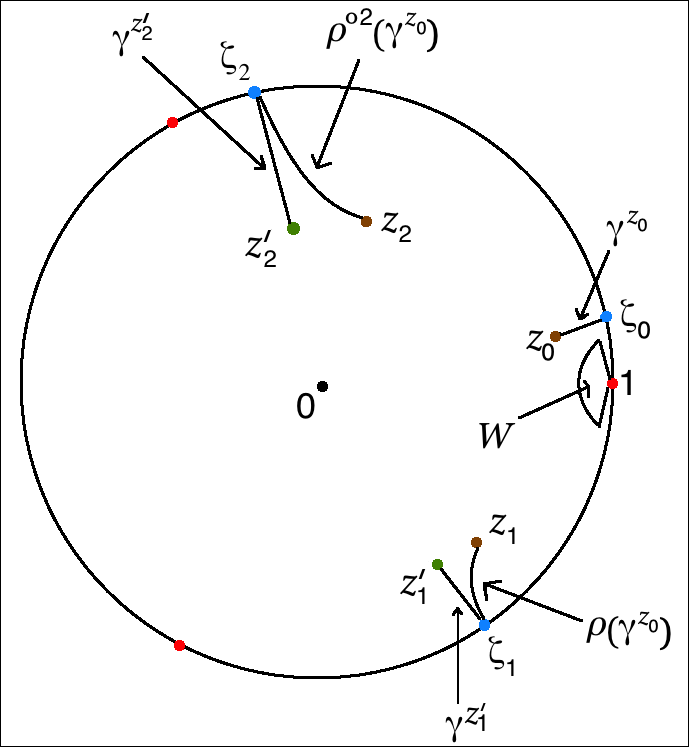}
\caption{The image of the geodesic ray $\pmb{\gamma}^{\mathbf{z}_0}$ under $\rho^{\circ N}$ shadows the actual geodesic ray $\pmb{\gamma}^{\mathbf{z}_N'}$.}
\label{john_proof_pic}
\end{figure}

\begin{proof}
Let $\widetilde{\iota}$ be reflection in the unit circle. Since $\rho$ fixes $\mathbb{T}$ as a set, we can extend it to the ``symmetrized" open set $\mathcal{V}:=\widehat{\C}\setminus\overline{\Pi\cup\widetilde{\iota}(\Pi)}$ by the Schwarz reflection principle. We denote the connected components of $\mathcal{V}$ by $\mathcal{V}_i$, $i\in\{1,2,3\}$.

As $\mathbf{z}_N\notin \mathbf{W}$ and $d(\mathbf{z}_N,\mathbb{T})<\eta$ for $\eta$ small enough, we have that $\mathbf{z}_N\notin\partial\Pi$, and hence, $\mathbf{z}_N\in\mathcal{V}_i$ for some $i$. Now, $\rho^{-N}(\mathcal{V}_i)$ is a disjoint union of finitely many round disks that are invariant under $\widetilde{\iota}$. Since $\mathbf{z}_0$ lies in one of these disks, its projection $\zeta_0$ on $\mathbb{T}$ also lies in the same disk. Hence in particular, $\mathbf{z}_N$ and $\zeta_N$ are in the same component $\mathcal{V}_i$. It follows that an inverse branch of $\rho^{\circ N}$, defined on the $\widetilde{\iota}$-invariant open set $\mathcal{V}_i$, carries $\mathbf{z}_N, \zeta_N$ to $\mathbf{z}_0, \zeta_0$ respectively. The facts that $\mathbf{z}_N\notin \mathbf{W}$ and $d(\mathbf{z}_N,\mathbb{T})<\eta$ now imply that
$$\mathcal{A}:=\mathcal{V}_i\setminus\overline{\left(\rho^{\circ N}\left(\pmb{\gamma}^{\mathbf{z}_0}\right)\cup\widetilde{\iota}\left(\rho^{\circ N}\left(\pmb{\gamma}^{\mathbf{z}_0}\right)\right)\right)}$$ is an annulus of definite modulus that depends only on $\theta_0$ and is independent of $\mathbf{z}_0$ and $N$. 

Note that $\rho^{-N}$ is an isometry from $\mathcal{V}_i$ onto $\rho^{-N}\left(\mathcal{V}_i\right)$ where both domains are equipped with the corresponding hyperbolic metrics. Hence, $\rho^{-N}(\mathcal{A})$ is an annulus of definite modulus (independent of $\mathbf{z}_0$ and $N$) surrounding the symmetrized radial line segment $\overline{\pmb{\gamma}^{\mathbf{z}_0}\cup\widetilde{\iota}(\pmb{\gamma}^{\mathbf{z}_0})}$ in $\rho^{-N}\left(\mathcal{V}_i\right)$.

The above lower bound on the modulus of the annulus $\rho^{-N}(\mathcal{A})$ implies that the line segment $\overline{\pmb{\gamma}^{\mathbf{z}_0}\cup\widetilde{\iota}(\pmb{\gamma}^{\mathbf{z}_0})}$ is uniformly bounded away from the boundary of $\rho^{-N}\left(\mathcal{V}_i\right)$, and hence, restricted to $\pmb{\gamma}^{\mathbf{z}_0}$, the hyperbolic metric of $\D$ and that of $\rho^{-N}\left(\mathcal{V}_i\cap\D\right)$ are both uniformly comparable to the reciprocal of the distance to $\mathbb{T}$. Thus, the hyperbolic metric of $\D$ and that of $\rho^{-N}\left(\mathcal{V}_i\cap\D\right)$ are uniformly comparable on $\pmb{\gamma}^{\mathbf{z}_0}$, and the hyperbolic metric of $\D$ and that of $\mathcal{V}_i\cap\D$ are uniformly comparable on $\rho^{\circ N}(\pmb{\gamma}^{\mathbf{z}_0})$. 

Therefore, $\rho^{\circ N}:\pmb{\gamma}^{\mathbf{z}_0}~\to~\rho^{\circ N}(\pmb{\gamma}^{\mathbf{z}_0})$ is a quasi-isometry with respect to the hyperbolic distance $d_{\D}$ of the unit disk with quasi-isometry constants independent of $\mathbf{z}_0$ and $N$. Therefore, $\rho^{\circ N}\circ\pmb{\gamma}^{\mathbf{z}_0}$ is shadowed by a hyperbolic geodesic of $\D$ with one end-point at $\zeta_N$. Since $\rho$ is expanding away from the third roots of unity, we conclude that the other end-point of this shadowing geodesic is bounded away from $\zeta_N$. It follows that $\rho^{\circ N}\circ\pmb{\gamma}^{\mathbf{z}_0}$ is shadowed by the geodesic arc $\pmb{\gamma}^{\mathbf{z}_N'}$, with a shadowing constant independent of $\mathbf{z}_0$ and $N$.
\end{proof}

\begin{remark}
1) For $\theta\in(\theta_0,\pi/4)$, the sector $\mathbf{W}_\theta$ contains $\mathbf{W}_{\theta_0}$. Hence the lower bound on the modulus of the annulus $\rho^{-N}(\mathcal{A})$ in the above proof remains valid for all $\theta\in(\theta_0,\pi/4)$. Thus, the same shadowing constant in $\approx$ remains valid if $\mathbf{W}_{\theta_0}$ is replaced by $\mathbf{W}_\theta$ for $\theta\in(\theta_0,\pi/4)$.

2) We will use the notation boldface $\mathbf{z}$ (respectively, $\pmb{\gamma}^{\mathbf{z}}$) for $\rho$-dynamics, and ordinary $z$ (respectively, $\gamma^z$) for $\sigma$-dynamics. 
\end{remark}

\subsubsection{Two uniform estimates}\label{unif_estimates} We will now state a couple of uniform estimates for the $\sigma$-dynamics which are needed for the proof of Theorem~\ref{tiling_set_John}. Recall that $0\in T^\infty$ corresponds to $0\in\D$ (for the model map $\rho$) under $\psi^{\mathrm{in}}$. The curves $\gamma^z$ in $T^\infty$ are now geodesic rays through the origin. We set $W:=W_{\theta_0}$ in the $\sigma$-dynamical plane, for some fixed $\theta_0\in[0,\pi/2)$.

The following assertion holds since $\partial T^\infty=\Gamma$ is a Jordan curve. 

\begin{lemma}\label{estimate_lc}
$\forall\epsilon>0$, $\exists M>0$ such that if $\delta(z')\geq\eta,\, w'\in\gamma^{z'},$ and $d_{T^\infty}(w',z')\geq M$, then
$\delta(w')\leq\epsilon\delta(z').$
\end{lemma}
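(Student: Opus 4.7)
The plan is to transport the statement to the unit disk via the Riemann map $\psi^{\mathrm{in}}:\D\to T^\infty$ (with $\psi^{\mathrm{in}}(0)=0$) constructed in the proof of Theorem~\ref{deltoid_dynamical_partition}, and then invoke the Carath\'eodory extension supplied by local connectivity (Lemma~\ref{deltoid_lc}). Since $\partial T^\infty$ is locally connected, $\psi^{\mathrm{in}}$ extends to a continuous (hence uniformly continuous) map $\overline{\D}\to\overline{T^\infty}$. Furthermore, the set $K_\eta:=\{z\in T^\infty:\delta(z)\ge \eta\}$ is a compact subset of the open set $T^\infty$, so its preimage under $\psi^{\mathrm{in}}$ lies in some $\overline{B(0,r_0)}$ with $r_0=r_0(\eta)<1$.

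Write $\mathbf{z}':=(\psi^{\mathrm{in}})^{-1}(z')$ and $\mathbf{w}':=(\psi^{\mathrm{in}})^{-1}(w')$. Because the hyperbolic geodesics of $\D$ through the origin are Euclidean radial segments, $\mathbf{w}'$ lies on the radial segment through $\mathbf{z}'$, and the hypothesis $d_{T^\infty}(w',z')\ge M$ translates to $d_{\D}(\mathbf{z}',\mathbf{w}')\ge M$ with $\mathbf{w}'$ further from $0$. A direct evaluation of the hyperbolic distance along a radius gives
$$d_{\D}(\mathbf{z}',\mathbf{w}')=\log\frac{(1-|\mathbf{z}'|)(1+|\mathbf{w}'|)}{(1+|\mathbf{z}'|)(1-|\mathbf{w}'|)}\ge M,$$
so, using $|\mathbf{z}'|\le r_0$ and $1+|\mathbf{w}'|\le 2$, we obtain $1-|\mathbf{w}'|\le 2e^{-M}$.

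Let $\zeta':=\mathbf{w}'/|\mathbf{w}'|\in\mathbb{T}$; then $\psi^{\mathrm{in}}(\zeta')\in\partial T^\infty$ and $|\mathbf{w}'-\zeta'|=1-|\mathbf{w}'|\le 2e^{-M}$. Given $\epsilon>0$, uniform continuity of $\psi^{\mathrm{in}}$ on $\overline{\D}$ furnishes $\delta>0$ such that Euclidean distance $<\delta$ in $\overline{\D}$ implies Euclidean distance $<\epsilon\eta$ in $\overline{T^\infty}$; choosing $M$ with $2e^{-M}<\delta$ yields
$$\delta(w')\le\bigl|\psi^{\mathrm{in}}(\mathbf{w}')-\psi^{\mathrm{in}}(\zeta')\bigr|<\epsilon\eta\le\epsilon\,\delta(z'),$$
since $\delta(z')\ge\eta$ by assumption. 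This completes the proof.

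The only substantive ingredient is the Carath\'eodory extension of $\psi^{\mathrm{in}}$, which is already available from Lemma~\ref{deltoid_lc}; the remaining computations are routine. There is no real obstacle, which is consistent with the paper's description of the assertion as ``straightforward''.
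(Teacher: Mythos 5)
Your proof is correct. The paper offers no argument here (the lemma is labeled ``straightforward'' with no proof), and the route you take — transporting to $\D$ via the normalized Riemann map $\psi^{\mathrm{in}}$, noting that $\delta(z')\ge\eta$ confines $\mathbf{z}'$ to a compact disk $\overline{B(0,r_0)}$, computing the radial hyperbolic distance to force $|\mathbf{w}'|$ close to $1$, and then invoking uniform continuity of the Carath\'eodory extension of $\psi^{\mathrm{in}}$ (available from Lemma~\ref{deltoid_lc}) to bound $\delta(w')$ — is exactly the natural way to make that assertion precise, and it correctly uses the conformal invariance of the hyperbolic metric under $\psi^{\mathrm{in}}$ so that $d_{T^\infty}(w',z')=d_\D(\mathbf{z}',\mathbf{w}')$. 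The one cosmetic remark is that your explicit distance formula assumes the curvature $-1$ normalization $\frac{2|dz|}{1-|z|^2}$; with a different normalization the estimate $1-|\mathbf{w}'|\le 2e^{-M}$ changes by a harmless constant in the exponent, and the conclusion is unaffected.
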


The next estimate follows from the geometry of the limit set near a cusp point (see Proposition~\ref{wedge_at_cusp}, also compare \cite[p.~21, Inequality~6.8]{CJY}). In the statement below, $C$ stands for the shadowing constant from Lemma~\ref{geodesic_distortion_rho}, and $N_{\mathrm{hyp}}(W,C):=\{z\in T^\infty: d_{T^\infty}(z,W)\leq C\}$ is a hyperbolic neighborhood of $W$ of radius $C$.

\begin{lemma}\label{estimate_wedge_parabolic}
$\forall\epsilon>0$, $\exists M>0$ such that if $\sigma(z')\in N_{\mathrm{hyp}}(W,C),\ w'\in\gamma^{z'},$ and $d_{T^\infty}(w',z')\geq M$, then
$\delta(w')\leq\epsilon\delta(z')$.
\end{lemma}
\begin{proof}
Given $\theta_0$, pick $0<\alpha\ll \pi/2-\theta_0$, and shrink the domain $T^\infty$ slightly near $\frac32$ so that the radial lines at angles $\pm\alpha$ centered at $\frac32$ form the boundary of the new domain $\widehat{T}^\infty$ near $\frac32$.
It is readily checked that the defining condition~\ref{John_condition} of John property holds for any $z'$ near $\frac32$ in $\widehat{T}^\infty$ (for example, with base point $0$), and hence the equivalent condition~\ref{John_condition_equiv} is also satisfied by geodesic rays in $\widehat{T}^\infty$ starting at points close to $\frac32$. Since the radial lines at angles $\pm\alpha$ centered at $\frac32$ well approximate $\partial T^\infty$ near $\frac32$, Euclidean distances from the boundaries of $T^\infty$ and $\widehat{T}^\infty$ are comparable for points in $N_{\mathrm{hyp}}(W,C)$. Finally, since hyperbolic distances are larger in $\widehat{T}^\infty$, condition~\ref{John_condition_equiv} in $\widehat{T}^\infty$ gives the desired uniform estimate in $T^\infty$.
\end{proof}

\begin{remark}
We use the notation $z'$ and $w'$ (instead of just $z$ and $w$) to make it consistent with the notation in the next subsection.
\end{remark}

\subsubsection{Proof of Theorem~\ref{tiling_set_John}}\label{proof_John_sec} Recall that our goal is to prove the existence of $M>0$ such that for all $z\in T^\infty$, $$w\in\gamma^z,\ d_{T^\infty}(z,w)\geq M\quad \implies\quad \delta(w)\leq \frac12\delta(z).$$ For all $z\in T^\infty$ with $\delta(z)<\eta$, we define the \emph{stopping time} $N=N(z)$ as the largest integer $n$ such that $$\delta(z_i)<\eta,\ i=1,\cdots,n,\quad \mathrm{and}\quad z_{n-1}\notin W,$$ where $z_i=\sigma^{\circ i}(z)$. The stopping time $N$ is well-defined because of expansiveness of $\sigma$. So we either have $\delta(z_{N+1})\geq\eta$ (``Case 1") or/and $z_N\in W$ (``Case 2"). 
\medskip

\noindent\textbf{Case 1.} Let $z_N\notin W$. We first observe that $\delta(z_N)\asymp\eta$ with a constant in $\asymp$ independent of $z$. By Lemma~\ref{geodesic_distortion_rho}, we have $\sigma^{\circ N}\circ \gamma^z\approx\gamma^{z_N'}$ ($z_N'$ is defined via the model map $\rho$); in particular, $d_{T^\infty}(z_N,z_N')\leq C$. It then follows that $\delta(z_N')\asymp\eta$ and we can apply Lemma~\ref{estimate_lc} with $z':=z_N'$. Let $\epsilon$ be a small number to be specified at the end of the argument, and let $M$ be as in Lemma~\ref{estimate_lc}. Let $w\in\gamma^z$ satisfy $d_{T^\infty}(z,w)>M$. Define $w_N:=\sigma^{\circ N}(w)$, and $w_N'\in\gamma^{z_N'}$ by the condition $d_{T^\infty}(z,w)=d_{T^\infty}(z_N',w_N').$ Then we have $d_{T^\infty}(z_N',w_N')>M$ and (by Lemma~\ref{estimate_lc}) $\delta(w_N')\leq\epsilon\delta(z_N').$ Since the hyperbolic metric (of $T^\infty$) at a point is comparable to the reciprocal of its Euclidean distance to the boundary $\partial T^\infty=\Gamma$, and the curves $\sigma^{\circ N}\circ \gamma^z$ and $\gamma^{z_N'}$ shadow each other (with a constant $C$ independent of $z$), it follows that $ \delta(w_N)$ and $\delta(w_N')$ (respectively, $\delta(z_N)$ and $\delta(z_N')$) are uniformly comparable. So we get $\delta(w_N)\leq C\epsilon\delta(z_N).$ 

If $\theta$ is sufficiently close to $\pi/2$ (which does not affect the shadowing constant in Lemma~\ref{geodesic_distortion_rho}), then the Euclidean distance between $\sigma^{\circ N}(\gamma^z)$ and $W_0$ is uniformly comparable to $\eta$. A straightforward application of the Koebe distortion theorem (applied to a suitable inverse branch of $\sigma^{\circ N}$) now gives us $$\delta(w)\leq\widetilde{C}\epsilon\delta(z)= \frac12\delta(z)$$ ($\widetilde{C}$ is independent of $z$ and $\epsilon$ is defined by the last equation).
\medskip

\noindent\textbf{Case 2.} We now consider the case $z_N\in W$. By construction, we have $z_{N-1}\notin W$ and $\delta(z_{N-1})<\eta$. By Lemma~\ref{geodesic_distortion_rho}, we have $\sigma^{\circ (N-1)}(\gamma^z)\approx\gamma^{z_{N-1}'}$; and in particular, $d_{\D}(z_{N-1}, z_{N-1}')\leq C$. Hence, $z_{N-1}$ and $z_{N-1}'$ lie in the same component of $T^\infty\setminus T^0$ implying that $d_{\D}(\sigma(z_{N-1}), \sigma(z_{N-1}'))=d_{\D}(z_{N}, \sigma(z_{N-1}'))\leq C$. Since $z_N\in W$, it follows that $\sigma(z_{N-1}')$ lies in a $C$-hyperbolic neighborhood of $W$. We can now apply Lemma~\ref{estimate_wedge_parabolic} with $z'=z_{N-1}'$ to obtain $\delta(w_{N-1}')\leq\epsilon\delta(z_{N-1}').$ The rest of the argument is exactly the same as in Case 1.

This completes the proof of Theorem~\ref{tiling_set_John}.

\subsection{Reflection map as a mating}\label{sec_conformal_mating}

We will now show that the Schwarz reflection of the deltoid arises as the unique conformal mating of the anti-polynomial $\overline{z}^2$ and the reflection map $\rho$ coming from the ideal triangle group.
\vspace{1mm}

\subsubsection{Dynamical systems associated with $\sigma$}~

\noindent (a) {\it Action on the non-escaping set}
$\widehat\C\setminus T^\infty=\overline{A}=A\sqcup \Gamma.$

By Proposition~\ref{basin_topology}, the basin of infinity $A$ (of $\sigma$) is simply connected. Let $\psi^{\mathrm{out}}:\widehat\C\setminus\mathbb D\to \overline A$ be the Riemann uniformization such that $\infty\mapsto\infty$ and $1\mapsto\frac32$. 

\begin{proposition}\label{conjugacy_filled_Julia}
$\sigma: \overline{A}\to \overline{A}$ is (conformally) equivalent to the polynomial
$f_0:~\widehat\C\setminus\D\to\widehat\C\setminus\D,\ f_0(z)=\overline z^2.$ The conjugacy is given by $\psi^{\mathrm{out}}$.

\end{proposition}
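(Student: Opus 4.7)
The plan is to pull back $\sigma|_{\overline A}$ via the Riemann map $\psi^{\mathrm{out}}$ and then identify the resulting self-map of the exterior disk by classifying degree-$2$ proper anti-holomorphic self-maps fixing $\infty$.

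First I would assemble the setting. By Proposition~\ref{basin_topology} the domain $A$ is simply connected and completely invariant, and by Theorem~\ref{deltoid_dynamical_partition} its boundary $\Gamma$ is a Jordan curve; Carath\'eodory's theorem then promotes the Riemann map to a homeomorphism $\psi^{\mathrm{out}}:\hat\C\setminus\D\to \overline A$ of closures. The three real degrees of freedom are pinned down by $\infty\mapsto\infty$ and $1\mapsto\tfrac32$; the latter makes sense because $\tfrac32\in\Gamma$ is a fixed point of $\sigma$ (evaluating $\sigma\circ\phi(w)=\phi(1/\overline w)$ at $w=1$ gives $\sigma(\tfrac32)=\tfrac32$).

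Next I would analyze $\sigma:A\to A$. Since $A=\hat\C\setminus\overline{T^\infty}$ is completely invariant and contained in $\Omega\cup\{\infty\}$ (because $T\subset \overline{T^\infty}$), the restriction $\sigma:A\to A$ is a well-defined proper anti-holomorphic map. Its degree is $2$: this is obtained by restricting the degree-$2$ branched cover $\sigma:\sigma^{-1}(\Omega)\to\Omega$ from Proposition~\ref{deltoid_schwarz_covering} to the completely invariant subset $A$. By Proposition~\ref{deltoid_schwarz_critical}, the only critical point of $\sigma|_A$ is $\infty$; the expansion $\sigma(z)\sim \tfrac12\overline z^2$ as $z\to\infty$ (read off from $\sigma\circ\phi(w)=\phi(1/\overline w)$) shows that $\infty$ is a super-attracting fixed point of local degree $2$.

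Then I would transfer the dynamics to $\hat\C\setminus\D$. The map $g:=(\psi^{\mathrm{out}})^{-1}\circ\sigma\circ\psi^{\mathrm{out}}$ is a proper anti-holomorphic self-map of $\hat\C\setminus\D$ of degree $2$, fixing $\infty$ with local degree $2$, and extending continuously to $\partial\D$. Under the inversion $\zeta=1/z$, $g$ corresponds to an anti-holomorphic proper self-map $h:\D\to\D$ of degree $2$ fixing $0$ with a critical point there. Writing $\overline{h(\overline\zeta\,)}$ as a degree-$2$ Blaschke product fixing $0$, of the form $e^{i\theta}\zeta(\zeta-a)/(1-\overline a\zeta)$, the critical-point condition at $0$ forces $a=0$, whence $g(z)=e^{-i\theta}\overline z^2$. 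Finally, the conjugacy relation $\sigma\circ\psi^{\mathrm{out}}=\psi^{\mathrm{out}}\circ g$ evaluated at $1$ together with the injectivity of $\psi^{\mathrm{out}}|_{\partial\D}$ and $\sigma(\tfrac32)=\tfrac32$ gives $g(1)=1$, so $e^{-i\theta}=1$ and $g=f_0$.

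The only point requiring care is making sure $\sigma:A\to A$ genuinely has degree $2$ with $\infty$ as its unique critical point; once this is in hand, the Carath\'eodory extension and the Blaschke classification of proper anti-holomorphic self-maps of the disk close the argument routinely.
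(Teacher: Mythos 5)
Your proof takes essentially the same route as the paper's: pull $\sigma|_{\overline A}$ back by $\psi^{\mathrm{out}}$, observe that the result is a proper degree-$2$ anti-holomorphic self-map of $\hat\C\setminus\D$ (an anti-Blaschke product) with a fixed critical point at $\infty$, and conclude it must be $\overline z^2$; you just fill in the degree/critical-point verifications and the normalization that the paper leaves implicit. One small algebraic slip: $\zeta\mapsto\overline{h(\overline\zeta)}$ is again anti-holomorphic, so it is $\zeta\mapsto h(\overline\zeta)$ (equivalently $\zeta\mapsto\overline{h(\zeta)}$) that you should write as a Blaschke product --- this does not affect the conclusion since the rotation factor is pinned down by the normalization $1\mapsto\tfrac32$ anyway.
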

\begin{proof}
The continuous extension of the Riemann uniformization $\psi^{\mathrm{out}}$ from $\widehat\C\setminus\D$ to $\overline{A}$ normalized as in the statement of the proposition conjugates $\sigma$ to a degree two anti-Blaschke product with a fixed critical point at $\infty$. Clearly, the conjugated map is $f_0$.
\end{proof}

\noindent (b) {\it Action on the tiling set $T^\infty$.} We will now show that the tessellation of $T^\infty$ by the preimages of the fundamental tile $T^0$ corresponds to the action of the {\it deltoid group} $\mathcal{G}_\Delta\subset{\rm Aut}(T^\infty)$ that is conformally equivalent to the ideal triangle group $\mathcal{G}$.

\begin{proposition} Let $T^1_j$ (j=1,2,3) be the three tiles of rank $1$, so
$$\sigma^{-1}T^0=T^1_1\sqcup T^1_2\sqcup T^1_3.$$ Then each map $\sigma: T_j^1\to T^0$ extends to a conformal automorphism
$\sigma_j: T^\infty\to T^\infty$. The deltoid group $\mathcal{G}_\Delta:=\left\langle \sigma_1,\sigma_2,\sigma_3\right\rangle\subset{\rm Aut}(T^\infty)$ is conformally conjugate to the ideal triangle group $\mathcal{G}$. In particular, $T^0$ is a fundamental domain of $\mathcal{G}_\Delta$.
\end{proposition}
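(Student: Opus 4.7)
The plan is to transport the entire structure from the ideal triangle group on $\D$ to the desired group on $T^\infty$ via the Riemann map $\psi^{\mathrm{in}}: \D \to T^\infty$ introduced in the proof of Theorem~\ref{deltoid_dynamical_partition}. Recall that $\psi^{\mathrm{in}}$ was built by Schwarz-reflecting a Riemann map $\Int{\Pi} \to \Int{T^0}$ across the sides of $\Pi$; this construction guarantees that $\psi^{\mathrm{in}}$ sends $\Pi$ onto $T^0$, sends each $\D_j$ onto the corresponding $T^1_j$, and conjugates $\rho: \D \setminus \Int{\Pi} \to \D$ to $\sigma: T^\infty \setminus \Int{T^0} \to T^\infty$.

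Given this, I would define $\sigma_j := \psi^{\mathrm{in}} \circ \rho_j \circ (\psi^{\mathrm{in}})^{-1}$ for $j = 1, 2, 3$, where $\rho_j \in \mathrm{Aut}(\D)$ is the anti-Möbius reflection in $\widetilde{C}_j$ (now viewed as an involution of all of $\D$, not just of $\D_j$). Each $\sigma_j$ is then an anti-conformal involution of $T^\infty$. Since $\rho_j$ agrees with $\rho$ on $\D_j$, the conjugacy property of $\psi^{\mathrm{in}}$ yields $\sigma_j|_{T^1_j} = \sigma|_{T^1_j}$, so $\sigma_j$ genuinely extends the original restriction $\sigma: T^1_j \to T^0$. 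By uniqueness of anti-holomorphic continuation, this is the unique such extension to an automorphism of $T^\infty$.

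The conjugation $g \mapsto \psi^{\mathrm{in}} \circ g \circ (\psi^{\mathrm{in}})^{-1}$ is a group isomorphism sending each $\rho_j$ to $\sigma_j$, hence carrying $\mathcal{G}$ isomorphically onto $\mathcal{G}_\Delta$. Consequently $\mathcal{G}_\Delta$ inherits the presentation $\langle \sigma_1, \sigma_2, \sigma_3 : \sigma_1^2 = \sigma_2^2 = \sigma_3^2 = \mathrm{id} \rangle$ from the ideal triangle group, and since $\Pi$ is a fundamental domain for $\mathcal{G}$ on $\D$, its image $\psi^{\mathrm{in}}(\Pi) = T^0$ is a fundamental domain for $\mathcal{G}_\Delta$ on $T^\infty$. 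The proof is essentially a mechanical transfer via the conjugacy, so no substantial obstacle arises; the only point requiring verification—that the conjugated map $\sigma_j$ agrees with the Schwarz reflection $\sigma$ on $T^1_j$—is immediate from $\psi^{\mathrm{in}}$ conjugating $\rho$ to $\sigma$ on the relevant domain.
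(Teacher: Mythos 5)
Your proof is correct and follows the same route the paper has in mind: the paper states this proposition without a separate argument and then immediately proves (Proposition \ref{deltoid_triangle_group_equivalence}) that $\psi^{\mathrm{in}}$ conjugates $\mathcal{G}_\Delta$ to $\mathcal{G}$, so the intended mechanism is exactly the transport via $\psi^{\mathrm{in}}$ that you spell out. Your version is just a more explicit unpacking — defining $\sigma_j := \psi^{\mathrm{in}}\circ\rho_j\circ(\psi^{\mathrm{in}})^{-1}$, checking agreement with $\sigma$ on $T^1_j$, and pushing the presentation and fundamental domain through the conjugacy — and there is no circularity since $\psi^{\mathrm{in}}$ was already constructed by iterated lifting in the proof of Theorem \ref{deltoid_dynamical_partition}.
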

\begin{proof}
The conformal map $\psi^{\mathrm{in}}:\mathbb D\to T^\infty$ (constructed in the proof of Theorem~\ref{deltoid_dynamical_partition}) conjugates $\rho$ to $\sigma$, and hence in particular, conjugates $\rho_j\vert_{\rho_j(\Pi)}$ to $\sigma\vert_{T_j^1}$. Thus, $\sigma_j:= \psi^{\mathrm{in}}\circ\rho_j\circ (\psi^{\mathrm{in}})^{-1}$ is a conformal automorphism of $T^\infty$ extending $\sigma: T_j^1\to T^0$. The desired conjugacy between $\mathcal{G}$ and $\mathcal{G}_\Delta$ is given by $\psi^{\mathrm{in}}$ .
\end{proof}

\noindent (c) {\it Action on the limit set}, $\sigma:\Gamma\to \Gamma$. This is the common boundary of the systems described in (a) and (b). On the one hand, $\sigma:\Gamma\to \Gamma$ is topologically equivalent to $f_0:~\mathcal{J}\to\mathcal{J}$, where $\mathcal{J}=\mathbb{T}$ is the Julia set of $f_0$. On the other hand, $\sigma:\Gamma\to \Gamma$ is topologically equivalent to the {\it Markov} map
$\rho: \Lambda\to \Lambda$ where $\Lambda=\mathbb{T}$ is the limit set of the ideal triangle group $\mathcal{G}$.

\begin{proposition}\label{external_maps_conjugacy}
There is a unique orientation-preserving homeomorphism $\mathcal{E}:~\mathbb{T}\to \mathbb{T}$, $1\mapsto 1$, that conjugates $\rho$ and $f_0$ on $\mathbb{T}$, i.e. $\mathcal{E}\circ \rho=f_0\circ\mathcal{E}.$
\end{proposition}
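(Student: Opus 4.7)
The plan is to extract $\mathcal{E}$ from the symbolic coding already developed in Subsection~\ref{conjugacy_anti_doubling_rho_sec}, and then argue uniqueness by showing that any conjugacy is determined by its values on the grand orbit of the fixed points.

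For existence, I would invoke the two semi-conjugacies $Q:M^\infty\to\mathbb{T}$ and $P:M^\infty\to\R/\Z$ from the one-sided shift to $\rho|_\mathbb{T}$ and $m_{-2}$. Because $\rho|_\mathbb{T}$ and $m_{-2}$ admit Markov partitions with the same transition matrix $M$ and the same cyclic ordering of atoms, the non-trivial fibers of $Q$ and of $P$ coincide (in both cases, a point has more than one address precisely when it lies in the grand orbit of a fixed point, i.e., a cube root of unity on the $\rho$-side or a third of the period on the $m_{-2}$-side). Consequently $P\circ Q^{-1}$ descends to a well-defined, continuous bijection $\mathcal{E}:\mathbb{T}\to\R/\Z$; compactness of $M^\infty$ upgrades this to a homeomorphism. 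The semi-conjugacy properties of $Q$ and $P$ with the shift give $\mathcal{E}\circ\rho=m_{-2}\circ\mathcal{E}=f_0\circ\mathcal{E}$, and since the atoms of the two Markov partitions are listed in the same cyclic order, $\mathcal{E}$ is orientation preserving. The normalization $\mathcal{E}(1)=1$ comes from matching constant symbol sequences at the fixed point.

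For uniqueness, suppose $\widetilde{\mathcal{E}}$ is a second orientation preserving homeomorphism conjugating $\rho$ to $f_0$ and fixing $1$. The map $\rho|_\mathbb{T}$ has exactly three fixed points, the cube roots of unity, and $m_{-2}$ has exactly three fixed points at $0,\tfrac13,\tfrac23$; the conjugacy relation forces $\widetilde{\mathcal{E}}$ to carry the former set bijectively to the latter, and orientation preservation together with $\widetilde{\mathcal{E}}(1)=1$ pins down the unique cyclic bijection. Inductively, $\widetilde{\mathcal{E}}$ must send the finite set $\rho^{-n}(\{1,\omega,\omega^2\})$ onto $m_{-2}^{-n}(\{0,\tfrac13,\tfrac23\})$, and the orientation preserving, Markov-preserving combinatorics determines this correspondence at each new level uniquely. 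Thus $\widetilde{\mathcal{E}}$ and $\mathcal{E}$ agree on $\bigcup_n \rho^{-n}(\{1,\omega,\omega^2\})$, which is dense in $\mathbb{T}$ (the Markov cylinders for $\rho$ have diameters tending to zero, as a consequence of the shrinking of tiles proved in Section~\ref{ideal_triangle}). Continuity then gives $\widetilde{\mathcal{E}}=\mathcal{E}$ on all of $\mathbb{T}$.

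The one delicate point is the density of iterated preimages of fixed points, since $\rho$ fails to be uniformly expanding at $1,\omega,\omega^2$ (the parabolic vertices of $\Pi$). I would handle this by appealing directly to the already established fact that the $\mathcal{G}$-tessellation of $\D$ has tiles whose spherical diameters shrink to zero as the word length grows; equivalently, the cylinder sets of the symbolic coding $Q$ shrink to points. This replaces hyperbolic expansion by the weaker topological expansivity that is all the uniqueness argument requires.
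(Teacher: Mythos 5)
Your proof is correct and is essentially the paper's own argument: existence is exactly the $P\circ Q^{-1}$ construction from Subsection~\ref{conjugacy_anti_doubling_rho_sec} (the paper explicitly records that $Q$ and $P$ have identical fibers), and uniqueness rests, as in the paper, on density of the iterated preimages of the fixed points. The paper phrases the uniqueness step more compactly --- it observes that $\widetilde{\mathcal{E}}\circ\mathcal{E}^{-1}$ is an orientation-preserving homeomorphism of $\mathbb{T}$ commuting with $f_0$ and fixing $1$, hence the identity --- but that fact is itself proved by the same level-by-level induction on preimages of the cube roots of unity that you carry out directly, so the underlying mechanism is identical.
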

\begin{proof}
The existence of the conjugacy $\mathcal{E}$ was demonstrated at the end of Section~\ref{ideal_triangle}. The uniqueness of such a conjugacy follows from the fact that the only orientation-preserving automorphism of $\mathbb{T}$ commuting with $f_0:\mathbb{T}\to\mathbb{T}$ and fixing $1$ is the identity map.
\end{proof}

We can summarize this discussion as follows. The Schwarz reflection $\sigma$ is a (conformal) mating between the polynomial dynamics $f_0$ on $\widehat\C\setminus\D$ and the ideal triangle group $\mathcal{G}$ on $\D$ (or more accurately, the reflection map $\rho$ associated with the group). The precise meaning of the term ``mating"  is explained in Subsection~\ref{what_is_mating} below.

\subsubsection{Aside: the question mark function}\label{question_mark_subsec} The homeomorphism $\mathcal{E}:\mathbb{T}\to \mathbb{T}$ is a close relative of the classical Minkowski question mark function $\ciq:~ [0,1]\to [0,1].$ This is one of the earliest examples of singular strictly increasing homeomorphisms of the interval that possesses various interesting fractal and number-theoretic properties \cite{Min,Den38,Sal43,Con01}. The function $\ciq$ also admits a dynamical interpretation of being a conjugacy between the Farey map and the tent map, which makes it a useful tool in studying ergodic properties of continued fractions and the Farey map \cite{KS}.
Moreover, the function $\ciq$ played an important role in the Bullett-Penrose construction of algebraic correspondences that arise as matings of quadratic maps and the modular group \cite{BP}.  

In this subsection, we derive an explicit relation between our conjugacy $\mathcal{E}$ and the Minkowski question mark function. As we shall see below, the connection between $\ciq$ and $\mathcal{E}$ involves Farey fractions and the Farey map, which naturally appear in the study of geodesic flow on the modular surface \cite{Ser} as well in the study of self-similarity properties of the Mandelbrot set \cite{DLS}.  The relation between $\ciq$ and $\mathcal{E}$ allows one to transfer the number-theoretic properties of $\ciq$ to similar properties for $\mathcal{E}$, which are heavily exploited in \cite{LLMM19} to obtain distortion estimates and construct a David extension for the map $\mathcal{E}$.

One way to define the question mark function is to set $\ciq(\frac01)=0$ and $\ciq(\frac11)=1$ and then use the recursive formula
\begin{equation}
\ciq\left(\frac{p+r}{q+s}\right)=\frac12\left (\ciq\left(\frac{p}{q}\right)+\ciq\left(\frac{r}{s}\right)\right)
\label{question_mark_recursion}
\end{equation}
which  gives us the values of $\ciq$ on all rational numbers (Farey fractions) in $[0,1]$. In particular, $\ciq$ is an increasing homeomorphism of $[0,1]$ that sends the vertices of level $n$ of the Farey tree to the vertices of level $n$ of the dyadic tree (see Figure~\ref{question_mark_tree}).

\begin{figure}[ht!]
\centering
\includegraphics[scale=0.42]{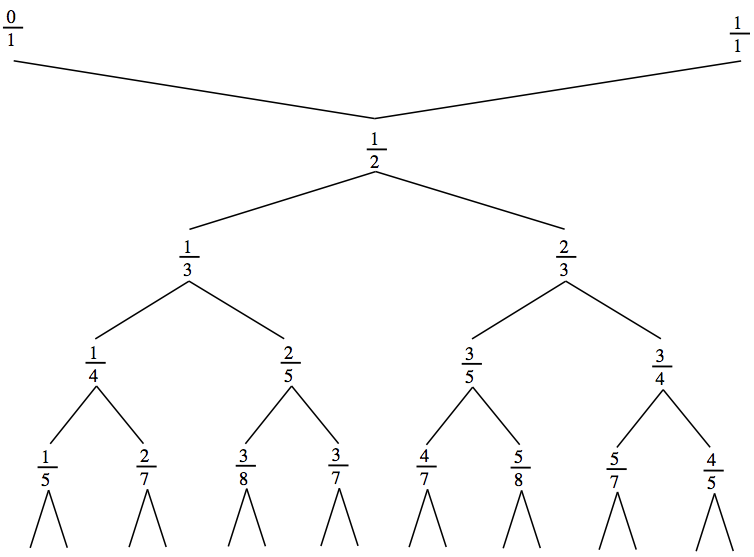} \includegraphics[scale=0.42]{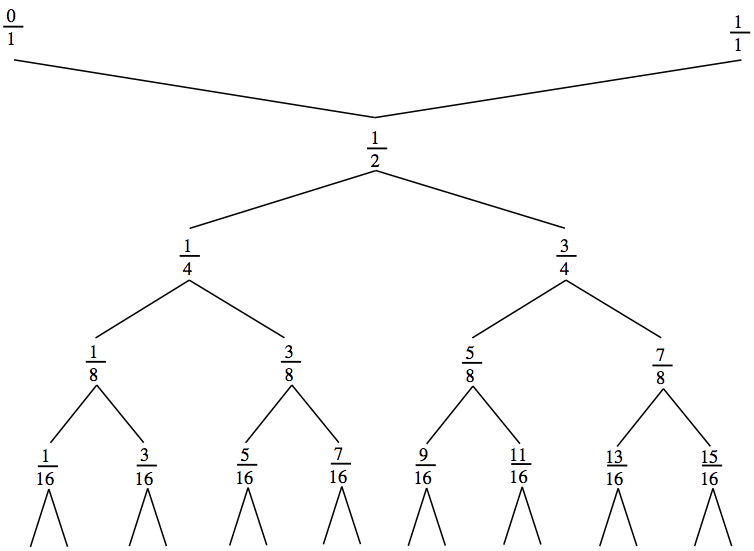} 
\caption{Left: The Farey tree. Right: The dyadic tree.}
\label{question_mark_tree}
\end{figure}

Formula~(\ref{question_mark_recursion}) can be described in terms of the action of an ideal triangle group  in the upper half-plane $\mathbb H$. Let $\Pi'$ be the ideal triangle with vertices $0, 1, \infty$. Then $\Pi'$ is a fundamental domain of the corresponding reflection group $\mathcal{G}'$. The corresponding tessellation of $\mathbb H$ consists of ideal triangles of rank $0,1, 2,$ etc. The rank zero triangle is $\Pi'$. There is exactly one rank $1$ triangle with vertices in $[0,1]$; the vertices are $\frac01, \frac12, \frac11$. The only new vertex is $\frac12$, which is the unique level $1$ vertex of the Farey tree. The map $\ciq$ sends this vertex to $\frac12$, which is the unique level $1$ vertex of the dyadic tree. There are exactly two rank $2$ triangles with vertices in $[0,1]$; the new vertices are $\frac{1}{3}$ and $\frac{2}{3}$, these are precisely the two vertices of level $2$ of the Farey tree. The map $\ciq$ sends these vertices to $\frac14$ and $\frac34$ (respectively), which are the two vertices of level $2$ of the dyadic tree, and so on.

Returning to our homeomorphism $\mathcal{E}:\mathbb{T}\to \mathbb{T}$ we note that the restriction of $\mathcal{E}$ to each of the three components of $\mathbb{T}\setminus \sqrt[3]1$ is given by an analogous construction which involves the tessellation of $\mathbb D$ with translates of $\Pi$ and the associated reflection group $\mathcal{G}$.

To describe a precise connection between the homeomorphism $\mathcal{E}$ and Minkowski question mark function $\ciq$, we need to define an analogue of the map $\rho$ in the upper half-plane model. Let $\phi$ be the M\"obius transformation $\phi$ that takes the unit disk onto the upper half-plane and such that $\phi(1)=0, \phi(e^{2\pi i/3})=1$, and $\phi(e^{4\pi i/3})=\infty$. The map $\phi$ conjugates $\rho$ to the following reflection map associated with the ideal triangle $\Pi'$ in $\mathbb{H}$
having its vertices at $0,1,\infty$:
$$
\theta:\R\cup\{\infty\}\to\R\cup\{\infty\},\qquad 
\theta(t)= \left\{\begin{array}{ll}
                    -t \qquad t\in\left[-\infty,0\right],  \\
                     \frac{t}{2t-1} \qquad t\in\left[0,1\right],\\
                     2-t \qquad t\in\left[1,+\infty\right].
                                          \end{array}\right. 
$$
By construction, $\theta$ maps $[0,\frac12]$ to $[-\infty,0]$ (respectively, $[\frac12,1]$ to $[1,+\infty]$). Composing $\theta$ with a conformal rotation (of $\mathbb{H}$) that brings $\theta([0,\frac12])=[-\infty,0]$ (respectively, $\theta([\frac12,1])=[1,+\infty]$) back to $[0,1]$ defines the orientation-reversing degree two map (which can be seen as the first return of $\theta$ to $[0,1)$)
 $$\tau:[0,1)\to[0,1),\qquad \displaystyle \tau(t)= \left\{\begin{array}{ll}
                     \frac{2t-1}{t-1}\ ({\rm mod}\ 1) \qquad t\in\left[0,\frac12\right),  \\
                     \frac{1-t}{t}\hspace{2.5mm} ({\rm mod}\ 1) \qquad t\in\left[\frac12,1\right).
                                          \end{array}\right. 
$$
The relation between Farey fractions and the action of the reflection group $\mathcal{G}'$, and the symmetry of the $\mathcal{G}'$-tessellation (of $\mathbb{H}$) under the above conformal rotations now imply that $\tau$ sends the vertices of level $n$ of the Farey tree to the vertices of level $(n-1)$. On the other hand, the anti-doubling map $m_{-2}:[0,1)\to[0,1)$ 
$$
m_{-2}(x)=\left\{\begin{array}{ll}
                     -2x+1\ ({\rm mod}\ 1) \qquad x\in\left[0,\frac12\right),  \\
                     -2x+2\ ({\rm mod}\ 1) \qquad x\in\left[\frac12,1\right),
                                          \end{array}\right.
$$
sends the vertices of level $n$ of the dyadic tree to the vertices of level $(n-1)$.

One can now verify that the map 
$$\mathfrak{a}:[0,1]\to[0,1]\quad x\mapsto \phi\left(\mathcal{E}^{-1}\left(e^{2\pi i\frac{x}{3}}\right)\right)
$$ 
is an increasing homeomorphism that conjugates $m_{-2}$ to $\tau$. 
Due to the conjugation property, $\mathfrak{a}$ carries the $n$-th preimages of $0$ under $m_{-2}$ (i.e., level $n$ of the dyadic tree) to the $n$-th preimages of $0$ under $\tau$ (i.e., level $n$ of the Farey tree). In light of the description of the map $\ciq$ given above, we conclude that $\ciq^{-1}$ and $\mathfrak{a}$ agree on a dense set of points, and hence 
$$
\ciq^{-1}(x)=\phi\left(\mathcal{E}^{-1}\left(e^{2\pi i\frac{x}{3}}\right)\right),\ \forall\ x\in [0,1].
$$
Thus, the Minkowski question mark function $\ciq$ is the restriction of the homeomorphism $\mathcal{E}$ to the arc $I:= [1,e^{2\pi i/3}]\subset\mathbb{T}$ written in appropriate coordinates. It conjugates the maps $\tau$ and $m_{-2}$ naturally induced by the triangle modular group and $\bar{z}^2$ on $I$.

\subsubsection{Uniqueness of mating}\label{what_is_mating} Here we will formalize the notion of mating and summarize the above discussion. 

\noindent   (a) {\it Setup.} We have two conformal dynamical systems
$$\rho: \overline{\mathbb D}\setminus\Int{\Pi}\to  \overline{\mathbb D},\quad \mathrm{and}\quad f_0: \widehat\C\setminus\mathbb D\to \widehat\C\setminus\mathbb D.$$

We also have a mating tool, the homeomorphism $\mathcal{E}: \mathbb{T}\to \mathbb{T}$ which conjugates $\rho$ on the limit set and $f_0$ on the Julia set, $\mathcal{E}\circ\rho=f_0\circ\mathcal{E}$.

\noindent   (b) {\it Topological mating.} Define $X~=~\overline{\mathbb D}~\vee_\mathcal{E}~(\widehat\C\setminus\mathbb{D})$, $Y=X\setminus\Int{\Pi},$
so $X$ is a topological sphere, and $Y$ is a closed Jordan disc in $X$. The well-defined topological map 
$\eta~\equiv~\rho~\vee_\mathcal{E}~f_0:~ Y\to X$
is  the topological mating between $\rho$ and $f_0$.

\noindent   (c) {\it Conformal mating.} The two Riemann uniformizations, $\psi^{\mathrm{in}}$ and $\psi^{\mathrm{out}}$,  glue together into a homeomorphism 
$$
H:  (X,Y) \rightarrow  (\widehat{\C},\overline{\Omega})
$$
which is conformal outside $H^{-1}(\Gamma)$ and which conjugates $\eta$ to $\sigma$.
It endows $X$ with a conformal structure compatible with the one on $X\setminus H^{-1}(\Gamma)$
that turns $\eta$ into an anti-holomorphic map conformally conjugate to $\sigma$. 
In this way, the mating $\eta$ provides us with a model for $\sigma$.   

\noindent   (d) {\it Uniqueness of conformal mating.} There is only one conformal structure on $X$ compatible with the standard structure on $X\setminus H^{-1}(\Gamma)$. Indeed, another structure would result in a non-conformal homeomorphism $\widehat \C \rightarrow \widehat\C$ which is conformal outside $\Gamma$, contradicting the conformal removability of $\Gamma$ (see Corollary~\ref{tiling_set_removable}). In this sense, $\sigma$ is the unique conformal mating of the map $\rho$ arising from the ideal triangle group and the anti-polynomial $\overline{z}^2$.   

Theorem~\ref{deltoid_mating_dynamical_partition}, that we announced in the introduction, is now obvious.

\begin{proof}[Proof of Theorem~\ref{deltoid_mating_dynamical_partition}]
The first part of the theorem follows from Theorem~\ref{deltoid_dynamical_partition} and Corollary~\ref{tiling_set_removable}.

The second statement is the content of Subsection~\ref{what_is_mating}.
\end{proof}

\begin{remark}
The conformal welding map $\mathcal{E}:\mathbb{T}\to\mathbb{T}$ appearing in the above mating construction is not a quasi-symmetry as it conjugates $\rho$ to $f_0$ mapping the parabolic fixed points of $\rho$ to the repelling fixed points of $f_0$. 
\end{remark}

\section{Iterated reflections with respect to the cardioid and a circle}\label{sec_main}

In this section, we carry out a detailed analysis of the dynamics and parameter space of Schwarz reflections with respect to a fixed cardioid and a family of circumscribing circles.
\vspace{1mm}

\noindent Notation.  By $B(a,r)$ (respectively, $\overline{B}(a,r)$), we will denote the open (respectively, closed) disk centered at $a$ with radius $r$. 

\subsection{Schwarz reflection with respect to the cardioid}\label{sec_cardioid}

The simplest examples of quadrature domains are interior and exterior disks $B(a,r)$ and $\overline{B}(a,r)^c$ respectively. As quadrature domains, their orders are $1$ and $0$ (respectively). In the rest of this section, we will focus on a particular quadrature domain of order $2$ that is of interest to us. 

\subsubsection{Cardioid as a quadrature domain}\label{cardioid_qd_sec} The principal hyperbolic component $\heartsuit$ of the Mandelbrot set (i.e. the unique hyperbolic component of period one) has a Riemann uniformization 
$$\phi:\D\to\heartsuit,\quad \phi(\lambda)=\lambda/2-\lambda^2/4$$ 
(see \cite[\S 2.1]{LLMM2} for details). Its boundary $\partial\heartsuit$ is the cardioid $$\phi(\partial\D)=\lbrace e^{i\theta}/2-e^{2i\theta}/4:\theta\in[0,2\pi)\rbrace.$$ Dynamically, this means that the quadratic map $z^2+w$ has a fixed point of multiplier $\lambda\in\D$ precisely when $w=\lambda/2-\lambda^2/4$ (here, \emph{multiplier} stands for the the $z$-derivative of $z^2+w$ at a fixed point). Since this Riemann uniformization is rational, it follows from Proposition~\ref{simp_conn_quad} that $\heartsuit$ is a quadrature domain. The quadrature function of $\heartsuit$ is $(\frac{3}{8z}-\frac{1}{16z^2})$. Hence, $\heartsuit$ is a quadrature domain of order $2$ with a unique node at $0$. 

\subsubsection{Schwarz reflection of $\heartsuit$}\label{schwarz_ref_cardioid_sec} Thanks to the commutative diagram in Figure~\ref{comm_diag_schwarz}, we have an explicit description of the Schwarz reflection map $\sigma:\overline{\heartsuit}\to\widehat{\C}$. Indeed, the commutative diagram implies that:

\begin{equation}
\sigma(\phi(\lambda))=\phi(1/\overline{\lambda}),\quad \mathrm{i.e.}\quad \sigma(\lambda/2-\lambda^2/4)=(2\overline{\lambda}-1)/4\overline{\lambda}^2
\label{schwarz_cardioid}
\end{equation}
for each $\lambda\in\overline{\D}$.

As $\phi$ is a two-to-one branched cover of $\widehat{\C}$, the commutative diagram~\eqref{schwarz_cardioid} shows that $\sigma(\overline{\heartsuit})=\widehat{\C}$. Since the only critical point of $\phi$ outside $\overline{\D}$ is at $\infty$, it follows that $0$ is the only critical point of $\sigma$ in $\heartsuit$. Moreover, some interesting functional values can be directly computed from this formula; e.g. $\sigma(0)=\infty,\ \sigma(3/16)=0,\ \sigma(5/36)=-3/4$.

Since $\heartsuit$ parametrizes all quadratic polynomials $z^2+w$ with an attracting fixed point of multiplier $\lambda$, it will be useful to understand the Schwarz reflection of the cardioid in terms of its action on multipliers of quadratic polynomials. A simple computation using Relation~(\ref{schwarz_cardioid}) shows that if the multiplier of the non-repelling fixed point of $z^2+w$ (where $w\in\overline{\heartsuit}\setminus\{0\}$) is $\lambda$, then the multipliers of the fixed points of $z^2+\sigma(w)$ are $2-1/\overline{\lambda}$ and $1/\overline{\lambda}$.

Since a quadratic polynomial $z^2+w$ is uniquely determined by any of its fixed point multipliers, the above discussion gives the following description of $\sigma$ on $\heartsuit\setminus\{0\}$.

\begin{proposition}[Multiplier description of Schwarz reflection]\label{multiplier_reflection}
Let $w=\phi(\lambda)\in\heartsuit\setminus\{0\}$ for some $\lambda\in\D^*$ (i.e., $z^2+w$ has an attracting fixed point of multiplier $\lambda$). Then, $\sigma(w)=c$ if and only if the quadratic polynomial $z^2+c$ has a fixed point of multiplier $1/\overline{\lambda}$.
\end{proposition}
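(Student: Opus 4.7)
The plan is to reduce the proposition to the explicit formula already recorded in Equation~(\ref{schwarz_cardioid}) by unpacking the multiplier parametrization of the principal hyperbolic component $\heartsuit$ and observing that the same formula $\phi(\mu) = \mu/2 - \mu^2/4$ computes, for \emph{any} $\mu \in \C$, the parameter $c$ for which $z^2+c$ has a fixed point of multiplier $\mu$. Once this is in place, both directions of the equivalence fall out of the identity $\sigma(\phi(\lambda)) = \phi(1/\overline{\lambda})$.

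First I would verify the general multiplier identity: the fixed points of $p_c(z) = z^2 + c$ are the roots of $z^2 - z + c = 0$, and the multiplier at such a fixed point $z$ is $p_c'(z) = 2z$. Writing $\mu = 2z$ and eliminating $z$ from $z^2 - z + c = 0$ gives $c = \mu/2 - \mu^2/4 = \phi(\mu)$. This shows that for every $\mu \in \C$, the polynomial $z^2+c$ has a fixed point of multiplier $\mu$ if and only if $c = \phi(\mu)$; and conversely every $c$ admits exactly two such $\mu$-values, namely the two roots of $\mu^2 - 2\mu + 4c = 0$, whose sum is $2$ (consistent with the well-known fixed-point multiplier relation).

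Next, I would apply this in both directions. Given $w = \phi(\lambda) \in \heartsuit \setminus \{0\}$ with $\lambda \in \D^*$, Equation~(\ref{schwarz_cardioid}) gives $\sigma(w) = \phi(1/\overline{\lambda})$. Hence if $\sigma(w) = c$, then $c = \phi(1/\overline{\lambda})$, and by the general multiplier identity the polynomial $z^2+c$ has a fixed point of multiplier $1/\overline{\lambda}$. Conversely, if $z^2+c$ has a fixed point of multiplier $1/\overline{\lambda}$, then $c = \phi(1/\overline{\lambda}) = \sigma(\phi(\lambda)) = \sigma(w)$.

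There is no real obstacle here; the statement is essentially a dictionary translation between the formula for $\sigma$ obtained via the Riemann map and the dynamical meaning of that Riemann map on $\heartsuit$. The only minor point to handle carefully is the extension of the interpretation ``$\phi(\mu)$ is the parameter whose fixed-point multiplier is $\mu$'' from $\mu \in \D$ (where it reflects the multiplier map of $\heartsuit$) to $\mu = 1/\overline{\lambda}$ with $|\mu| > 1$ (where the relevant fixed point is repelling), and this is handled by the algebraic derivation above which is valid for all $\mu \in \C$.
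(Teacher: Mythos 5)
Your proof is correct and takes essentially the same approach as the paper: the paper also derives the result from the identity $\sigma(\phi(\lambda))=\phi(1/\overline{\lambda})$ together with the observation that $\phi(\mu)=\mu/2-\mu^2/4$ gives the parameter with a fixed point of multiplier $\mu$. You simply spell out the routine algebra (solving $z^2-z+c=0$ and $\mu=2z$) that the paper compresses into the phrase ``a simple computation.''
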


\subsubsection{Covering properties of $\sigma$}\label{mapping_sigma_sec} This description allows us to conveniently study some mapping properties of the map $\sigma$.

\begin{proposition}\label{small_cardioid}
$\sigma^{-1}(\heartsuit)=\phi(B(\frac{2}{3}, \frac{1}{3}))$, and $\sigma:\sigma^{-1}(\heartsuit)\to\heartsuit$ is an anti-holomorphic isomorphism. Moreover, $\sigma:\partial \sigma^{-1}(\heartsuit)\to\partial\heartsuit$ is an orientation-reversing homeomorphism.
\end{proposition}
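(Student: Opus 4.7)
The plan is to leverage Proposition~\ref{multiplier_reflection}, which recasts $\sigma$ as an explicit anti-M\"obius operation on fixed-point multipliers of quadratic polynomials. First I will identify $\sigma^{-1}(\heartsuit)$. For $w = \phi(\lambda) \in \heartsuit \setminus \{0\}$ with $\lambda \in \D^*$, the point $w$ lies in $\sigma^{-1}(\heartsuit)$ iff the quadratic $z^2 + \sigma(w)$ has an attracting fixed point. By Proposition~\ref{multiplier_reflection}, the two fixed-point multipliers of this quadratic are $1/\overline{\lambda}$ and $2 - 1/\overline{\lambda}$. Since $|1/\overline{\lambda}| > 1$ for $\lambda \in \D^*$, the attracting one must be $2 - 1/\overline{\lambda}$, so the condition reduces to $1/\overline{\lambda} \in B(2,1)$. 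The anti-holomorphic involution $\mu \mapsto 1/\overline{\mu}$ swaps the real-symmetric disks $B(2,1)$ and $B(2/3, 1/3)$ (a one-line check: the boundary points $1, 3 \in \partial B(2,1)$ are mapped to $1, 1/3 \in \partial B(2/3, 1/3)$, and the test point $2 \mapsto 1/2$ fixes the interior/exterior choice). Hence the condition becomes $\lambda \in B(2/3, 1/3) \subset \D$. As $\sigma(0) = \infty \notin \heartsuit$, the origin is excluded, and I conclude $\sigma^{-1}(\heartsuit) = \phi(B(2/3, 1/3))$.

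Next I will exhibit $\sigma$ on $\sigma^{-1}(\heartsuit)$ as a composition of elementary maps. Using the identity $\phi(\mu) = \phi(2 - \mu)$, which is immediate from $\phi(\mu) = \mu/2 - \mu^2/4$, together with the fact that $2 - 1/\overline{\lambda} \in \D$ whenever $\lambda \in B(2/3, 1/3)$, the defining relation $\sigma(\phi(\lambda)) = \phi(1/\overline{\lambda})$ can be rewritten as
\[
\sigma \;=\; \phi \;\circ\; \bigl(\mu \mapsto 2 - 1/\overline{\mu}\bigr) \;\circ\; \phi^{-1} \qquad \text{on } \phi(B(2/3, 1/3)),
\]
where $\phi^{-1}$ denotes the local inverse of the biholomorphism $\phi : \D \to \heartsuit$. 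The middle factor is the composition of the anti-M\"obius bijection $B(2/3, 1/3) \to B(2,1)$ found above with the affine isomorphism $B(2,1) \to \D$, $w \mapsto 2 - w$; it is therefore an anti-holomorphic bijection onto $\D$. Sandwiching between biholomorphisms shows that $\sigma$ is an anti-holomorphic isomorphism from $\phi(B(2/3, 1/3))$ onto $\heartsuit$.

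For the boundary assertion, I would observe that all three factors extend as homeomorphisms on the relevant boundary curves. The restriction $\phi|_{\overline{\D}}$ is a homeomorphism onto $\overline{\heartsuit}$ because the ambiguity $\phi(\mu) = \phi(2-\mu)$ creates no conflict on $\overline{\D}$: the solutions $\mu, 2-\mu$ of $\phi(\mu) = \phi(2-\mu)$ both lie in $\overline{\D}$ only when $\mu \in \overline{\D} \cap \overline{B}(2,1) = \{1\}$. Thus $\phi$ and $\phi^{-1}$ contribute orientation-preserving boundary homeomorphisms, while the middle factor contributes orientation reversal, since $\mu \mapsto 1/\overline{\mu}$ is anti-conformal (hence orientation-reversing on the boundary circle) and $w \mapsto 2 - w$ is affine (orientation-preserving). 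The composition is therefore an orientation-reversing homeomorphism $\partial \sigma^{-1}(\heartsuit) \to \partial \heartsuit$. I do not foresee a genuine obstacle in this proof; the only care-point is tracking which of the two possible Möbius images the anti-involution $\mu \mapsto 1/\overline{\mu}$ selects for $B(2/3, 1/3)$, and that is settled by any single interior test point.
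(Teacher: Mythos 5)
Your proof is correct and follows essentially the same route as the paper: both reduce the statement to the explicit anti-M\"obius map $g(\lambda)=2-1/\overline{\lambda}$ on the multiplier disk and check its mapping properties on $B(\tfrac23,\tfrac13)$. The minor differences (using the identity $\phi(\mu)=\phi(2-\mu)$ in place of the paper's direct appeal to Proposition~\ref{multiplier_reflection}, and the extra verification of the disk swap and injectivity of $\phi|_{\overline{\D}}$) are just alternative phrasings of the same computations.
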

\begin{proof}
First note that $\sigma$ fixed $\partial\heartsuit$ pointwise. Now let $w\in\heartsuit$ and $\lambda$ be the attracting multiplier of $z^2+w$; i.e. $w=\phi(\lambda)$. Since $\lambda<1$, we have that $\vert 1/\overline{\lambda}\vert>1$. Therefore, $\sigma(w)\in\heartsuit$ if and only if $\vert 2-1/\overline{\lambda}\vert<1$. A simple computation shows that this happens if and only if $\vert \lambda-\frac{2}{3}\vert<\frac{1}{3}$; i.e. $\lambda\in B(2/3,1/3)$. This proves that $\sigma(w)\in\heartsuit\Leftrightarrow w\in \phi(B(\frac{2}{3}, \frac{1}{3}))$. Hence, $\sigma^{-1}(\heartsuit)=\phi(B(2/3, 1/3))$.

The above discussion also shows that the (inverse of the) Riemann uniformization $\phi$ of $\heartsuit$ conjugates $\sigma:\sigma^{-1}(\heartsuit)\to\heartsuit$ to the map $$g: B(2/3,1/3) \to \D,\ g(\lambda)= 2-1/\overline{\lambda}.$$ Since $g$ is an anti-holomorphic isomorphism, we conclude that $\sigma:\sigma^{-1}(\heartsuit)\to\heartsuit$ is such as well.

Similarly, the inverse of the homeomorphic boundary extension of the Riemann uniformization $\phi$ of $\heartsuit$ conjugates $\sigma:\partial\sigma^{-1}(\heartsuit)\to\partial\heartsuit$ to the map $g: \partial B(2/3,1/3) \to \partial\D$. Since $g$ is an orientation-reversing homeomorphism, the same is true for $\sigma:\partial\sigma^{-1}(\heartsuit)\to\partial\heartsuit$.
\end{proof}

\begin{remark}
$\sigma^{-1}(\heartsuit)$ is a smaller cardioid with its cusp at $\frac{1}{4}$ and vertex at $\frac{5}{36}$. Also, $\sigma^{-1}(\heartsuit)\cap\R=(\frac{5}{36},\frac{1}{4})$.
\end{remark}

The following behavior of $\sigma$ outside of $\sigma^{-1}(\heartsuit)$ can be directly deduced from Proposition~\ref{multiplier_reflection}.

\begin{proposition}
$\sigma:\heartsuit\setminus\overline{\sigma^{-1}(\heartsuit)}\to\widehat{\C}\setminus\overline{\heartsuit}$ is a two-to-one branched covering branched at $0$.
\end{proposition}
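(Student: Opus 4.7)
The plan is to factor the restriction $\sigma|_{\heartsuit\setminus\overline{\sigma^{-1}(\heartsuit)}}$ through the semi-conjugacy $\sigma\circ\phi=\phi\circ r$ (where $r(\lambda)=1/\overline{\lambda}$) and to read off the covering properties from the three factors separately. First I would view $\phi(\lambda)=\lambda/2-\lambda^2/4$ as a degree-two branched self-cover of $\hat{\C}$; its critical points are $\lambda=1$ (image $1/4$) and $\lambda=\infty$ (image $\infty$), and the deck transformation is the holomorphic involution $\iota(\lambda)=2-\lambda$. Since $\iota(\overline{\D})=\overline{B(2,1)}$ (these two closed disks meeting only at the common critical point $1$), we obtain $\phi^{-1}(\overline{\heartsuit})=\overline{\D}\cup\overline{B(2,1)}$, so
\[
\phi:\hat{\C}\setminus(\overline{\D}\cup\overline{B(2,1)})\;\longrightarrow\;\hat{\C}\setminus\overline{\heartsuit}
\]
is a two-to-one branched covering branched only at $\infty$.

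Next I would show that $r$ restricts to an anti-holomorphic bijection from $\D\setminus\overline{B(2/3,1/3)}$ onto $\hat{\C}\setminus(\overline{\D}\cup\overline{B(2,1)})$. The boundary circle $\partial B(2/3,1/3)$ (tangent to $\partial\D$ at $1$) is already seen in the proof of Proposition~\ref{small_cardioid} to correspond under $r$ to the boundary circle $\partial B(2,1)$ (tangent to $\partial\D$ at $1$ from the outside); that $r$ takes the interior of the former to the interior of the latter is checked by tracing one point (e.g.\ $r(1/2)=2$). Since $r$ is globally an involution exchanging $\D$ with $\hat{\C}\setminus\overline{\D}$, the claim about the complementary crescent-shaped regions follows at once.

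Combining Proposition~\ref{small_cardioid}, which gives $\sigma^{-1}(\heartsuit)=\phi(B(2/3,1/3))$, with the fact that $\phi|_{\D}$ is a biholomorphism onto $\heartsuit$, I would write $\sigma$ as the composition
\[
\heartsuit\setminus\overline{\sigma^{-1}(\heartsuit)}\;\xrightarrow[\cong]{\phi^{-1}}\;\D\setminus\overline{B(2/3,1/3)}\;\xrightarrow[\cong]{r}\;\hat{\C}\setminus(\overline{\D}\cup\overline{B(2,1)})\;\xrightarrow[\phi]{2:1}\;\hat{\C}\setminus\overline{\heartsuit}.
\]
The first two arrows are bijections and the third is a two-to-one branched cover branched only at $\infty$. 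Therefore $\sigma$ is a two-to-one branched cover of $\hat{\C}\setminus\overline{\heartsuit}$, branched exactly at the point whose image under $\phi\circ r\circ\phi^{-1}$ is $\infty$, and this point is $0$ (since $\phi(0)=0$, $r(0)=\infty$, $\phi(\infty)=\infty$), consistent with $\sigma(0)=\infty$.

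The only step that needs genuine care is the identification of $r(\D\setminus\overline{B(2/3,1/3)})$ with $\hat{\C}\setminus(\overline{\D}\cup\overline{B(2,1)})$ and the correct counting of sheets of $\phi$ over that domain; once the tangency configuration of the two closed disks and their meeting at the critical point $\lambda=1$ is in place, the rest is a formal composition of a biholomorphism, an anti-holomorphic bijection, and a branched two-to-one cover.
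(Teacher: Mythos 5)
Your proof is correct, and it takes a genuinely different route from the paper's. The paper (in a commented-out proof, and as signalled by the sentence ``Using Proposition~\ref{multiplier_reflection}, it is straightforward\dots'') relies on the multiplier description of $\sigma$: given $w_1=\phi(\lambda_1)$ with $\lambda_1\notin\overline{B(\tfrac23,\tfrac13)}\cup\{0\}$, one explicitly produces a second preimage $w_2=\phi(\lambda_2)$ with $\lambda_2=\tfrac{\lambda_1}{2\lambda_1-1}$, using the identity $\tfrac{1}{\overline{\lambda_1}}+\tfrac{1}{\overline{\lambda_2}}=2$ for fixed-point multipliers of a quadratic polynomial; the branching at $0$ is then handled separately. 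You instead exploit the semi-conjugacy $\sigma\circ\phi=\phi\circ r$ directly and view $\phi$ as a global degree-two rational self-map of $\hat{\C}$ with deck involution $\iota(\lambda)=2-\lambda$, so that $\phi^{-1}(\overline{\heartsuit})=\overline{\D}\cup\iota(\overline{\D})$ and $\phi$ restricts to a two-to-one branched cover from the complementary crescent onto $\hat{\C}\setminus\overline{\heartsuit}$; the circle reflection $r$ then identifies $\D\setminus\overline{B(\tfrac23,\tfrac13)}$ with that crescent. This is precisely the mechanism the paper records in the Remark after Corollary~\ref{fiber_sum} (``covering degrees\dots can also be computed using \cite[Lemma~4.1]{LM} and the fact that the cardioid is a quadrature domain of order two''), and it mirrors the direct proof given for the deltoid in Proposition~\ref{deltoid_schwarz_covering}. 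Your route is more conceptual and carries over verbatim to any simply connected quadrature domain with rational Riemann map; the paper's multiplier route is more computational but is preferred there because it also yields Corollary~\ref{fiber_sum} and the shrinking estimate of Corollary~\ref{iterated_pre_image_cardioid}, which are reused later.

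One small presentational caveat: when you assert $\phi^{-1}(\overline{\heartsuit})=\overline{\D}\cup\overline{B(2,1)}$, it is worth spelling out why nothing outside $\overline{\D}\cup\iota(\overline{\D})$ can map into $\overline{\heartsuit}$ --- namely, that every point of $\heartsuit$ already has its full degree-two fiber accounted for by the biholomorphism $\phi|_{\D}$ together with $\iota$, so no third preimage exists. This is implicit in ``deck transformation'' but deserves a sentence.
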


\begin{corollary}\label{fiber_sum}
For two distinct points $w_1=\phi(\lambda_1), w_2=\phi(\lambda_2)\in\heartsuit\setminus\left(\overline{\sigma^{-1}(\heartsuit)}\cup\{0\}\right)$, we have $\sigma(w_1)=\sigma(w_2)$ if and only if $1/\overline{\lambda_1}+1/\overline{\lambda_2}=2$.
\end{corollary}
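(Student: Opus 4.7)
The plan is to chain Proposition~\ref{multiplier_reflection} with the elementary fact (recorded in Subsection~\ref{mandel}) that the two fixed point multipliers of any quadratic polynomial $z^2+c$ sum to $2$. Essentially, the corollary is just a restatement of the multiplier description of $\sigma$ on the region where $\sigma$ is two-to-one.

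First I would translate the hypothesis using Proposition~\ref{multiplier_reflection}. By that proposition, $\sigma(w_i)$ is characterized as the unique $c$ for which the quadratic polynomial $z^2+c$ possesses a fixed point of multiplier $1/\overline{\lambda_i}$ (uniqueness follows from the remark that a fixed-point multiplier determines a quadratic polynomial, since $\mu_1+\mu_2=2$ and $\mu_1\mu_2=4c$). Since $\lambda_i\in\D^*$, we have $|1/\overline{\lambda_i}|>1$, so this fixed point is the repelling one.

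For the forward direction, assume $w_1\neq w_2$ (so $\lambda_1\neq\lambda_2$) and $\sigma(w_1)=\sigma(w_2)=c$. Then $1/\overline{\lambda_1}$ and $1/\overline{\lambda_2}$ are two distinct multipliers of fixed points of the same quadratic polynomial $z^2+c$. But $z^2+c$ has exactly two fixed points whose multipliers sum to $2$, so $\{1/\overline{\lambda_1},\,1/\overline{\lambda_2}\}$ must be precisely this pair and therefore $1/\overline{\lambda_1}+1/\overline{\lambda_2}=2$. Conversely, given $1/\overline{\lambda_1}+1/\overline{\lambda_2}=2$, the two numbers $1/\overline{\lambda_1}$ and $1/\overline{\lambda_2}$ arise as the multipliers of the two fixed points of a unique quadratic $z^2+c$, where $c=1/(4\overline{\lambda_1}\overline{\lambda_2})$. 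Applying Proposition~\ref{multiplier_reflection} to each of $w_1$ and $w_2$ then yields $\sigma(w_1)=c=\sigma(w_2)$.

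There is no genuine obstacle; the entire argument is a direct unfolding of the multiplier dictionary established in Proposition~\ref{multiplier_reflection} together with the trace relation $\mu_1+\mu_2=2$ for fixed points of quadratic polynomials. The only very mild point of attention is the degenerate case $\lambda_1=\lambda_2$ (which would force $\lambda_i=1\notin\D$ in the algebraic relation), but this is automatically excluded once we interpret the two preimages as distinct, consistently with the two-to-one mapping behavior of $\sigma$ on $\heartsuit\setminus\overline{\sigma^{-1}(\heartsuit)}$ established just above.
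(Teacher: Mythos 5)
Your proof is correct and takes essentially the same approach as the paper, which proves this corollary in a single line by citing the multiplier description of $\sigma$ from Proposition~\ref{multiplier_reflection} together with the trace relation $\mu_1+\mu_2=2$ for the fixed-point multipliers of a quadratic polynomial. You have simply unfolded that one-line argument in full, including the (sensible) observation that the case $w_1=w_2$ is degenerate and should be read as excluded.
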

\begin{proof}
This follows directly from the multiplier description of $\sigma$ given in Proposition~\ref{multiplier_reflection} and the fact that the sum of the multipliers of the two fixed points of a quadratic polynomial is $2$.
\end{proof}

\subsubsection{Iteration of $\sigma$}\label{cardioid_ref_iteration_sec}

\begin{corollary}[Iterated preimages of $\heartsuit$]\label{iterated_pre_image_cardioid}
$\sigma^{-n}(\heartsuit)=\phi(B(\frac{2n}{2n+1},\frac{1}{2n+1}))$, and $\sigma^{\circ n}:\sigma^{-n}(\heartsuit)\to\heartsuit$ is a univalent map. In particular, $\mathrm{diam}(\sigma^{-n}(\heartsuit))\rightarrow 0$, as $n\to+\infty$.
\end{corollary}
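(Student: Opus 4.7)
The plan is to prove the identity $\sigma^{-n}(\heartsuit) = \phi(D_n)$ with $D_n := B\bigl(\tfrac{2n}{2n+1}, \tfrac{1}{2n+1}\bigr)$ by induction on $n \ge 0$; the cases $n=0$ (trivial) and $n=1$ (Proposition~\ref{small_cardioid}) serve as base. By Relation~(\ref{schwarz_cardioid}) the Riemann map $\phi$ conjugates $\sigma$ on $\phi(B(\tfrac{2}{3},\tfrac{1}{3})) = \sigma^{-1}(\heartsuit)$ to the anti-Möbius map $g(\lambda) = 2 - 1/\overline{\lambda}$ (this is the multiplier description of Proposition~\ref{multiplier_reflection}), so the induction step reduces to showing $g(D_n) = D_{n-1}$.

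To carry this out efficiently I would pass to the Möbius coordinate $w = 1/(1-\lambda)$, which sends $\mathbb{D}$ to the right half-plane $H_0 := \{\re w > \tfrac{1}{2}\}$. A short expansion of the defining circle $|\lambda - \tfrac{2n}{2n+1}| = \tfrac{1}{2n+1}$ via $\lambda = 1 - 1/w$ collapses to $\re w = n + \tfrac{1}{2}$, so $D_n$ corresponds precisely to the translated half-plane $H_n := \{\re w > n + \tfrac{1}{2}\}$. The key observation is that in the $w$-coordinate the map $g$ takes the remarkably simple form $w \mapsto \overline{w} - 1$: writing $\mu = g(\lambda)$ and $\nu = 1/(1-\mu)$, direct substitution of $\mu = 2 - 1/\overline{\lambda}$ and $\overline{\lambda} = 1 - 1/\overline{w}$ gives $\nu = \overline{w} - 1$. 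The corresponding $g$-preimage is then $\nu \mapsto \overline{\nu} + 1$, which manifestly sends $H_{n-1}$ onto $H_n$, closing the induction.

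Univalence of $\sigma^{\circ n}$ on $\sigma^{-n}(\heartsuit)$ comes for free. For $n \ge 1$ every $D_n$ is internally tangent to $\partial \mathbb{D}$ at $\lambda = 1$ with diameter $\tfrac{2}{2n+1} \le \tfrac{2}{3}$, hence $D_n \subseteq D_1$, and therefore $\sigma^{-n}(\heartsuit) \subseteq \sigma^{-1}(\heartsuit)$. Since $\sigma$ restricts to an anti-holomorphic isomorphism on $\sigma^{-1}(\heartsuit)$ by Proposition~\ref{small_cardioid}, the iterate $\sigma^{\circ n}\colon \sigma^{-n}(\heartsuit) \to \heartsuit$ is a composition of $n$ univalent maps and is itself univalent.

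Finally, $\mathrm{diam}(D_n) = \tfrac{2}{2n+1} \to 0$, and the disks $D_n$ cluster (in the Hausdorff topology) at the boundary point $\lambda = 1$. Since $\phi$ is a polynomial, it is continuous on $\overline{\mathbb{D}}$, so $\phi(D_n) = \sigma^{-n}(\heartsuit)$ shrinks to $\phi(1) = \tfrac{1}{4}$, the cusp of $\partial \heartsuit$, and in particular its Euclidean diameter tends to $0$. The only substantive computational step --- and the main obstacle to a tidy write-up --- is verifying the simplification $\nu = \overline{w} - 1$ together with the boundary calculation $D_n \leftrightarrow H_n$; both are elementary but require an honest algebraic manipulation.
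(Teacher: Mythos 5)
Your proof is correct and follows the same strategy as the paper: use the Riemann map $\phi$ to reduce the problem to iterating the anti-Möbius map $g(\lambda)=2-1/\overline{\lambda}$ on the disk, as set up in Proposition~\ref{small_cardioid}. The only (minor) difference is computational: the paper's proof states the closed-form iterate $(\iota\circ g)^{\circ n}(\lambda)=\frac{(n+1)\lambda-n}{n\lambda-(n-1)}$ and reads off $g^{-n}(\D)=B(\tfrac{2n}{2n+1},\tfrac{1}{2n+1})$ from it, whereas you conjugate by $w=1/(1-\lambda)$ to the half-plane model where $g$ becomes $w\mapsto\overline w -1$, which renders the identification of $g^{-n}(\D)$ with $H_n$ essentially immediate --- a cleaner way to organize the same calculation.
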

\begin{proof}
This is simply an extension of the arguments used in the proof of Proposition~\ref{small_cardioid}. Iterating the map $\sigma$ is equivalent to iterating the univalent map $g: \lambda\mapsto 2-1/\overline{\lambda}$ (as long as $w\in\sigma^{-1}(\heartsuit)$ or equivalently $\lambda\in g^{-1}(\D)$). As $(\iota\circ g)^{\circ n}(\lambda)=\frac{(n+1)\lambda-n}{n\lambda-(n-1)}$, it follows that $g^{-n}(\D)=B(\frac{2n}{2n+1},\frac{1}{2n+1})$. Hence, $\sigma^{-n}(\heartsuit)=\phi(B(\frac{2n}{2n+1},\frac{1}{2n+1}))$. The other statements readily follow.
\end{proof}

\subsubsection{An explicit formula for $\sigma$}\label{real_formula_sec} Recall that Relation~(\ref{schwarz_cardioid}) gives an implicit formula for $\sigma$. Choosing the branch of square root which sends positive reals to positive reals, we have the following explicit formula for $\sigma$: 
\begin{equation}
\sigma(w)=\frac{1-2\sqrt{1-4\overline{w}}}{4(1-\sqrt{1-4\overline{w}})^2},\quad \mathrm{for\ all}\ w\in\overline{\heartsuit}.  
\label{real_formula}
\end{equation}

\subsubsection{A characterization of $\heartsuit$ as a quadrature domain}\label{cardioid_char_sec}

\begin{proposition}[Characterization of cardioids and disks as quadrature domains]\label{cardioid_char_prop}
1) Let $\Omega\ni \infty$ be a simply connected quadrature domain with associated Schwarz reflection map $\sigma_\Omega$. If $\sigma_\Omega:\sigma_\Omega^{-1}(\overline{\Omega}^c)\to\overline{\Omega}^c$ is univalent, then $\Omega$ is an exterior disk.

2) Let $\Omega\ni 0$ be a bounded simply connected quadrature domain such that $\partial\Omega$ has a cusp at $\frac14$. Moreover, suppose that the Schwarz reflection map $\sigma_\Omega$ of $\Omega$ has a double pole at $0$, and $\sigma_\Omega:\sigma_\Omega^{-1}(\Omega)\to\Omega$ is univalent. Then, $\Omega=\heartsuit$.
\end{proposition}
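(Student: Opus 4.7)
The plan is to apply Proposition~\ref{simp_conn_quad} to both parts: the Riemann map $\phi:\D\to\Omega$ is rational of some degree $d$, and the semi-conjugacy $\sigma_\Omega(\phi(w))=\phi(1/\overline{w})$ on $\overline{\D}$ lets me convert hypotheses on $\sigma_\Omega$ into hypotheses on $\phi$. My strategy is first to pin down $d$ from the univalence hypothesis, and then to identify $\phi$ explicitly using the remaining hypotheses.

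The key mapping-degree observation, which I would establish first, is the following. Since $\phi|_\D:\D\to\Omega$ is biholomorphic and $\phi(\partial\D)=\partial\Omega$, the set $V':=\phi^{-1}(\hat\C\setminus\overline{\Omega})$ lies in $\hat\C\setminus\overline{\D}$ and $\phi:V'\to\hat\C\setminus\overline{\Omega}$ is a branched covering of degree $d$; similarly, $V:=\phi^{-1}(\Omega)\setminus\D$ lies in $\hat\C\setminus\overline{\D}$ and $\phi:V\to\Omega$ is a branched covering of degree $d-1$. Conjugating by the inversion $w\mapsto 1/\overline{w}$ (a bijection between $\hat\C\setminus\overline{\D}$ and $\D$) together with $\phi|_\D$, it follows that $\sigma_\Omega:\sigma_\Omega^{-1}(\overline{\Omega}^c)\to\overline{\Omega}^c$ has degree $d$ (the count relevant for part 1), while $\sigma_\Omega:\sigma_\Omega^{-1}(\Omega)\to\Omega$ has degree $d-1$ (relevant for part 2).

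For part 1, the univalence hypothesis then forces $d=1$, so $\phi$ is M\"obius, $\Omega$ is a round disk in $\hat\C$, and since $\infty\in\Omega$, $\Omega$ is the complement of a closed Euclidean disk. For part 2, univalence forces $d=2$. I would then normalize the Riemann map so that $\phi(0)=0$ (possible since $0\in\Omega$); the semi-conjugacy shows that $\sigma_\Omega$ having a double pole at $0$ is equivalent to $\phi$ having a double pole at $\infty$, which (as $\deg\phi=2$) is necessarily the only pole of $\phi$. Hence $\phi$ is a polynomial of degree $2$ with $\phi(0)=0$, i.e.\ $\phi(w)=Aw^2+Bw$ for some $A,B\in\C$ with $A\neq 0$.

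Finally, the cusp at $1/4$ corresponds to a critical point $w_c\in\partial\D$ of $\phi$ with $\phi(w_c)=1/4$. Precomposing with a rotation of $\D$ (which preserves $\phi(0)=0$ and the polynomial structure), I may take $w_c=1$. The equations $\phi'(1)=2A+B=0$ and $\phi(1)=A+B=1/4$ then force $A=-1/4$, $B=1/2$, giving $\phi(w)=w/2-w^2/4$, which is precisely the Riemann map of $\heartsuit$. Therefore $\Omega=\phi(\D)=\heartsuit$. The main obstacle in this plan is the mapping-degree bookkeeping of the second paragraph; once that is in place, the rest reduces to routine normalization and a short algebraic computation.
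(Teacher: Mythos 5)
Your proposal is correct and follows essentially the same route as the paper: Proposition~\ref{simp_conn_quad} supplies the rational Riemann map, the commutative diagram translates the univalence hypothesis into a degree count for $\phi$ (in both cases the degree of $\sigma_\Omega$ on the relevant pre-image equals $d$ resp.\ $d-1$), and the remaining hypotheses pin down $\phi$ explicitly. Your version spells out the branched-covering bookkeeping a bit more explicitly and normalizes $\phi(0)=0$ first, deduces the quadratic-polynomial form from the degree and the pole condition, and only then uses the rotational freedom to place the critical point at $1$; the paper simultaneously normalizes $\phi(0)=0$ and $\phi(1)=\tfrac14$ at the outset and then deduces the pole and degree conditions. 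The two orderings are equivalent, and both arrive at $\phi(w)=w/2-w^2/4$.
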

\begin{proof}
1) By Proposition~\ref{simp_conn_quad}, there exists a rational map $\phi_\Omega$ on $\widehat{\C}$ such that $\phi_\Omega:\D\to\Omega$ is a Riemann uniformization of $\Omega$. Note that by Figure~\ref{comm_diag_schwarz}, the degree of $\sigma_\Omega:\sigma_\Omega^{-1}(\overline{\Omega}^c)\to\overline{\Omega}^c$ is equal to the number of preimages in $\widehat{\C}$ of a generic point in $\overline{\Omega}^c$ under $\phi_\Omega$. Our assumption now implies that $\mathrm{deg}\ \phi_\Omega=1$; i.e. $\phi_\Omega$ is a M{\"o}bius map. The conclusion follows.

2) As in the previous part, let $\phi_\Omega$ be a rational map on $\widehat{\C}$ such that $\phi_\Omega:\D\to\Omega$ is a Riemann uniformization of $\Omega$. We can assume that $\phi_\Omega(0)=0$, and $\phi_\Omega(1)=\frac14$. Since $\phi_\Omega(\partial\D)=\partial\Omega$ has a cusp at $\frac14$, it follows that $\phi_\Omega'(1)=0$.

Since the Schwarz reflection map $\sigma_\Omega$ of $\Omega$ has a double pole at $0$, the commutative diagram in Figure~\ref{comm_diag_schwarz} readily yields that $\phi_\Omega(\infty)=\infty$, and $D\phi_\Omega(\infty)=0$. 

The same commutative diagram also implies that the degree of $\sigma_\Omega:\sigma_\Omega^{-1}(\Omega)\to\Omega$ is equal to the number of preimages in $\widehat{\C}\setminus\overline{\D}$ of a generic point in $\Omega$ under $\phi_\Omega$. Hence, our assumption that $\sigma_\Omega:\sigma_\Omega^{-1}(\Omega)\to\Omega$ is univalent translates to the fact that $\mathrm{deg}\ \phi_\Omega=2$. It follows that $\phi_\Omega$ is a quadratic polynomial. Since $\phi_\Omega(0)=0$, $\phi_\Omega(1)=\frac14$, and $\phi_\Omega'(1)=0$, we conclude that $\phi_\Omega(\lambda)=\frac{\lambda}{2}-\frac{\lambda^2}{4}$. Therefore, $\Omega=\heartsuit$.
\end{proof}

\begin{remark}
The simple connectivity assumption in Proposition~\ref{cardioid_char_prop} can be dropped (although we will not need this improvement). However, one needs to employ more involved arguments to deduce the results without this assumption (see \cite{Gus} for the cardioid case, and \cite{Ep2} for the case of disks).
\end{remark}

\subsection{The dynamical and parameter planes}\label{dynamical_plane}

Let $\heartsuit$ be the principal hyperbolic component of the Mandelbrot set. For any $a\in\C$, let $B(a,r_a)$ be the smallest disk containing $\heartsuit$ and centered at $a$; i.e. $\partial B(a,r_a)$ is the circumcircle to $\heartsuit$.

\begin{proposition}\label{circle_cardioid_touching}
1) For $a\in(-\infty,-\frac{1}{12})$, the circumcircle $\partial B(a,r_a)$ touches $\partial\heartsuit$ at exactly two points. On the other hand, for any $a\in\C\setminus (-\infty,-\frac{1}{12})$, the circumcircle $B(a,r_a)$ touches $\partial\heartsuit$ at exactly one point.

2) For $a\neq-\frac{1}{12}$, the circle $\partial B(a,r_a)$ has a contact of order one with $\partial\heartsuit$. On the other hand, $\partial B(a,r_a)$ has a contact of order three with $\partial\heartsuit$ if $a=-\frac{1}{12}$.
\end{proposition}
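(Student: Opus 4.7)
The plan is to analyze the function $F_a(\theta):=|\phi(e^{i\theta})-a|^{2}$ on $\R/2\pi\Z$, whose global maxima are exactly the touching points of $\partial B(a,r_a)$ with $\partial\heartsuit$. First I would compute $F_a'(\theta)$; writing $w=e^{i\theta}$ and using $\phi(w)=w/2-w^{2}/4$, a direct calculation reduces the critical-point equation $F_a'(\theta)=0$ to
\[
(w-1)\bigl(8\bar a\,w^{3}-2w^{2}-2w+8a\bigr)=0.
\]
Setting $R_a(w):=8\bar a\,w^{3}-2w^{2}-2w+8a$, the factor $(w-1)$ always appears because $\phi'(1)=0$ (the cusp $1/4$ is forced to be a critical point of $F_a$), and the remaining critical points are the roots of $R_a$ lying on $\partial\D$. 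I would then exploit the palindromic-type identity $R_a(w)=w^{3}\,\overline{R_a(1/\bar w)}$, which pairs roots as $\{w_0,1/\bar w_0\}$, to conclude that $R_a$ has either exactly one or exactly three roots on $\partial\D$; correspondingly $F_a$ has $2$ or $4$ critical points on the circle (alternating max-min-max-min in the second case).

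For real $a$, $R_a$ has real coefficients and $R_a(-1)=0$, so it factors as $(w+1)(8a\,w^{2}-2(1+4a)w+8a)$, whose quadratic factor has roots of product $1$ and discriminant $-192(a+1/12)(a-1/4)$. The discriminant is negative exactly for $a<-1/12$ or $a>1/4$, in which case the conjugate roots of the quadratic lie on $\partial\D$ and give a symmetric pair $\pm\theta_{0}$ with $\cos\theta_{0}=1/(8a)+1/2$. A direct comparison of $F_a$-values at the four critical points $0,\pi,\pm\theta_{0}$ then shows that for $a<-1/12$ the pair $\pm\theta_{0}$ provides the global maximum (yielding two touching points), while for $a>1/4$ it consists of local minima and the unique global maximum sits at $\phi(-1)=-3/4$; for $-1/12\le a\le 1/4$ there are only two critical points and the maximum is again unique. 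This settles part~1 for real parameters.

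The main work is the case $\im(a)\neq 0$. Suppose for contradiction that two distinct roots $w_{1},w_{2}$ of $R_a$ on $\partial\D$ satisfy $F_a(w_{1})=F_a(w_{2})$. At every critical point one has $F_a=P_a'(w)/(8w)$, where $P_a(w):=4w^{2}F_a(w)$ is an explicit degree-four polynomial; combining this with Vieta's formulas for $R_a$ I would solve explicitly for
\[
s:=w_{1}+w_{2}=\frac{48|a|^{2}+16a+1}{4\bar a\,(1+12a)},\qquad p:=w_{1}w_{2}=\frac{1+12a}{1+12\bar a},
\]
and note that $|p|=1$ is automatic. The requirement that $(s,p)$ genuinely arises from two points $w_{1},w_{2}\in\partial\D$ is $\bar s=s/p$, which after clearing denominators becomes equivalent to $\im(a)=0$ or to the auxiliary quartic $6912|a|^{4}-288|a|^{2}-32\re(a)-1=0$. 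The decisive identity
\[
|s|^{2}-4=-\,\frac{6912|a|^{4}-288|a|^{2}-32\re(a)-1}{16|a|^{2}\,|1+12a|^{2}}
\]
then shows that this quartic is precisely the locus $|s|=2$, on which $w_{1}$ and $w_{2}$ necessarily coincide; it therefore yields no pair of distinct touching points. A short parallel computation disposes of the hybrid scenario where one putative touching point is the cusp $1/4$, leaving the real-axis segment $(-\infty,-1/12)$ as the only source of multiple touching.

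For part~2, the order of contact at a touching point $\theta_{0}$ is $2m$, where $m$ is the least integer with $F_a^{(2m)}(\theta_{0})\neq 0$. At a simple root of $R_a$, $F_a''(\theta_{0})\neq 0$, yielding the generic order-two contact. At $a=-1/12$ a direct computation shows that $R_a(w)$ degenerates to $-\tfrac{2}{3}(w+1)^{3}$, so $w=-1$ is a triple root and $F_a'(\pi)=F_a''(\pi)=F_a'''(\pi)=0$ while $F_a^{(4)}(\pi)\neq 0$, giving order-four contact of $\overline{B(-1/12,\,2/3)}$ with $\partial\heartsuit$ at $-3/4$; this circle is simultaneously the osculating circle of $\partial\heartsuit$ at $-3/4$. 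The main obstacle throughout is the symbolic identity for $|s|^{2}-4$ in paragraph three, which requires careful bookkeeping with Vieta's formulas to establish rigorously.
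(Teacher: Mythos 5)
Your proof is correct and follows a genuinely different route from the paper's. The paper argues geometrically: it first claims that two tangency points can occur only for real $a$ (via a brief symmetry argument), then for real $a$ compares the curvature of $\partial B(a, a+\tfrac34)$ with that of $\partial\heartsuit$ at $-\tfrac34$, and for part~(2) observes that the evolute of $\partial\heartsuit$ is a one-third-scaled cardioid whose cusp is at $-\tfrac1{12}$, the only center at which the circumcircle is osculating. Your approach instead parametrizes the tangency locus by the critical points of $F_a(\theta)=|\phi(e^{i\theta})-a|^{2}$ on the circle, factors the critical-point equation into $(w-1)R_a(w)=0$ with a cubic $R_a$ enjoying the palindromic symmetry $R_a(w)=w^{3}\overline{R_a(1/\bar w)}$, and reduces everything to elementary polynomial algebra. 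I have checked the central claims: $R_a(w)=8\bar a w^{3}-2w^{2}-2w+8a$ is right; the factorization $R_a(w)=(w+1)(8a w^{2}-(2+8a)w+8a)$ for real $a$ and the discriminant $-192(a-\tfrac14)(a+\tfrac1{12})$ are right; the Vieta-plus-equal-values computation does give $p=(1+12a)/(1+12\bar a)$ and $s=(48|a|^{2}+16a+1)/(4\bar a(1+12a))$; and the identity $|s|^{2}-4=-\bigl(6912|a|^{4}-288|a|^{2}-32\re a-1\bigr)/\bigl(16|a|^{2}|1+12a|^{2}\bigr)$ holds. So does $R_{-1/12}(w)=-\tfrac23(w+1)^{3}$, and the fact that at a simple root of $R_a$ on $\partial\D$, $F_a''\neq0$ (since $F_a''(\theta_0)\propto(w_0-1)R_a'(w_0)$). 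The payoff of your approach is that it makes the ``two tangencies force $a$ real'' step completely explicit --- the paper's symmetry argument is stated quite tersely --- at the cost of heavier bookkeeping, whereas the paper's route is shorter and more conceptual (curvature comparison, evolute). One step you should still write out is the ``hybrid scenario'': the cusp $\tfrac14$ is never a farthest point (near $\theta=0$ the boundary behaves like $\tfrac14+\tfrac{\theta^{2}}4+i\tfrac{\theta^{3}}4$, so any circle through $\tfrac14$ that circumscribes $\heartsuit$ would have to curve faster than the cusp does, which is impossible), and the comparison of $F_a$-values at $0,\pi,\pm\theta_0$ for real $a$ should be displayed explicitly; neither is hard, but both are currently only asserted.
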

\begin{proof}
1) Note that the cardioid $\partial\heartsuit$ and its circumcircle $\partial B(a,r_a)$ can have at most two points of tangency. 
Since the real line is the only axis of symmetry of $\heartsuit$, it follows that $\partial B(a,r_a)$ can touch $\partial\heartsuit$ at two points only if $a$ is real. Moreover, when $a$ is real, $\partial\heartsuit\cap\partial B(a,r_a)$ is either the singleton $\{-\frac34\}$ or consists of a pair of complex conjugate points.

It is clear that for $a\leq-\frac{3}{4}$, the circumcircle $\partial B(a,r_a)$ touches $\partial\heartsuit$ at a pair of complex conjugate points. 

Now let $-\frac34<a<-\frac{1}{12}$. For such values of $a$, the curvature of the circle $\partial B(a, a+\frac34)$ is larger than $\frac32$. But the curvature of $\partial\heartsuit$ at $-\frac34$ is $\frac32$ (in other words, the boundary of the cardioid curves less than the circle does at $-\frac34$). Since $\partial B(a, a+\frac34)$ is real-symmetric, we have that $B(a, a+\frac34)$ is locally contained in $\heartsuit$ near $-\frac34$. Hence, $\partial B(a, a+\frac34)$ is not a circumcircle to $\partial\heartsuit$. It now follows that $\partial\heartsuit\cap\partial B(a,r_a)$ consists of a pair of complex conjugate points (see Figure~\ref{real_slit_double}).

Finally, let $a\geq-\frac{1}{12}$. Note that the curvature of the circle $\partial B(a, a+\frac34)$ is at most $\frac32$, while the curvature of $\partial\heartsuit$ is at least $\frac32$ at each point (in other words, the boundary of the cardioid curves more than the circle everywhere). Since $\partial B(a, a+\frac34)$ is real-symmetric, it follows that $B(a, a+\frac34)$ contains $\heartsuit$. Hence, $r_a=a+\frac34$, and $\partial B(a, a+\frac34)$ is the circumcircle to $\heartsuit$ centered at $a$. In particular, $\partial\heartsuit\cap\partial B(a, a+\frac34)=\{-\frac34\}$.

2) The evolute (locus of centers of curvature) of $\partial\heartsuit$ is a cardioid $\frac{1}{3}$-rd the size of $\partial\heartsuit$ with a cusp at $-\frac{1}{12}$. For any $a$ not on the evolute, the circle $\partial B(a,r_a)$ is, by definition, not an osculating circle to $\partial\heartsuit$ (i.e. $\partial B(a,r_a)$ has a simple tangency with $\partial\heartsuit$). 

Now suppose that $a\neq-\frac{1}{12}$ is a point on the evolute of $\heartsuit$. More precisely, let $a\neq-\frac{1}{12}$ be the center of curvature of $\partial\heartsuit$ at some point $\alpha_a'$. A simple computation now shows that the radius of curvature of $\partial\heartsuit$ at $\alpha_a'$ is strictly smaller than the distance between $a$ and $\overline{\alpha_a'}\in\partial\heartsuit$; i.e. $\vert a-\alpha_a'\vert<\vert a-\overline{\alpha_a'}\vert$. It follows that the osculating circle to $\partial\heartsuit$ centered at $a$ does not circumscribe $\heartsuit$; in other words, $\partial B(a,r_a)$ is not an osculating circle to $\partial\heartsuit$ for these parameters $a$ as well.

Finally, for $a=-\frac{1}{12}$, the circle $\partial B(a,r_a)$ is indeed the osculating circle of $\partial\heartsuit$ at $-\frac{3}{4}$. Moreover, as $-\frac{3}{4}$ is a vertex of $\partial\heartsuit$ (i.e. the curvature of $\partial\heartsuit$ has a vanishing derivative at $-\frac{3}{4}$), the osculating circle of $\partial\heartsuit$ at $-\frac{3}{4}$ (which is centered at $-\frac{1}{12}$) has a third order tangency with $\partial\heartsuit$. Therefore, for all $a\in\C\setminus (-\infty,-\frac{1}{12}]$, the circle $\partial B(a,r_a)$ touches $\partial\heartsuit$ at exactly one point and has a contact of order one with $\partial\heartsuit$ at that point, while for $a=-\frac{1}{12}$, the circumcircle $\partial B(a,r_a)$ touches $\partial\heartsuit$ at exactly one point and has a contact of order three with $\partial\heartsuit$.
\end{proof}

\begin{figure}[ht!]
\centering
\includegraphics[width=0.32\linewidth]{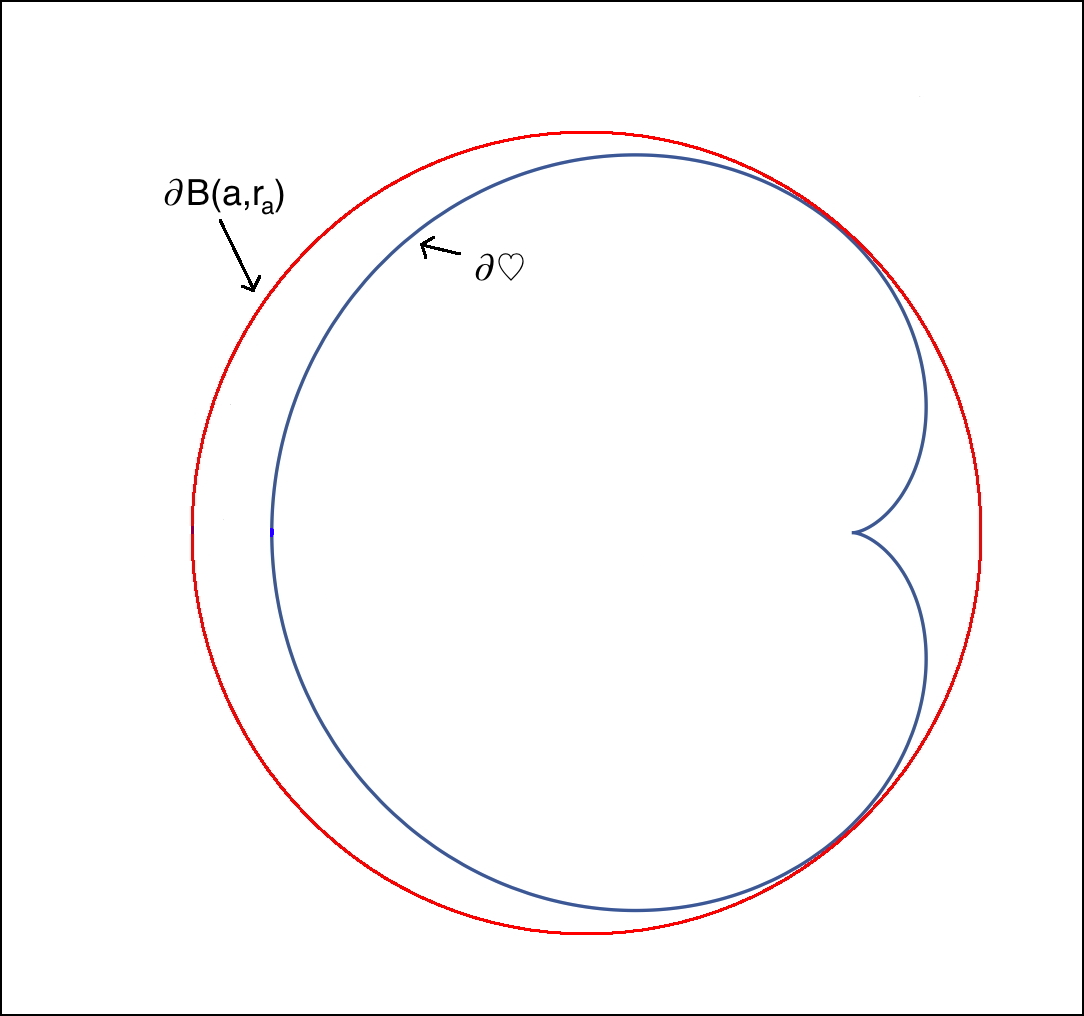}\ \includegraphics[width=0.48\linewidth]{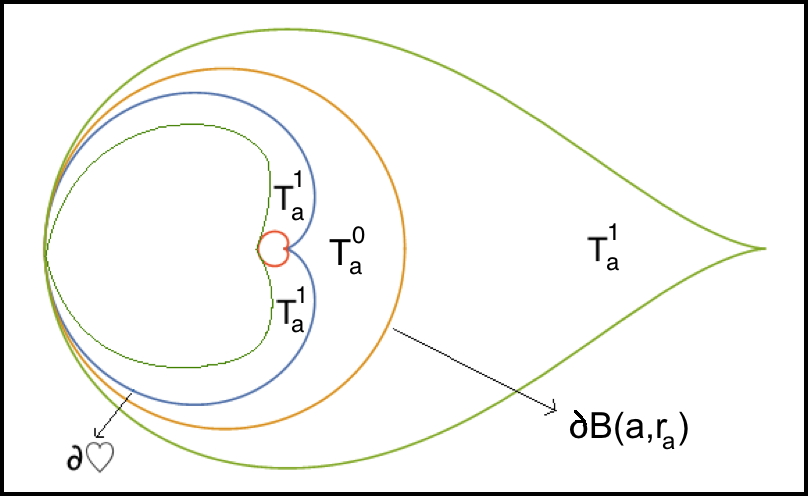}
\caption{Left: For any $a$ in the slit $(-\infty,-\frac{1}{12})$, the disk $B(a,r_a)$ touches $\heartsuit$ at two points. Right: For any $a\in\mathbb{C}\setminus(-\infty,-\frac{1}{12})$, the tiles of rank $0$ and $1$ are labeled as $T_a^0$ and $T_a^1$ respectively. The union of the tiles of rank $0$ and $1$ is denoted by $E_a^1$.}
\label{real_slit_double}
\end{figure}

\subsubsection{The family $\mathcal{S}$}\label{family_defn} In this paper, we will only be interested in the situation where the circumcircle to the cardioid touches it at only one point. For $a\in\C\setminus (-\infty,-\frac{1}{12})$, let $$\Omega_a := \heartsuit\cup\overline{B}(a,r_a)^c,\quad \mathrm{and}\quad T_a:=\Omega_a^c= \overline{B}(a,r_a)\setminus\heartsuit.$$ We now define our dynamical system $F_a:\overline{\Omega}_a\to\widehat{\C}$ as, 
$$
w \mapsto \left\{\begin{array}{ll}
                    \sigma(w) & \mbox{if}\ w\in\overline{\heartsuit}, \\
                    \sigma_a(w) & \mbox{if}\ w\in B(a,r_a)^c, 
                                          \end{array}\right. 
$$
where $\sigma$ is the Schwarz reflection of $\heartsuit$, and $\sigma_a$ is reflection with respect to the circle $\vert w-a\vert=r_a$. It follows from our previous discussion that $0$ is the only critical point of $F_a$. We will call this family of maps $\mathcal{S}$; i.e. 
$$
\mathcal{S}:=\left\{F_a:\overline{\Omega}_a\to\widehat{\C}:a\in\C\setminus \left(-\infty,-\frac{1}{12}\right)\right\}.
$$

\begin{proposition}\label{cardioid_circle_branched_cover}
For each $a\in\C\setminus\left(-\infty,-\frac{1}{12}\right)$, the map $F_a: F_a^{-1}(\Omega_a)\to\Omega_a$ is a two-to-one branched covering, branched only at $0$.
\end{proposition}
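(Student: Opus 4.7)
The plan is to analyze $F_a$ piece-by-piece on the defining split of $\overline{\Omega}_a$ and then combine preimage counts. First I will record, via Proposition~\ref{circle_cardioid_touching}(1), that for $a \in \C \setminus (-\infty, -1/12)$ the cardioid $\overline{\heartsuit}$ is contained in $\overline{B(a,r_a)}$ and meets $\partial B(a,r_a)$ at a single point. Consequently $\Omega_a \cap \overline{B(a,r_a)} = \heartsuit$, so a point $z \in \overline{B(a,r_a)^c}$ belongs to $F_a^{-1}(\Omega_a)$ exactly when $\sigma_a(z) \in \heartsuit$. Writing $U_a := \sigma_a^{-1}(\heartsuit) \cap \overline{B(a,r_a)^c}$, the involutive anti-M\"obius map $\sigma_a$ will restrict to an anti-conformal isomorphism $U_a \to \heartsuit$.

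On the cardioid side, $F_a = \sigma$, and I will invoke the mapping properties worked out in Subsection~\ref{mapping_sigma_sec}: by Proposition~\ref{small_cardioid}, $\sigma^{-1}(\heartsuit) = \phi(B(2/3, 1/3)) \subset \heartsuit$ and $\sigma$ is an anti-conformal isomorphism there; by the unnumbered proposition immediately following, $\sigma\colon \heartsuit \setminus \overline{\sigma^{-1}(\heartsuit)} \to \hat{\C} \setminus \overline{\heartsuit}$ is a two-to-one branched covering, branched only at the unique critical point $0$, whose image under $\sigma$ is $\infty$.

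With these ingredients in hand, I will count preimages of each $w \in \Omega_a$ under $F_a$. If $w \in \heartsuit$, exactly one preimage lies in $\sigma^{-1}(\heartsuit)\subset \overline{\heartsuit}$ (and none in $\overline{\heartsuit}\setminus \sigma^{-1}(\heartsuit)$, since $\sigma$ maps that region outside $\overline{\heartsuit}$), while a second preimage lies in $U_a$, giving two simple preimages in total. If instead $w \in \overline{B(a,r_a)^c}$, then automatically $w \in \hat{\C}\setminus \overline{\heartsuit}$ (by the inclusion $\overline{\heartsuit}\subset \overline{B(a,r_a)}$), so $w$ receives two preimages from $\overline{\heartsuit}\setminus \overline{\sigma^{-1}(\heartsuit)}$ (merging into a double preimage at $0$ exactly when $w=\infty$), and no preimage from the exterior disk (because $\sigma_a$ sends $\overline{B(a,r_a)^c}$ into $\overline{B(a,r_a)}$, which does not contain $w$). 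Either way the total multiplicity is two, with $0$ the unique branch point, sitting above the unique critical value $\infty$.

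Properness of $F_a\colon F_a^{-1}(\Omega_a) \to \Omega_a$ will follow from the fact that $F_a$ extends continuously to the compact set $\overline{\Omega}_a$ and fixes $\partial\Omega_a = \partial\heartsuit \cup \partial B(a,r_a)$ pointwise (as $\sigma$ and $\sigma_a$ each fix their own boundary). Hence $F_a^{-1}(\Omega_a)$ is disjoint from $\partial\Omega_a$, and any compact $K \subset \Omega_a$ has compact preimage inside $F_a^{-1}(\Omega_a)$. The one delicate point I expect is the bookkeeping at the cusp $1/4 \in \partial\heartsuit$ and at the tangency point in $\partial\heartsuit \cap \partial B(a,r_a)$, where the two defining pieces of $F_a$ meet; but since both singular points lie on $\partial\Omega_a$ (hence outside $F_a^{-1}(\Omega_a)$), they contribute no preimages, and the piecewise counts assemble cleanly into a proper anti-holomorphic branched double cover branched only at $0$.
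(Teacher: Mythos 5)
Your proof is correct and follows essentially the same route as the paper's: you decompose $F_a^{-1}(\Omega_a)$ into the same three pieces ($\sigma^{-1}(\heartsuit)$, $\sigma_a^{-1}(\heartsuit)$, and $\sigma^{-1}(\overline{B}(a,r_a)^c)$), citing Proposition~\ref{small_cardioid} and the unnumbered covering proposition for $\sigma$ on $\heartsuit\setminus\overline{\sigma^{-1}(\heartsuit)}$, and then verify the degree-two count fiberwise. The paper simply organizes the argument by listing the three connected components of the preimage and describing the map on each, whereas you count preimages of each target point $w$; this is the same content rearranged, with your explicit remarks on properness and on the singular boundary points being harmless additions.
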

\begin{proof}
Note that $F_a^{-1}(\Omega_a)\subset\Omega_a$ consists of three open connected components. Two of which, namely $\sigma^{-1}(\heartsuit)$ and $\sigma_a^{-1}(\heartsuit)$, map univalently onto $\heartsuit$. On the other hand, $0\in\sigma^{-1}(\overline{B}(a,r_a)^c)$, so $F_a$ is a $2:1$ branched cover from $\sigma^{-1}(\overline{B}(a,r_a)^c)$ onto $\overline{B}(a,r_a)^c.$
\end{proof}

\begin{corollary}\label{circle_cardioid_inverse_branches}
Let $V\subset\Omega_a$ be a simply connected domain such that the forward orbit of the critical point $0$ does not intersect $V$. Then, all inverse branches of $F_a^{\circ n}$ ($n\geq1$) are defined on $V$. 
\end{corollary}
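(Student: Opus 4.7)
The plan is to prove both conclusions simultaneously by induction on $n$, leveraging Proposition~\ref{cardioid_circle_branched_cover}, which tells us that $F_a:F_a^{-1}(\Omega_a)\to\Omega_a$ is a degree two branched covering whose unique critical value is $F_a(0)=\sigma(0)=\infty$. The inductive hypothesis will be that every inverse branch of $F_a^{\circ n}$ on $V$ is defined and maps $V$ into $\Omega_a\setminus T_a$; the key observation that propagates the induction is that such an inverse branch automatically avoids the entire forward orbit of $0$, not merely the first iterate.

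For the base case $n=1$: since $V\subset\Omega_a$ is simply connected and $\infty=F_a(0)$ lies in the forward orbit of $0$ (which $V$ avoids by hypothesis), $V$ is a simply connected subset of $\Omega_a$ missing the only critical value of $F_a|_{F_a^{-1}(\Omega_a)}$. Standard covering space theory then yields two well-defined single-valued inverse branches of $F_a$ on $V$, each landing in $F_a^{-1}(\Omega_a)$. The image is disjoint from $T_a$ because $F_a$ fixes $\partial\Omega_a=\partial T_a$ pointwise (the Schwarz reflection $\sigma$ fixes $\partial\heartsuit$ and the circle reflection $\sigma_a$ fixes $\partial B(a,r_a)$), so $F_a^{-1}(\Omega_a)\cap\partial T_a=\emptyset$, while $F_a^{-1}(\Omega_a)\subset\overline{\Omega_a}$ forces disjointness from $\Int(T_a)$ as well.

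For the inductive step, suppose every inverse branch $h$ of $F_a^{\circ n}$ is defined on $V$ with $h(V)\subset\Omega_a\setminus T_a$. Since $h$ is a (possibly anti-)biholomorphism, $h(V)$ is simply connected. The crucial claim is that $h(V)$ avoids the extended orbit $\{F_a^{\circ k}(0):k\geq 0\}$: if $z\in h(V)$ satisfied $z=F_a^{\circ k}(0)$ for some $k\geq 0$, then $F_a^{\circ n}(z)=F_a^{\circ(n+k)}(0)$ would lie in $V$ with $n+k\geq 1$, contradicting the assumption on $V$. In particular $\infty\notin h(V)$, so by the base-case argument applied to $h(V)$, the two inverse branches of $F_a$ extend over $h(V)$ into $F_a^{-1}(\Omega_a)\subset\Omega_a\setminus T_a$. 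Composing with $h$ yields the $2^{n+1}$ inverse branches of $F_a^{\circ(n+1)}$ on $V$, each with image disjoint from $T_a$.

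The argument is almost entirely formal once one makes the right invariant hypothesis; the only mild subtlety---and the place a careless argument could stumble---is recognizing that requiring $V$ to miss the forward orbit $\{F_a^{\circ k}(0):k\geq 1\}$ is enough to make the pullbacks miss the larger set $\{F_a^{\circ k}(0):k\geq 0\}$ (including the critical point itself), which is what allows the induction to continue past the first step. Verifying the disjointness $F_a^{-1}(\Omega_a)\cap T_a=\emptyset$ from the pointwise invariance of $\partial\Omega_a$ is routine but essential, as it is this fact that upgrades the conclusion ``image in $\Omega_a$'' to ``image in $\Omega_a\setminus T_a$''.
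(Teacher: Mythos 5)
Your proof is correct. The paper gives no argument for this corollary --- it is stated as an immediate consequence of Proposition~\ref{cardioid_circle_branched_cover} --- and the covering-theoretic induction you supply (pulling back through the degree-two branched covering $F_a\colon F_a^{-1}(\Omega_a)\to\Omega_a$, noting that the orbit hypothesis on $V$ forces each pulled-back domain $h(V)$ to miss the unique critical value $\infty=F_a(0)$, and using $F_a^{-1}(\Omega_a)\subset\Omega_a$ for the disjointness from $T_a$) is exactly the routine argument the reader is expected to fill in.
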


\subsubsection{Dynamics near singular points}\label{dyn_sing_subsec} Note that $\partial T_a$ has two singular points; namely the double point $\alpha_a$ and the cusp point $\frac{1}{4}$. Both of these are fixed points of $F_a$. As the map $F_a$ admits no anti-holomorphic extension in neighborhoods of these fixed points, we need to obtain local expansions of $F_a$ in suitable relative neighborhoods of $\alpha_a$ and $\frac{1}{4}$. 

Formula~(\ref{real_formula}) shows that the map $F_a$ has no single-valued anti-holomorphic extension in a neighborhood of $\frac{1}{4}$ since extending $F_a$ near $\frac{1}{4}$ involves choosing a branch of square root. A straightforward computation yields the following asymptotics of $F_a$ near $\frac{1}{4}$.

\begin{proposition}[Dynamics near cardioid cusp]\label{cusp_asymp}
Choosing the branch of square root that sends positive reals to positive reals, we have $$F_a(w)=\overline{w}-k(\frac{1}{4}-\overline{w})^{\frac32}+O((\frac{1}{4}-\overline{w})^2)$$ where $w\in B(\frac{1}{4},\epsilon)\setminus(\frac{1}{4},\frac{1}{4}+\epsilon)$, for some $k>0$ and $\epsilon>0$ small enough. Hence, $\frac{1}{4}$ repels nearby real points on its left.
\end{proposition}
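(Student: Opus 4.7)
The plan is to substitute the explicit formula~(\ref{real_formula}) for $\sigma$ and carry out a Puiseux-series expansion at the cusp. Near $\tfrac14$ the map $F_a$ coincides with the cardioid Schwarz reflection $\sigma$, so I would work directly with~(\ref{real_formula}). The slit $(\tfrac14,\tfrac14+\epsilon)$ is removed precisely so that the branch of $\sqrt{1-4\overline w}$ taking positive reals to positive reals is single-valued on $B(\tfrac14,\epsilon)\setminus(\tfrac14,\tfrac14+\epsilon)$, and I would fix this branch throughout.

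Setting $u:=\tfrac14-\overline w$ gives $\sqrt{1-4\overline w}=2\sqrt u$. With $t=\sqrt u$, formula~(\ref{real_formula}) becomes
$$F_a(w)=\frac{1-4t}{4(1-2t)^2}.$$
Expanding $(1-2t)^{-2}=\sum_{n\ge 0}(n+1)(2t)^n$ as a geometric series and multiplying by $(1-4t)/4$, the linear term in $t$ cancels and one obtains
$$F_a(w)=\tfrac14-t^2-4t^3+O(t^4)=\overline w-4\bigl(\tfrac14-\overline w\bigr)^{3/2}+O\bigl((\tfrac14-\overline w)^2\bigr).$$
This is the desired expansion with the explicit constant $k=4>0$.

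For the final assertion, I would specialize to $w=\tfrac14-\delta$ with $\delta>0$ small. Then $\overline w=w$ and $u=\delta$, so the expansion gives
$$\tfrac14-F_a(w)=\delta+4\delta^{3/2}+O(\delta^2)>\delta$$
for all sufficiently small $\delta>0$. Hence $F_a$ sends a real point slightly to the left of $\tfrac14$ to a real point strictly further to the left, which is exactly the statement that $\tfrac14$ repels nearby real points on its left (in a parabolic fashion, with repulsion rate governed by the $\delta^{3/2}$ term).

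The only bookkeeping subtlety is the choice of branch of the square root on the slit neighborhood, which is fixed by the hypothesis; once this is settled the argument reduces to the elementary series expansion above, so I do not anticipate any genuine obstacle.
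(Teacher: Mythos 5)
Your proof is correct and carries out exactly the ``straightforward computation'' that the paper leaves to the reader: substituting $t=\sqrt{\tfrac14-\overline w}$ into the explicit formula~(\ref{real_formula}) for $\sigma$ (which is what $F_a$ is near $\tfrac14$), noting the linear term in $t$ cancels, and reading off the Puiseux expansion. You additionally make the constant explicit, $k=4$, which the paper leaves unspecified, and your verification of the repelling behaviour on $(\tfrac14-\epsilon,\tfrac14)$ is exactly the intended consequence.
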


Moreover, Corollary~\ref{iterated_pre_image_cardioid} gives a precise description of the dynamics of $F_a$ near the fixed point $\frac{1}{4}$; in particular, all points in $\heartsuit\cap B(\frac{1}{4},\epsilon)$ eventually leave $\heartsuit\cap B(\frac{1}{4},\epsilon)$ or escape to $T_a$ under iterations of $F_a$.  

On the other hand, both $\sigma$ and $\sigma_a$ admit anti-holomorphic extensions in a neighborhood of $\alpha_a$. As germs, these extensions are anti-holomorphic involutions. However, these extensions do not match up to yield a single anti-holomorphic map in a neighborhood of $\alpha_a$. In this case, it will be more convenient to work with the second iterate $F_a^{\circ 2}$. Note that 
\begin{equation}
F_a^{\circ 2}=\left\{\begin{array}{ll}
                      \sigma_a\circ\sigma & \mbox{on}\ \sigma^{-1}(\overline{B}(a,r_a)^c), \\
                      \sigma\circ\sigma_a & \mbox{on}\ \sigma_a^{-1}(\heartsuit).
                                          \end{array}\right. 
\label{iterate_alpha}
\end{equation}

Let us first look at the parameters $a\neq -\frac{1}{12}$. For such parameters $a$, the curves $\partial\heartsuit$ and $\partial B(a,r_a)$ have a simple tangency at $\alpha_a$ (in particular, they have common tangent and normal lines). The following result describes the local behavior of the maps $\sigma_a\circ\sigma$ and $\sigma\circ\sigma_a$ near $\alpha_a$. 

\begin{proposition}[Dynamics near double point; $a\neq-\frac{1}{12}$]\label{double_asymp}
Let $a\neq -\frac{1}{12}$. Then, $\sigma$ and $\sigma_a$ extend as local anti-holomorphic involutions near $\alpha_a$ such that 
$$(\sigma_a\circ\sigma)(w)=w+k_a(w-\alpha_a)^2+O((w-\alpha_a)^3),\quad \mathrm{and}$$ $$(\sigma_a\circ\sigma)^{-1}(w)=(\sigma\circ\sigma_a)(w)=w-k_a(w-\alpha_a)^2+O((w-\alpha_a)^3)$$ for all $w\in B(\alpha_a,\epsilon)$ and some $k_a\neq 0$. Moreover, the inward (respectively, outward) normal vector to $\partial\heartsuit$ at $\alpha_a$ is the repelling direction of the parabolic germ $\sigma_a\circ\sigma$ (respectively, of $\sigma\circ\sigma_a$). These are the only repelling directions for $F_a^{\circ 2}$ at $\alpha_a$.
\end{proposition}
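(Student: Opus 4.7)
My plan is to reduce everything to a local computation near $\alpha_a$ in coordinates adapted to the common tangency. First I would translate $\alpha_a$ to the origin and rotate so that the common tangent line to $\partial\heartsuit$ and $\partial B(a,r_a)$ is the real axis, with the common inward normal (pointing into $\heartsuit$ and equivalently into $B(a,r_a)$, since $\heartsuit\subset B(a,r_a)$) along the positive imaginary axis. Since $\alpha_a$ is a regular point of each of the two real-analytic curves taken separately, both $\sigma$ and $\sigma_a$ automatically extend as anti-holomorphic involutions in a full neighborhood of $\alpha_a$. By Proposition~\ref{circle_cardioid_touching}(2), in these coordinates the two curves look like $y=\kappa_1 x^2/2+O(x^3)$ and $y=\kappa_2 x^2/2+O(x^3)$ with $\kappa_1>\kappa_2>0$, where strict inequality is exactly the hypothesis $a\ne -\frac{1}{12}$.

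Next, using the defining relation $\overline{\sigma(z)}=\bar z$ on the curve together with the parametrization $z=x+i\kappa_j x^2/2+O(x^3)$ and differentiating twice at the fixed point, one gets Schwarz functions $S_j(z)=z-i\kappa_j z^2+O(z^3)$ and hence
$$\sigma(w)=\bar w+i\kappa_1\bar w^2+O(\bar w^3),\qquad \sigma_a(w)=\bar w+i\kappa_2\bar w^2+O(\bar w^3).$$
A term-by-term composition then delivers
$$(\sigma_a\circ\sigma)(w)=w+i(\kappa_2-\kappa_1)w^2+O(w^3),$$
so the claimed expansion holds with $k_a:=i(\kappa_2-\kappa_1)$, a nonzero negative pure imaginary number. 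Since $\sigma$ and $\sigma_a$ are anti-holomorphic involutions, $(\sigma_a\circ\sigma)^{-1}=\sigma\circ\sigma_a$, and the symmetric computation yields the opposite quadratic term, which also matches the formal expansion of the inverse of a parabolic germ.

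For the repelling directions I would invoke the standard normal form: a holomorphic germ $w\mapsto w+cw^2+O(w^3)$ with $c\ne 0$ has a unique attracting ray at $\arg w=\pi-\arg c$ and a unique repelling ray at $\arg w=-\arg c$. Since $\arg k_a=-\pi/2$, the repelling ray of $\sigma_a\circ\sigma$ is the positive imaginary axis, which in our normalized coordinates is precisely the inward normal to $\partial\heartsuit$ at $\alpha_a$. The same reasoning applied to $\sigma\circ\sigma_a$ (with leading coefficient $-k_a$) places its repelling ray along the outward normal. The uniqueness statement for $F_a^{\circ 2}$ then follows from formula~(\ref{iterate_alpha}): in a small disc about $\alpha_a$, the map $F_a^{\circ 2}$ agrees with $\sigma_a\circ\sigma$ on the $\overline{\heartsuit}$-side and with $\sigma\circ\sigma_a$ on the $\overline{B}(a,r_a)^c$-side, and each of these parabolic germs contributes exactly one repelling ray.

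The main subtlety, in my view, is the last step: one must verify that the two repelling rays genuinely lie inside the pieces of $\overline{\Omega}_a$ on which $F_a^{\circ 2}$ is represented by the respective compositions, rather than pointing into the forbidden sliver $T_a$ where $F_a^{\circ 2}$ is undefined. This is where the inequality $\kappa_1>\kappa_2$, together with the coincidence of the two inward normals, does the real work: it forces the positive imaginary ray to lie above both parabolic arcs and hence in $\overline{\heartsuit}$, and symmetrically for the negative imaginary ray. The analytic expansions themselves are entirely routine.
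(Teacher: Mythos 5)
Your proposal is correct and follows essentially the same route as the paper: both arguments expand the two Schwarz reflections to second order near $\alpha_a$ using the curvatures of $\partial\heartsuit$ and $\partial B(a,r_a)$, deduce $k_a\neq 0$ from the strict curvature inequality $\kappa_1>\kappa_2$ (equivalent to $a\neq-\tfrac{1}{12}$, since the circumcircle is then not osculating), and read off the repelling directions from the sign of the quadratic coefficient. The only difference is one of explicitness: the paper cites Davis's book for the local expansion $S_j(z)=z-i\kappa_j z^2+O(z^3)$ while you carry out that calculation and the final disambiguation of which piece of $\overline{\Omega}_a$ each repelling ray lies in, which the paper states as a consequence of $\kappa_1>\kappa_2$ without spelling it out.
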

\begin{proof}
The fact that $\sigma$ (respectively, $\sigma_a$) admits a local anti-holomorphic extension near $\alpha_a$ follows from the fact that $\alpha_a$ is a non-singular point of $\partial\heartsuit$ (respectively, of $\partial B(a,r_a)$). Since these local extensions are anti-holomorphic involutions, it follows that $\sigma_a\circ\sigma$ is the inverse of $\sigma\circ\sigma_a$ (as germs).

The first two terms in the local expansions of the Schwarz reflection maps $\sigma$ and $\sigma_a$ can be computed in terms of the slope and curvature of the corresponding curves at $\alpha_a$ (see \cite[Chapter~7]{Dav74}). Moreover, since $\partial\heartsuit$ and $\partial B(a,r_a)$ do not have the same curvature at $\alpha_a$ (recall that $\partial B(a,r_a)$ is not the osculating circle to $\partial\heartsuit$ at $\alpha_a$ for $a\neq -\frac{1}{12}$), it follows that $k_a\neq 0$. The final statement on repelling directions of the parabolic germs is a consequence of the fact that $\partial\heartsuit$ has greater curvature than $\partial B(a,r_a)$ at $\alpha_a$ (as $\partial B(a,r_a)$ is a circumcircle of $\partial\heartsuit$).
\end{proof}

\begin{remark}\label{contact_schwarz}
More generally, if two real-analytic smooth curve germs $\gamma_1$ and $\gamma_2$ with associated Schwarz reflection maps $\sigma_1$ and $\sigma_2$ (respectively) have contact of order $k$ at the origin, then $\sigma_1\circ\sigma_2$ is of the form $z+cz^{k+1}+\cdots$.
\end{remark}
 
Let us choose repelling petals $P_1$ and $P_2$ of the parabolic germs $\sigma_a\circ\sigma$ and $\sigma\circ\sigma_a$ contained in $\sigma^{-1}(\overline{B}(a,r_a)^c)$ and $\sigma_a^{-1}(\heartsuit)$ respectively (in other words, $P_1$ and $P_2$ are repelling and attracting petals for the parabolic germ $\sigma_a\circ\sigma$). Then, 
\begin{equation}
\bigcap_{n\in\N}\ F_a^{-2n}(\overline{P_1})=\bigcap_{n\in\N}\ F_a^{-2n}(\overline{P_2})=\{\alpha_a\},
\label{petals_shrink}
\end{equation}
where we have chosen the inverse branch of $F_a^{\circ 2}$ near $\alpha_a$ that fixes $\alpha_a$.

We now turn our attention to the parameter $a=-\frac{1}{12}$.

\begin{proposition}[Dynamics near double point; $a=-\frac{1}{12}$]\label{fat_basilica_local_dyn}
Let $a=-\frac{1}{12}$. Then $\sigma$ and $\sigma_a$ extend as local anti-holomorphic involutions near $\alpha_a=-\frac{3}{4}$ such that 
$$(\sigma_a\circ\sigma)(w)=w-k'_a(w+\frac{3}{4})^4+O((w+\frac{3}{4})^5),\quad \mathrm{and}$$
$$(\sigma_a\circ\sigma)^{-1}(w)=(\sigma\circ\sigma_a)(w)=w+k'_a(w+\frac{3}{4})^4+O((w+\frac{3}{4})^5)$$ for all $w\in B(-\frac{3}{4},\epsilon)$ and some $k'_a>0$. Moreover, the positive (respectively, negative) real direction at $-\frac{3}{4}$ is an attracting vector of the former (respectively, latter) parabolic germ, and these are the only attracting directions for $F_a^{\circ 2}$ at $-\frac34$. Thus, $-\frac{3}{4}$ attracts nearby real points under iterates of $F_{a}$.
\end{proposition}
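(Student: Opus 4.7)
The plan is to extract the order-four parabolic expansion of $\sigma_a\circ\sigma$, pin down the sign of its leading coefficient, and then identify which attracting directions of the resulting germ are actually visible in the piecewise dynamics of $F_a^{\circ 2}$.

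\emph{Extensions and form of the germ.} Since $\alpha_a=-\tfrac{3}{4}$ is a non-singular boundary point of both $\partial\heartsuit$ and $\partial B(-\tfrac{1}{12},\tfrac{2}{3})$, the reflection principle gives local anti-holomorphic involutive extensions of $\sigma$ and $\sigma_a$ to a disk around $-\tfrac34$, each fixing $-\tfrac{3}{4}$. Consequently $\sigma_a\circ\sigma$ and $\sigma\circ\sigma_a$ are holomorphic germs fixing $-\tfrac34$, mutually inverse because $\sigma$ and $\sigma_a$ are involutions. By Proposition~\ref{circle_cardioid_touching}(2) the two curves have contact of order four at $-\tfrac34$, and Remark~\ref{contact_schwarz} then yields
\[
(\sigma_a\circ\sigma)(w)=w+c\bigl(w+\tfrac34\bigr)^{4}+O\bigl((w+\tfrac34)^{5}\bigr)
\]
for some $c\ne 0$. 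Because $a=-\tfrac{1}{12}\in\R$ and both bounding curves are symmetric about the real axis, $\sigma$ and $\sigma_a$ commute with complex conjugation; hence $c\in\R$.

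\emph{Sign of the leading coefficient.} To conclude $c=-k'_a$ with $k'_a>0$, I would expand $\sigma(-\tfrac34+t)$ as a real Taylor series in $t$ using formula~(\ref{real_formula}), and then substitute into the explicit circle reflection $\sigma_a(s)=-\tfrac{1}{12}+\tfrac{4/9}{\bar s+1/12}$. The first three orders in $t$ must cancel by the order-four contact established above, and a direct calculation leaves $(\sigma_a\circ\sigma)(-\tfrac34+t)=-\tfrac34+t-\tfrac{1}{32}\,t^{4}+O(t^{5})$, so $k'_a=\tfrac{1}{32}>0$. The expansion for $\sigma\circ\sigma_a$ then follows automatically by inverting the germ.

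\emph{Attracting directions and real dynamics.} For a holomorphic parabolic germ $g(z)=z-k'_a z^{4}+O(z^{5})$ with $k'_a>0$ (where $z=w+\tfrac34$), the standard theory supplies three attracting directions $\{v:k'_a v^{3}\in\R_{>0}\}=\{1,\,e^{\pm 2\pi i/3}\}$ and three repelling directions $\{-1,\,e^{\pm i\pi/3}\}$. However, $\sigma_a\circ\sigma$ realizes $F_a^{\circ 2}$ only on $\sigma^{-1}(\overline{B}(a,r_a)^c)$, which near $-\tfrac34$ is (to leading order) the right half-plane $\re z>0$. This sector contains the attracting direction $v=1$ and the two repelling directions $e^{\pm i\pi/3}$, but not the other two attracting directions $e^{\pm 2\pi i/3}$, which lie on the disk-exterior side where $F_a^{\circ 2}$ is instead given by $\sigma\circ\sigma_a=g^{-1}$. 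The symmetric argument for $g^{-1}$ identifies the negative real direction as the unique attracting direction of $\sigma\circ\sigma_a$ inside its dynamical domain. The final claim about real orbits is now immediate: real $w>-\tfrac34$ (resp.\ $w<-\tfrac34$) near $-\tfrac34$ sits in the positive (resp.\ negative) real attracting petal of the appropriate germ and hence converges to $-\tfrac34$ under iteration of $F_a^{\circ 2}$.

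\emph{Main obstacle.} Everything apart from the sign of $c$ is a direct consequence of the reflection principle, Remark~\ref{contact_schwarz}, the involution identity, and standard parabolic dynamics. The sign itself seems to require the honest Taylor computation sketched above; a purely geometric argument based on the fact that $\heartsuit\subset B(-\tfrac{1}{12},\tfrac{2}{3})$ with a single contact at $-\tfrac34$ might also work, but the explicit calculation is short and conclusive.
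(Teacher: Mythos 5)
Your proposal follows the same route as the paper's proof: the local extensions and involutivity come from non-singularity of $-\tfrac34$ on both curves, the form of the germ is read off from the order-four contact (Proposition~\ref{circle_cardioid_touching}) together with Remark~\ref{contact_schwarz}, the sign of the leading coefficient is obtained by explicit computation via Formula~(\ref{real_formula}), and Formula~(\ref{iterate_alpha}) is used to identify the one attracting direction of each germ that actually lies in its piecewise domain of definition for $F_a^{\circ 2}$. The only addition is your explicit value $k'_a=\tfrac{1}{32}$, which the paper does not record and which I have not independently verified, but nothing in the argument depends on the precise constant.
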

\begin{proof}
For $a=-\frac{1}{12}$, the curves $\partial\heartsuit$ and $\partial B(a,r_a)$ have a third order tangency at $\alpha_a=-\frac{3}{4}$ (i.e. a contact of order $3$), so the desired asymptotics follow from Remark~\ref{contact_schwarz}. The fact that $\sigma$ (respectively, $\sigma_a$) admits a local anti-holomorphic extension near $-\frac{3}{4}$ follows from the fact that $-\frac{3}{4}$ is a non-singular point of $\partial\heartsuit$ (respectively, of $\partial B(a,r_a)$). Since these extensions are involutions, $\sigma_a\circ\sigma$ is the inverse of $\sigma\circ\sigma_a$. The asymptotics of $\sigma_a\circ\sigma$ and $\sigma\circ\sigma_a$ near $-\frac34$ can be explicitly computed using Formula~(\ref{real_formula}). The attracting directions of the parabolic germs are readily seen from these asymptotics. The fact that these are the only attracting directions for $F_a^{\circ 2}$ at $-\frac34$ follows from Formula~(\ref{iterate_alpha}).
\end{proof}

\begin{remark}
We see from the above local expansions that each of the parabolic germs $\sigma_a\circ\sigma$ and $\sigma\circ\sigma_a$ has two more attracting directions at $-\frac34$, however they lie in regions where the germs do not coincide with $F_a^{\circ 2}$, see Formula~(\ref{iterate_alpha}).
\end{remark}

\subsubsection{Tiling set, non-escaping set, and limit set}\label{dyn_inv_sets} We now proceed to define the dynamically relevant sets for a general map $F_a\in\mathcal{S}$. It is easy to see that the points $\alpha_a$ and $\frac{1}{4}$ have only two preimages under $F_a$, and every point of $T_a\setminus\{\alpha_a,\frac{1}{4}\}$ has three preimages under $F_a$. In order to get an honest covering map, we will therefore work with $T_a^0:= T_a\setminus\{\alpha_a,\frac{1}{4}\}$. Then, the restriction $F_a:F_a^{-1}(T_a^0)\to T_a^0$ is a degree $3$ covering.

\begin{definition}[Tiling set, non-escaping set, and limit set]

\begin{itemize}
\item For any $k~\geq~0$, the connected components of $F_a^{-k}(T_a^0)$ are called \emph{tiles} (of $F_a$) of rank $k$. The unique tile of rank $0$ is $T_a^0$.

\item The \emph{tiling set} $T_a^\infty$ of $F_a$ is defined as the set of points that eventually escape to $T_a^0$; i.e. $T_a^\infty=\bigcup_{k=0}^\infty F_a^{-k}(T_a^0)$. Equivalently, the tiling set is the union of all tiles.

\item The \emph{non-escaping set} $K_a$ of $F_a$ is the complement $\widehat{\C}\setminus T_a^\infty$. Connected components of $\Int{K_a}$ are called \emph{Fatou components} of $F_a$. All iterates of $F_a$ are defined on $K_a$. 

\item The boundary of $T_a^\infty$ is called the \emph{limit set} of $F_a$, and is denoted by $\Gamma_a$.
\end{itemize}
\end{definition}

\begin{remark}
Note that the tiling set, non-escaping set, and limit set of $F_a$ are the analogues of basin of infinity, filled Julia set, and Julia set (respectively) of a polynomial (see \cite[\S 18]{M1new} for definitions and basic properties of these sets).
\end{remark}

\begin{figure}
\includegraphics[width=0.36\linewidth]{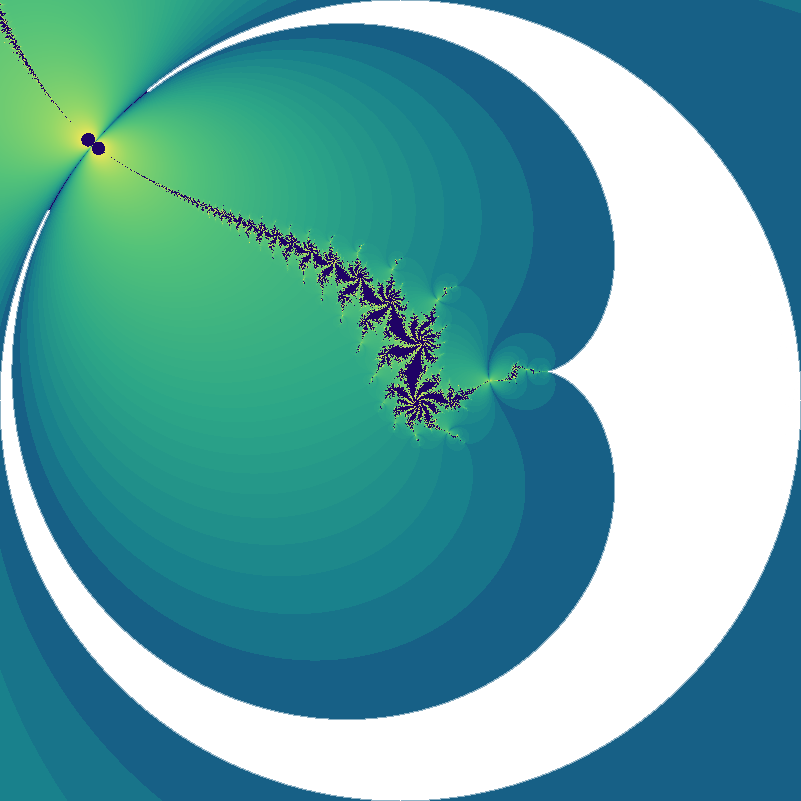}\quad \includegraphics[width=0.35\linewidth]{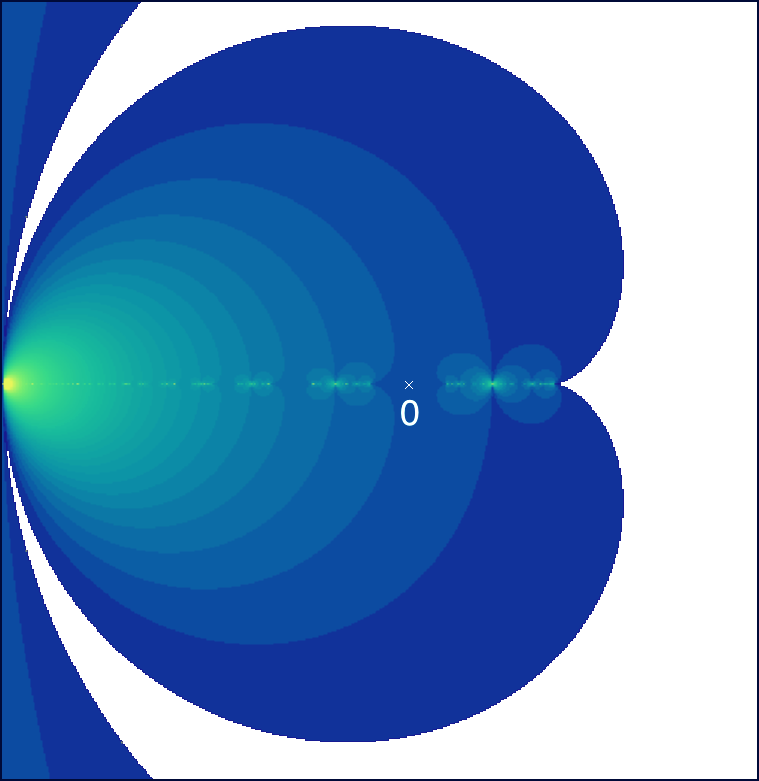}

\caption{Left: A part of the dynamical plane of a parameter in $\cC(\mathcal{S})$. The white region denotes the tile of rank $0$. Right: A part of the dynamical plane of a parameter outside $\cC(\mathcal{S})$. The critical point $0$ is contained in a ramified tile, which disconnects the non-escaping set.}

\label{various_limit_sets}
\end{figure}

The tiling set and the non-escaping set yield an invariant partition of the dynamical plane of $F_a$.

\begin{proposition}[Tiling set is open and connected]\label{escape_connected}
For each $a\in\C\setminus(-\infty,-\frac{1}{12})$, the tiling set $T_a^\infty$ is an open connected set, and hence the non-escaping set $K_a$ is closed.
\end{proposition}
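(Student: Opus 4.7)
The statement contains two assertions: that $T_a^\infty$ is open in $\hat{\mathbb{C}}$ and that $T_a^\infty$ is connected. I plan to establish them separately, exploiting the Schwarz reflection structure of $F_a$ along $\partial T_a$ and the hierarchical decomposition of $T_a^\infty$ into tiles.

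For openness, fix $w\in T_a^\infty$ and let $n$ be the smallest integer with $w_n:=F_a^{\circ n}(w)\in T_a^0$. A preliminary observation is that the orbit $w_0,\dots,w_{n-1}$ avoids the critical point $0$ (else the next iterate would be $\infty\notin T_a$) and also avoids the singular fixed points $\alpha_a,\tfrac14$ (which are fixed and would trap the orbit, contradicting $w_n\in T_a^0$). The endpoint $w_n$ either lies in $\Int T_a$, in which case a Euclidean ball around $w_n$ sits in $T_a^0\subset T_a^\infty$, or lies on $\partial T_a\setminus\{\alpha_a,\tfrac14\}$, in which case local Schwarz/circle reflection provides an anti-holomorphic extension of $F_a$ across $\partial T_a$ near $w_n$ which swaps the two sides, so a full Euclidean neighborhood of $w_n$ is contained in $T_a^0\cup F_a^{-1}(T_a^0)\subset T_a^\infty$. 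Since each $w_j$ with $j<n$ is a non-critical, non-singular point of $F_a$, well-defined local anti-holomorphic inverse branches of $F_a$ are available at each stage, and pulling this neighborhood back along the orbit yields the required neighborhood of $w$ in $T_a^\infty$.

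For connectedness, the plan is to proceed by induction on rank, showing that $E_a^n:=\bigcup_{k=0}^n F_a^{-k}(T_a^0)$ is connected for every $n\ge 0$, and then conclude since $T_a^\infty=\bigcup_{n\ge 0} E_a^n$ is a nested union of such sets. The base case reduces to connectedness of $T_a^0$: the droplet $T_a=\overline{B}(a,r_a)\setminus\heartsuit$ is a closed topological disk (the circumscribing closed disk with the open cardioid, touching it at the single point $\alpha_a$, removed), and deleting the two boundary points $\alpha_a,\tfrac14$ does not disconnect it. For the inductive step, the key observation is that $F_a$ acts as the identity on $\partial T_a\setminus\{\alpha_a,\tfrac14\}$, so each fixed boundary arc lies in every preimage set $F_a^{-k}(T_a^0)$ simultaneously. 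One then verifies that each rank $n$ tile $\tau$ shares such a fixed boundary arc with its parent $\tau':=F_a(\tau)\subset E_a^{n-1}$, either directly along $\partial T_a$ or via an intermediate pre-image arc of $\partial T_a$, so $\tau\cup E_a^{n-1}$ remains connected.

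The principal technical subtlety lies in this inductive step: one must confirm that every rank $n$ tile carries a non-trivial shared boundary arc with a tile of strictly smaller rank, and is not joined at an isolated preimage of $\alpha_a$ or $\tfrac14$. This follows from the local dynamics at the singular fixed points (Propositions~\ref{cusp_asymp} and~\ref{double_asymp}), which guarantee that $\alpha_a$ and $\tfrac14$ are isolated singular points of $\partial T_a$ whose preimages under $F_a$ are discrete, so non-trivial shared arcs of pre-images of $\partial T_a$ always persist between successive ranks. A complementary subtlety in the openness argument is the well-definedness of the inverse branches near non-singular boundary points of $T_a$, which is handled by using the Schwarz reflection extension to convert the one-sided map $F_a$ into a genuine anti-holomorphic local diffeomorphism on a full open neighborhood.
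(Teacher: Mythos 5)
Your proposal follows the same spirit as the paper's proof---decomposing $T_a^\infty$ into the increasing union of the finite-rank clusters $E_a^k$---but you argue openness and connectedness separately, and both elaborations contain errors.

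The genuine gap is in the openness argument: you claim the escaping orbit $w_0,\dots,w_{n-1}$ avoids the critical point $0$, justified by ``else the next iterate would be $\infty\notin T_a$.'' That parenthetical only rules out $w_{n-1}=0$; it does not rule out $w_j=0$ for $j<n-1$, and in fact the claim is false. If $a\notin\overline{\heartsuit}$ (which forces $a$ outside the connectedness locus), then $a\in\Int{T_a}\subset T_a^0$, and the critical orbit is $0\mapsto\infty\mapsto a\in T_a^0$; taking $w=0$ gives $n=2$ with the critical point sitting in the pullback chain, so the ``well-defined local anti-holomorphic inverse branches at each stage'' do not exist at $w_0=0$. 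The conclusion survives because $F_a$ is still an open map near $0$---a proper degree-$2$ branched cover onto a neighborhood of $\infty$, cf.\ Proposition~\ref{cardioid_circle_branched_cover}---so one can pull back open sets without single-valued inverse branches; but the argument as written does not address this case, and the reasoning offered for excluding it is incorrect.

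A smaller slip appears in the connectedness step: you assert that each rank-$n$ tile $\tau$ shares a boundary arc with its parent $\tau'=F_a(\tau)$. This holds for $n=1$ (rank-$1$ tiles abut $T_a^0$ along $\partial T_a$), but fails for $n\geq2$: in the ideal-triangle model (to which the unramified part of the tiling is conjugated by $\psi_a$, Proposition~\ref{schwarz_group}), the tile $T^{i_1,\dots,i_n}$ abuts the rank-$(n-1)$ tile $T^{i_1,\dots,i_{n-1}}$ along a preimage arc, whereas $F_a(\tau)$ corresponds to $T^{i_2,\dots,i_n}$, which lies in $\D_{i_2}\neq\D_{i_1}$ and is generically not adjacent to $\tau$. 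The fact the induction actually needs---that every rank-$n$ tile abuts \emph{some} rank-$(n-1)$ tile along a preimage arc of $\partial T_a$---is true and rescues the conclusion, but as written you have identified the wrong adjacent tile.
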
 
\begin{proof}
Let us denote the union of the tiles of rank $0$ through $k$ by $E_a^k$ (see Figure~\ref{real_slit_double}). Since every tile of rank $k\geq1$ is attached to a tile of rank $(k-1)$ along a boundary curve and $\Int{E_a^0}$ is connected, it follows that $\Int{E_a^k}$ is connected. Moreover, $\Int{E_a^k}\subsetneq\Int{E_a^{k+1}}$ for each $k\geq 0$. 

Note that if $z\in T_a^\infty$ belongs to the interior of a tile of rank $k$, then it lies in the interior of $E_a^k$. On the other hand, if $z\in T_a^\infty$ belongs to the boundary of a tile of rank $k$, then it lies in the interior of $E_a^{k+1}$. Hence, 
$$
T_a^\infty=\bigcup_{k\geq 0}\Int{E_a^k}.
$$ 
Thus, $T_a^\infty$ is an increasing union of open, connected sets, and hence itself is such. Consequently, its complement $K_a$ is a closed set. 
\end{proof}

\begin{corollary}\label{fatou_simp_conn}
Each Fatou component of $F_a$ is simply connected.
\end{corollary}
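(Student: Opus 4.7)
The plan is to show that for each Fatou component $U$, the complement $\hat{\mathbb{C}} \setminus U$ is connected; since $U$ is an open, connected subset of $\hat{\mathbb{C}}$, this will force $U$ to be simply connected by the standard characterization of simply connected subsets of the sphere.

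First I would use Proposition~\ref{escape_connected}: the tiling set $T_a^\infty$ is open and connected, so its closure $\overline{T_a^\infty} = T_a^\infty \cup \Gamma_a$ is connected as well. Next I would verify that for any Fatou component $V$ the boundary $\partial V$ lies inside $\Gamma_a$. This is routine topology: since $V$ is a connected component of the open set $\Int K_a$, the boundary $\partial V$ cannot intersect $\Int K_a$ (else $V$ would not be maximal), while $\partial V \subset \overline{V} \subset K_a$ because $K_a$ is closed; therefore $\partial V \subset K_a \setminus \Int K_a = \partial K_a = \partial T_a^\infty = \Gamma_a$.

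With these observations, I would decompose
\[
\hat{\mathbb{C}} \setminus U \;=\; \overline{T_a^\infty} \;\cup\; \bigcup_{V \neq U} \overline{V},
\]
where the union is over Fatou components $V$ distinct from $U$. The identity holds because $\hat{\mathbb{C}} \setminus U = T_a^\infty \cup \Gamma_a \cup \bigsqcup_{V\neq U} V$, and for each $V$ the extra boundary $\partial V$ is already contained in $\Gamma_a \subset \overline{T_a^\infty}$. Each $\overline{V}$ is connected, and it meets $\overline{T_a^\infty}$: indeed, since $T_a^\infty \neq \emptyset$, the Fatou component $V$ is a proper subset of $\hat{\mathbb{C}}$, so $\partial V$ is non-empty and is contained in $\Gamma_a \subset \overline{T_a^\infty}$. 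Thus the displayed union is connected as a union of connected sets sharing the common connected piece $\overline{T_a^\infty}$, and the conclusion follows.

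I do not expect any serious obstacle; the argument is essentially a topological consequence of the connectedness of $T_a^\infty$ established in Proposition~\ref{escape_connected}, together with the fact that the boundary of any Fatou component must lie in $\Gamma_a$. The only mild subtlety worth double-checking is the identification $\partial V \subset \Gamma_a$, which is what forces the closures $\overline{V}$ to attach to $\overline{T_a^\infty}$ and prevents the complement of $U$ from splitting.
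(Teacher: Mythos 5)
Your argument is correct and fills in exactly the standard topological reasoning implicitly behind the paper's unproved corollary: since $T_a^\infty$ is open and connected (Proposition~\ref{escape_connected}), its closure $\overline{T_a^\infty}=T_a^\infty\cup\Gamma_a$ is connected, every Fatou component $V$ has nonempty boundary contained in $\Gamma_a$, so the complement of any fixed Fatou component $U$ is the connected union $\overline{T_a^\infty}\cup\bigcup_{V\neq U}\overline{V}$, which forces $U$ to be simply connected. No gaps; this matches the intended derivation from Proposition~\ref{escape_connected}.
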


\begin{proposition}[Complete invariance]\label{dynamically_inv}
For each $a\in\C\setminus(-\infty,-\frac{1}{12})$, both $T_a^\infty$ and $K_a$ are completely invariant under $F_a$.
\end{proposition}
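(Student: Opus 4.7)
The plan is to unfold the definitions directly, separately verifying forward and backward invariance of $T_a^\infty$, and then deducing the statement for $K_a$ by complementation. Throughout, complete invariance must be understood relative to the domain of definition: since $F_a$ is only defined on $\overline{\Omega}_a$, the precise assertions to be proved are
\[
F_a^{-1}(T_a^\infty)=T_a^\infty\cap\overline{\Omega}_a,\qquad F_a(T_a^\infty\cap\overline{\Omega}_a)\subseteq T_a^\infty,
\]
and analogously with $K_a$ in place of $T_a^\infty$.

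First I would prove backward invariance of the tiling set. If $z\in\overline{\Omega}_a$ satisfies $F_a(z)\in T_a^\infty$, then by definition $F_a(z)\in F_a^{-k}(T_a^0)$ for some $k\geq 0$, whence $z\in F_a^{-(k+1)}(T_a^0)\subset T_a^\infty$. Conversely, any $z\in T_a^\infty\cap\overline{\Omega}_a$ with $z\in F_a^{-k}(T_a^0)$ for some $k\geq 1$ automatically satisfies $F_a(z)\in T_a^\infty$. The only subtle case is $k=0$: points in $T_a^0\cap\overline{\Omega}_a$ lie on the non-singular part of $\partial T_a$, and since $F_a$ coincides there with the Schwarz reflections $\sigma$ or $\sigma_a$, it fixes these boundary points pointwise; hence $F_a(z)=z\in T_a^0\subset T_a^\infty$. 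This disposes of forward invariance as well.

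For $K_a=\hat{\C}\setminus T_a^\infty$, the statements follow by taking complements. If $z\in\overline{\Omega}_a$ has $F_a(z)\in K_a$ but $z\in T_a^\infty$, then by the forward invariance just established $F_a(z)\in T_a^\infty$, contradicting $F_a(z)\in K_a$; this gives $F_a^{-1}(K_a)\subseteq K_a$. Symmetrically, if $z\in K_a\cap\overline{\Omega}_a$ had $F_a(z)\in T_a^\infty$, then backward invariance would place $z$ in $T_a^\infty$, again a contradiction.

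There is no real obstacle here; the proposition is a direct consequence of the definition $T_a^\infty=\bigcup_{k\geq 0}F_a^{-k}(T_a^0)$, combined with the observation that $F_a$ restricts to the identity on $\partial T_a\setminus\{\alpha_a,\tfrac14\}$. The only point that requires any care is the book-keeping about the domain of $F_a$, which is handled by the base case $k=0$ above.
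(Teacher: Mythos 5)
Your proof is correct. The paper states Proposition~\ref{dynamically_inv} without supplying an argument (it is evidently regarded as an immediate consequence of the definition of the tiling set), and your proposal fills in exactly the verification one would expect: backward invariance of $T_a^\infty$ by shifting indices in $T_a^\infty=\bigcup_{k\geq 0}F_a^{-k}(T_a^0)$, forward invariance by the same shift for $k\geq 1$ plus the observation that $F_a$ restricts to the identity on $T_a^0\cap\overline{\Omega}_a=\partial T_a\setminus\{\alpha_a,\tfrac14\}$, and then the statements for $K_a$ by complementation. You are also right that the only genuine care required concerns the domain of definition of $F_a$; it is worth noting explicitly (as an add-on) that $K_a$ does lie entirely in $\overline{\Omega}_a$, since $K_a\cap T_a=\{\alpha_a,\tfrac14\}\subset\partial\Omega_a$, so there is no ambiguity in speaking of $F_a$ on $K_a$.
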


The next proposition sheds light on the geometry of the tiling set near the singular points $\frac14$ and $\alpha_a$. It roughly says that the tiling set contains sufficiently wide angular wedges near the singular points.

\begin{proposition}\label{wedge_at_cardioid_sing} 
1) Let $a\in\C\setminus(-\infty,-\frac{1}{12})$. Then for every $\delta\in\left(0,\pi\right)$, there exists $R>0$ such that $$W_a^1:=\left\{\frac14+re^{i\theta}: 0<r<R,\ -(\pi-\delta)<\theta<(\pi-\delta)\right\}\subset T_a^\infty.$$ 

2) Let $a\in\C\setminus(-\infty,-\frac{1}{12}]$, and $\theta_0$ be the slope of the inward normal vector to $\partial\heartsuit$ at $\alpha_a$. Then for every $\delta\in\left(0,\frac{\pi}{2}\right)$, there exists $R>0$ such that $$W_a^2:=\alpha_a+\left\{re^{i(\theta_0+\theta)}: 0<r<R,\ \delta<\vert\theta\vert<\pi-\delta\right\}\subset T_a^\infty.$$
\end{proposition}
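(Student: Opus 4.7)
My plan is to handle each part by decomposing the relevant wedge into an ``easy'' piece directly contained in $T_a^0$, and a dynamical piece inside $\overline{\heartsuit}$ or $\overline{B}(a, r_a)^c$ whose points must be shown to escape to $T_a^0$ under iteration.

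For part 1), note that $\frac{1}{4}$ is strictly interior to $\overline{B}(a, r_a)$ for every $a \in \mathbb{C} \setminus (-\infty, -\tfrac{1}{12})$, so for $R$ small the exterior cusp region of $\heartsuit$ near $\frac{1}{4}$ is contained in $T_a$. Hence $W_a^1 \setminus \heartsuit$ lies in $T_a \setminus \{\tfrac{1}{4}\} = T_a^0$, and the cusp arcs $W_a^1 \cap \partial\heartsuit$ are likewise in $T_a^0$. For $W_a^1 \cap \heartsuit$ I would use the Riemann map $\phi$ of Subsection~\ref{sec_cardioid} to conjugate $\sigma|_{\heartsuit}$ with $g(\lambda) = 2 - 1/\overline{\lambda}$ at its parabolic fixed point $\lambda = 1$. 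By Corollary~\ref{iterated_pre_image_cardioid}, $\sigma^{-n}(\heartsuit) = \phi(B(\tfrac{2n}{2n+1}, \tfrac{1}{2n+1}))$ shrinks to $\{\tfrac{1}{4}\}$, so every $w \in \heartsuit \setminus \{\tfrac{1}{4}\}$ lies in $\sigma^{-n}(\heartsuit) \setminus \sigma^{-(n+1)}(\heartsuit)$ for a unique finite $n$, giving $\sigma^{\circ(n+1)}(w) \notin \overline{\heartsuit}$. The angular exclusion in $W_a^1$ (arguments near $\pi$) corresponds in $\lambda$-coordinates to excluding the real interval $\lambda \in (0, 1) \subset \mathbb{D}$, precisely the orbits that might exit $\heartsuit$ at the leftmost cardioid point $-\tfrac{3}{4} = \phi(-1)$, which can coincide with the double point $\alpha_a$ for some parameters and thus fail to lie in $T_a^0$. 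After this exclusion, for $R$ small the orbit's exit from $\heartsuit$ is close to $\frac{1}{4}$ in the cardioid's exterior cusp region, hence in $T_a^0$.

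For part 2), decompose $W_a^2$ into the pieces in $T_a$, in $\heartsuit$, and in $\overline{B}(a, r_a)^c$; the first is in $T_a^0$ for $R$ small. For the other two, I would use the parabolic description of $F_a^{\circ 2}$ from Proposition~\ref{double_asymp}, combined with the Schwarz expansions of $\sigma$ and $\sigma_a$ as anti-holomorphic reflections across boundaries of respective curvatures $\kappa_c > \kappa_d$. In local coordinates at $\alpha_a$ with inward normal along $+x$, composing yields
\[
F_a^{\circ 2}(x + iy) = \bigl(x - (\kappa_c - \kappa_d) y^2\bigr) + i\, y \bigl(1 + 2(\kappa_c - \kappa_d) x\bigr) + O(|w|^3)
\]
on the cardioid side, with the sign-flipped version on the circle-exterior side. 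The $x$-coordinate thus drifts toward the lens $T_a$ (of $x$-width $(\kappa_c - \kappa_d) y^2/2$ at tangential position $y$) from either side, while the tangential coordinate $|y|$ grows multiplicatively. The angular exclusion in $W_a^2$ avoids both normal directions, keeping orbits off the attracting petals of $\sigma_a \circ \sigma$ and $\sigma \circ \sigma_a$, so orbits do not converge to $\alpha_a$.

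The main obstacle in part 2 is quantitative: the $x$-step $(\kappa_c - \kappa_d) y^2$ equals twice the lens width, so an orbit can skip over $T_a$ in a single application of $F_a^{\circ 2}$, and the leading-order dynamics alone would produce a spurious period-two orbit bouncing between $\heartsuit$ and $\overline{B}(a, r_a)^c$. This is resolved by the multiplicative growth of $|y|$: after finitely many iterations the tangential coordinate has grown enough that the orbit's next $\sigma$- or $\sigma_a$-image must land in the interior of the now-wider lens, hence in $T_a^0$. The corresponding subtlety in part 1---ensuring the $\heartsuit$-exit does not occur at a singular point---is handled by the angular exclusion in $W_a^1$ as described above.
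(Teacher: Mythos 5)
Your overall strategy—decompose the wedge into a piece already in $T_a^0$ and a dynamical piece, then track the orbit to escape—is different in spirit from the paper's proof, which just cites the standard parabolic-dynamics fact that iterated pre-images of a fundamental domain near a parabolic point tessellate the repelling petal and hence fill out any Stolz angle disjoint from the repelling direction(s). Your direct argument could in principle be made to work, but as written it has a gap in each part, and a serious one in part~2.

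For part~1, the claim ``for $R$ small the orbit's exit from $\heartsuit$ is close to $\frac14$'' is the entire quantitative content and is left unjustified. In $\lambda$-coordinates, $w-\frac14 = -\frac14(\lambda-1)^2$, so the angular exclusion $|\arg(w-\tfrac14)\mp\pi|>\delta$ corresponds to a wedge of half-width $\delta/2$ around the repelling direction $\mu:=\lambda-1<0$ of the parabolic germ $\mu\mapsto\mu-2\mu^2+\cdots$. One then needs to show that, with $\arg\mu_0$ bounded away from $\pi$, the orbit exits $B(\tfrac23,\tfrac13)$ at a point $\mu^*$ with $|\mu^*|\lesssim |\mu_0|/\delta$; this is what keeps the exit near $\frac14$ and hence in $T_a^0$. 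Without this estimate, the orbit could in principle slide far down the repelling petal before exiting, which is exactly what happens for $\mu_0$ on or very near the negative real axis. Your remark about orbits reaching $-\tfrac34$ conflates two things: the issue is not that an excluded orbit would eventually exit at the antipodal point, but that without the angular control the exit point is not uniformly localized near $\frac14$ as $R\to0$.

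For part~2, you correctly identify the degeneracy—the $x$-step of $F_a^{\circ2}$ is $(\kappa_c-\kappa_d)y^2$, which is exactly twice the lens width $\tfrac12(\kappa_c-\kappa_d)y^2$—but the proposed resolution via multiplicative growth of $|y|$ does not work. Both the $x$-step and the lens width are proportional to $y^2$, so growth of $|y|$ leaves their ratio unchanged and in no way widens the escape window relative to the stride. The actual reason the orbit cannot skip $T_a$ forever is that there are two adjacent escape windows per $F_a^{\circ2}$-iterate: one where the odd step $\sigma(w_n)$ already lands in $T_a^0$, of width $f_c(y_n)-f_d(y_n)$, and one where the even step $(\sigma_a\circ\sigma)(w_n)$ lands in $T_a^0$, again of width $f_c(y_n)-f_d(y_n)$. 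Their union has width equal to the full $x$-step $(\kappa_c-\kappa_d)y_n^2$, so at leading order the stride and the combined window match exactly. This is precisely the statement that the two halves of $T_a^0$ near $\alpha_a$ form a fundamental domain for the parabolic germ, and promoting the leading-order matching to a rigorous statement (controlling the $O(|\mu|^3)$ corrections as $R\to0$) is exactly the content of the Leau--Fatou/Fatou-coordinate theory that the paper's short proof is invoking. As it stands, your argument for part~2 does not close this gap.
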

\begin{proof}
1) This is a consequence of the ``parabolic" behavior of $F_a$ near $\frac14$. The local Puiseux series expansion of $F_a$ near $\frac14$ (see Proposition~\ref{cusp_asymp}) imply that $(\frac14-\epsilon,\frac14)$ is the unique repelling direction of $F_a$ at $\frac14$, and the iterated preimages of the fundamental tile $T_a^0$ occupy a circular sector of Stolz angle $2(\pi-\delta)$ (disjoint from $(\frac14-\epsilon,\frac14)$) at $\frac14$ (compare Figure~\ref{various_limit_sets}).

2) This is a consequence of the parabolic dynamics of $F_a^{\circ 2}$ near $\alpha_a$. The local power series expansions of the piecewise definitions of $F_a^{\circ 2}$ near $\alpha_a$ (see Formula~(\ref{iterate_alpha}) and Proposition~\ref{double_asymp}) imply that the inward and outward normal vectors to $\partial\heartsuit$ at $\alpha_a$ are the only repelling directions of $F_a^{\circ 2}$ at that point, and the iterated preimages of each of the two connected components of $T_a^0\cap B(\alpha_a,\epsilon)$ (for $\epsilon>0$ sufficiently small) occupy a circular sector of Stolz angle $(\pi-2\delta)$ (disjoint from the repelling directions) at $\alpha_a$ (compare Figure~\ref{various_limit_sets}).
\end{proof}

We will now show that connectivity of the non-escaping set $K_a$ is completely determined by the dynamics of the unique critical point $0$ (as in the case of quadratic polynomials or anti-polynomials).

\begin{figure}[h!]
\begin{tikzpicture}
\node[anchor=south west,inner sep=0] at (0,0) {\includegraphics[width=0.3\linewidth]{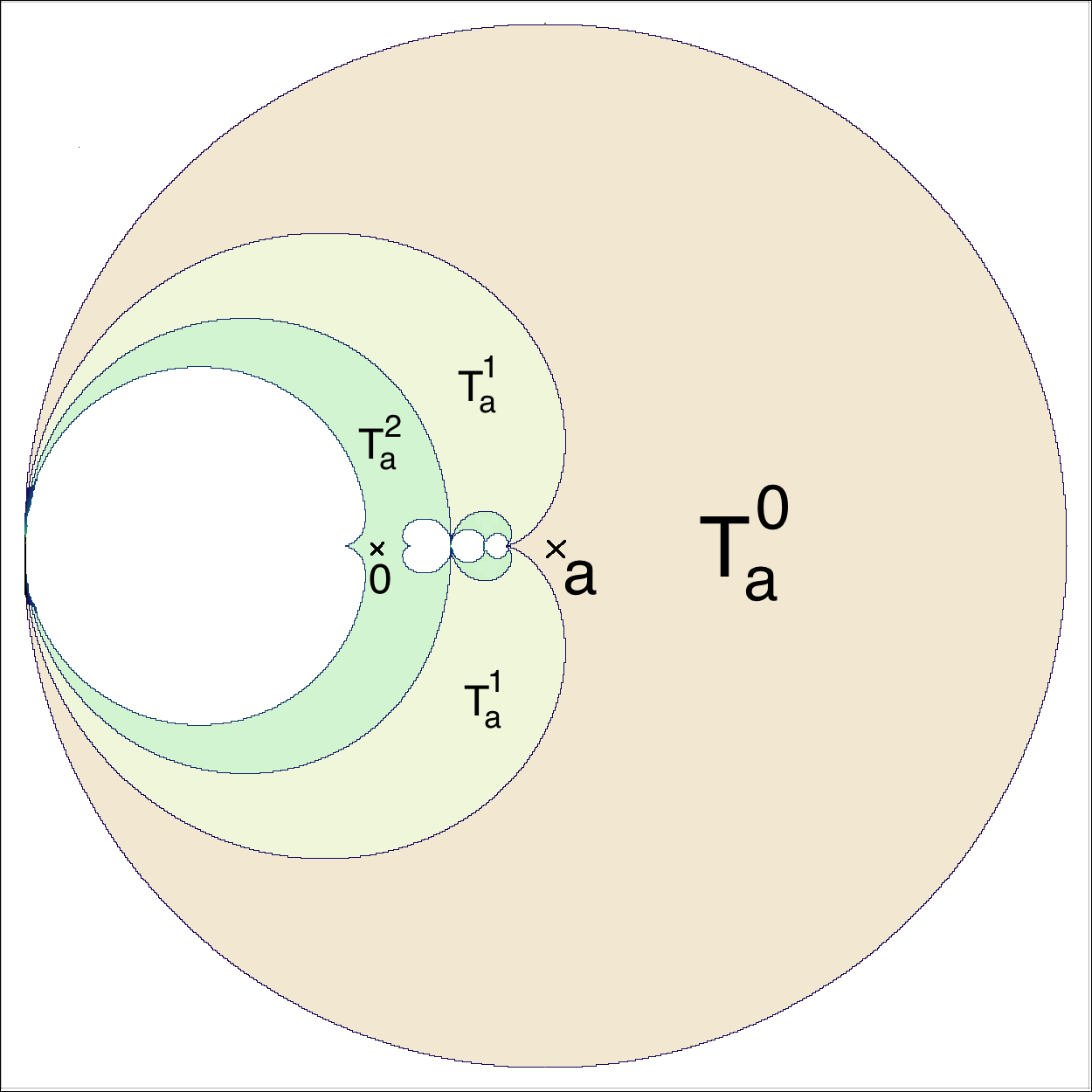}};
\node[anchor=south west,inner sep=0] at (5,0) {\includegraphics[width=0.484\linewidth]{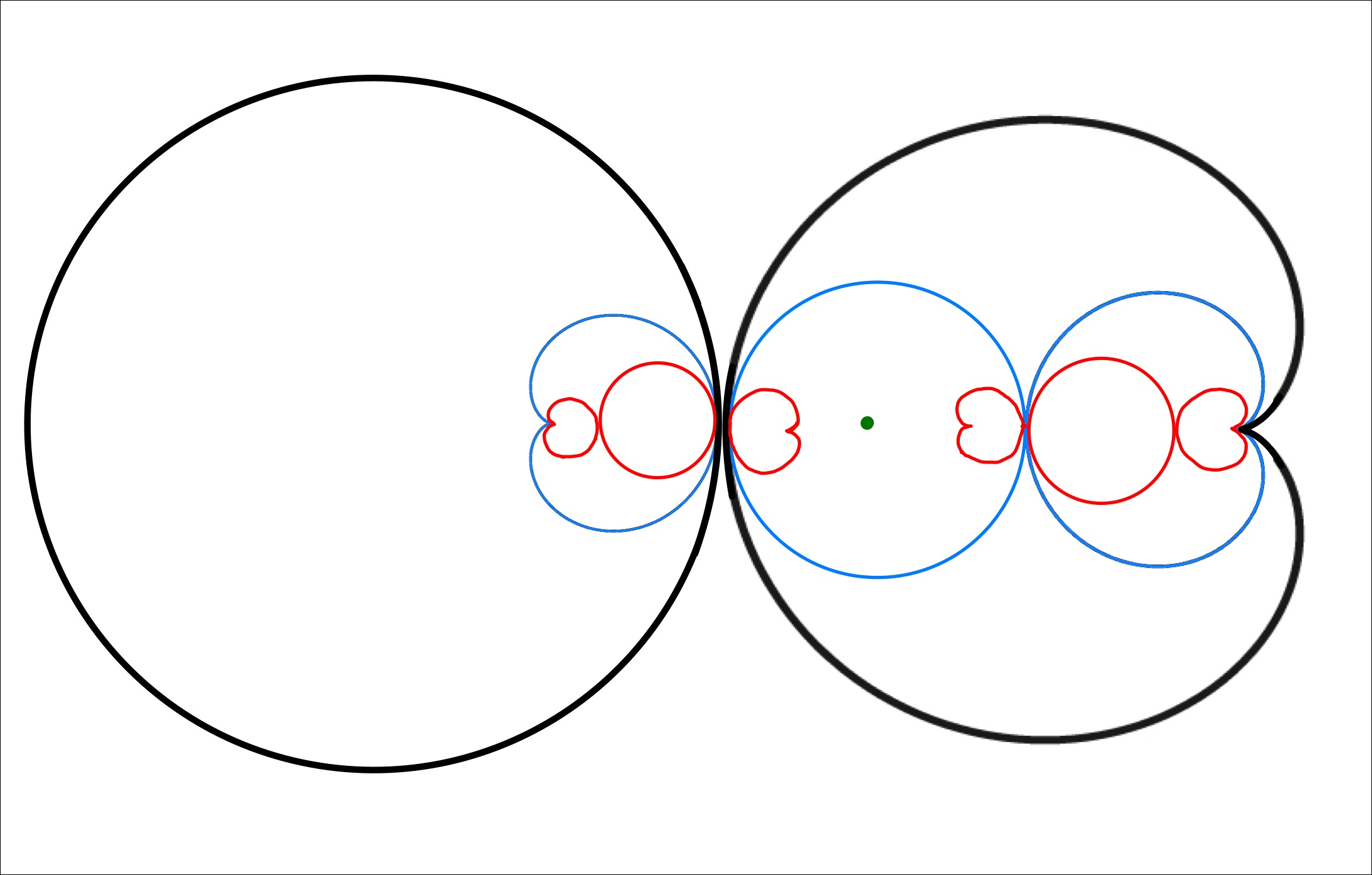}};
\node at (8.2,3.2) {$T_a^0$};
\node at (9.6,2.8) {$T_a^1$};
\node at (9.6,1) {$T_a^1$};
\node at (6,2) {$T_a^1$};
\node at (9,1.6) {\begin{tiny}$T_a^2$\end{tiny}};
\end{tikzpicture}
\caption{Left: Displayed are some of the initial tiles for a parameter $a$ in the escape locus for which the critical point $0$ escapes in two iterates. For such a parameter $a$, the external conjugacy $\psi_a$ of Proposition~\ref{schwarz_group} extends conformally to all the tiles of the first rank. Right: The same dynamical plane in an inverted coordinate system such that the droplet contains $\infty$ in its interior. The tiles of rank $0, 1$, and the rank $2$ tile containing the critical point are marked. This ramified rank $2$ tile disconnects the non-escaping set.}
\label{quickest_escape}
\end{figure}

\begin{proposition}\label{prop_critical_conn_locus}
The non-escaping set $K_a$ is connected if and only if it contains the critical point $0$.
\end{proposition}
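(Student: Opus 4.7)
I would prove the statement as the Schwarz-reflection analogue of the classical Julia-set connectedness dichotomy for quadratic polynomials, via a tile-by-tile topological construction in the spirit of Douady--Hubbard.

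For the easy direction, suppose $0 \in K_a$: since $K_a = \hat{\C} \setminus T_a^\infty$ and $T_a^\infty$ is open and connected by Proposition~\ref{escape_connected}, it suffices to show $T_a^\infty$ is simply connected. Writing $T_a^\infty = \bigcup_n U_n$ with $U_n = \Int E_a^n$ an increasing union of connected open sets, I would prove each $U_n$ is simply connected by induction on $n$, with base $U_0 = \Int T_a$ a topological crescent (a disk). Forward invariance of $K_a$ combined with $0 \in K_a$ forces the whole critical orbit to avoid every tile, so on every rank-$n$ tile $\Delta_n$ the map $F_a^{\circ n}$ has no critical point. Hence $F_a^{\circ n} \colon \Delta_n \to T_a^0$ is an unbranched covering of a simply connected target, i.e.\ a homeomorphism, and each $\Delta_n$ is a topological disk. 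Moreover, among the three boundary arcs of $\Delta_n$ (coming from the homeomorphism with $T_a^0$), exactly one is the ``parent'' arc shared with a rank-$(n-1)$ tile in $U_{n-1}$, while the other two are shared with rank-$(n+1)$ tiles not yet in $U_n$; hence each rank-$n$ tile is glued to $U_{n-1}$ along a single arc, an attachment that preserves simple connectivity. An increasing union of simply connected planar domains is simply connected, so $T_a^\infty$ is a topological disk and $K_a$ is connected.

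For the converse, suppose $0 \notin K_a$ and let $N \geq 2$ be minimal with $F_a^{\circ N}(0) \in T_a^0$ (note $N \geq 2$ since $F_a(0) = \infty \notin T_a^0$). Denote by $\Delta_N$ the rank-$N$ tile containing $0$. Because $0$ is the only critical point of $F_a$, an inductive composition argument using Proposition~\ref{cardioid_circle_branched_cover} shows that $F_a^{\circ N} \colon \Delta_N \to T_a^0$ is a branched covering of degree $2$, with unique critical point $0$ of local degree $2$. Riemann--Hurwitz gives $\chi(\Delta_N) = 1$, so $\Delta_N$ remains a topological disk, but its boundary is now a connected double cover $\partial \Delta_N \to \partial T_a^0$, so every arc of $\partial T_a^0$ has exactly two lifts in $\partial \Delta_N$. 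In particular, the unique ``parent'' arc lifts to two disjoint arcs, and $\Delta_N$ attaches to the already simply connected $U_{N-1}$ along two disjoint arcs rather than one. Attaching a disk along two arcs to a simply connected domain creates a handle: $U_N$ contains a Jordan curve $\gamma \subset T_a^\infty$ that is not null-homotopic in $T_a^\infty$, and each of the two complementary discs of $\gamma$ in $\hat{\C}$ must contain non-escaping points, yielding at least two components of $K_a$.

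The principal obstacle is the final verification that both complementary regions of $\gamma$ actually meet $K_a$ rather than being swallowed by tiles of rank $> N$. A pure Euler-characteristic count does not rule this out, and I would handle it via a careful inductive tracking of which sheet of the branched cover $F_a \colon F_a^{-1}(\Omega_a) \to \Omega_a$ (Proposition~\ref{cardioid_circle_branched_cover}) contributes tiles on each side of $\gamma$. The wedge estimates near the parabolic singular points in Proposition~\ref{wedge_at_cardioid_sing}, combined with the Puiseux asymptotics of Propositions~\ref{cusp_asymp} and~\ref{double_asymp}, should then ensure that boundary accumulation points populate both complementary regions of $\gamma$.
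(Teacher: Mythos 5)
Your forward implication ($0\in K_a\Rightarrow K_a$ connected) is correct, and it is essentially the dual formulation of the paper's argument: you establish simple connectivity of $T_a^\infty$ by an inductive tile-attaching argument, while the paper observes directly that each $\hat{\C}\setminus\Int{E_a^k}$ is a full continuum (since the unramified tiles form a tree of disks) and that $K_a$ is the nested intersection of these. Both routes are sound; the paper's is shorter because it avoids spelling out the arc-attaching combinatorics. One small inaccuracy in your language: $\partial T_a^0$ consists of three disjoint open arcs, not a circle, so ``connected double cover'' is not quite the right phrase — though the conclusion that the parent arc of $\Delta_N$ has two disjoint lifts is correct, and so is the Euler-characteristic count (the result is an annulus, not a ``handle'': $\chi$ drops to $0$, not $-1$).

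For the converse, you have correctly identified that the genuine content is showing that both complementary discs of the non-contractible curve $\gamma\subset U_N$ actually meet $K_a$, and you candidly flag that your sketch does not close this gap. Your proposed repair, however, is not the right tool: the wedge estimates of Proposition~\ref{wedge_at_cardioid_sing} and the Puiseux asymptotics of Propositions~\ref{cusp_asymp} and~\ref{double_asymp} describe the local geometry of the tiling set near the two singular points, but they carry no information about which side of $\gamma$ the non-escaping set accumulates on, and no amount of ``inductive sheet-tracking'' phrased that way leads cleanly to the conclusion. The argument the paper actually relies on (made explicit in the proof of Proposition~\ref{cantor_outside}) is a single degree count: since all tiles of rank $<N$ are unramified, $E_a^{N-1}$ is a closed topological disk, $E_a^N$ is an annulus, and $\hat{\C}\setminus E_a^{N}$ has exactly two components $V_1,V_2$; because the unique critical point $0$ lies in $E_a^{N}$, the proper map $F_a:\hat{\C}\setminus E_a^{N}\to\hat{\C}\setminus E_a^{N-1}$ is an \emph{unbranched} degree-two cover of a connected target, so each $V_i$ maps homeomorphically onto $\hat{\C}\setminus E_a^{N-1}$. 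That image contains points of $K_a$, and since $K_a$ is completely invariant, each $V_i$ must contain points of $K_a$. This is what forces $K_a$ to be disconnected, and it renders the wedge/Puiseux machinery in your sketch unnecessary.
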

\begin{proof}
If $0$ does not escape under $F_a$, then every tile is unramified, and $\widehat{\C}\setminus\Int{E_a^k}$ is a full continuum for each $k\geq 0$. Therefore, $K_a=\bigcap_{k\geq 0}\left(\widehat{\C}\setminus\Int{E_a^k}\right)$ is a nested intersection of full continua and hence is a full continuum itself. 

Now suppose that $0\in T_a^\infty$. Since $\infty\notin T_a^0$, there exists a smallest integer $k\geq 2$ such that $0$ lies in a tile of rank $k$. Then the tile containing $0$ is a subset of the closure of $\sigma^{-1}(\overline{B}(a,r_a)^c)$. As the tiles of rank $0$ through $k-1$ do not contain the critical point, it follows that $E_a^{k-1}$ is simply connected. Also, since $\sigma:\sigma^{-1}(\overline{B}(a,r_a)^c)\to\overline{B}(a,r_a)^c$ is a two-to-one branched cover branched at $0$, it follows that $\sigma:\sigma^{-1}(\overline{B}(a,r_a)^c)\setminus E_a^k\to \overline{B}(a,r_a)^c\setminus E_a^{k-1}$ is a two-to-one covering map onto a simply connected set. Thus, $\sigma^{-1}(\overline{B}(a,r_a)^c)\setminus E_a^k$ must consist of two disjoint copies of $\overline{B}(a,r_a)^c\setminus E_a^{k-1}$. Since $K_a\cap\sigma^{-1}(\overline{B}(a,r_a)^c)$ is contained in the union of these two copies, and intersects both copies non-trivially, we obtain a disconnection of $K_a$ (see Figure~\ref{quickest_escape} for the situation when $k=2$).
\end{proof}

\subsubsection{The connectedness locus}\label{conn_locus_def} The above proposition leads to the following definition.

\begin{definition}[Connectedness locus and escape locus]
The connectedness locus of the family $\mathcal{S}$ is defined as $$\cC(\mathcal{S})=\{a\in\C\setminus(-\infty,-\frac{1}{12}): 0\notin T_a^\infty\}=\{a\in\C\setminus(-\infty,-\frac{1}{12}): K_a\ \textrm{is\ connected}\}.$$ The complement of the connectedness locus in the parameter space is called the \emph{escape locus}.
\end{definition}

Let us collect a few easy facts about the connectedness locus $\cC(\mathcal{S})$.

\begin{proposition}\label{conn_locus_in_cardioid}
$\cC(\mathcal{S})\subset \heartsuit\cup\{\frac{1}{4}\}$.
\end{proposition}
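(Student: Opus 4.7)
The strategy is a direct two-step orbit computation for the critical point $0$, followed by a location check on where the second iterate lands.

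First I would trace the initial iterates of the critical orbit. Since $\heartsuit$ is a quadrature domain of order $2$ with its unique node at $0$ (Subsection~\ref{cardioid_qd_sec}), the Schwarz reflection $\sigma$ has a double pole at $0$, so $F_a(0) = \sigma(0) = \infty$. Since $\infty$ lies in the exterior of the circumscribing circle, the circle reflection $\sigma_a$ sends $\infty$ to its center, giving $F_a(\infty) = \sigma_a(\infty) = a$. Thus $F_a^{\circ 2}(0) = a$.

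Next, I would argue that for every parameter $a \in \C \setminus \bigl((-\infty,-\tfrac{1}{12}) \cup \heartsuit \cup \{\tfrac14\}\bigr)$, the point $a$ lies in the desingularized fundamental tile $T_a^0 = T_a \setminus \{\alpha_a, \tfrac14\}$. Because $a$ is the center of the closed disk $\overline{B}(a,r_a)$ and $a \notin \heartsuit$ by hypothesis, the definition $T_a = \overline{B}(a,r_a)\setminus\heartsuit$ places $a$ in $T_a$. To see that $a$ is non-singular: the hypothesis excludes $a = \tfrac14$, and $a \neq \alpha_a$ because $\alpha_a \in \partial B(a,r_a)$ forces $|a - \alpha_a| = r_a > 0$.

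Combining these two steps, $F_a^{\circ 2}(0) = a \in T_a^0$, so the critical point escapes to the fundamental tile in exactly two iterations, i.e., $0 \in T_a^\infty$. Proposition~\ref{prop_critical_conn_locus} then yields $a \notin \cC(\mathcal{S})$, which is the contrapositive of the claim. Since the whole argument reduces to identifying $F_a^{\circ 2}(0)$ and checking where it sits in the dynamical partition of $\hat\C$, there is essentially no genuine obstacle here; the only thing to verify carefully is that the singular points of $T_a$ cannot coincide with the center $a$ of the circumscribing circle, which is immediate from $r_a > 0$ and the hypothesis $a \neq \tfrac14$.
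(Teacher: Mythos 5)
Your proof is correct and follows essentially the same route as the paper's: compute $F_a^{\circ 2}(0) = a$ (via $\sigma(0)=\infty$ and $\sigma_a(\infty)=a$), observe $a\neq\alpha_a$ since $\alpha_a\in\partial B(a,r_a)$, and conclude that when $a\notin\heartsuit\cup\{\tfrac14\}$ the critical orbit enters $T_a^0$ after two steps. Your version just spells out the intermediate steps that the paper's terse proof takes for granted.
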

\begin{proof}
Note that for all $a\in\C\setminus(-\infty,-\frac{1}{12})$, we have $F_a^{\circ 2}(0)=a\in B(a,r_a)$. Clearly, $a\neq \alpha_a$. Therefore, if $a\notin\heartsuit\cup\{\frac{1}{4}\}$, then $F_a^{\circ 2}(0)\in T_a^0$; i.e. $0\in T_a^\infty$.
\end{proof}

\begin{proposition}\label{conn_locus_almost_closed}
$\cC(\mathcal{S})$ is closed in $\C\setminus(-\infty,-\frac{1}{12})$.
\end{proposition}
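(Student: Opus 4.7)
The plan is to show that the escape locus $\mathcal{E}:=\{a\in\C\setminus(-\infty,-\tfrac{1}{12}):0\in T_a^\infty\}$ is open; since $\cC(\mathcal{S})$ is its complement in $\C\setminus(-\infty,-\tfrac{1}{12})$, this gives the desired closedness.

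Fix $a_0\in\mathcal{E}$ and let $n_0\geq 2$ be the smallest integer with $F_{a_0}^{\circ n_0}(0)\in T_{a_0}^0$; set $w_0:=F_{a_0}^{\circ n_0}(0)$. My first step is to observe that every intermediate iterate $F_{a_0}^{\circ k}(0)$, $0\leq k\leq n_0-1$, lies in the open domain $\Omega_{a_0}$. Indeed, minimality of $n_0$ forbids membership in $\partial\Omega_{a_0}\setminus\{\alpha_{a_0},\tfrac{1}{4}\}\subset T_{a_0}^0$, while equality with either fixed point $\alpha_{a_0}$ or $\tfrac{1}{4}$ would freeze the orbit at a point of $\overline{\Omega}_{a_0}\setminus T_{a_0}^0$, contradicting $n_0<\infty$. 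Since $\sigma$ is independent of $a$ and $\sigma_a(w)=\overline{a}+r_a^2/(\overline{w}-\overline{a})$ depends continuously on $a$ (with $r_a$ varying continuously on $\C\setminus(-\infty,-\tfrac{1}{12}]$), a short induction on $k$ yields continuity of $a\mapsto F_a^{\circ k}(0)$ at $a_0$ for $0\leq k\leq n_0$.

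Next I split on the location of $w_0$. In the easy case $w_0\in\mathrm{int}(T_{a_0})=B(a_0,r_{a_0})\setminus\overline{\heartsuit}$, continuity of $a\mapsto F_a^{\circ n_0}(0)$ together with continuity of the open set $\mathrm{int}(T_a)$ forces $F_a^{\circ n_0}(0)\in\mathrm{int}(T_a)\subset T_a^0$ for all $a$ near $a_0$, so $a\in\mathcal{E}$. In the harder case $w_0\in\partial T_{a_0}^0$ lies on $\partial\heartsuit$ or $\partial B(a_0,r_{a_0})$ and is distinct from $\alpha_{a_0}$ and $\tfrac{1}{4}$; for perturbed $a$, the iterate $F_a^{\circ n_0}(0)$ may land on either side of the boundary curve. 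If it lands in $T_a$, we are done; otherwise it lies in $\Omega_a$ close to the relevant curve, and I apply $F_a$ once more. The local Schwarz reflection involved --- either $\sigma$ across $\partial\heartsuit$ or $\sigma_a$ across $\partial B(a,r_a)$ --- is a local diffeomorphism swapping the two sides of the curve (using Proposition~\ref{small_cardioid} for the cardioid side, which maps a neighborhood in $\heartsuit$ of $w_0\notin\overline{\sigma^{-1}(\heartsuit)}$ univalently across $\partial\heartsuit$). It therefore sends $F_a^{\circ n_0}(0)$ to a point on the opposite side. Because $w_0\neq\alpha_{a_0},\tfrac14$, the opposite side is locally contained in $B(a,r_a)\setminus\overline{\heartsuit}=\mathrm{int}(T_a)$ for $a$ close enough to $a_0$ (the non-tangency part of $\partial\heartsuit$ lies strictly inside $B(a,r_a)$, and the non-tangency part of $\partial B(a,r_a)$ lies strictly outside $\overline{\heartsuit}$). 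Hence $F_a^{\circ(n_0+1)}(0)\in T_a^0$ and $a\in\mathcal{E}$.

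The main obstacle will be this boundary sub-case, where naive continuity plus openness of $T_a^0$ breaks down (since $w_0$ sits on a moving boundary curve); one must invoke the local Schwarz-reflection geometry and the fact that $w_0$ avoids the singular points to guarantee that a single extra iterate pushes the perturbed orbit into the open interior of the fundamental tile.
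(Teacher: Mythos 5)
Your argument follows the same strategy as the paper's proof: continuity of $a\mapsto F_a^{\circ k}(0)$ and of the tile $T_a^0$, together with the observation that when $F_{a_0}^{\circ n_0}(0)$ lies on $\partial T_{a_0}^0$ a single further reflection pushes the perturbed orbit into $\Int{(T_a)}$ --- which is precisely the paper's alternative ``$F_a^{\circ n_0}(0)\in T_a^0$ or $F_a^{\circ(n_0+1)}(0)\in T_a^0$''. Your write-up correctly fills in the boundary case the paper leaves implicit (including the non-tangency observations keeping the reflected point inside $T_a$); just fix the small typo $\sigma_a(w)=a+r_a^2/(\overline{w}-\overline{a})$, not $\overline{a}+r_a^2/(\overline{w}-\overline{a})$.
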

\begin{proof}
Note that the fundamental tile $T_a^0$ varies continuously with the parameter as $a$ runs over $\C\setminus\left(-\infty,-\frac{1}{12}\right)$. Now let $a_0\in\C\setminus\left(-\infty,-\frac{1}{12}\right)$ be a parameter outside the connectedness locus $\cC(\mathcal{S})$. Then there exists some positive integer $n_0$ such that $F_{a_0}^{\circ n_0}(0)\in T_{a_0}^0$. It follows that for all $a$ sufficiently close to $a_0$, we have $F_{a}^{\circ n_0}(0)\in T_a^0$ or $F_{a}^{\circ (n_0+1)}(0)\in T_a^0$. Hence, $\cC(\mathcal{S})$ is closed in $\C\setminus\left(-\infty,-\frac{1}{12}\right)$.
\end{proof}

\begin{proposition}[Real slice of the connectedness locus]\label{real_connected}
$\cC(\mathcal{S})\cap\R=\left[-\frac{1}{12}, \frac{1}{4}\right]$.
\end{proposition}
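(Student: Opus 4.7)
The plan is to prove the two inclusions separately.

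For $\cC(\mathcal{S})\cap\R\subset[-\tfrac{1}{12},\tfrac{1}{4}]$, Proposition~\ref{conn_locus_in_cardioid} gives $\cC(\mathcal{S})\subset\heartsuit\cup\{\tfrac14\}$. Since $\overline{\heartsuit}\cap\R=[-\tfrac34,\tfrac14]$ and the parameter space excludes the slit $(-\infty,-\tfrac{1}{12})$, the intersection $\cC(\mathcal{S})\cap\R$ is contained in $(-\tfrac34,\tfrac14]\cap[-\tfrac{1}{12},\infty)=[-\tfrac{1}{12},\tfrac14]$.

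For the reverse inclusion, the strategy is to produce, for each real $a\in[-\tfrac{1}{12},\tfrac14]$, a forward-invariant arc on the Riemann sphere that contains the entire forward orbit of the critical point $0$ and is disjoint from $T_a^0$. Let
\[
J := (-\infty,\tfrac14]\cup\{\infty\}\subset\hat{\mathbb C},
\]
the closed arc of the real circle in $\hat{\mathbb C}$ that goes from $\tfrac14$ through $-\infty$ and $+\infty$ back to $\tfrac14$. I will verify that (i) $F_a(J\cap\overline{\Omega}_a)\subset J$, (ii) $J\cap T_a^0=\emptyset$, and (iii) $\{F_a^{\circ n}(0)\}_{n\ge0}\subset J$. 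Granted these, $0\notin T_a^\infty$, so $a\in\cC(\mathcal{S})$.

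The substantive step is (i). Write $J=J_{\mathrm{out}}\cup J_{\mathrm{in}}$ with $J_{\mathrm{out}}:=(-\infty,-\tfrac34]\cup\{\infty\}\subset\overline{B}(a,r_a)^c$ and $J_{\mathrm{in}}:=[-\tfrac34,\tfrac14]\subset\overline{\heartsuit}$, meeting only at $-\tfrac34$. On $J_{\mathrm{out}}$, $F_a=\sigma_a$ is the real circle reflection $\sigma_a(x)=a+r_a^2/(x-a)$ with $r_a=a+\tfrac34$, and a direct computation gives $\sigma_a(J_{\mathrm{out}})=[-\tfrac34,a]\subset J$, using $a\leq\tfrac14$. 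On $J_{\mathrm{in}}$, $F_a=\sigma$; exploiting the semi-conjugacy $\sigma\circ\phi=\phi\circ(1/\bar{\,\cdot\,})$ along the real parameter $\lambda\in[-1,1]$ (which is carried bijectively onto $J_{\mathrm{in}}$ by $\phi$) together with the explicit formula~(\ref{real_formula}), one checks that $\sigma$ fixes $-\tfrac34$ and $\tfrac14$, has a double pole at $0$, and is monotone on each of the subintervals $[-\tfrac34,0)$ and $(0,\tfrac14]$. Combining boundary values with monotonicity shows $\sigma([-\tfrac34,0))\subset(-\infty,-\tfrac34]$ and $\sigma((0,\tfrac14])\subset(-\infty,\tfrac14]$, while $\sigma(0)=\infty$. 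Hence $\sigma(J_{\mathrm{in}})\subset J$. Claim (ii) is immediate: $T_a^0\cap\R=(\tfrac14,2a+\tfrac34]$ lies in $(\tfrac14,\infty)$ and $\infty\notin T_a^0$. For (iii), observe $0\in J$, $F_a(0)=\sigma(0)=\infty\in J$, $F_a(\infty)=\sigma_a(\infty)=a\in J$, and then proceed by induction using (i).

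The only delicate point is the monotonicity verification for $\sigma$ on $J_{\mathrm{in}}$, which follows cleanly from the explicit parametrization via $\phi$ (noting that $\phi$ is increasing on $(-\infty,1)$ and the map $\lambda\mapsto 1/\lambda$ reverses the relevant intervals). The boundary parameters $a=-\tfrac{1}{12}$ and $a=\tfrac14$ are automatically handled: at $a=\tfrac14$ the critical orbit lands on the parabolic cusp fixed point $\tfrac14$ in two iterates (Proposition~\ref{cusp_asymp}), while at $a=-\tfrac{1}{12}$ the critical orbit is captured by the fat parabolic basin of $-\tfrac34$ (Proposition~\ref{fat_basilica_local_dyn}); in both cases the orbit is trapped in $J$ and never reaches $T_a^0$.
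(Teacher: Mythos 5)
Your proof is correct and follows essentially the same route as the paper: identify the closed arc $\left[-\infty,\frac14\right]$ on the Riemann sphere as an $F_a$-invariant set disjoint from $T_a^0$ that contains the critical orbit, then combine with Proposition~\ref{conn_locus_in_cardioid} for the reverse inclusion. The only cosmetic difference is that you verify the invariance by pushing the real interval through the Riemann map $\phi$ and the relation $\sigma\circ\phi=\phi\circ(1/\bar\lambda)$ rather than reading it off Formula~(\ref{real_formula}) as the paper does; the closing remarks about the endpoint parameters are harmless but not needed, since the invariance argument already covers them.
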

\begin{proof}
Let $a\in\left[-\frac{1}{12},\frac{1}{4}\right]$. For such parameters $a$, we have $\alpha_a=-\frac{3}{4}$ and $r_a=(a+\frac{3}{4})$. Moreover, using Formula~(\ref{real_formula}), we see that:
\begin{itemize}
\item $F_a(\left[-\infty,-\frac{3}{4}\right])=\left[-\frac{3}{4},a\right]$ ($F_a$ is decreasing),

\item $F_a(\left[-\frac{3}{4},0\right])=\left[-\infty,-\frac{3}{4}\right]$ ($F_a$ is decreasing),

\item $F_a(\left[0,\frac{1}{4}\right])=\left[-\infty,\frac{1}{4}\right]$ ($F_a$ is increasing).
\end{itemize}

In particular, the interval $\left[-\infty, \frac{1}{4}\right]$ is invariant under $F_a$. Since $\left[-\infty, \frac{1}{4}\right]$ is disjoint from $T_a^0$, it follows that $0$ does not escape to $T_a^0$. Therefore, $\left[-\frac{1}{12}, \frac{1}{4}\right]\subset\cC(\mathcal{S})$. 

But by Proposition~\ref{conn_locus_in_cardioid}, we already know that $\cC(\mathcal{S})\subset\heartsuit\cup\lbrace\frac{1}{4}\rbrace$. This completes the proof.
\end{proof}

The above proof also shows that for each $a\in\left[-\frac{1}{12}, \frac{1}{4}\right]$, $F_a:\left[-\infty, \frac{1}{4}\right]\to\left[-\infty, \frac{1}{4}\right]$ is a unimodal map with a simple critical point at $0$.
\begin{corollary}\label{real_C1}
For each $a\in\left[-\frac{1}{12}, \frac{1}{4}\right]$, the map $F_a:\left[-\infty, \frac{1}{4}\right]\to\left[-\infty, \frac{1}{4}\right]$ is a $C^1$ unimodal map with a simple critical point at $0$.
\end{corollary}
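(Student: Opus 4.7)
The plan is to check the three non-trivial features of the statement---$C^1$-regularity, unimodality, and simplicity of the critical point---one at a time, leaning on the piecewise description $F_a = \sigma_a$ on $[-\infty,-3/4]$ and $F_a = \sigma$ on $[-3/4, 1/4]$, together with the monotonicity information already extracted inside the proof of Proposition~\ref{real_connected}.

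First I would handle $C^1$-smoothness at the junction $w=-3/4$, which is the only place where the two real-analytic pieces are glued. On the left, $\sigma_a(w) = a + r_a^2/(w-a)$ is a M\"obius transformation, so a direct computation gives $\sigma_a'(-3/4) = -r_a^2/(a+3/4)^2 = -1$ (since $r_a = a+3/4$). On the right, I would use Formula~(\ref{real_formula}) (or, equivalently, parametrize via $\lambda = 1 - \sqrt{1-4w}$, which sends $w = -3/4$ to $\lambda = -1$) and differentiate to obtain $\sigma'(-3/4) = -1$ as well. The two one-sided derivatives agree, so $F_a$ is $C^1$ at $-3/4$; at every other interior point of $[-\infty, 1/4]$ it is real-analytic, and at the endpoint $w = -\infty$ the M\"obius map $\sigma_a$ is real-analytic in the smooth chart $u = 1/w$ at $\infty \in \hat{\R}$, with finite value $\sigma_a(\infty) = a$.

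Next I would deal with the critical point $w = 0$, where $\sigma(0) = \infty$, so ``simple critical point'' has to be interpreted with respect to a smooth chart around $\infty$ on the target arc. Substituting $\lambda = 1 - \sqrt{1-4w}$ into the identity $\sigma(\phi(\lambda)) = (2\bar\lambda - 1)/(4\bar\lambda^2)$ and expanding, I obtain $\sigma(w) = -\frac{1}{16 w^2} + \frac{3}{8w} + O(1)$ as $w \to 0$. Pushing this into the chart $u = -1/w'$ near $\infty$ yields $u = 16 w^2 + O(w^3)$, which has a non-degenerate zero of order exactly two at $w = 0$; equivalently, $F_a$ has a simple critical point at $0$. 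Unimodality on $[-\infty, 1/4]$ (strictly monotone on either side of $0$, with $0$ the unique turning point) then follows immediately from the three monotonicity computations already recorded in the proof of Proposition~\ref{real_connected}, combined with the $C^1$-matching at $-3/4$.

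I expect no serious obstacle. The coincidence $\sigma_a'(-3/4) = \sigma'(-3/4) = -1$ could look accidental but is forced by geometry: both $\partial\heartsuit$ and $\partial B(a, r_a)$ pass through $-3/4$ with the same unit tangent, so their Schwarz reflections share the same $1$-jet there. Moreover, Proposition~\ref{double_asymp} already tells us that $F_a^{\circ 2}$ has a parabolic fixed point at $-3/4$ with a non-zero quadratic term when $a \neq -1/12$, which implies that $F_a$ fails to be $C^2$ at $-3/4$ in general---so $C^1$ is the sharp regularity claim. The only minor bookkeeping needed is the choice of a smooth chart on $\hat{\R}$ around $\infty$ when discussing smoothness at the endpoint $-\infty$ and at the image $F_a(0) = \infty$ of the critical point.
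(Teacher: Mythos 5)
Your proposal is correct and follows exactly the paper's route: the only real content is $C^1$-matching at the single junction $w=-\tfrac34$, which you establish (as the paper does, though it omits the computation) by checking that both one-sided derivatives equal $-1$, while piecewise real-analyticity and the monotonicity data from Proposition~\ref{real_connected} give the rest. The extras you supply --- the geometric explanation via matching tangent directions (hence matching $1$-jets of the two reflections), the explicit chart bookkeeping at $\pm\infty$ for the simple critical point, and the remark that the non-vanishing quadratic term in Proposition~\ref{double_asymp} shows $C^1$ is sharp --- are all correct and flesh out details the paper leaves implicit.
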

\begin{proof}
Since $\sigma_a$ and $\sigma$ are anti-meromorphic on $\overline{B}(a,a+\frac{3}{4})^c$ and $\heartsuit$ respectively, it follows that $\sigma_a$ is $C^1$ on $\left[-\infty,-\frac{3}{4}\right)$ and $\sigma$ is $C^1$ on $\left(-\frac{3}{4},\frac{1}{4}\right]$. Moreover, the real-analytic extensions of these maps (in an $\R$-neighborhood of $-\frac{3}{4}$) have a common derivative $-1$ at $-\frac{3}{4}$. This proves that $F_a$ is $C^1$-smooth on $\left[-\infty, \frac{1}{4}\right]$.
\end{proof}

\subsubsection{Quasiconformal deformations in $\mathcal{S}$}\label{qcdef_sec} Quasiconformal deformations play a crucial role in studying the parameter spaces of holomorphic/anti-holomorphic dynamical systems. In this spirit, we will prove a lemma that will allow us to talk about quasiconformal deformations in the family $\mathcal{S}$.

\begin{lemma}[Quasiconformal deformation of Schwarz reflections/changing the mirrors]\label{schwarz_qcdef}
Let $\mu$ be an $F_a$-invariant Beltrami coefficient on $\widehat{\C}$, and $\Phi$ be a quasiconformal map integrating the Beltrami coefficient $\mu$ such that $\Phi$ fixes $0, \frac{1}{4},$ and $\infty$. Let $b=\Phi(a)$. Then, $b\in\C\setminus (-\infty,-\frac{1}{12})$, $\Phi(\Omega_a)=\Omega_b$, and $F_b=\Phi\circ F_a\circ\Phi^{-1}$ on $\Omega_b$.
\end{lemma}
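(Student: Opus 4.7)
The plan is to define $G := \Phi \circ F_a \circ \Phi^{-1}$ on $\Phi(\overline{\Omega}_a)$ and identify it with the reflection map $F_b$ by reconstructing the underlying droplet. Since $F_a$ agrees with $\sigma$ on $\overline{\heartsuit}$ and with $\sigma_a$ on $\overline{B}(a,r_a)^c$, and $\mu$ is $F_a$-invariant, the measurable Riemann mapping theorem applied separately on each piece shows that $G$ is anti-meromorphic on each component of $\Phi(\Omega_a)$. As $F_a$ fixes $\partial\Omega_a$ pointwise, $G$ fixes $\Phi(\partial \Omega_a)$ pointwise. Hence the simply connected domains $D_1 := \Phi(\heartsuit)$ and $D_2 := \Phi(\overline{B}(a,r_a)^c)$ are quadrature domains, and the restrictions $G_i := G|_{\overline{D_i}}$ for $i=1,2$ are their Schwarz reflections.

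I would then apply Proposition~\ref{cardioid_char_prop} to each piece. For $D_2$: it is a simply connected quadrature domain containing $\infty = \Phi(\infty)$, and $G_2$ is conjugate via $\Phi$ to the univalent map $\sigma_a$, so part~(1) gives $D_2 = \overline{B}(b,r')^c$ for some $r' > 0$, where the center is determined as the unique simple pole of $G_2$, namely $\Phi(a) = b$. For $D_1$: it is a bounded simply connected quadrature domain containing $0 = \Phi(0)$; the map $G_1$ is a proper anti-meromorphic map of topological degree $2$ from $\overline{D_1}$ onto $\hat{\mathbb{C}}$ with unique $\infty$-preimage at $0$, hence $G_1$ has a double pole at $0$; and $G_1: G_1^{-1}(D_1) \to D_1$ is conjugate to the univalent map $\sigma:\sigma^{-1}(\heartsuit) \to \heartsuit$ of Proposition~\ref{small_cardioid}. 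Once I verify that $\partial D_1$ has a cusp at $\tfrac{1}{4}$, part~(2) of Proposition~\ref{cardioid_char_prop} delivers $D_1 = \heartsuit$.

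The main obstacle is verifying the cusp at $\tfrac{1}{4}$, since quasiconformal homeomorphisms do not in general respect boundary singularities. The argument I have in mind uses $F_a$-invariance of $\mu$ to transfer local extendability across the boundary: at any smooth point $p$ of $\partial\heartsuit$, $\sigma$ extends as a local anti-holomorphic involution, and invariance of $\mu$ under this local extension forces $G_1$ to extend anti-meromorphically across $\partial D_1$ at $\Phi(p)$; on the other hand, by Proposition~\ref{cusp_asymp} no single-valued anti-meromorphic extension of $\sigma$ exists near $\tfrac{1}{4}$, so $G_1$ cannot extend across $\partial D_1$ at $\tfrac{1}{4} = \Phi(\tfrac{1}{4})$. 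Thus $\tfrac{1}{4}$ is a singular point of $\partial D_1$, and since $D_1$ is a Jordan domain (being a homeomorphic image of $\heartsuit$), $\partial D_1$ has no double points, so the singularity must be a cusp.

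To finish, combining $D_1 = \heartsuit$ and $D_2 = \overline{B}(b,r')^c$ yields $\Phi(\Omega_a) = \heartsuit \cup \overline{B}(b,r')^c$. Because $\Phi$ is a homeomorphism sending the inclusion $\heartsuit \subset \overline{B}(a,r_a)$ onto $\heartsuit \subset \overline{B}(b,r')$ and preserving the cardinality of the contact set on $\partial\heartsuit$, Proposition~\ref{circle_cardioid_touching} forces $r' = r_b$ and $b \in \mathbb{C} \setminus (-\infty,-\tfrac{1}{12})$. Therefore $\Phi(\Omega_a) = \Omega_b$, and the identity $F_b = \Phi \circ F_a \circ \Phi^{-1}$ on $\Omega_b$ follows by matching the Schwarz reflections on each piece.
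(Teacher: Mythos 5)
Your overall strategy --- decompose $\Phi(\Omega_a)$ into the two pieces $D_1=\Phi(\heartsuit)$ and $D_2=\Phi(\overline B(a,r_a)^c)$, recognize each as a quadrature domain whose Schwarz reflection is the conjugate of the corresponding piece of $F_a$, and pin them down via Proposition~\ref{cardioid_char_prop} --- is exactly the paper's approach, and your closing step using Proposition~\ref{circle_cardioid_touching} to force $r'=r_b$ and $b\notin(-\infty,-\frac{1}{12})$ matches as well. Two small imprecisions do not affect correctness: the Schwarz reflection of the exterior disk has no pole \emph{in} $D_2$ (the center is identified as $G_2(\infty)=b$, not as a pole), and $\sigma:\overline\heartsuit\to\hat\C$ is not a constant-degree proper map onto $\hat\C$ (it is one-to-one over $\heartsuit$, two-to-one over the complement); what one should say is that $\infty$ has the unique preimage $0$ in $\heartsuit$ and $\sigma$ has local degree $2$ there, which is what actually yields the double pole for $G_1$.

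The genuine issue is the cusp. The paper simply asserts that $\partial\Phi(\heartsuit)$ has a cusp at $\frac14$ (implicitly relying on the standard fact that quasiconformal homeomorphisms preserve the zero-angle, infinite-modulus geometry of a cusp), whereas you try to derive it, which is commendable. Your argument that $\partial D_1$ is real-analytic away from $\frac14$ is correct: $F_a$-invariance of $\mu$ near a smooth $p\in\partial\heartsuit$ gives $\tilde\sigma_p^*\mu=\mu$ on a two-sided neighborhood, so $\Phi\circ\tilde\sigma_p\circ\Phi^{-1}$ is anti-conformal and extends $G_1$ across $\Phi(p)$. But the final inference --- ``$\sigma$ has no anti-meromorphic extension at $\frac14$, so $G_1$ has none at $\frac14$'' --- does not follow by the same route: if $\hat G_1$ were such an extension, $\Phi^{-1}\circ\hat G_1\circ\Phi$ is only quasi-regular, since $\mu$ need not be $\hat G_1$-invariant on the new territory outside $D_1$, so you cannot conclude an anti-meromorphic extension of $\sigma$. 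Nonconformal conjugacies simply do not transport the extendability obstruction in the direction you want. The gap is fixable within your framework: any anti-holomorphic $\hat G_1$ on a neighborhood $V$ of $\frac14$ fixing the real-analytic arc $\partial D_1\cap V$ pointwise is the Schwarz reflection in that arc, hence an involution, so $G_1\circ G_1=\mathrm{id}$ on the open set $\Phi(\sigma^{-1}(\heartsuit))\cap V\cap G_1^{-1}(V)$; conjugating by $\Phi$ gives $\sigma\circ\sigma=\mathrm{id}$ on an open subset of $\sigma^{-2}(\heartsuit)$, hence (by real-analyticity) on all of $\sigma^{-2}(\heartsuit)$, contradicting Corollary~\ref{iterated_pre_image_cardioid} which shows $\sigma^{\circ 2}$ maps the proper subdomain $\sigma^{-2}(\heartsuit)$ onto all of $\heartsuit$. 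With that repair (or with the qc-preservation-of-cusp modulus estimate that the paper implicitly invokes), your proof is complete.
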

\begin{proof}
The assumption that $\mu$ is $F_a$-invariant implies that $\Phi\circ F_a\circ\Phi^{-1}$ is anti-meromorphic on $\Phi(\Omega_a)$. Hence, $\Phi\circ F_a\circ\Phi^{-1}$ is an anti-meromorphic map on $\Phi(\overline{B}(a,r_a)^c)$ that continuously extends to the identity map on $\Phi(\partial \overline{B}(a,r_a)^c)=\partial\Phi(\overline{B}(a,r_a)^c)$. Since $\Phi$ fixes $\infty$, it follows that $\Phi(\overline{B}(a,r_a)^c)$ is an unbounded quadrature domain with Schwarz reflection map $\Phi\circ \sigma_a\circ\Phi^{-1}$. Since $F_a$ maps $\overline{B}(a,r_a)^c$ univalently onto $B(a,r_a)$, we conclude that $\Phi\circ F_a\circ\Phi^{-1}$ maps $\Phi(\overline{B}(a,r_a)^c)$ univalently onto $\Phi(B(a,r_a))$. It now follows by Proposition~\ref{cardioid_char_prop} that $\Phi(\overline{B}(a,r_a)^c)$ is an exterior disk. 

Let $b=\Phi(a)$. Recall that the Schwarz reflection map of an exterior disk maps $\infty$ to the center of the interior part of the disk. Since $\sigma_a(\infty)=a$ and $\Phi(\infty)=\infty$, it follows that the Schwarz reflection map $\Phi\circ \sigma_a\circ\Phi^{-1}$ of $\Phi(\overline{B}(a,r_a)^c)$ maps $\infty$ to $b$. Hence, $\Phi(\overline{B}(a,r_a)^c)$ is of the form $\overline{B}(b,r)^c$, for some $r>0$.

Again, $\Phi\circ F_a\circ\Phi^{-1}$ is an anti-meromorphic map on $\Phi(\heartsuit)$ that continuously extends to the identity map on $\Phi(\partial \heartsuit)=\partial\Phi(\heartsuit)$. As $\Phi$ fixes $0$ and $\infty$, it follows that $\Phi(\heartsuit)\ni 0$ is a bounded simply connected quadrature domain with associated Schwarz reflection map $\Phi\circ \sigma\circ\Phi^{-1}$. Also note that $\Phi\circ \sigma\circ\Phi^{-1}$ maps $\Phi(\sigma^{-1}(\heartsuit))$ univalently onto $\Phi(\heartsuit)$. Moreover, $\Phi\circ \sigma\circ\Phi^{-1}$ has a double pole at $0$, and $\partial\Phi(\heartsuit)$ has a cusp at $\frac14$. Therefore by Proposition~\ref{cardioid_char_prop}, we have that $\Phi(\heartsuit)=\heartsuit$, and $\Phi\circ \sigma\circ\Phi^{-1}\equiv\sigma$ on $\heartsuit$. 

As $\partial B(a,r_a)$ and $\partial\heartsuit$ touch at a unique point, the same is true for $\partial\overline{B}(b,r)$ and $\partial\heartsuit$. Hence, $\partial B(b,r)$ is a circumcircle of $\heartsuit$ touching $\partial\heartsuit$ at a single point. So, $r=r_b$, and $\Phi\circ \sigma_a\circ\Phi^{-1}\equiv\sigma_b$ on $\overline{B}(b,r_b)^c$. In particular, $b\in\C\setminus (-\infty,-\frac{1}{12})$.

To summarize, we have proved that $\Phi(\heartsuit)=\heartsuit$ and $\Phi(\overline{B}(a,r_a)^c)=\overline{B}(b,r_b)^c$ (for some $b\in\C\setminus (-\infty,-\frac{1}{12})$), where $\partial B(b,r_b)$ circumscribes $\partial\heartsuit$ and touches it at a single point. Hence, $\Phi(\Omega_a)=\Omega_b$. Furthermore, $\Phi\circ \sigma\circ\Phi^{-1}\equiv\sigma$ on $\heartsuit$, and $\Phi\circ \sigma_a\circ\Phi^{-1}\equiv\sigma_b$ on $\overline{B}(b,r_b)^c$. Therefore, $\Phi\circ F_a\circ\Phi^{-1}\equiv F_b$ on $\Omega_b$.

This completes the proof.
\end{proof}

\subsubsection{Classification of Fatou components}\label{fatou_class_sec} We will now discuss the classification of Fatou components, and their relation with the post-critical set (of the maps in $\mathcal{S}$). 

We define the \emph{multiplier} of a periodic point of an anti-holomorphic map as the $z$-derivative of its holomorphic first return map (compare \cite[\S 1.1]{Sa1}). A cycle of an anti-holomorphic map is called \emph{super-attracting}, \emph{attracting}, \emph{neutral}, or \emph{repelling} if the  absolute value of the associated multiplier is equal to $0$, lies in $(0,1)$, is equal to $1$, or is greater than $1$ (respectively).
A neutral cycle is called \emph{parabolic} if the associated multiplier is a root of unity, and \emph{irrationally neutral} otherwise. It is easily seen that a neutral cycle of odd period (of an anti-holomorphic map) is necessarily parabolic with multiplier $1$ (see \cite[Lemma~2.8]{MNS}). Finally, an irrationally neutral periodic point of an anti-holomorphic map (necessarily of even period) is called a \emph{Siegel} point (respectively, \emph{Cremer} point) if its first return map is linearizable (respectively, non-linearizable); i.e., conformally conjugate (respectively, not conformally conjugate) to a rigid irrational rotation. In the former case, the maximal domain on which a linearization exists is a conformal disk, called a \emph{Siegel disk}.

We refer the readers to \cite{M1new} for a comprehensive discussion of the local fixed point theory of holomorphic maps, and to \cite[\S 1.1]{Sa1} for analogous results in the anti-holomorphic setting.

\begin{proposition}[Classification of Fatou components]\label{fatou_classification}
Let $a\in\C\setminus(-\infty,-\frac{1}{12})$. Then the following hold true.

1) Every Fatou component of $F_a$ is eventually preperiodic.

2) For $a\neq-\frac{1}{12}$, every periodic Fatou component of $F_a$ is either the (immediate) basin of attraction of an attracting cycle, or the (immediate) basin of attraction of a parabolic cycle, or a Siegel disk.

3) For $a=-\frac{1}{12}$, each periodic Fatou component of $F_a$ is either the (immediate) basin of attraction of an attracting/parabolic cycle, or a Siegel disk, or the (immediate) basin of attraction of the singular point $-\frac34$. In fact, the map $F_a$ has a $2$-cycle of Fatou components such that every point in these components converges (under the dynamics) to the singular point $-\frac34$ through a period two cycle of attracting petals (intersecting the real line). Moreover, these are the only two periodic Fatou components of $F_a$ where orbits converge to $-\frac34$.
\end{proposition}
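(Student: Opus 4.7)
The plan is to handle the three parts in sequence: part 1 via a Sullivan-style quasiconformal deformation argument tailored to the family $\mathcal{S}$, and parts 2 and 3 by combining the classical Fatou--Denjoy--Wolff classification of periodic components of self-maps of simply connected hyperbolic Riemann surfaces with the local analyses of $F_a$ at the singular points $\tfrac14$ and $\alpha_a$ from Subsection~\ref{dyn_sing_subsec}.

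For part 1, assume for contradiction that $U \subset \Int K_a$ is a wandering Fatou component. The components in the grand orbit of $U$ are pairwise disjoint, and each is simply connected by Corollary~\ref{fatou_simp_conn}. I will construct an infinite-dimensional holomorphic family of $F_a$-invariant Beltrami coefficients $\mu$ with $\|\mu\|_\infty < 1$: freely prescribe $\mu$ on a set of grand-orbit representatives and propagate by pushforward under $F_a$ and pullback under its inverse branches (Proposition~\ref{cardioid_circle_branched_cover}), extending by zero off the grand orbit. Solving the Beltrami equation with normalization $\Phi_\mu(0) = 0,\ \Phi_\mu(\tfrac14) = \tfrac14,\ \Phi_\mu(\infty) = \infty$ and invoking Lemma~\ref{schwarz_qcdef} produces a holomorphic map $\mu \mapsto b(\mu) \in \C \setminus (-\infty, -\tfrac{1}{12})$ such that $F_{b(\mu)} = \Phi_\mu \circ F_a \circ \Phi_\mu^{-1}$. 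Since the parameter space is one-complex-dimensional while the family of admissible $\mu$ (modulo the equivalence $\mu \sim \mu' \Leftrightarrow b(\mu) = b(\mu')$) is infinite-dimensional, this is absurd.

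For part 2, let $a \neq -\tfrac{1}{12}$ and let $U$ be a periodic Fatou component of period $p$, with first-return map $g := F_a^{\circ p}|_U$ (replaced by its holomorphic square if $p$ is odd). Since $U$ is simply connected (Corollary~\ref{fatou_simp_conn}), the classical Fatou--Denjoy--Wolff trichotomy applies. An interior fixed point of $g$ yields, by Schwarz's lemma, a (super-)attracting basin or a Siegel disk (a rational rotation is excluded since it would force some iterate of $F_a$ to be the identity on an open set). If there is no interior fixed point, all orbits of $g$ converge to a boundary fixed point $z_\infty \in \partial U$; if $z_\infty$ is non-singular, the snail lemma forces multiplier $1$ and yields a parabolic basin. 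I exclude $z_\infty \in \{\tfrac14, \alpha_a\}$: at $\alpha_a$, Proposition~\ref{double_asymp} identifies the only repelling directions of $F_a^{\circ 2}$ as the inward and outward normals to $\partial\heartsuit$, which are exactly the directions along which $\Int K_a$ approaches $\alpha_a$, so orbits in $\Int K_a$ are locally repelled; at $\tfrac14$, the Puiseux expansion of Proposition~\ref{cusp_asymp} directly shows that $F_a$ pushes nearby points in $\heartsuit$ (the only part of $\Int K_a$ that accumulates at $\tfrac14$) away from $\tfrac14$ along the cusp direction, preventing convergence. Herman rings are ruled out by simple connectivity.

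Part 3 proceeds as part 2 away from $-\tfrac34$, but at $-\tfrac34$ Proposition~\ref{fat_basilica_local_dyn} upgrades $F_a^{\circ 2}$ to a parabolic germ of order $4$ whose only attracting directions are $+1$ and $-1$, pointing respectively into $\heartsuit$ and into $\overline{B}(a,r_a)^c$, both of which lie in $\Int K_a$ by Proposition~\ref{real_connected}. The petal in $\heartsuit$ determines a Fatou component $U_+$ on which $F_a^{\circ 2}$-orbits converge to $-\tfrac34$; setting $U_- := F_a(U_+)$ and invoking real symmetry of $F_a$, one gets $F_a(U_-) = U_+$, so $\{U_+, U_-\}$ is the desired $2$-cycle. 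The uniqueness of these two components among Fatou components attracted to $-\tfrac34$ follows directly from the uniqueness of the attracting directions in Proposition~\ref{fat_basilica_local_dyn}, and all remaining periodic components are classified as in part 2. I expect the principal obstacle to lie in part 1, where one must verify that the map $\mu \mapsto b(\mu)$ indeed has an infinite-dimensional image (after quotienting by its natural equivalence), which is the standard delicate step of the Sullivan-type argument, here executed via Lemma~\ref{schwarz_qcdef} and the specific geometry of the family $\mathcal{S}$.
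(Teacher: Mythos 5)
Your argument follows the paper's route closely: a Sullivan-style quasiconformal-deformation argument built on Lemma~\ref{schwarz_qcdef} and finite-dimensionality of $\mathcal{S}$ for part~1, the Fatou--Denjoy--Wolff trichotomy on the simply connected first-return domain together with the Snail Lemma at non-singular boundary fixed points and the repelling-sector geometry at $\tfrac14$ and $\alpha_a$ (Propositions~\ref{cusp_asymp}, \ref{double_asymp}, \ref{wedge_at_cardioid_sing}) for part~2, and the order-$4$ parabolic germ analysis at $-\tfrac34$ (Proposition~\ref{fat_basilica_local_dyn}) for part~3. Two small cleanups: the attracting petals lie in $\Int K_a$ simply because they are open, $F_a^{\circ 2}$-invariant, and disjoint from $T_a^0$ (Proposition~\ref{real_connected} controls only the real slice and is not the right citation), and $F_a(U_-)=U_+$ follows immediately from $F_a^{\circ 2}(U_+)=U_+$ without any appeal to real symmetry.
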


\begin{remark}
We will soon see that the only periodic Fatou components of the map $a=-\frac{1}{12}$ are the two periodic Fatou components touching at the singular point $-\frac34$. 
\end{remark}

\begin{proof}
1) The classical proof of non-existence of wandering Fatou components for rational maps \cite[Theorem~1]{Sul} can be adapted to the current setting.\footnote{Proofs of nonexistence of wandering Fatou components for various classes of non-rational maps can be found in \cite{EL1,EL2,GK,Adam}.} This is possible because the family $\mathcal{S}$ is quasiconformally closed (by Lemma~\ref{schwarz_qcdef}) and the parameter space of $\mathcal{S}$ is finite-dimensional (in fact, real two-dimensional). If $F_a$ were to have a wandering Fatou component (such a component would necessarily be simply connected by Corollary~\ref{fatou_simp_conn}), then one could construct an infinite-dimensional space of nonequivalent deformations of $F_a$ supported on the grand orbit of the wandering component, all of which would be members of $\mathcal{S}$. Evidently, this would contradict the fact that the parameter space of $\mathcal{S}$ is finite-dimensional.

2) Let $a\neq-\frac{1}{12}$, and $U$ be a periodic Fatou component of period $k$ of $F_a^{\circ 2}$. Recall that by Corollary~\ref{fatou_simp_conn}, $U$ is simply connected. We can now invoke \cite[Theorem~5.2, Lemma~5.5]{M1new} to conclude that the Fatou component $U$ is either the (immediate) basin of attraction of an attracting cycle, or a Siegel disk, or all orbits in $U$ converge to a unique boundary point $w_0\in\partial U$ with $F_a^{\circ n}(w_0)=w_0$. 

To complete the proof, we only need to focus on the third case. To this end, suppose that all orbits in $U$ converge to a unique boundary point $w_0\in\partial U$ with $F_a^{\circ n}(w_0)=w_0$. 

\noindent \textbf{Case 1: $w_0$ is not a singular point of $T_a$.} In this case, $F_a^{\circ 2}$ has a holomorphic extension in a full neighborhood of $w_0$. Therefore, the classical Snail Lemma (see \cite[Lemma~16.2]{M1new}) asserts that $w_0$ is a parabolic fixed point of multiplier $1$ of $F_a^{\circ 2k}$, and hence $U$ is an immediate basin of attraction of a parabolic cycle.

\noindent \textbf{Case 2: $w_0$ is a singular point of $T_a$.} In this case, we have that $w_0\in\{\frac14,\alpha_a\}$. Since $F_a^{\circ 2}$ does not have a holomorphic extension in a full neighborhood of the singular points, we cannot apply the Snail Lemma. 

However, the local asymptotics of $F_a$ and the geometry of $T_a^\infty$ near $\frac14$ (Propositions~\ref{cusp_asymp} and~\ref{wedge_at_cardioid_sing}) imply that $U\subset K_a$ is contained in the ``repelling petal" of $F_a$ at $\frac14$; i.e. orbits in $U$ cannot non-trivially converge to $\frac14$. Hence, $w_0\neq\frac14$.

Again, since $a\neq-\frac{1}{12}$, the local asymptotics of $F_a^{\circ 2}$ and the geometry of $T_a^\infty$ near $\alpha_a$ (Propositions~\ref{double_asymp} and~\ref{wedge_at_cardioid_sing}) imply that $U\subset K_a$ is contained in one of the two ``repelling petals" of $F_a^{\circ 2}$ at $\alpha_a$; i.e. orbits in $U$ cannot non-trivially converge to $\alpha_a$. Hence, $w_0\neq\alpha_a$. 

This completes the proof of the second statement of the proposition.

3) Finally, let us look at $a=-\frac{1}{12}$. For this parameter, we have $\alpha_a=-\frac34$. We only need to prove the assertions concerning Fatou components where orbits converge to $-\frac34$. 

We have seen in Proposition~\ref{fat_basilica_local_dyn} that the positive and negative real directions at $-\frac34$ are attracting vectors for $F_a^{\circ 2}$, and in particular, $-\frac{3}{4}$ attracts nearby real points under iterations of $F_{a}^{\circ 2}$. These two axes are permuted by $F_a$. 

Clearly, a sufficiently small attracting petal $P_1$ of $\sigma_a\circ\sigma$ (respectively, $P_2:=F_a(P_1)$ of $\sigma\circ\sigma_a$) containing the positive (respectively, negative) real direction at $-\frac34$ is contained in $\Int{K_a}$. Therefore, these two petals are contained in a two-periodic cycle of Fatou components of $F_a$ touching at the fixed point $-\frac{3}{4}$. 

\begin{figure}
\centering
\includegraphics[scale=0.3]{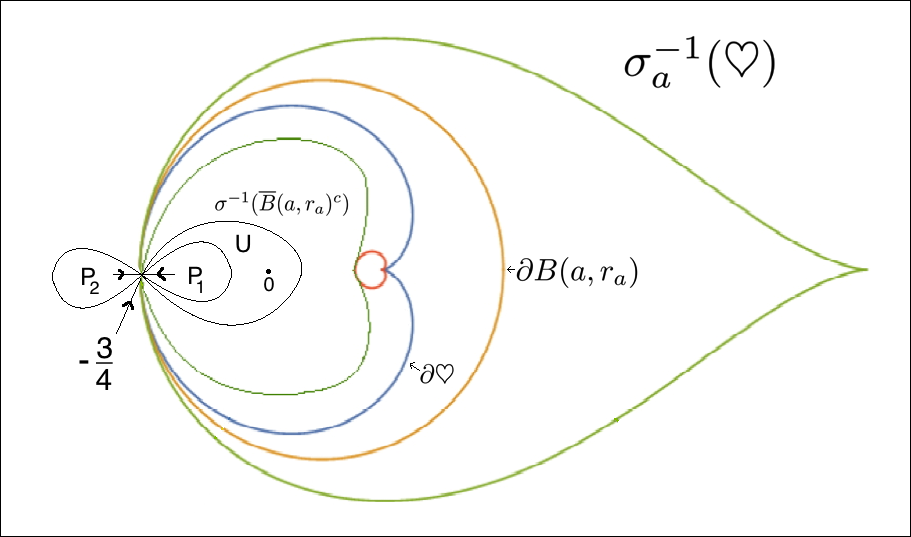}
\caption{The dynamical plane of the map $a=-\frac{1}{12}$ is shown with the attracting petals $P_1$ and $P_2$ at $-\frac34$ marked. The Fatou component $U\subset\sigma^{-1}(\overline{B}(a,r_a)^c)$ contains the petal $P_1$. The $F_a^{\circ 2}-$~orbit of every point in $U$ converges to $-\frac34$ through the attracting petal $P_1$. The critical point $0$ lies in $U$.}
\label{petals_alpha}
\end{figure}

One of these two Fatou components is contained in $\sigma^{-1}(\overline{B}(a,r_a)^c)\subset\heartsuit$, let us call it $U$. Then, $P_1\subset U$ (see Figure~\ref{petals_alpha}). Applying \cite[Lemma~5.5]{M1new} to $F_a^{\circ 2}\vert_U$, we conclude that the $F_a^{\circ 2}$-orbit of every point in $U$ converges to the boundary fixed point $-\frac{3}{4}$. Moreover, $F_a^{\circ 2}\equiv\sigma_a\circ\sigma$ on $U$. The power series expansion of $\sigma_a\circ\sigma$ near $-\frac34$ (see Proposition~\ref{fat_basilica_local_dyn}) implies that  the only attracting direction of $\sigma_a\circ\sigma$ at $-\frac34$ that is contained in $\sigma^{-1}(\overline{B}(a,r_a)^c)$ is the positive real direction. Therefore, the $F_a^{\circ 2}$-orbit of every point in $U$ must converge to $-\frac34$ through the attracting petal $P_1$. It follows that the $F_a^{\circ 2}$-orbit of every point in $F_a(U)$ must converge to $-\frac34$ through the attracting petal $P_2$.

The last statement now readily follows.
\end{proof}

Let us also mention the relation between (closures of) various types of Fatou components and the post-critical set.

\begin{proposition}[Fatou components and critical orbit]\label{fatou_critical}
1) If $F_a$ has an attracting or parabolic cycle, then the forward orbit of the critical point $0$ converges to this cycle.  

2) If $U$ is a Siegel disk of $F_a$, then $\partial U\subset\overline{\left\{F_a^{\circ n}(0)\right\}_{n\geq0}}$. Every Cremer point (i.e. an irrationally neutral, non-linearizable periodic point) of $F_a$ is also contained in $\overline{\left\{F_a^{\circ n}(0)\right\}_{n\geq0}}$.

3) For $a=-\frac{1}{12}$, the critical orbit of $F_a$ converges to the singular point $-\frac34$.
\end{proposition}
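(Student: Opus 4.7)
For part (1), I would adapt the classical theorem that an immediate attracting or parabolic basin contains a critical point. Passing to $F_a^{\circ 2k}$ (which is holomorphic near the cycle) and taking an immediate basin $U$, which is simply connected by Corollary~\ref{fatou_simp_conn}, I would suppose $U$ contains no critical point of $F_a^{\circ 2k}$; then $F_a^{\circ 2k}|_U$ is an unbranched self-cover of a simply connected domain, hence a conformal automorphism of $U$. In the attracting case, uniformizing $U$ to $\mathbb{D}$ produces a M\"obius automorphism with an interior fixed point of multiplier of modulus strictly less than $1$, a contradiction. In the parabolic case, the single-valued Fatou coordinate would be an unbranched cover $\Phi:U\to\mathbb{C}$, hence a biholomorphism, contradicting the hyperbolicity of $U\subset\hat{\mathbb{C}}$. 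Thus the unique critical point $0$ has some forward iterate lying in the immediate basin, and its orbit converges to the cycle.

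For part (2), I would argue by maximal extension of the linearization. For a Siegel disk $U$ of period $k$, let $\kappa:U\to D_r$ be the linearizing conjugacy of $F_a^{\circ 2k}|_U$ to an irrational rotation, with $r$ maximal among such extensions. If the closed post-critical set $\overline{\{F_a^{\circ n}(0)\}_{n\geq 0}}$ were disjoint from $\partial U$, then an open neighborhood $N$ of $\partial U$ would also be disjoint from it, and Corollary~\ref{circle_cardioid_inverse_branches} guarantees that all inverse branches of $F_a^{\circ 2k}$ are defined on $N$. Using the normal family of these branches together with the density of orbits on $\partial D_r$ under the irrational rotation, one extends $\kappa^{-1}$ conformally to a strictly larger disk, contradicting maximality of $r$. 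For a Cremer cycle, the local linearization argument fails; instead, I would invoke the anti-holomorphic analogue of P\'erez-Marco's hedgehog theorem (or Ma\~n\'e's theorem on non-expanding recurrent periodic points), applied to the even iterate $F_a^{\circ 2}$ (which is holomorphic near any periodic point in $\Int{K_a}$).

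For part (3), I would combine Proposition~\ref{fatou_classification}(3) with the real-line dynamics established in Proposition~\ref{real_connected} and Corollary~\ref{real_C1}. For $a=-\tfrac{1}{12}$, the critical orbit $\{F_a^{\circ n}(0)\}$ lies in the invariant real interval $[-\infty,\tfrac{1}{4}]$, on which $F_a$ is a $C^1$ unimodal map with critical point $0$. By Proposition~\ref{fat_basilica_local_dyn}, $F_a^{\circ 2}(w)=w-k_a'(w+\tfrac{3}{4})^4+\cdots$ near $-\tfrac{3}{4}$ with $k_a'>0$, so $-\tfrac{3}{4}$ attracts nearby real points from both sides. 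Since the orbit cannot escape to $T_a^0$ (Proposition~\ref{real_connected}) and the only other fixed point $\tfrac{1}{4}$ in $[-\infty,\tfrac{1}{4}]$ is repelling from the left (Proposition~\ref{cusp_asymp}) and not hit by the orbit (direct check: $F_a(0)=\infty$, $F_a^{\circ 2}(0)=-\tfrac{1}{12}\in(-\tfrac{3}{4},\tfrac{1}{4})$), the orbit must converge to $-\tfrac{3}{4}$. This also verifies that $0$ sits inside the Fatou component $U$ identified in Proposition~\ref{fatou_classification}(3).

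The main obstacle will be the Cremer case of part (2): the absence of a local linearization at Cremer points precludes the direct extension argument used for Siegel disks, forcing appeal to deeper tools (hedgehogs or Ma\~n\'e's theorem). Adapting either to the Schwarz reflection setting should go through by passing to the holomorphic iterate $F_a^{\circ 2}$ and localizing, since Cremer cycles lie in $\Int{K_a}$ away from the singular points $\tfrac{1}{4}$ and $\alpha_a$ where anti-holomorphicity of $F_a$ would otherwise obstruct the classical machinery.
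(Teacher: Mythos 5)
Your sketches for parts (1) and the Siegel half of (2) are essentially sound, but part (3) has a genuine gap, and it is worth noting that the paper's Siegel argument exploits structure specific to Schwarz reflections that your argument bypasses.

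For part (1) your unbranched-cover argument is the standard Fatou mechanism; the paper cites \cite[Theorems~8.6,~10.15]{M1new} and does not spell out the adaptation. One small point: "unbranched self-cover of a simply connected domain, hence automorphism" requires knowing that $F_a^{\circ 2k}\vert_U$ is a \emph{proper} self-map of $U$, which in turn uses that $U$ is a connected component of $F_a^{-2k}(U)$ and Proposition~\ref{cardioid_circle_branched_cover}; worth flagging, but fine.

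For the Siegel case of part (2), your route via maximal extension of the linearization is a legitimate alternative but is left vague at the crucial step ("one extends $\kappa^{-1}$ conformally to a strictly larger disk"); note also that Corollary~\ref{circle_cardioid_inverse_branches} applies only to simply connected domains, so you should localize to small disks near a boundary point rather than to an annular neighborhood $N$ of $\partial U$. The paper does something cleaner and more specific to this setting: it takes inverse branches of $F_a^{\circ kn}$ on a small disk $B(w_0,\epsilon_0)$ about $w_0\in\partial U$ assumed outside the post-critical closure, extracts a univalent normal limit $g$, and then observes that $V:=g(B(w_0,\epsilon_0/2))$ is a neighborhood of a point of $\Gamma_a$ on which infinitely many iterates of $F_a$ stay in $\Omega_a$ --- contradicting the fact that every point of $V\cap T_a^\infty$ must escape to the fundamental tile after finitely many iterates. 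That use of the tiling set in place of the Julia set is the essential adaptation, and your sketch misses it. For Cremer points the paper simply says the same normal-family argument works (as in \cite[Corollary~14.4]{M1new}), so P\'erez-Marco hedgehogs or Ma\~n\'e's theorem are unnecessarily heavy; they would work, but the lighter route is available because $F_a^{\circ 2}$ is genuinely holomorphic near the Cremer point.

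Part (3) is where your argument breaks. You observe that the critical orbit stays in the invariant interval $[-\infty,\tfrac14]$, that $-\tfrac34$ attracts nearby real points, that $\tfrac14$ is repelling, and that the orbit never lands in $T_a^0$, and then conclude the orbit "must converge to $-\tfrac34$." This does not follow: a $C^1$ unimodal map on an invariant interval can carry the critical orbit to an attracting cycle of period $>1$, or to a more complicated invariant set, without ever approaching the parabolic fixed point. To get convergence from real dynamics alone one would need something like a negative Schwarzian condition (Singer's theorem), which you have not verified and which is not obviously available for a Schwarz reflection map. Also note that you cannot invoke Corollary~\ref{att_para_count}(2) or Proposition~\ref{non_repelling_count} to exclude other cycles --- those results are downstream of the very proposition you are proving. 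The paper avoids all of this by a complex-analytic argument: the $F_a^{\circ 2}$-orbit of every point of the period-two Fatou component $U\subset\sigma^{-1}(\overline{B}(a,r_a)^c)$ converges to $-\tfrac34$ through a fixed attracting petal $P_1$; by \cite[Theorem~10.9]{M1new}, $F_a^{\circ 2}$ restricted to $P_1$ is conjugate to $w\mapsto w+1$ on a right half-plane; and by the argument of \cite[Theorem~10.15]{M1new}, the boundary of the maximal domain of this Fatou coordinate must contain a critical point of $F_a^{\circ 2}$. Since $0$ is the only critical point, $0\in U$ and the critical orbit converges to $-\tfrac34$. You should replace your real-dynamics reasoning by this Fatou-coordinate argument.
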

\begin{proof}
1) The proof follows classical arguments of Fatou and is similar to the corresponding proof for rational maps \cite[Theorem~8.6, Theorem~10.15]{M1new}. 

2) Let $U$ be a Siegel disk of $F_a$ of period $k$. Since $U$ is a connected component of $\Int{K_a}$ and $K_a=\widehat\C\setminus T_a^\infty$, we have that $\partial U\subset \partial T_a^\infty=\Gamma_a$. 

Let us choose $w_0\in\partial U\setminus\{\frac14,\alpha_a\}$, and assume that $w_0$ is not in the closure of the post-critical set $\overline{\{F_a^{\circ n}(0)\}_{n\geq0}}$. Then there exists $\epsilon_0>0$ such that $B(w_0,\epsilon_0)\subset \Omega_a$ and $B(w_0,\epsilon_0)\cap\overline{\{F_a^{\circ n}(0)\}_{n\geq0}}=\emptyset$. Proposition~\ref{circle_cardioid_inverse_branches} and our assumption on $B(w_0,\epsilon_0)$ ensure that we can define inverse branches of $F_a^{\circ kn}$ (for each $n\geq1$) on $B(w_0,\epsilon_0)$ such that they map $B(w_0,\epsilon_0)\cap U$ biholomorphically into $U$ and map $B(w_0,\epsilon_0)\cap\partial U$ into $\partial U$. Moreover, the image of $B(w_0,\epsilon_0)$ under all these inverse branches avoid $T_a$. Hence, they form a normal family, so some subsequence converges to a holomorphic map $g$. Since the maps $F_a^{\circ kn}\vert_U$ are conjugate to irrational rotations, it follows that $g$ is univalent on $B(w_0,\epsilon_0)$. Moreover, $g(w_0)\in \partial U\subset\Gamma_a$.

Let us set $V:=g(B(w_0,\frac{\epsilon_0}{2}))$. Since $g$ is a subsequential limit of the chosen inverse branches of $F_a^{\circ kn}$ (on $B(w_0,\epsilon_0)$), it follows that infinitely many $F_a^{\circ kn}$ map $V$ into $B(w_0,\epsilon_0)\subset\Omega_a$. 

On the other hand, note that $V$ is a neighborhood of $g(w_0)\in\Gamma_a$, and hence $V$ intersects the tiling set $T_a^\infty$ non-trivially. But every point in $V\cap T_a^\infty$ is mapped to the fundamental tile $T_a^0$ by a sufficiently large iterate of $F_a$. Clearly, this contradicts the conclusion of the previous paragraph. This contradiction implies that the assumption $w_0\notin\overline{\left\{F_a^{\circ n}(0)\right\}_{n\geq0}}$ was false. Hence, $\partial U\setminus\{\frac14,\alpha_a\}\subset\overline{\left\{F_a^{\circ n}(0)\right\}_{n\geq0}}$. The result (for boundaries of Siegel disks) now follows by taking topological closure on both sides.

Since a Cremer point of $F_a$ is necessarily contained in $\Omega_a$, the proof of the corresponding statement for Cremer points is analogous to the Siegel case (compare \cite[Corollary~14.4]{M1new}).

3) Let $U\subset\sigma^{-1}(\overline{B}(a,r_a)^c)$ be the period two Fatou component of $F_a$ such that the $F_a^{\circ 2}$-orbit of every point in $U$ converges to $-\frac34$ asymptotic to the positive real direction at $-\frac34$ (the existence of this component was proved in Proposition~\ref{fatou_classification}(Case~3), compare Figure~\ref{petals_alpha}). It follows from the proof of the same proposition and \cite[Theorem~10.9]{M1new} that $F_a^{\circ 2}$, restricted to a small attracting petal $P_1$ in $U$, is conformally conjugate to translation by $+1$ on a right half-plane. One can now argue as in \cite[Theorem~10.15]{M1new} to conclude that the boundary of the maximal domain of definition of this conjugacy contains a critical point of $F_a^{\circ 2}$. It follows that $0\in U$.
\end{proof}

\begin{corollary}[Counting attracting/parabolic cycles]\label{att_para_count}
1) Let $a\neq-\frac{1}{12}$, and $F_a$ have an attracting (respectively, parabolic) cycle. Then the basin of attraction of this attracting (respectively, parabolic) cycle is equal to $\Int{K_a}$. In particular, $F_a$ has no other attracting, parabolic, or Siegel cycle.

2) Let $a=-\frac{1}{12}$. Then the basin of attraction of the singular point $-\frac34$ is equal to $\Int{K_a}$. In particular, $F_a$ has no attracting, parabolic, or Siegel cycle.
\end{corollary}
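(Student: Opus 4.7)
The plan is to combine the Fatou-component classification (Proposition~\ref{fatou_classification}) with the constraints on periodic components imposed by the single critical point (Proposition~\ref{fatou_critical}), and then use non-existence of wandering components to sweep up all of $\Int{K_a}$.

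First I would treat part (1), fixing $a\neq -\frac{1}{12}$. Suppose $F_a$ has an attracting or parabolic cycle $\mathcal{O}$. By Proposition~\ref{fatou_critical}(1), the forward orbit of the unique critical point $0$ converges to $\mathcal{O}$; in particular $0$ lies in the (grand orbit of the) immediate basin $\mathcal{B}$ of $\mathcal{O}$. Now any other attracting or parabolic cycle of $F_a$ would, by the same proposition, also attract the orbit of $0$, which is impossible since this orbit can converge to only one cycle; so $\mathcal{O}$ is the unique attracting/parabolic cycle. Moreover, a Siegel disk $U$ of $F_a$ would satisfy $\partial U\subset\overline{\{F_a^{\circ n}(0)\}_{n\ge 0}}$ by Proposition~\ref{fatou_critical}(2); but $\overline{\{F_a^{\circ n}(0)\}_{n\ge 0}}$ equals the countable union of the orbit points with the finite cycle $\mathcal{O}$, hence is countable, while $\partial U$ is a perfect set, a contradiction. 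The same countability argument rules out Cremer cycles as well.

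Having eliminated the other possibilities in Proposition~\ref{fatou_classification}(2), every periodic Fatou component of $F_a$ is part of $\mathcal{B}$. Using Proposition~\ref{fatou_classification}(1) (no wandering Fatou components), every Fatou component $V$ satisfies $F_a^{\circ n}(V)\subset \mathcal{B}$ for some $n$, hence $V$ itself lies in the basin of $\mathcal{O}$. Therefore $\Int{K_a}$ equals the full basin of $\mathcal{O}$, completing part (1).

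For part (2), fix $a=-\frac{1}{12}$. Proposition~\ref{fatou_classification}(3) guarantees the existence of a $2$-cycle of Fatou components $\{U,F_a(U)\}$, touching at $-\tfrac{3}{4}$, in whose basin every orbit converges to $-\tfrac{3}{4}$; call this basin $\mathcal{B}$. By Proposition~\ref{fatou_critical}(3), the critical orbit lies in $\mathcal{B}$. The same exclusion argument as before applies: an attracting or parabolic cycle would capture the critical orbit (Proposition~\ref{fatou_critical}(1)), contradicting its convergence to $-\tfrac{3}{4}$; and a Siegel (or Cremer) cycle would force $\partial U$ to lie in the countable closure of the critical orbit, which is impossible. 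Combining Proposition~\ref{fatou_classification}(3) (classification of periodic components) with (1) (no wandering components), every Fatou component eventually enters the $2$-cycle, so $\Int{K_a}=\mathcal{B}$.

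The only subtle point, and the one I would present most carefully, is the Siegel-exclusion step: one must argue that $\overline{\{F_a^{\circ n}(0)\}_{n\ge 0}}$ is too small to contain a Siegel boundary when the critical orbit converges to an attracting/parabolic cycle (or to $-\tfrac{3}{4}$). This reduces to the elementary observation that a convergent sequence together with its limit is countable, whereas $\partial U$ for a Siegel disk $U$ is uncountable; no further dynamical input is needed.
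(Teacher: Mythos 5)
Your proof is correct and fills in the details that the paper's one-line proof (citing Propositions~\ref{fatou_classification}, \ref{fatou_critical}, and uniqueness of the critical point) leaves implicit. The chain of reasoning — critical orbit can converge to at most one cycle, so attracting/parabolic cycles are unique; Siegel boundaries must lie in the (countable) closure of the converging critical orbit, a contradiction since a Siegel boundary is a nondegenerate continuum; then no-wandering plus periodic-component classification forces every Fatou component into the basin — is exactly the argument the paper's proof is gesturing at.
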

\begin{proof}
Both parts follow from Propositions~\ref{fatou_classification}, \ref{fatou_critical}, and the fact that $F_a$ has a unique critical point.
\end{proof}

\subsubsection{Counting irrationally neutral cycles}\label{siegel_cremer_count}
In Proposition~\ref{fatou_critical}, we showed that the boundary of a Siegel disk is contained in the post-critical set of $F_a$. This, however, is not sufficient to give an upper bound on the number of Siegel cycles of the map $F_a$ (a priori, the unique critical point of $F_a$ could be ``shared" by various Siegel cycles of $F_a$). We will end this subsection with an adaptation of Shishikura's results on rational maps \cite[Theorem~1]{Shi2} that will allow us to count the number of irrationally neutral cycles of $F_a$.

\begin{proposition}\label{non_repelling_count}
For any $a\in\C\setminus(-\infty,-\frac{1}{12})$, the map $F_a$ has at most one non-repelling cycle.
\end{proposition}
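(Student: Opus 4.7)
The plan is to adapt Shishikura's quasiconformal surgery argument \cite{Shi2}, which in the rational case bounds the number of non-repelling cycles by the number of critical points. The two essential ingredients are Lemma~\ref{schwarz_qcdef}, which shows that $\mathcal{S}$ is closed under integration of $F_a$-invariant Beltrami coefficients, and the fact that $F_a$ has a single critical point at $0$.

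Suppose for contradiction that $F_a$ admits two distinct non-repelling cycles $\mathcal{O}_1,\mathcal{O}_2$. I would treat the cases according to the type of each $\mathcal{O}_i$. If one of them is attracting or parabolic, Corollary~\ref{att_para_count} already forbids the other from being attracting, parabolic, or Siegel, leaving only the possibility that $\mathcal{O}_2$ is a Cremer cycle. But then by Proposition~\ref{fatou_critical}(2) every point of $\mathcal{O}_2$ lies in $\overline{\{F_a^{\circ n}(0)\}_{n\ge 0}}$; on the other hand, by Proposition~\ref{fatou_critical}(1) (or its $a=-\tfrac{1}{12}$ counterpart) this post-critical closure is contained in $\overline{\Int K_a}$ together with the attracting, parabolic, or singular ``pseudo-attracting'' cycle, none of which can contain a Cremer point (whose multiplier has modulus $1$). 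Contradiction. If both $\mathcal{O}_i$ are irrationally indifferent, I would carry out Shishikura's surgery in one of the two cycles: for a Siegel cycle, pull back a radial stretching from the linearizing coordinate of a Siegel disk $U$ via every forward iterate, yielding an $F_a$-invariant Beltrami coefficient $\mu$ supported on the grand orbit of $U$; for a Cremer cycle, use a limit of such surgeries performed on Siegel-type perturbations obtained from Lemma~\ref{schwarz_qcdef}. Integrating $\mu$ and invoking Lemma~\ref{schwarz_qcdef} gives some $F_b \in \mathcal{S}$ in which $\mathcal{O}_1$ has become an attracting cycle while $\mathcal{O}_2$ persists as a non-repelling cycle. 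This contradicts Corollary~\ref{att_para_count}.

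The principal obstacle will be the Cremer case, where the surgery requires perturbing a cycle with multiplier on $\partial\mathbb{D}$ into a cycle with multiplier strictly inside $\mathbb{D}$, and our one-complex-parameter family may seem too rigid to support this. The resolution comes from Lemma~\ref{schwarz_qcdef}: since every $F_a$-invariant Beltrami differential integrates to a bona fide element of $\mathcal{S}$, the family is qc-complete, and this is exactly the amount of flexibility Shishikura's argument needs. A secondary verification is that the support of $\mu$ can be chosen disjoint from $T_a$ and from the singular points $\tfrac{1}{4}, \alpha_a$, so that Lemma~\ref{schwarz_qcdef} applies. This is automatic because non-repelling cycles avoid the singular set (the singular points are themselves fixed under $F_a$ with repelling or degenerate dynamics by Propositions~\ref{cusp_asymp}--\ref{fat_basilica_local_dyn}), and the grand orbits of the $\mathcal{O}_i$ accumulate only on the limit set $\Gamma_a$ away from $\partial T_a \cap \{\tfrac{1}{4},\alpha_a\}$.
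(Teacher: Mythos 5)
Your first case (one attracting/parabolic cycle plus a putative Cremer cycle) is handled correctly via Proposition~\ref{fatou_critical}. However, the core of the proposition — the case of two irrationally indifferent cycles — contains a genuine error in how the surgery is set up. You propose to construct an \emph{$F_a$-invariant} Beltrami coefficient $\mu$ supported on the grand orbit of a Siegel disk and then integrate it via Lemma~\ref{schwarz_qcdef} to produce $F_b\in\mathcal{S}$ in which ``$\mathcal{O}_1$ has become an attracting cycle.'' This cannot work. Integrating an $F_a$-invariant Beltrami form produces a quasiconformal \emph{conjugacy} of $F_a$ itself, and a qc conjugacy is in particular an orientation-preserving topological conjugacy; by Naishul's theorem the multiplier of an irrationally indifferent cycle is a topological conjugacy invariant, so the image cycle under $\Phi$ is again a Siegel (respectively Cremer) cycle with the same rotation number. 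No contradiction arises. The same obstruction defeats the passage ``for a Cremer cycle, use a limit of such surgeries on Siegel-type perturbations'': a qc conjugacy cannot perturb a Cremer multiplier off the unit circle either.

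The actual mechanism in Shishikura's argument — and the one the paper invokes — is to modify the \emph{map}, not the conformal structure. One replaces $F_a$ by a quasi-regular map $G_a$ which agrees with $F_a$ away from the Siegel disks and small neighborhoods of the Cremer points, and which has genuine attracting cycles where $F_a$ had irrationally indifferent ones. The key point that makes straightening possible is that orbits under $G_a$ pass through the modified region in a controlled way, so one can build a $G_a$-invariant (not $F_a$-invariant) conformal structure and integrate \emph{that}. The resulting holomorphic map is qc conjugate to $G_a$, not to $F_a$, and in particular the cycles are now honestly attracting. Finally, since $G_a$ is not anti-holomorphic, Lemma~\ref{schwarz_qcdef} does not apply directly; one must rerun its proof for the quasi-regular map $G_a$ (this is what the paper means by ``adapt the proof of Proposition~\ref{schwarz_qcdef}''), using the facts that $G_a$ still maps $\overline B(a,r_a)^c$ univalently onto $B(a,r_a)$ and $G_a^{-1}(\heartsuit)$ univalently onto $\heartsuit$ to conclude that the straightened map lies in $\mathcal{S}$. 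With these two corrections — surgery on the map rather than on the conformal structure, and an adapted rather than direct application of Lemma~\ref{schwarz_qcdef} — your overall strategy matches the paper's and goes through. Your supplementary checks on the support of the surgery (away from $\frac14$, $\alpha_a$, and $T_a$) are fine and indeed necessary.
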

\begin{proof}[Sketch of the proof.]
Thanks to Proposition~\ref{att_para_count}, it suffices to prove that $F_a$ has at most one irrationally neutral cycle.

The main idea in the proof of \cite[Theorem~1]{Shi2} is to construct a quasi-regular perturbation of the original map in Siegel disks or in a small neighborhood of Cremer points such that all the irrationally neutral cycles of the original map become attracting for the modified map. This argument, being purely local, applies equally well to the map $F_a$, and produces a quasi-regular map $G_a$ with as many attracting cycles as the number of irrationally neutral cycles of $F_a$. Moreover, $G_a$ is a small perturbation of $F_a$, and agrees with $F_a$ outside the Siegel disks and away from the Cremer points of $F_a$. It follows that $G_a$ maps $\overline{B}(a,r_a)^c$ univalently onto $B(a,r_a)$, and $G_a^{-1}(\heartsuit)$ univalently onto $\heartsuit$. We can now adapt the proof of Proposition~\ref{schwarz_qcdef} to conclude that $G_a$ is quasiconformally conjugate to some member $F_{a'}$ of the family $\mathcal{S}$. Since attracting cycles are preserved under quasiconformal conjugacies, it follows from Proposition~\ref{att_para_count} that $G_a$ can have at most one attracting cycle. The conclusion follows.
\end{proof}

\begin{corollary}\label{siegel_cremer_complete}
1) If $F_a$ has a cycle of Siegel disks, then every Fatou component of $F_a$ eventually maps to this cycle of Siegel disks.

2) If $F_a$ has a Cremer point, then $\Int{K_a}=\emptyset$; i.e. $K_a=\Gamma_a$.
\end{corollary}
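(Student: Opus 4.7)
The plan is to deduce both statements from Proposition~\ref{non_repelling_count} (at most one non-repelling cycle) in combination with the Fatou component classification of Proposition~\ref{fatou_classification} and the post-critical description of Proposition~\ref{fatou_critical}. In both parts, no wandering Fatou components exist, so it suffices to analyze the periodic ones.

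For part (1), suppose $F_a$ has a cycle of Siegel disks. By Corollary~\ref{att_para_count}(2), the parameter $a=-\tfrac{1}{12}$ admits no Siegel cycle, so $a\neq-\tfrac{1}{12}$, and Proposition~\ref{fatou_classification}(2) lists each periodic Fatou component as an attracting basin, a parabolic basin, or a Siegel disk. Since the given Siegel cycle is a non-repelling cycle, any additional attracting cycle, parabolic cycle, or second Siegel cycle would produce a second non-repelling cycle, violating Proposition~\ref{non_repelling_count}. Hence every periodic Fatou component must belong to the prescribed Siegel cycle, and Proposition~\ref{fatou_classification}(1) then forces every Fatou component to eventually map into this cycle.

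For part (2), suppose $F_a$ has a Cremer point; its cycle is irrationally neutral, hence non-repelling. I first rule out $a=-\tfrac{1}{12}$. By Proposition~\ref{fatou_critical}(3), the post-critical set of $F_{-1/12}$ accumulates only at the double point $-\tfrac{3}{4}$; by Proposition~\ref{fatou_critical}(2), any Cremer point must lie in the closure of the post-critical set. Since $-\tfrac{3}{4}$ is a singular point of $T_a$ where $F_a$ admits no holomorphic extension (so cannot be a Cremer periodic point in the usual sense) and the forward critical orbit itself is not periodic, no Cremer point can exist when $a=-\tfrac{1}{12}$. Thus $a\neq-\tfrac{1}{12}$, and Proposition~\ref{fatou_classification}(2) applies. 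Any attracting basin, parabolic basin, or Siegel disk of $F_a$ would yield a non-repelling cycle distinct from the Cremer cycle, contradicting Proposition~\ref{non_repelling_count}. Hence $F_a$ has no periodic Fatou components, and by Proposition~\ref{fatou_classification}(1) no Fatou components at all; equivalently $\Int{K_a}=\emptyset$. Since $K_a$ is closed with $\partial K_a=\partial T_a^\infty=\Gamma_a$, this gives $K_a=\Gamma_a$.

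The main delicate point is handling the excluded parameter $a=-\tfrac{1}{12}$ in part (2): the parabolic-like two-cycle of Fatou components attached to $-\tfrac{3}{4}$ produced in Proposition~\ref{fatou_classification}(3) does not literally fall under the attracting/parabolic/Siegel/Cremer cycles to which the Shishikura-type argument of Proposition~\ref{non_repelling_count} applies, so one cannot directly quote "at most one non-repelling cycle" to exclude the coexistence of that basin with a Cremer point. The post-critical route through Proposition~\ref{fatou_critical} circumvents this issue cleanly, which is why it is the preferred route in the plan above.
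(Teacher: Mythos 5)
Your proof is correct and follows the route the paper clearly intends: combine Proposition~\ref{non_repelling_count} with the Fatou-component classification of Propositions~\ref{fatou_classification} and~\ref{fatou_critical}. You are also right to treat $a=-\tfrac{1}{12}$ separately in part (2) — the singular basin of $-\tfrac34$ is not a non-repelling cycle in the sense of Proposition~\ref{non_repelling_count}, so that proposition alone does not exclude a coexisting Cremer point, and your appeal to Proposition~\ref{fatou_critical}(2)(3) (the critical orbit is infinite, converges to the singular point $-\tfrac34$, and thus contains no periodic point, so no Cremer point can lie in its closure) closes that gap properly.
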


\begin{remark}
1) For maps with locally connected limit sets, the Snail Lemma rules out the presence of Cremer points.

2) A weaker upper bound on the number of neutral cycles of $F_a$ can be obtained by employing Fatou's classical perturbation arguments (see \cite[Lemma~13.2]{M1new})
\end{remark}

\subsection{Dynamical uniformization of the tiling set}\label{escape_set_group}

Near the super-attracting fixed point at infinity, the dynamics of a quadratic anti-polynomial is conjugate to the model map $\overline{z}^2$ via the B{\"o}ttcher coordinate (see \cite[Lemma~1]{Na1}). For the maps $F_a$, we have the following analogue of B{\"o}ttcher coordinates in the tiling set $T_a^\infty$ that conjugates $F_a$ to  the reflection map $\rho$ arising from the ideal triangle group $\mathcal{G}$ (see Section~\ref{ideal_triangle} for definitions of the ideal triangle $\Pi$ and the reflection map $\rho$). 

\subsubsection{Uniformization of the tiling set}\label{tiling_set_unif_sec} Note that for $a\in\C\setminus\left(\left(-\infty,-\frac{1}{12}\right)\cup\cC(\mathcal{S})\right)$, the critical value $\infty$ eventually escapes to the tile $T_a^0$. This leads to the following definition of depth for parameters in the escape locus.

\begin{definition}[Depth]\label{def_depth}
For $a\in\C\setminus\left(\left(-\infty,-\frac{1}{12}\right)\cup\cC(\mathcal{S})\right)$, the \emph{smallest} positive integer $n(a)$ such that $F_a^{\circ n(a)}(\infty)\in T_a^0$ is called the \emph{depth} of $a$.
\end{definition} 

\begin{proposition}\label{schwarz_group}
1) For $a\in\cC(\mathcal{S})$, $F_a:T_a^\infty\setminus\Int{T_a^0}\to T_a^\infty$ is conformally conjugate to $\rho:\D\setminus\Int{\Pi}\to\D$.

2) For $a\in\C\setminus\left(\left(-\infty,-\frac{1}{12}\right)\cup\cC(\mathcal{S})\right)$, $F_a:\displaystyle\bigcup_{n=1}^{n(a)} F_a^{-n}(T_a^0)\to\displaystyle\bigcup_{n=0}^{n(a)-1} F_a^{-n}(T_a^0)$ is conformally conjugate to $\rho:\displaystyle\bigcup_{n=1}^{n(a)} \rho^{-n}(\Pi)\to\displaystyle\bigcup_{n=0}^{n(a)-1}\rho^{-n}(\Pi)$.
\end{proposition}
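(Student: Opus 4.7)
The plan is to mimic the construction used for the deltoid in Proposition~\ref{deltoid_triangle_group_equivalence}, building the conjugacy by iterated Schwarz reflections starting from a Riemann map of the fundamental tile. I will first treat the connected case (1) and then adapt the construction to the escape case (2).

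First I would construct a conformal isomorphism $\psi_a : \Pi \to \Int{T_a^0}$ and extend it to prime-end boundaries via Carath\'eodory's theorem. Note that $\Int{T_a^0} = B(a,r_a)\setminus\overline{\heartsuit}$ is simply connected, since the cardioid-shaped ``hole'' touches the outer circle at the single boundary point $\alpha_a$. The prime-end boundary of $\Int{T_a^0}$ is naturally split into three open arcs by three singular prime ends: the cardioid cusp $\tfrac14$, and the two accesses to the double point $\alpha_a$ (from the two sides of the cardioid). I would normalize $\psi_a$ so that these three singular prime ends are images of the three ideal vertices $1,\omega,\omega^2$ of $\Pi$, with the circle arc of $\partial T_a^0$ corresponding to one prescribed side of $\Pi$ and the two cardioid arcs corresponding to the other two sides.

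Next I would extend $\psi_a$ by the reflection principle, following the $M$-admissible word combinatorics introduced in Section~\ref{ideal_triangle}. Given a tile $T^{i_1,\ldots,i_k} = \rho_{i_1}\circ\cdots\circ\rho_{i_k}(\Pi)$, reflection across each of its sides in $\D$ is given by one of $\rho_1,\rho_2,\rho_3$, while on the $T_a^0$ side, reflection across each of the three boundary arcs is given locally by $\sigma$ (for the two cardioid arcs) or by $\sigma_a$ (for the circle arc). Since both families of reflections fix their respective boundaries pointwise and are anti-conformal involutions locally, the Schwarz reflection principle allows me to extend $\psi_a$ across each smooth part of a side by the formula $\psi_a \mapsto \sigma_\bullet \circ \psi_a \circ \rho_i$, where $\sigma_\bullet \in \{\sigma,\sigma_a\}$ is the reflection matching the arc. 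Iterating this over all $M$-admissible words of length $\le N$ produces a biholomorphism $\psi_a : \bigcup_{k=0}^N \rho^{-k}(\Pi) \to \bigcup_{k=0}^N F_a^{-k}(T_a^0)$ for every $N\ge 0$. Compatibility of these extensions at shared boundaries is automatic from the group-theoretic structure of $\mathcal{G}$ together with the fact that $\sigma$ and $\sigma_a$ are single-valued on $\heartsuit$ and $\overline{B}(a,r_a)^c$ respectively, so there is no monodromy obstruction. The semi-conjugacy $\psi_a\circ F_a = \rho\circ\psi_a$ on $T_a^\infty\setminus\Int{T_a^0}$ follows directly from the inductive definition.

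For part (1), since $a\in\cC(\mathcal{S})$ the critical point $0$ lies in $K_a$, so no tile $F_a^{-k}(T_a^0)$ contains $0$; consequently $F_a$ is univalent on each tile by Proposition~\ref{cardioid_circle_branched_cover}, and the Schwarz reflection step never encounters a critical point. Taking the union over $N$ produces the desired biholomorphism $\psi_a : \D \to T_a^\infty$, conjugating $\rho$ to $F_a$. The singular boundary points $\tfrac14$ and $\alpha_a$ are automatically handled: they are ideal vertices on both sides, and the reflection principle only needs to be applied on smooth boundary arcs, with local parabolic behavior near the singular points controlled by Propositions~\ref{cusp_asymp}, \ref{double_asymp}, and~\ref{fat_basilica_local_dyn}.

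For part (2), the same inductive construction proceeds verbatim as long as the tiles encountered are unramified. Since $F_a(0)=\infty$, the tile of $T_a^\infty$ containing the critical point $0$ has rank exactly $n(a)+1$, so all tiles $F_a^{-k}(T_a^0)$ with $1\le k\le n(a)$ are unramified and $F_a$ restricts to a univalent map on each; thus the lift can be carried out up to depth $n(a)$, giving the stated conjugacy between the truncated dynamical systems. The main technical point I expect to be the principal obstacle in both parts is verifying that the Carath\'eodory prime-end correspondence respects the three-arc decomposition of $\partial T_a^0$ consistently with the $M$-admissible labelling, and tracking this labelling so that the two distinct cardioid sides and the single circle side are matched correctly with $\rho_1,\rho_2,\rho_3$; beyond depth $n(a)$, the construction genuinely fails because the ramified tile of rank $n(a)+1$ covers its $F_a$-image two-to-one, breaking the bijective correspondence with the tile structure on the $\D$ side.
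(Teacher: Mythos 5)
Your proposal takes essentially the same route as the paper: both construct a Riemann map between the fundamental tile $T_a^0$ and $\Pi$, then extend it tile-by-tile through the $\mathcal{G}$-tessellation, using the absence of the critical point in the relevant tiles to ensure each extension step is univalent and the absence of monodromy (because $F_a$ and $\rho$ fix their respective boundary arcs pointwise). Your framing via the Schwarz reflection principle ($\psi_a \mapsto \sigma_\bullet\circ\psi_a\circ\rho_i$) is just a more explicit rendering of the paper's ``iterated lifting'' argument, and your observation that the ramified tile of rank $n(a)+1$ obstructs continuing past depth $n(a)$ matches the paper exactly.
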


\begin{figure}[ht!]
\centering
\includegraphics[width=0.4\linewidth]{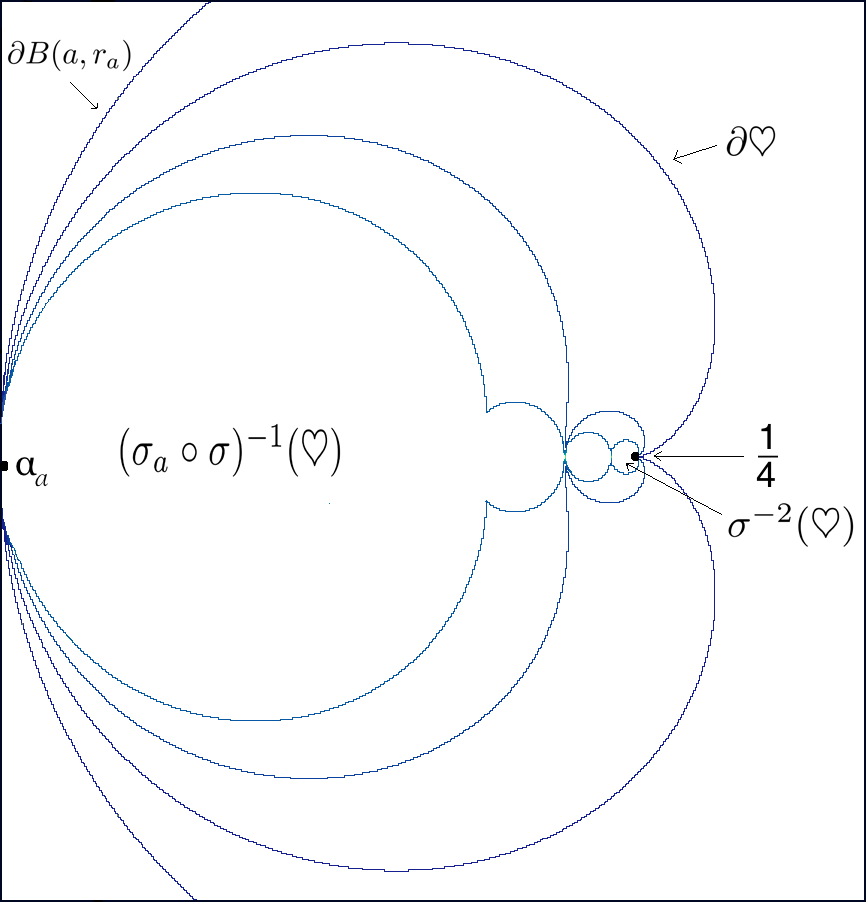}
\caption{The tail of the $0$-ray of $F_a$ is trapped inside $\sigma^{- 2}(\heartsuit)$, which is mapped univalently onto $\heartsuit$ by $\sigma^{\circ 2}$. Similarly, there is a representative of the $\frac13$-ray whose tail is contained in $(\sigma_a\circ\sigma)^{-1}(\heartsuit)$.}
\label{ray_landing_fig}
\end{figure}

\begin{proof}
Let $\psi_a$ be (the homeomorphic extension of) a conformal isomorphism between $T_a^0$ and $\Pi$ such that $\psi_a$ maps the upper (respectively lower) half of $\partial\heartsuit\setminus\{\alpha_a,\frac{1}{4}\}$ onto $\widetilde{C}_1$ (respectively $\widetilde{C}_3$) and $\partial B(a,r_a)\setminus\{\alpha_a\}$ onto $\widetilde{C}_2$ (compare Section~\ref{ideal_triangle}). 

1) Since $a\in\cC(\mathcal{S})$, the tiles of all rank of $F_a$ map diffeomorphically onto $T_a^0$ under iterates of $F_a$. Similarly, the tiles of the tessellation of $\D$ arising from the ideal triangle group $\mathcal{G}$ map diffeomorphically onto $\Pi$ under iterates of $\rho$. Furthermore, $F_a$ and $\rho$ act as identity maps on $\partial T_a^0$ and $\partial \Pi$ respectively. This allows us to lift $\psi_a$ to a conformal isomorphism from $T_a^\infty$ (which is the union of all iterated preimages of $T_a^0$ under $F_a$) onto $\D$ (which is the union of all iterated preimages of $\Pi$ under $\rho$). Note that the trivial actions of $F_a$ and $\rho$ on $\partial T_a^0$ and $\partial \Pi$ (respectively) ensure that the iterated lifts match on the boundaries of the tiles. By construction, the extended map conjugates $F_a:T_a^\infty\setminus \Int{T_a^0}\to T_a^\infty$ to $\rho:\D\setminus\Int{\Pi}\to\D$. 

2) Let $a\in\C\setminus\left(\left(-\infty,-\frac{1}{12}\right)\cup\cC(\mathcal{S})\right)$. By definition of depth of $a$, the unique critical point $0$ does not lie in $\bigcup_{n=1}^{n(a)} F_a^{-n}(T_a^0)$  (compare Figure~\ref{quickest_escape}). So each connected component of $ F_a^{-n}(T_a^0)$, $n=1,\cdots,n(a)$ (i.e. tiles of rank $1$ through $n(a)$) maps onto $T_a^0$ diffeomorphically under iterates of $F_a$. Hence, $\psi_a$ lifts to a conformal isomorphism from $\bigcup_{n=1}^{n(a)} F_a^{-n}(T_a^0)$ onto $\bigcup_{n=1}^{n(a)} \rho^{-n}(\Pi)$ such that it conjugates $F_a:\bigcup_{n=1}^{n(a)} F_a^{-n}(T_a^0)\to\bigcup_{n=0}^{n(a)-1} F_a^{-n}(T_a^0)$ to $\rho:\bigcup_{n=1}^{n(a)} \rho^{-n}(\Pi)\to\bigcup_{n=0}^{n(a)-1}\rho^{-n}(\Pi)$.
\end{proof}

\begin{definition}\label{psi_def}
The conformal conjugacy between $F_a$ and $\rho$, constructed in Proposition~\ref{schwarz_group}, is denoted by $\psi_a$.
\end{definition}

\begin{remark}\label{extension_external}
We say that a tile is degenerate if it contains the critical point $0$ or some of its iterated preimages. For $a\notin\cC(\mathcal{S})$, the conjugacy $\psi_a$ extends biholomorphically to all the non-degenerate tiles of $F_a$. 
\end{remark}

\subsubsection{Tiles and rays}\label{tiles_rays_sec} The conjugacy $\psi_a$ allows us to associate a symbol sequence to every non-degenerate tile of $F_a$. Indeed, every non-degenerate tile of $F_a$ is mapped biholomorphically by $\psi_a$ onto some tile $T^{i_1,\cdots,i_k}$ of $\D$ (see Definition~\ref{def_tiles}).

\begin{definition}[Coding of dynamical tiles]\label{tile_code}
i) Let $a\in\cC(\mathcal{S})$. For any $M$-admissible word $(i_1,\cdots,i_k)$, the dynamical tile $T_a^{i_1,\cdots,i_k}$ is defined as $$T_a^{i_1,\cdots,i_k}:=\psi_a^{-1}(T^{i_1,\cdots,i_k}).$$ 

ii) Let $a\in\C\setminus\left(\left(-\infty,-\frac{1}{12}\right)\cup\cC(\mathcal{S})\right)$. For any $M$-admissible word $(i_1,\cdots,i_k)$, the dynamical tile $T_a^{i_1,\cdots,i_k}$ is defined as above, provided $T^{i_1,\cdots,i_k}$ is in the image of $\psi_a$.
\end{definition}

We can use the map $\psi_a$ (see Definition~\ref{psi_def}) to define dynamical rays for the maps $F_a$.

\begin{definition}[Dynamical rays of $F_a$]\label{dyn_ray_schwarz}
The preimage of a $\mathcal{G}$-ray at angle $\theta$ under the map $\psi_a$ is called a $\theta$-dynamical ray of $F_a$.
\end{definition}

\begin{remark}\label{def_ray_well_defined}
1) Although a $\mathcal{G}$-ray at angle $\theta$ is not always uniquely defined, it is easy to see that the corresponding $\theta$-dynamical rays define the same prime end to $K_a$.

2) For $a\notin\cC(\mathcal{S})$, we say that a dynamical ray of $F_a$ at angle $\theta$ bifurcates if the corresponding sequence of tiles (of $F_a$) contains a degenerate tile. Otherwise, a dynamical ray is well-defined all the way to $\Gamma_a$.
\end{remark}

\subsubsection{Landing of preperiodic rays}\label{ray_landing_sec} As in the case for polynomials with connected Julia sets, we will now show that for a map $F_a$ with connected limit set, all dynamical rays at preperiodic angles (under $\rho$) land on $\Gamma_a$ (compare \cite[Expos{\'e}~VIII]{orsay}, \cite[Theorem~18.10]{M1new}).

We denote the set of all points of $\mathbb{T}$ that are (pre)periodic under $\rho$ by $\mathrm{Per}(\rho)$. 

\begin{proposition}[Landing of preperiodic rays]\label{per_rays_land}
Let $a\in\cC(\mathcal{S})$, and $\theta\in\mathrm{Per}(\rho)$. Then the following statements hold true.
\begin{enumerate}
\item The dynamical $\theta$-ray of $F_a$ lands on $\Gamma_a$. 

\item The $0$-ray of $F_a$ lands at $\frac{1}{4}$, while the $\frac{1}{3}$ and $\frac{2}{3}$-rays land at $\alpha_a$. No other ray lands at $\frac14$ or $\alpha_a$. 

\item The iterated preimages of the $0, \frac13,$ and $\frac23$-rays land at the iterated preimages of $\frac14$ and $\alpha_a$.

\item Let $\theta\in\mathrm{Per}(\rho)\setminus\displaystyle\bigcup_{n\geq0}\rho^{-n}(\lbrace 0,\frac{1}{3},\frac{2}{3}\rbrace)$. Then, the dynamical ray of $F_a$ at angle $\theta$ lands at a repelling or parabolic (pre)periodic point on $\Gamma_a$. 
\end{enumerate}
\end{proposition}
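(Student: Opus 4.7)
My plan is to dispose of Part (2) first via the structural correspondence between $\Pi$ and $T_a^0$ furnished by $\psi_a$, and then to obtain Parts (1) and (3) simultaneously by the classical landing argument for periodic external rays, adapted to this Schwarz-reflection setting.

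For Part (2) I will use the normalization of $\psi_a$ from Proposition~\ref{schwarz_group}: $\psi_a$ sends the upper (respectively lower) arc of $\partial\heartsuit\setminus\{\alpha_a,\tfrac14\}$ onto $\widetilde C_1$ (resp.\ $\widetilde C_3$) and $\partial B(a,r_a)\setminus\{\alpha_a\}$ onto $\widetilde C_2$. Consequently, the ideal vertex $1$ of $\Pi$ (where $\widetilde C_1$ meets $\widetilde C_3$) corresponds to the cusp $\tfrac14$, while $e^{\pm 2\pi i/3}$ correspond to the two accesses at the double point $\alpha_a$. The $\mathcal G$-ray at angle $0$ enters $\Pi$ along a radius into $1$, so its $\psi_a^{-1}$-image enters $T_a^0$ through $\tfrac14$; combining the wedge $W_a^1$ from Proposition~\ref{wedge_at_cardioid_sing}(1) with the parabolic Puiseux expansion of Proposition~\ref{cusp_asymp} will force this ray to actually land at $\tfrac14$. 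The $\tfrac13$- and $\tfrac23$-rays will be handled analogously using Proposition~\ref{wedge_at_cardioid_sing}(2) together with Propositions~\ref{double_asymp} and~\ref{fat_basilica_local_dyn}. Uniqueness will follow because any ray landing at a singular point of $\partial T_a^0$ corresponds under $\psi_a$ to a $\mathcal G$-ray whose impression contains an ideal vertex of $\Pi$, forcing its angle into $\{0,\tfrac13,\tfrac23\}$; landing at the iterated pre-images of $\{\tfrac14,\alpha_a\}$ will then follow from the $F_a/\rho$-equivariance of $\psi_a$.

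For Parts (1) and (3), by the same equivariance it suffices to treat a purely periodic $\theta\in\mathrm{Per}(\rho)$ of $\rho$-period $p$ with $\theta\notin\bigcup_{n\geq 0}\rho^{-n}\{0,\tfrac13,\tfrac23\}$; the pre-periodic case will then follow by finitely many pullbacks under $F_a$. Write $R=R_a(\theta)$ and let $L\subseteq\Gamma_a$ be its set of accumulation points; then $L$ is compact and connected, and $F_a^{\circ p}(R)=R$ as prime ends. Pulling back via $\psi_a$ the natural parametrization of the $\mathcal G$-ray (on which $\rho^{\circ p}$ acts as a shift by $p$ fundamental-domain steps) will yield a parametrization of $R$ under which $F_a^{\circ p}$ acts as the corresponding shift toward the landing. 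A standard limit argument will then show that every $z\in L$ is a fixed point of $F_a^{\circ p}$. Since $F_a^{\circ p}$ is anti-meromorphic and finite-to-one on its natural domain, its fixed-point set in $\hat{\C}$ is discrete, so the connected set $L$ must reduce to a single point $z_0$. To conclude that $z_0$ is repelling or parabolic, I will rule out the remaining possibilities: attracting cycles and Siegel centers lie in $\Int K_a$ by Propositions~\ref{fatou_classification} and~\ref{fatou_critical}, ruling those out for $z_0\in\Gamma_a$, and a Cremer landing will be excluded by the classical Snail Lemma applied to $F_a^{\circ p}$ at $z_0$.

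The principal technical obstacle is the claim that individual accumulation points of $R$ are fixed by $F_a^{\circ p}$. My plan is to transport the entire problem to $\D$ via $\psi_a$, where $\rho^{\circ p}$ has clean Markov dynamics along the $\mathcal G$-ray; the subtle point is ensuring that passage to the limit on $\Gamma_a$ respects the piecewise definition of $F_a$ near the singular points $\tfrac14$ and $\alpha_a$, and this is exactly what the wedge estimates of Proposition~\ref{wedge_at_cardioid_sing} are designed to enable.
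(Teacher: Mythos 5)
Your overall architecture is close to the paper's for Part (1): parametrize the ray by the $\mathcal G$-ray vertices, shift by $F_a^{\circ p}$, and show the accumulation set is a single fixed point. But there are two genuine gaps.

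First, the claim that the fixed-point set of $F_a^{\circ p}$ ``in $\hat{\C}$'' is discrete is false: $F_a$ (hence every iterate) fixes $\partial T_a$ pointwise, which is a continuum. The correct statement is that the fixed-point set is discrete within $\Gamma_a$, and this has to be argued by noting that $\Gamma_a\cap\partial T_a=\{\tfrac14,\alpha_a\}$ and that on $\Omega_a$ the \emph{holomorphic} map $F_a^{\circ 2p}$ has a discrete fixed-point set. The paper is careful about exactly this, and also folds in the possibility that accumulation points are iterated pre-images of $\{\tfrac14,\alpha_a\}$ (where the piecewise structure of $F_a$ blocks the naive limit argument), concluding that the union of these two sets is totally disconnected before invoking connectedness of the accumulation set.

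Second, and more seriously, your uniqueness argument in Part (2) does not work as stated. You want to conclude that if the $\theta$-ray lands at $\tfrac14$ or $\alpha_a$ then $\theta\in\{0,\tfrac13,\tfrac23\}$ ``because the corresponding $\mathcal G$-ray's impression contains an ideal vertex of $\Pi$.'' But the dictionary between boundary prime ends of $\D$ and boundary points of $T_a^\infty$ requires the Riemann map $\psi_a^{-1}$ to extend continuously to $\overline{\D}$, i.e.\ local connectivity of $\Gamma_a$, which is precisely what is \emph{not} available for a general $a\in\cC(\mathcal S)$: several distinct prime ends may have $\tfrac14$ in their impression. The paper avoids this by a Goldberg--Milnor circular-order argument: if the $\theta$-ray lands at $\tfrac14$, all the rays at angles $\rho^{\circ n}(\theta)$ also land there, lie locally in $\sigma^{-2}(\heartsuit)$, and $F_a^{\circ 2}=\sigma^{\circ 2}$ is an orientation-preserving local homeomorphism there; this forces $\theta$ to have $\rho$-period $2$, which is impossible. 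A parallel argument is run at $\alpha_a$ using $\sigma_a\circ\sigma$. This is a genuinely different mechanism from the one you propose, and you would need to supply it. Since Part (3) in your plan rules out pre-image-of-singular-point landings by quoting the uniqueness in Part (2), the gap propagates: the invocation of the Snail Lemma needs the landing point to admit a full holomorphic neighborhood for $F_a^{\circ 2p}$, and that hinges exactly on the missing uniqueness step. Finally, the wedge estimates of Proposition~\ref{wedge_at_cardioid_sing} describe the geometry of $T_a^\infty$ near the singular points; they do not by themselves repair the continuity issues in the limit argument near pre-images of $\tfrac14$ and $\alpha_a$, so you should not lean on them for that.
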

\begin{proof}
1) The idea of the proof is classical (see \cite[Theorem~18.10]{M1new}). But there are subtleties related to the presence of singular points that we need to address.

Without loss of generality, we can assume that $\theta$ is periodic of period $k$ under $\rho$. Then by Section~\ref{ideal_triangle}, there exists a shift-periodic element $\left(\overline{i_1, i_2, \cdots, i_k}\right)\in M^\infty$ such that  $Q(\left(\overline{i_1, i_2, \cdots, i_k}\right))=\theta$. Consider the sequence of points $$w_n:=\psi_a^{-1}\left(\left(\rho_{i_1}\circ\cdots\circ\rho_{i_k}\right)^{\circ n}(0)\right),\ n\geq1$$ on the $\theta$-dynamical ray of $F_a$. Let $\gamma_n$ be the sequence of arcs on the $\theta$-dynamical ray (of $F_a$) bounded by $w_n$ and $w_{n+1}$. Clearly, $F_a(\gamma_n)=\gamma_{n-1}$, and there is a fixed positive constant $c$ such that $\ell_{\mathrm{hyp}}(\gamma_n)=c$ for all $n\geq1$ (here, $\ell_{\mathrm{hyp}}$ denotes the hyperbolic length in $T_a^\infty$). We now consider two cases.

\noindent\textbf{Case a.} (The accumulation set of the $\theta$-ray is contained in $\cup_{k\geq 0} F_a^{-n}(\{\frac14, \alpha_a\})$.) Since the accumulation set of a ray is connected, and the set of iterated preimages of $\{\frac14, \alpha_a\}$ is totally disconnected, we conclude that the $\theta$-ray must land. 

\noindent\textbf{Case b.} (The accumulation set of the $\theta$-ray intersects $\Gamma_a\setminus\cup_{k\geq 0} F_a^{-n}(\{\frac14, \alpha_a\})$.) Let $w\in \Gamma_a\setminus\cup_{k\geq 0} F_a^{-k}(\{\frac14, \alpha_a\})$ be an accumulation point of the $\theta$-ray. In this case, the map $F_a^{\circ n}$ admits an anti-holomorphic extension in a neighborhood of $w$. Moreover, as the arcs $\gamma_n$ accumulate on the limit set $\Gamma_a$, it follows that the Euclidean lengths of these arcs tend to $0$ as $n\to\infty$. Hence, the arguments of the proof of \cite[Theorem~18.10]{M1new} apply to the present setting, and show that $F_a^{\circ n}(w)=w$. But the set of the fixed points of $F_a^{\circ k}$ in $\Gamma_a$ is totally disconnected (as the set of fixed points of the holomorphic second iterate $F_a^{\circ 2k}$ in $\Gamma_a$ is discrete). Finally, connectedness of the accumulation set of a ray implies that the $\theta$-ray lands.

2) Note that the landing point of a ray at a fixed angle must be a fixed point, and the only fixed points of $F_a$ on $\Gamma_a$ are $\frac14$ and $\alpha_a$. By the construction of $\psi_a$ and the dynamical rays of $F_a$, we see that the tail of the $0$-ray is contained in $\sigma^{-1}(\heartsuit)$. Hence, it must land at $\frac{1}{4}$. Similarly, the tails of the $\frac{1}{3}$ and $\frac{2}{3}$-rays lie outside $\sigma^{-1}(\heartsuit)$, and hence they must land at $\alpha_a$. 

Now let $\theta\in\R/\Z\setminus\{0,1/3,2/3\}$. We need to show that the dynamical $\theta$-ray does not land at $\frac14$ or $\alpha_a$. By way of contradiction, let us assume that the dynamical $\theta$-ray of $F_a$ lands at $\frac14$ or $\alpha_a$. We consider the two cases separately and arrive at a contradiction in each case. 

\noindent\textbf{Case a.} (The dynamical $\theta$-ray lands at $\frac14$.) Since $\frac14$ is a fixed point, the dynamical rays at angles $\rho^{\circ n}(\theta),\ n\geq 1,$ must also land at $\frac14$. Clearly, all these rays (including the $0$-ray) are locally contained in $\sigma^{-2}(\heartsuit)$ near $\frac14$ (see Figure~\ref{ray_landing_fig}). Note that $F_a^{\circ 2}\equiv\sigma^{\circ 2}$ on $\sigma^{-2}(\heartsuit)$. By Corollary~\ref{iterated_pre_image_cardioid}, $\sigma^{\circ 2}\vert_{\sigma^{-2}(\heartsuit)}$ extends to an orientation-preserving local homeomorphism in a neighborhood of $\frac14$. Hence, $F_a^{\circ 2}$ acts injectively on these rays, and preserves their circular order around $\frac14$. Applying the arguments of \cite[Lemma~2.3]{M2a} on $F_a^{\circ 2}=\sigma^{\circ 2}$ near $\frac14$, one concludes that $\theta$ is fixed under $\rho^{\circ 2}$. This is a contradiction since there is no angle of period $2$ under $\rho$, and $\theta$ is not a fixed angle.

\noindent\textbf{Case b.} (The dynamical $\theta$-ray lands at $\alpha_a$.) As $\alpha_a$ is a fixed point, the dynamical rays at angles $\rho^{\circ 2n}(\theta)$ ($n\geq 1$) also land at $\alpha_a$. Locally near $\alpha_a$, either all these rays are contained in $(\sigma_a\circ\sigma)^{-1}(\heartsuit)$ or all of them are contained in $(\sigma\circ\sigma_a)^{-1}(\overline{B}(a,r_a)^c)$. For definiteness, let us assume that they are all contained in $(\sigma_a\circ\sigma)^{-1}(\heartsuit)$ (see Figure~\ref{ray_landing_fig}). We can also choose a representative of the $\frac13$-ray in $(\sigma_a\circ\sigma)^{-1}(\heartsuit)$. Note that $F_a^{\circ 2}\equiv\sigma_a\circ\sigma$ on $(\sigma_a\circ\sigma)^{-1}(\heartsuit)$ (see Formula~(\ref{iterate_alpha})). By Propositions~\ref{double_asymp} and~\ref{fat_basilica_local_dyn}, $(\sigma_a\circ\sigma)\vert_{(\sigma_a\circ\sigma)^{-1}(\heartsuit)}$ has a holomorphic extension around $\alpha_a$ which is locally injective near $\alpha_a$. Therefore, $F_a^{\circ 2}$ acts injectively on the rays mentioned above (including the $\frac13$-ray), and preserves their circular order around $\alpha_a$. Applying again \cite[Lemma~2.3]{M2a} on $F_a^{\circ 2}=\sigma_a\circ\sigma$ near $\alpha_a$, we conclude that $\theta$ must be fixed under $\rho^{\circ 2}$ which contradicts the assumption that $\theta\notin\{0,1/3,2/3\}$. 

This shows that no ray lands at the singular points other than the $0,\frac13$, and $\frac23$-rays.

3) Since the iterated preimages of the singular points (excluding the singular points themselves) are mapped by locally holomorphic/anti-holomorphic iterates of $F_a$ to the singular points, it follows that the dynamical rays at angles in $\bigcup_{n\geq0}\rho^{-n}(\{0,1/3,2/3\})\setminus\{0,1/3,2/3\}$ land at iterated preimages of $\frac14$ and $\alpha_a$ (excluding $\frac14$ and $\alpha_a$).

4) Finally, let $\theta\in\mathrm{Per}(\rho)\setminus\bigcup_{n\geq0}\rho^{-n}(\{0,1/3,2/3\})$. Once again, we can assume that $\theta$ is periodic of period $k>1$ under $\rho$. By the previous parts of the proposition, the landing point of the $\theta$-ray of $F_a$ is not an iterated preimage of the singular points. Therefore, $F_a^{\circ 2k}$ is defined in a small neighborhood of the landing point. An application of the classical Snail Lemma now shows that the landing point of the $\theta$-dynamical ray is repelling or parabolic (compare \cite[Lemma~18.9]{M1new}). 
\end{proof}

Let us also state a converse to the previous proposition.

\begin{proposition}[Repelling and parabolic points are landing points of rays]\label{rep_para_landing_point}
Let $a\in\cC(\mathcal{S})$. Then, every repelling and parabolic periodic point of $F_a$ is the landing point of finitely many (at least one) dynamical rays. Moreover, all these rays have the same period under $F_a^{\circ 2}$.
\end{proposition}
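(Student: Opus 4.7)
The plan is to adapt the classical Douady--Hubbard argument (Milnor's book, \S18) that every repelling or parabolic periodic point of a polynomial with connected Julia set is the landing point of at least one periodic external ray. The key simplification is that for any non-singular periodic point $z_0 \in \Gamma_a$ of $F_a$, the second iterate $F_a^{\circ 2}$ is holomorphic in a neighborhood of $z_0$, so once we pass to an appropriate even iterate we may work entirely in the holomorphic category. If $z_0$ is one of the singular fixed points $\frac14$ or $\alpha_a$ (or an iterated pre-image thereof), the conclusion is already provided by Proposition~\ref{per_rays_land}, so I may henceforth assume $z_0$ is not a singular value. Setting $k$ equal to the $F_a$-period of $z_0$, the holomorphic germ $g := F_a^{\circ 2k}$ fixes $z_0$; in the repelling case Koenigs' theorem furnishes a linearizing coordinate on a neighborhood $U$ of $z_0$, while in the parabolic case the Leau--Fatou flower theorem provides a repelling petal $P$ at $z_0$ on which a suitable inverse branch of $g$ is conjugate to translation via Fatou coordinates.

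Next I would pick $w_0 \in U \cap T_a^\infty$ (resp.\ $w_0 \in P \cap T_a^\infty$, nonempty because $z_0\in\Gamma_a=\partial T_a^\infty$ and, in the parabolic case, orbits in $T_a^\infty$ must escape the parabolic basin so every repelling petal meets $T_a^\infty$). Joining $w_0$ to $w_1 := g^{-1}(w_0)$ by a short arc $\gamma_0\subset T_a^\infty$ (where $g^{-1}$ is the branch fixing $z_0$), the concatenation $\gamma := \bigcup_{n\geq 0} g^{-n}(\gamma_0)$ is a curve in $T_a^\infty$ landing at $z_0$: it shrinks geometrically by Koenigs in the repelling case, and shrinks according to the Fatou-coordinate asymptotic $\mathrm{dist}(g^{-n}(w_0),z_0) \asymp n^{-1/q}$ in the parabolic case. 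By construction $\gamma$ together with its forward $g$-iterates is $g$-invariant, so its image under the external conjugacy $\psi_a:T_a^\infty\to\D$ from Proposition~\ref{schwarz_group} lands at a point of $\mathbb{T}$ fixed by $\rho^{\circ 2k}$, hence at some angle $\theta$ periodic under $\rho$. The $\theta$-dynamical ray is well defined (since $\theta\in\mathrm{Per}(\rho)$), lands on $\Gamma_a$ by Proposition~\ref{per_rays_land}, and determines the same prime end of $T_a^\infty$ as $\gamma$; therefore it lands at $z_0$.

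For finiteness and the common-period statement, let $\Theta\subset\mathbb{T}$ be the set of angles whose dynamical rays land at $z_0$. By Proposition~\ref{per_rays_land} (and our assumption that $z_0$ is not a pre-image of a singular point) every element of $\Theta$ is periodic under $\rho$; since $F_a^{\circ 2k}$ is holomorphic and locally injective at $z_0$, it permutes the rays landing at $z_0$ preserving their cyclic order, and $\rho^{\circ 2k}$ therefore acts injectively on $\Theta$. As $\rho^{\circ 2k}$ is expanding on $\mathbb{T}$, an injective action on $\Theta$ forces $\Theta$ to be finite and $\rho^{\circ 2k}$ to act as a single cyclic permutation, so all elements of $\Theta$ share a common $\rho^{\circ 2k}$-period; translated via $\psi_a$ this is exactly a common $F_a^{\circ 2}$-period for the rays. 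The main obstacle I expect is controlling the parabolic case: one must ensure that the chosen arc $\gamma_0$ stays inside $T_a^\infty$ (avoiding $T_a$ and its orbit) and that every pulled-back pre-image $g^{-n}(\gamma_0)$ is defined along the access. This is where $a\in\cC(\mathcal{S})$ is decisive, since it guarantees that the critical point of $F_a$ does not enter the tiling set and hence that all inverse branches of $g$ along the chosen access remain univalent on a neighborhood of $\gamma$ by Corollary~\ref{circle_cardioid_inverse_branches}.
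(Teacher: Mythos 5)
Your proposal is correct and takes essentially the same approach as the paper: the paper simply cites \cite[Theorems~24.5,~24.6]{L6} (the Douady--Hubbard landing theorem) and asserts that the proof carries over mutatis mutandis, and what you have written out is precisely that classical argument adapted to $F_a$ --- disposing of singular (pre-)fixed points via Proposition~\ref{per_rays_land}, building an invariant access through $T_a^\infty$ via Koenigs/Fatou coordinates for $F_a^{\circ 2k}$, transporting it through $\psi_a$ to a periodic angle, and concluding finiteness and common period from injectivity and expansion of $\rho^{\circ 2k}$ on the set of landing angles.
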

\begin{proof}
The proof of \cite[Theorem~24.5, Theorem~24.6]{L6} applies mutatis mutandis to our situation.
\end{proof}

\begin{remark}
Unlike in the holomorphic situation, it is not true that all rays landing at a periodic point have the same period under $F_a$ (this is already false for quadratic anti-polynomials, see \cite[Theorems~2.6,~3.1]{Sa1}). The combinatorics of dynamical rays landing at periodic points of $F_a$ will be discussed in detail in \cite{LLMM2}.
\end{remark}

\subsubsection{Locally connected limit sets, and density of repelling cycles}
Similar to the situation of rational maps or Kleinian groups, we have the following statements.

\begin{proposition}[Density of repelling cycles]\label{density_loc_conn}
Let $a\in\cC(\mathcal{S})$, and $\Gamma_a$ be locally connected. Then the following hold true.

1) Iterated preimages of $\frac14$ under $F_a$ are dense in $\Gamma_a$.

2) Repelling periodic points of $F_a$ are dense in $\Gamma_a$.
\end{proposition}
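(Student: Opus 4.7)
The plan is to use the Carath\'eodory boundary extension of the dynamical uniformization $\psi_a$ to transfer density statements from $\mathbb{T}$ to $\Gamma_a$. First, by Proposition~\ref{schwarz_group}(1), $\psi_a$ is a conformal isomorphism between $T_a^\infty$ and $\D$, so $T_a^\infty$ is a simply connected domain bounded by $\Gamma_a$. Since $\Gamma_a$ is locally connected by hypothesis, Carath\'eodory's theorem yields a continuous extension $\overline{\psi_a^{-1}}: \overline{\D} \to \overline{T_a^\infty}$ whose restriction to $\mathbb{T}$ is a continuous surjection onto $\Gamma_a$ that semi-conjugates $\rho|_{\mathbb{T}}$ to $F_a|_{\Gamma_a}$. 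Density in $\Gamma_a$ can then be tested by producing dense subsets of $\mathbb{T}$ whose images under $\overline{\psi_a^{-1}}$ have the desired dynamical meaning.

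For part (1), I would first observe that the iterated pre-images of $1 \in \mathbb{T}$ under $\rho$ are dense in $\mathbb{T}$. Indeed, via the conjugacy $\mathcal{E}$ constructed in Subsection~\ref{conjugacy_anti_doubling_rho_sec}, these correspond to the iterated pre-images of $0$ under $m_{-2}$, that is, the dyadic rationals in $\R/\Z$, which are visibly dense. By Proposition~\ref{per_rays_land}(2) the $0$-dynamical ray of $F_a$ lands at $\tfrac{1}{4}$, and pulling back along iterates of $F_a$ shows that each dynamical ray at an angle in $\bigcup_{n\geq 0}\rho^{-n}(0)$ lands at an iterated pre-image of $\tfrac{1}{4}$. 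Applying the continuous surjection $\overline{\psi_a^{-1}}|_{\mathbb{T}}$ to this dense subset of $\mathbb{T}$ then gives a dense subset of $\Gamma_a$ consisting of iterated pre-images of $\tfrac{1}{4}$.

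For part (2), periodic angles under $\rho$ are dense in $\mathbb{T}$ (via $\mathcal{E}$ they correspond to the periodic angles of $m_{-2}$, i.e.~the rationals with odd denominator in $\R/\Z$, which are dense). By Proposition~\ref{per_rays_land}(3) each such periodic ray lands at a periodic point of $F_a$ that is either repelling or parabolic, and by Proposition~\ref{rep_para_landing_point} only finitely many rays can land at any given periodic point. On the other hand, Proposition~\ref{non_repelling_count} shows that $F_a$ admits at most one non-repelling cycle; together with the singular fixed points $\tfrac{1}{4}$ and $\alpha_a$ (the latter being genuinely parabolic when $a=-\tfrac{1}{12}$), the non-repelling periodic points on $\Gamma_a$ form a finite set. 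Hence only finitely many periodic $\rho$-angles can land on non-repelling periodic points. Removing this finite exceptional set from the dense collection of periodic $\rho$-angles and applying $\overline{\psi_a^{-1}}|_{\mathbb{T}}$ yields a dense subset of $\Gamma_a$ consisting of repelling periodic points of $F_a$.

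The main item to watch is the finiteness step in part (2): that only finitely many periodic rays can accumulate on non-repelling periodic points. This is delivered by the combination of Proposition~\ref{rep_para_landing_point} (finitely many rays per landing point) with Proposition~\ref{non_repelling_count} (at most one non-repelling cycle), after one further notes that periodic pre-images of a finite set of periodic angles under $\rho$ form the same finite set. The remaining content is bookkeeping built on top of the already-established dynamical uniformization of the tiling set and the ray-landing theorems.
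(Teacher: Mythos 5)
Your argument is correct and follows the same strategy as the paper's proof: use local connectivity of $\Gamma_a$ to get the Carath\'eodory extension of $\psi_a^{-1}$ to $\mathbb{T}$, which gives a continuous surjection semi-conjugating $\rho\vert_{\mathbb{T}}$ to $F_a\vert_{\Gamma_a}$, then push forward the dense subsets of $\mathbb{T}$ (pre-images of the fixed angle $0$, respectively the periodic angles). The paper's treatment of part~(2) is more compressed --- it only cites Proposition~\ref{non_repelling_count} and leaves implicit that removing the finitely many non-repelling/singular periodic points from the dense set of periodic points still leaves a dense set (since $\Gamma_a$ has no isolated points); your version makes that bookkeeping explicit via Propositions~\ref{per_rays_land} and~\ref{rep_para_landing_point}, which is a minor but not substantive elaboration.
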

\begin{proof}
1) Since $\Gamma_a=\partial T_a^\infty$ is connected and locally connected, the inverse of the Riemann uniformization $\psi_a^{-1}:\D\to\ T_a^\infty$ that conjugates $\rho$ to $F_a$ (constructed in Proposition~\ref{schwarz_group}) extends continuously to $\mathbb{T}$, and semi-conjugates $\rho\vert_{\mathbb{T}}$ to $F_a\vert_{\Gamma_a}$. Abusing notations, let us denote this continuous extension by $\psi_a^{-1}$. Note that $\psi_a^{-1}(0)=\frac14$. Moreover, the iterated preimages of $0$ under $\rho$ are dense in $\mathbb{T}$, and $\psi_a^{-1}$ maps these points to the iterated preimages of $\frac14$ under $F_a$. It now follows that $\bigcup_{k=0}^\infty F_a^{-k}(\{1/4\})$ is dense in $\Gamma_a$.

2) Since periodic points of $\rho$ are dense in $\mathbb{T}$, one can argue as in the first part to conclude that periodic points of $F_a$ are dense in $\Gamma_a$. The result now immediately follows from Proposition~\ref{non_repelling_count}. 
\end{proof}

\begin{remark}
1) Note that for $w\in\Gamma_a$, there exists no $\epsilon>0$ such that infinitely many forward iterates of $F_a$ are defined on $B(w_0,\epsilon)$. This prohibits us from using the standard normal family argument to prove density of iterated preimages or density of repelling cycles in $\Gamma_a$. In the locally connected case, we circumvent this problem by working with the topological model $\rho\vert_{\mathbb{T}}$ for $F_a\vert_{\Gamma_a}$.

2) Under the local connectivity assumption, the density statement is also true for the preimages of the double point $\alpha_a$, and can be proved similarly.
\end{remark}

\subsubsection{Landing map and lamination}\label{landing_map_lami_sec} Let $a\in\cC(\mathcal{S})$. Proposition~\ref{per_rays_land} allows us to define a landing map $L_a:\mathrm{Per}(\rho)\to \Gamma_a$ that associates to every (pre)periodic angle $\theta$ (under $\rho$) the landing point of the $\theta$-dynamical ray of $F_a$.

\begin{definition}[Pre-periodic lamination of $F_a$]\label{def_preper_lami}
For $a\in\cC(\mathcal{S})$, the preperiodic lamination of $F_a$ is defined as the equivalence relation on $\mathrm{Per}(\rho)\subset\R/\Z$ such that $\theta, \theta'\in\mathrm{Per}(\rho)$ are related if and only if $L_a(\theta)=L_a(\theta')$. We denote the preperiodic lamination of $F_a$ by $\lambda(F_a)$.
\end{definition}

The next definition and the subsequent proposition relates preperiodic laminations of $F_a$ to rational laminations of quadratic anti-polynomials (see \cite[\S 2.2]{LLMM2}). This connection will be crucial in \cite{LLMM2}, where we will establish a combinatorial bijection between the geometrically finite parameters of $\mathcal{S}$ and those of $\mathcal{L}$. Recall from Subsection~\ref{conjugacy_anti_doubling_rho_sec} that $\mathcal{E}$ is a circle homeomorphism that conjugates $\rho$ to $m_{-2}$.

\begin{definition}[Push-forward/pullback of laminations]\label{push_lami}
The push-forward $\mathcal{E}_{\ast}(\lambda(F_a))$ of the preperiodic lamination of $F_a$ is defined as the image of $\lambda(F_a)\subset\mathrm{Per}(\rho)\times\mathrm{Per}(\rho)$ under $\mathcal{E}\times\mathcal{E}$. Clearly, $\mathcal{E}_{\ast}(\lambda(F_a))$ is an equivalence relation on $\Q/\Z$. Similarly, the pullback $\mathcal{E}^{\ast}(\lambda(f_c))$ of the rational lamination of a quadratic anti-polynomial $f_c$ is defined as the preimage of $\lambda(f_c)\subset\Q/\Z\times\Q/\Z$ under $\mathcal{E}\times\mathcal{E}$.
\end{definition}

\begin{proposition}[Properties of preperiodic laminations]\label{prop_preper_lami}
Let $a\in\cC(\mathcal{S})$, and $\lambda(F_a)$ be the preperiodic lamination associated with $F_a$. Then, $\lambda(F_a)$ satisfies the following properties.
\begin{enumerate}
\item $\lambda(F_a)$ is closed in $\mathrm{Per}(\rho)\times\mathrm{Per}(\rho)$.

\item Each $\lambda(F_a)$-equivalence class $A$ is a finite subset of $\mathrm{Per}(\rho)$.

\item If $A$ is a  $\lambda(F_a)$-equivalence class, then $\rho(A)$ is also a $\lambda(F_a)$-equivalence class.

\item If $A$ is a  $\lambda(F_a)$-equivalence class, then $A\mapsto\rho(A)$ is consecutive reversing.

\item $\lambda(F_a)$-equivalence classes are pairwise unlinked.
\end{enumerate}

Consequently, the push-forward $\mathcal{E}_{\ast}(\lambda(F_a))$ is a formal rational lamination (under $m_{-2}$).
\end{proposition}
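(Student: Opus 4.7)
My plan is to verify the five properties directly, modeling the argument on Kiwi's proof for polynomial rational laminations (\cite[Theorem~1.1]{kiwi}), and then derive the final assertion by transporting each property across the orientation-preserving circle homeomorphism $\mathcal{E}$ of Section~\ref{ideal_triangle}. The crucial structural input is the semi-conjugacy
\[
L_a \circ \rho \;=\; F_a \circ L_a \qquad \text{on } \mathrm{Per}(\rho),
\]
which is immediate from the conformal conjugacy $\psi_a$ of Proposition~\ref{schwarz_group} together with Proposition~\ref{per_rays_land}. From this, properties (2), (4), and (5) follow quickly: finiteness of classes is the content of Proposition~\ref{rep_para_landing_point}; the consecutive-reversing property reflects that $\rho$ is an orientation-reversing double cover of $\mathbb{T}$ while $\psi_a^{-1}$ respects cyclic order at the ideal boundary of $T_a^\infty$; and the unlinked property follows from the pairwise disjointness of the $\mathcal{G}$-rays, hence of their $\psi_a^{-1}$-images in $T_a^\infty$, which prevents two classes with distinct landing points from having linked angles on $\mathbb{T}$.

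For $\rho$-invariance (property (3)), the inclusion $\rho(A) \subseteq [\mathrm{class}]$ is immediate from the semi-conjugacy. To promote this to equality, given $\theta'' \in \mathrm{Per}(\rho)$ with $L_a(\theta'') = F_a(L_a(A))$, I need to produce $\tilde\theta \in A$ with $\rho(\tilde\theta) = \theta''$. When $L_a(A)$ avoids $\{1/4, \alpha_a\}$ and lies outside the forward orbit of the critical point (the generic case), $F_a$ is locally univalent at $L_a(A)$, so the $\theta''$-ray pulls back uniquely to a ray landing at $L_a(A)$ and the desired preimage exists. The exceptional cases $L_a(A) \in \{1/4, \alpha_a\}$ are handled using part (2) of Proposition~\ref{per_rays_land}, which enumerates the landing rays explicitly, combined with the local expansions of Propositions~\ref{cusp_asymp}, \ref{double_asymp}, \ref{fat_basilica_local_dyn} and the second-iterate formula~(\ref{iterate_alpha}). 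Closedness (property (1)) is the subtlest step: for a convergent sequence $(\theta_n, \theta_n') \to (\theta, \theta')$ in $\mathrm{Per}(\rho) \times \mathrm{Per}(\rho)$ with $L_a(\theta_n) = L_a(\theta_n')$, one passes to the (prime-end) limit of the landing points and uses properties (2)--(5) together with a local analysis at the (possibly parabolic) limit landing point, patterned on Kiwi's argument, to conclude $L_a(\theta) = L_a(\theta')$. This closedness step, together with the book-keeping needed at the singular points in property (3), is the part I expect to carry the most technical weight.

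Finally, since $\mathcal{E}$ is an orientation-preserving homeomorphism conjugating $\rho$ to $m_{-2}$ that restricts to a bijection $\mathrm{Per}(\rho) \to \mathbb{Q}/\mathbb{Z}$ (both being the pre-periodic sets of the corresponding expanding factor-of-two maps), properties (1)--(5) transfer verbatim to $\mathcal{E}_\ast(\lambda(F_a))$: closedness and finiteness are preserved by the homeomorphism $\mathcal{E} \times \mathcal{E}$, the orbit condition transfers via the conjugacy $\mathcal{E} \circ \rho = m_{-2} \circ \mathcal{E}$, and the two combinatorial conditions are preserved because $\mathcal{E}$ is orientation-preserving. Hence $\mathcal{E}_\ast(\lambda(F_a))$ satisfies the defining axioms in Proposition~\ref{prop_rat_lam}, and is therefore a formal rational lamination under $m_{-2}$.
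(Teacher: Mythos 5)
Your proposal takes essentially the same route as the paper: the paper's own proof is a one-line reference to \cite[Theorem~1.1]{kiwi} for properties (1)--(5) and to the conjugacy $\mathcal{E}$ for the final transfer. You have fleshed out the Kiwi-style argument and correctly flagged the places where the Schwarz-reflection setting forces extra care --- namely the singular points $\frac14$, $\alpha_a$, their preimages, and the closedness step. One small clarification: Proposition~\ref{rep_para_landing_point} alone gives finiteness of classes only at repelling/parabolic periodic points; for classes landing at $\frac14$, $\alpha_a$, or their preimages you need Proposition~\ref{per_rays_land}(2), and for strictly preperiodic landing points you propagate finiteness backwards via the local finite-to-one-ness of $F_a$ --- you gesture at all of this, so the argument is sound, just slightly more composite than the single citation suggests.
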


\begin{proof}
The proof of the properties of $\lambda(F_a)$ are analogous to the proof of the corresponding statements for rational laminations of polynomials (see \cite[Theorem~1.1]{kiwi}). The last statement follows from the fact that $\mathcal{E}$ is a homeomorphism of the circle that conjugates $\rho$ to $m_{-2}$.  
\end{proof}

\subsubsection{Cantor dynamics}\label{cantor_sec} We will now study the topological structure of non-escaping sets for maps $F_a$ outside the connectedness locus $\cC(\mathcal{S})$.

\begin{proposition}[Cantor outside $\cC(\mathcal{S})$]\label{cantor_outside}
If $a\notin\cC(\mathcal{S})$, then $K_a$ is a Cantor set.
\end{proposition}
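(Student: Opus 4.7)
Since $a \notin \cC(\mathcal{S})$, the critical point $0$ eventually escapes to $T_a^0$. Let $N = n(a)+1$ be the minimal rank of a tile containing $0$, and call this ``critical tile'' $\Delta_0$. By Proposition~\ref{cardioid_circle_branched_cover}, $F_a^{\circ N}\colon \Delta_0 \to T_a^0$ is a $2$-to-$1$ branched cover ramified only at $0$, so $\Delta_0$ is a topological disk; every tile of rank $<N$ is non-degenerate, and by Proposition~\ref{schwarz_group}(2) the map $\psi_a$ identifies the $F_a$-dynamics on them with the ideal-triangle-group action.

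The plan is to build a Yoccoz-type puzzle partition and run a Cantor-attractor argument. First I would observe that $\Int K_a = \emptyset$: by Propositions~\ref{fatou_classification} and~\ref{fatou_critical}, the existence of a Fatou component would force the critical orbit to accumulate on a non-repelling cycle (or on $-\tfrac{3}{4}$ when $a=-\tfrac{1}{12}$), contradicting the escape of $0$. Hence $K_a = \Gamma_a$. Let $E$ be the closure of the union of all tiles of rank $\leq N$; this compact set contains $0 \in \Delta_0$ and the critical value $\infty$ (which lies in a tile of rank $n(a)$ by the definition of depth). The complement $\hat{\C}\setminus E$ is then a finite disjoint union of Jordan domains $V_1,\ldots,V_m$, which I call depth-$0$ puzzle pieces. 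Since no $V_i$ contains $\infty$, Corollary~\ref{circle_cardioid_inverse_branches} gives well-defined inverse branches of $F_a^{\circ k}$ on each $V_i$ for all $k \geq 1$, producing depth-$k$ puzzle pieces by pullback, each mapped univalently onto some $V_j$ by the corresponding iterate of $F_a$. The set $K_a$ is the nested intersection of the closures of all these puzzle pieces.

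The remaining step is to show that the diameters of puzzle pieces shrink to zero as the depth grows. The engine is hyperbolic expansion: the strict inclusion $F_a^{-1}(\Omega_a) \subsetneq \Omega_a$ yields, via Schwarz--Pick, uniform expansion of $F_a$ in the hyperbolic metric of $\Omega_a$ on compact subsets bounded away from the parabolic fixed points $\tfrac{1}{4}$ and $\alpha_a$. Near these parabolic points, Proposition~\ref{wedge_at_cardioid_sing} confines $K_a$ to narrow sectors along the repelling directions, where the parabolic petal asymptotics of Propositions~\ref{cusp_asymp}, \ref{double_asymp}, and \ref{fat_basilica_local_dyn} supply the requisite contraction of the inverse dynamics. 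A Denjoy--Wolff / Koebe-distortion argument, in the spirit of the one used for the deltoid in Subsection~\ref{proof_John_sec}, then yields uniform shrinking. Once shrinking is established, $K_a$ is realized as the attractor of an iterated function system with at least two well-separated contractions, hence is totally disconnected and perfect, so the Cantor property follows from compactness.

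The main obstacle is the lack of uniform hyperbolic expansion at the parabolic singular points $\tfrac{1}{4}$ and $\alpha_a$ (and the higher-order degeneracy at $\alpha_a = -\tfrac{3}{4}$ when $a = -\tfrac{1}{12}$). The resolution is the angular-wedge estimate of Proposition~\ref{wedge_at_cardioid_sing}, which confines $K_a$ into narrow petal-complementary sectors near these points, reducing the analysis to parabolic petal dynamics where the geometric contraction of inverse branches is classical.
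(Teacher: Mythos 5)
Your overall strategy — coding components of $K_a$ by itineraries of inverse branches over the partition of $\hat\C\setminus E_a^{n(a)+1}$ induced by the critical tile, then proving shrinking of the nested pullbacks — is in the same spirit as the paper's proof (the paper works directly with the two inverse branches $g_1,g_2:\hat\C\setminus E_a^{n(a)}\to\hat\C\setminus E_a^{n(a)+1}$ rather than puzzle pieces, but this amounts to the same coding). The opening reduction to $\Int K_a=\emptyset$ is an unnecessary detour: proving that every component is a singleton already gives empty interior, and the paper does not include this step.

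The genuine gap is in the shrinking step. You delegate it to a ``Denjoy--Wolff / Koebe-distortion argument, in the spirit of the one used for the deltoid in Subsection~\ref{proof_John_sec}.'' But that argument (the proof of Theorem~\ref{tiling_set_John}) is built around the conformal model $\psi^{\mathrm{in}}:\D\to T^\infty$ conjugating $\rho$ to $\sigma$: the shadowing Lemma~\ref{geodesic_distortion_rho} compares images of geodesic rays under $\rho^{\circ N}$ with actual geodesic rays in $\D$, and this requires the tiling set to be a simply connected, unramified copy of $\D$. When $a\notin\cC(\mathcal{S})$ none of this is available: $T_a^\infty$ contains the critical point and is therefore ramified, and once the proposition is proved its complement is a Cantor set, so $T_a^\infty$ is infinitely connected. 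The paper's proof handles the shrinking by a simpler two-case argument: for components of $K_a$ meeting the grand orbit of $\frac14$ or $\alpha_a$, shrinking is explicit via Corollary~\ref{iterated_pre_image_cardioid} and Relation~(\ref{petals_shrink}); for all other components, the compositions $g_{i_1}\circ\cdots\circ g_{i_k}$ form a normal family on $\Int(\hat\C\setminus E_a^{n(a)})$ (they omit $\Int T_a^0$) whose limit is constant, because the images of a small disk lying inside a tile of rank $n(a)+1$ under the successive composites sit in tiles of pairwise distinct ranks, hence are disjoint, forcing their diameters to zero. Replacing your John-domain/shadowing ingredient with this normal-family argument closes the gap; your puzzle-piece formalism then delivers the result.
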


\begin{proof}
The idea of the proof is similar to that of \cite[Expos{\'e}~III, \S 1, Proposition~1]{orsay}. However, lack of uniform contraction of the inverse branches of $F_a$ (around the singular points) adds some subtlety to the situation.

Let $a\notin\cC(\mathcal{S})$ and $n(a)$ be the depth of $a$ (see Definition~\ref{def_depth}). Then, some tile of rank $(n(a)+1)$ contains the critical point $0$, and disconnects $K_a$ (compare Figure~\ref{various_limit_sets} (right)). This is equivalent to saying that $\widehat{\C}\setminus E_a^{n(a)+1}$ has two connected components (where $E^k_a$ is the union of tiles of rank $\leq k$). $F_a$ maps each of these two connected components injectively onto $\widehat{\C}\setminus E_a^{n(a)}$. Let us denote the inverse branches of $F_a$ defined on $\widehat{\C}\setminus E_a^{n(a)}$ by $g_1$ and $g_2$. Clearly, every connected component of $K_a$ is the nested intersection of sets of the form $(g_{i_1}\circ g_{i_2}\circ\cdots\circ g_{i_k})(\widehat{\C}\setminus E_a^{n(a)})$, where $(i_1,i_2,\cdots)\in\{1,2\}^{\N}$.

The fact that the connected component of $K_a$ containing $\frac{1}{4}$ is a singleton follows from Corollary~\ref{iterated_pre_image_cardioid}, and the same statement about $\alpha_a$ follows from shrinking of petals in Relation~(\ref{petals_shrink}). Hence, every connected component of $K_a$ containing a point of $\bigcup_{k\geq 0}F_a^{-k}(\{1/4,\alpha_a\})$ is a singleton.

On $\Int{(\widehat{\C}\setminus E_a^{n(a)})}$, the sequence of anti-conformal maps $\{g_{i_1}, g_{i_1}\circ g_{i_2},\cdots\}$ forms a normal family with a constant limit function. It follows that a connected component of $K_a$ not intersecting $\frac14,\alpha_a$ and their preimages is also a singleton. 

This proves that every connected component of $K_a$ is a singleton; i.e. $K_a$ is a Cantor set. 
\end{proof}

\begin{proof}[Proof of Theorem~\ref{non_escaping_connected_intro}]
This follows from Proposition~\ref{schwarz_group} and Proposition~\ref{cantor_outside}.
\end{proof}

\section{Geometrically finite maps in the circle-and-cardioid family}\label{geom_fin_sec}

In this section, we will take a closer look at the dynamical properties of a simpler subclass of maps in $\mathcal{S}$. More precisely, we will focus on geometrically finite maps; i.e. maps with an attracting or parabolic cycle, and maps for which the critical point is non-escaping and strictly preperiodic.

Recall that for any $a\in\C\setminus(-\infty,-\frac{1}{12})$, the map $F_a$ has two fixed points $\alpha_a$ and $\frac{1}{4}$ such that $F_a$ does not admit an anti-holomorphic extension in a neighborhood of these points. We analyzed the dynamical behavior of $F_a$ near these fixed points in Proposition~\ref{cusp_asymp} and Proposition~\ref{double_asymp}. In what follows, we will refer to these fixed points as \emph{singular points}. On the other hand, the terms \emph{cycle/periodic orbit} will be reserved for periodic points of period greater than one (these are contained in $\Omega_a$).

\subsection{Hyperbolic and parabolic maps}\label{hyp_para_sec}

$F_a$ is called hyperbolic (respectively, parabolic) if it has a (super-)attracting (respectively, parabolic) cycle. It is easy to see that a (super-)attracting cycle of $F_a$ belongs to the interior of $K_a$, and a parabolic cycle lies on the boundary of $K_a$ (e.g. by \cite[\S 5, Theorem~5.2]{M1new}). Moreover, a parabolic periodic point necessarily lies on the boundary of a Fatou component (i.e. a connected component of $\Int{K_a}$) that contains an attracting petal of the parabolic germ such that the forward orbit of every point in the component converges to the parabolic cycle through the attracting petals.

\subsubsection{First properties and examples}\label{first_prop_examples_sec_1}

By Corollary~\ref{att_para_count}, a hyperbolic or parabolic map $F_a$ has a unique (super-)attracting or parabolic cycle, and the basin of attraction of this cycle coincides with all of $\Int{K_a}$. Moreover, Proposition~\ref{fatou_critical} implies that $a\in\cC(\mathcal{S})\setminus\{-\frac{1}{12}\}$.

\noindent Examples. i) The simplest hyperbolic map in $\cC(\mathcal{S})$ is $a=0$. The critical orbit of the corresponding map is given by $0\leftrightarrow\infty$, so the map has a super-attracting cycle. This map is the analogue of the Basilica anti-polynomial $\overline{z}^2-1$.

ii) The map $F_a$ with $a=\frac{3}{16}$ (respectively, $a=\frac{16\sqrt{3}-3}{132}$) has a super-attracting (respectively, parabolic) $3$-cycle (by Equation~(\ref{schwarz_cardioid})). This map is the analogue of the Airplane anti-polynomial $\overline{z}^2-1.7549\cdots$ (respectively, the parabolic Airplane anti-polynomial $\overline{z}^2-\frac74$).

\subsubsection{Radial limit set of hyperbolic/parabolic maps}\label{radial_hyp_para_sec} The following notion, which plays an important role in our study, was first introduced in \cite{L1} and later formalized in \cite{Urb1,Mcm}. (The concept was tacitly used in the proof of Lemma~\ref{deltoid_lc}.)

\begin{definition}[Radial limit set]\label{def_radial}
A point $w\in \Gamma_a$ is called a \emph{radial point} if there exists $\delta > 0$ and an infinite sequence of positive integers $\lbrace n_k \rbrace$ such that there exists a well-defined inverse branch of $F_a^{\circ n_k}$ defined on $B(F_a^{\circ n_k}(w), \delta)$ sending $F_a^{\circ n_k}(w)$ to $w$ for all $k$. The set of all radial points of $\Gamma_a$ is called the \emph{radial limit set} of $F_a$.
\end{definition}

It is not hard to see that for a radial point $w$, the sequence of inverse branches of $F_a^{\circ n_k}$ mapping $F_a^{\circ n_k}(w)$ to $w$ are eventually contracting; in other words, we have $\displaystyle\lim_{k\rightarrow \infty}(F_a^{\circ n_k})'(w)=\infty$. Thus, $F_a$ has a strong expansion property at the radial points of $\Gamma_a$. Furthermore, one can use the inverse branches of $F_a^{\circ n_k}$ to transfer information from moderate scales to microscopic scales at radial points.

The next proposition gives an explicit description of the radial limit set of hyperbolic and parabolic maps.

\begin{proposition}\label{radial_parabolic}
1) Let $F_a$ have a (super-)attracting cycle. Then, the radial limit set of $F_a$ is equal to $\Gamma_a\setminus\displaystyle  \bigcup_{k=0}^{\infty} F_a^{-k}(\{1/4,\alpha_a\})$.

2) Let $F_a$ have a parabolic cycle $\mathcal{O}$. Then, the radial limit set of $F_a$ is equal to $\Gamma_a\setminus \displaystyle\bigcup_{k=0}^{\infty} F_a^{-k}(\mathcal{O}\cup\{1/4,\alpha_a\})$. 
\end{proposition}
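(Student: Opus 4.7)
The plan is to break the proof into two inclusions. Writing $E_a$ for the excluded set ($\bigcup_k F_a^{-k}(\{\frac14,\alpha_a\})$ in the hyperbolic case, $\bigcup_k F_a^{-k}(\{\frac14,\alpha_a\}\cup\mathcal{O})$ in the parabolic case), I need to show (a) $E_a$ is disjoint from the radial limit set, and (b) every $w\in\Gamma_a\setminus E_a$ is radial. The arguments for the hyperbolic and parabolic cases run in parallel, the only difference being the additional excluded orbit $\mathcal{O}$ in the parabolic setting.

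For (a), the unifying observation is that at each excluded point, $F_a$ or a suitable iterate has multiplier of modulus one: the $\frac32$-cusp asymptotics of Proposition~\ref{cusp_asymp} give $\partial_{\bar w}F_a|_{1/4}=1$; Proposition~\ref{double_asymp} gives $(F_a^{\circ 2})'(\alpha_a)=1$; and the parabolic hypothesis gives the analogue along $\mathcal{O}$. Hence if $F_a^{\circ N}(w)$ lies in the excluded set, then along an arithmetic subsequence $n_k=N+kq$ (with $q$ the relevant parabolic period of the target), the derivatives $|(F_a^{\circ n_k})'(w)|$ remain bounded. If $w$ were radial, one could intersect with the subsequence supplied by Definition~\ref{def_radial}, and Koebe distortion applied to the inverse branches of $F_a^{\circ n_k}$ over $B(F_a^{\circ n_k}(w),\delta/2)$ would yield a disk around $w$ of positive radius on which all the forward iterates $F_a^{\circ n_k}$ are confined to a fixed compact subset of $\hat{\C}$. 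By Montel's theorem, $w$ would then lie in $\Int K_a$, contradicting $w\in\Gamma_a$. For preimages of $\frac14$ and $\alpha_a$ specifically, an even more direct obstruction is available: $B(F_a^{\circ n_k}(w),\delta)$ meets $T_a$ where $F_a$ is not defined, so no honest inverse branch can exist on the full ball.

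For (b), let $w\in\Gamma_a\setminus E_a$. I would extract a subsequence along which $w_{n_k}:=F_a^{\circ n_k}(w)$ converges to a point $w^*\in\Gamma_a$ that is uniformly away from $\{\frac14,\alpha_a\}\cup\mathcal{O}$ and from the closure of the post-critical set. Such a subsequence exists because an orbit in $\Gamma_a$ cannot accumulate on the excluded set without eventually landing there: near $\frac14$ and $\alpha_a$, the wedge description of Proposition~\ref{wedge_at_cardioid_sing} squeezes $\Gamma_a$ into the one-dimensional repelling directions of the parabolic germs of Propositions~\ref{cusp_asymp}--\ref{double_asymp}, forcing convergent orbits to be eventually fixed; near $\mathcal{O}$, convergence would occur only through the attracting petals, which belong to $\Int K_a$ rather than $\Gamma_a$. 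With $w^*$ chosen, the post-critical closure lies in $\Int K_a$ in the hyperbolic case and accumulates only on $\mathcal{O}$ in the parabolic case, so one can pick $\delta>0$ with $B(w^*,2\delta)\subset\Omega_a$ disjoint from this closure. Corollary~\ref{circle_cardioid_inverse_branches} then yields inverse branches of each $F_a^{\circ n_k}$ on $B(w^*,2\delta)\supset B(w_{n_k},\delta)$ for large $k$; the particular branch sending $w_{n_k}$ to $w$ witnesses radiality.

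\textbf{Main obstacle.} The delicate step is the ``trapping lemma'' near the singular points: any orbit of $F_a$ that remains in a small neighborhood of $\frac14$ or $\alpha_a$ forever while lying in $\Gamma_a$ must land on the singular point in finite time. This is the anti-holomorphic analogue of Proposition~\ref{temporary_near_cusp} from the deltoid setting; the new subtlety here is the double point $\alpha_a$, where one works with $F_a^{\circ 2}$ acting on the pair of ``repelling petals'' separated by attracting directions that lie entirely in $\Int K_a$. Once this trapping lemma and its parabolic-cycle analogue (using that the attracting petals of $\mathcal{O}$ lie in $\Int K_a$) are established, the remainder of the argument is a routine combination of Koebe distortion, Montel's theorem, and Corollary~\ref{circle_cardioid_inverse_branches}.
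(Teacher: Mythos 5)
Your proposal is correct and follows essentially the same route as the paper's proof: the wedge description of Proposition~\ref{wedge_at_cardioid_sing} confines $\Gamma_a$ near $\frac14$ and $\alpha_a$ to the repelling petals, forcing orbits in $\Gamma_a$ either to land on a singular (or parabolic) point or to visit a fixed compact set away from the excluded set and the post-critical closure infinitely often, whereupon Corollary~\ref{circle_cardioid_inverse_branches} provides the inverse branches; and radiality fails on the excluded set because there the derivative growth $|(F_a^{\circ n_k})'(w)|\to\infty$ enjoyed by radial points is violated. One small simplification: once $F_a^{\circ N}(w)$ lands on the forward-invariant set $\mathcal{O}\cup\{\frac14,\alpha_a\}$, the derivatives $|(F_a^{\circ m})'(w)|$ are bounded for \emph{every} $m\ge N$, so there is no need to restrict to an arithmetic subsequence or to intersect it with the subsequence from Definition~\ref{def_radial} (an intersection which a priori could be empty); the bounded-derivative contradiction applies to any putative radial subsequence outright, and the Montel step is likewise dispensable.
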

\begin{proof}
1) If $w$ is an iterated preimage of a singular point, then it clearly does not belong to the radial limit set. 

Let $w\in\Gamma_a\setminus\bigcup_{k=0}^{\infty} F_a^{-k}(\{1/4,\alpha_a\})$. Recall that by Proposition~\ref{wedge_at_cardioid_sing}, the limit set is locally contained in the ``repelling petals" at $\frac14$ and $\alpha_a$. Hence, no point in $\Gamma_a$ can non-trivially converge to one of the singular points. Our assumption on $w$ now implies that there exists $\delta>0$ such that infinitely many forward iterates $F_a^{\circ n_k}(w)$ stay outside $B(\{\frac{1}{4},\alpha_a\},2\delta)$. Since the critical orbit of $F_a$ converges to an attracting cycle (i.e. the post-critical set stays at a positive distance away from $\Gamma_a$), it follows $B(F_a^{\circ n_k}(w),\delta)\subset\Omega_a\setminus\left\{F_a^{\circ r}(0)\right\}_{r\geq0}$ for all $k\geq1$ (possibly after choosing a smaller $\delta>0$). Therefore, for each $k\geq1$, there is a well-defined inverse branch $F_a^{-n_k}:B(F_a^{\circ n_k}(w),\delta)\to\C$ that sends $F_a^{\circ n_k}(w)$ to $w$.

2) In the parabolic situation, the only difference is that the critical orbit converges to the parabolic cycle $\mathcal{O}$ which lies on the limit set. In particular, there is no uniform neighborhood of $\mathcal{O}$ where infinitely many inverse branches of $F_a^{\circ n}$ are defined. Hence, the radial limit set is disjoint from $\bigcup_{k=0}^{\infty} F_a^{-k}(\mathcal{O}\cup\{1/4,\alpha_a\})$.

On the other hand, if $w\in\Gamma_a\setminus\bigcup_{k=0}^{\infty} F_a^{-k}(\mathcal{O}\cup\{1/4,\alpha_a\})$, then its forward orbit can neither converge to the parabolic cycle nor converge to the singular points. Hence, infinitely many forward iterates of $F_a^{\circ n_k}(w)$ stay away from the fundamental tile and the closure of the post-critical set. Therefore, there exists $\delta>0$ and well-defined inverse branches $F_a^{-n_k}:B(F_a^{\circ n_k}(w),\delta)\to\C$ that send $F_a^{\circ n_k}(w)$ to $w$ (for all $k\geq1$).
\end{proof}

\subsubsection{Topological and analytic properties}\label{top_anal_sec_1} Note that the Julia set of a hyperbolic or parabolic polynomial has zero area. Using the description of radial limit sets given in Proposition~\ref{radial_parabolic}, we can now prove the same statement for hyperbolic and parabolic maps in $\mathcal{S}$. 

\begin{corollary}\label{meas_zero}
If $F_a$ is hyperbolic or parabolic, then $\Gamma_a$ has zero area.
\end{corollary}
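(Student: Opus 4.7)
The plan is to reduce the statement to the radial limit set having zero area, and then to apply the classical Koebe-distortion argument familiar from the rational map setting. By Proposition~\ref{radial_parabolic}, the set $\Gamma_a$ minus its radial part is a countable set (the iterated pre-images of $\{\frac{1}{4},\alpha_a\}$, and additionally those of the parabolic cycle $\mathcal{O}$ in the parabolic case), which is of Lebesgue measure zero. Thus it suffices to show that the radial limit set $\Lambda_a \subset \Gamma_a$ has zero area.

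Suppose for contradiction that $\mathrm{Leb}(\Lambda_a)>0$. By the Lebesgue density theorem there is a density point $w \in \Lambda_a$. Since $w$ is radial, there exist $\delta>0$ and a sequence $n_k\to\infty$ such that $g_k := F_a^{-n_k}$ is a univalent inverse branch on $B_k := B(F_a^{\circ n_k}(w), \delta)$ with $g_k(F_a^{\circ n_k}(w)) = w$. By Koebe's distortion theorem, $g_k$ has uniformly bounded distortion on the smaller disk $B_k' := B(F_a^{\circ n_k}(w), \delta/2)$; set $D_k := g_k(B_k') \ni w$.

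The key intermediate step is to show $\mathrm{diam}(D_k) \to 0$. Pass to a subsequence so that $F_a^{\circ n_k}(w) \to w_\infty \in \Gamma_a$. For $k$ large, $B(w_\infty,\delta/4) \subset B_k$, so the restrictions $g_k|_{B(w_\infty,\delta/4)}$ form a normal family by Koebe. If some subsequential limit $g$ were non-constant, then $g$ would be univalent and $V := g(B(w_\infty,\delta/4))$ would be an open neighborhood of $w$ on which $\{F_a^{\circ n_k}\}$ is a normal family. This would contradict the non-normality of the iterates near any point of $\Gamma_a$, which follows from the fact that every neighborhood of a limit-set point meets both the open set $T_a^\infty$ (whose points eventually enter $T_a^0$) and $K_a$ (whose orbits do not). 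Hence every limit of $\{g_k\}$ is constant, so $D_k$ shrinks to $w$ while remaining Koebe-round.

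With this in hand, the density transfer is routine. Since $D_k$ are Koebe-round sets shrinking to $w$, Lebesgue density at $w$ yields $\mathrm{Leb}(\Gamma_a \cap D_k)/\mathrm{Leb}(D_k) \to 1$. By complete invariance of $\Gamma_a$ we have $g_k(\Gamma_a \cap B_k') = \Gamma_a \cap D_k$, so the complementary densities transfer via bounded distortion:
\[
\frac{\mathrm{Leb}(B_k'\setminus \Gamma_a)}{\mathrm{Leb}(B_k')} \;\leq\; K^4 \cdot \frac{\mathrm{Leb}(D_k \setminus \Gamma_a)}{\mathrm{Leb}(D_k)} \;\longrightarrow\; 0
\]
for an absolute Koebe constant $K$. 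Letting $k \to \infty$, using the Hausdorff convergence $B_k' \to B(w_\infty,\delta/2)$ and closedness of $\Gamma_a$, one gets $\mathrm{Leb}(B(w_\infty,\delta/2) \setminus \Gamma_a) = 0$. But $w_\infty \in \partial T_a^\infty$, so $B(w_\infty,\delta/2) \cap T_a^\infty$ is a non-empty open set of positive Lebesgue measure, a contradiction. The main obstacle is establishing the shrinking of the $D_k$'s, especially in the parabolic case where $F_a^{\circ n_k}(w)$ may accumulate near the parabolic cycle $\mathcal{O}$; this is precisely what the normal-family/non-normality argument above is designed to handle.
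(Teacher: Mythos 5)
Your proof is correct and follows essentially the same route as the paper: reduce to the radial limit set (whose complement in $\Gamma_a$ is countable, hence null, by Proposition~\ref{radial_parabolic}), use Koebe distortion along a sequence of contracting inverse branches to transfer Lebesgue density from arbitrarily small scales at a radial point up to a fixed moderate scale, and invoke the fact that $\Gamma_a=\partial T_a^\infty$ has empty interior to produce the required gap. Two cosmetic points: normality of $\{g_k\}$ on $B(w_\infty,\delta/4)$ comes from Montel's theorem (the images avoid the open set $\Int{T_a}$), not from Koebe; and the ``non-normality near $\Gamma_a$'' you invoke to rule out a non-constant subsequential limit of the $g_k$ is more precisely that the iterates $F_a^{\circ n_k}$ cannot all be defined on a fixed open neighborhood of $w$, since any such neighborhood contains points of $T_a^\infty$ that eventually land in $T_a^0$, where $F_a$ is undefined.
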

\begin{proof}
One can follow the arguments of \cite[Proposition~25.23]{L6} or \cite[Theorem~3.8]{Urb1} to show that no radial limit point is a point of Lebesgue density for $\Gamma_a$. The description of the radial limit set of $F_a$ given in Proposition~\ref{radial_parabolic} now yields the result.
\end{proof}

Recall that the limit set of a hyperbolic or parabolic map is connected. The following proposition shows that the limit set of such maps is also locally connected.

\begin{proposition}\label{geom_fin_local_conn}
If $F_a$ is hyperbolic or parabolic, then $\Gamma_a$ is locally connected.
\end{proposition}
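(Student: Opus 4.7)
The plan is to reproduce, in the present setting, the puzzle-piece argument used for the deltoid in Lemma~\ref{deltoid_lc}, with modifications to accommodate the second singular point $\alpha_a$ and, in the parabolic case, the parabolic cycle. For each $w\in\Gamma_a$ I will produce a basis of open connected neighborhoods of $w$ in the relative topology of $\Gamma_a$. To set up puzzles, let $G_a:=G_{\D}\circ\psi_a$ be a Green-type function on $T_a^\infty$, and let the initial graph $I_0$ consist of the equipotential $\{G_a=1\}$ together with the arcs of the dynamical rays at angles $0,\frac13,\frac23$ lying inside it; by Proposition~\ref{per_rays_land} these three rays land at $\frac14$ and $\alpha_a$. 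Iterated $F_a$-preimages of $I_0$ give finer graphs $I_n$ whose complementary components are the puzzle pieces of depth $n$. The analogues of Proposition~\ref{puzzle_connected} and Proposition~\ref{puzzles_separate_impression} then hold by identical arguments.

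Second, I would handle the radial case. By Proposition~\ref{radial_parabolic}, every $w\in\Gamma_a$ outside the grand orbit of $\{\frac14,\alpha_a\}$ and of the parabolic cycle (if any) is radial. For such $w$, choosing a depth-one puzzle piece $P$ containing infinitely many $F_a^{\circ n_k}(w)$ and compactly contained in a depth-zero piece, I would pull $P$ back along the contracting inverse branches $F_a^{-n_k}$ supplied by the radial condition. Disjointness of these pullbacks (they sit inside different tiles/puzzle pieces) combined with Koebe distortion forces their diameters to zero, exactly as at the end of the proof of Proposition~\ref{radial_lc}. The sets $F_a^{-n_k}(\Int{P}\cap\Gamma_a)$ are connected neighborhoods of $w$ in $\Gamma_a$ and provide the desired basis.

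Third, I would treat the singular points, and by local-homeomorphy pullback their iterated preimages. At $\frac14$, the parabolic Puiseux expansion of Proposition~\ref{cusp_asymp} together with the wedge of Proposition~\ref{wedge_at_cardioid_sing}(1) reduce the analysis to the deltoid cusp case of Subsection~\ref{local_dyn_near_cusps}: the successive $\sigma$-preimages of a wedge-shaped puzzle piece bounded by an arc of the $0$-ray and its first preimage shrink to $\{\frac14\}$, giving the basis. At $\alpha_a$ the same argument applies with $F_a^{\circ 2}$ in place of $F_a$, using Propositions~\ref{double_asymp}--\ref{fat_basilica_local_dyn} for the parabolic model on either side and Proposition~\ref{wedge_at_cardioid_sing}(2) for the wedge, with the $\frac13$- and $\frac23$-rays serving as the puzzle boundaries.

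The main obstacle is the parabolic case at points of the parabolic cycle $\mathcal{O}$, where, unlike at a radial point, the post-critical orbit accumulates and no uniform disk supports infinitely many inverse branches. The remedy is classical Leau-Fatou flower theory applied to the holomorphic return iterate at a parabolic point $p\in\mathcal{O}$: by Proposition~\ref{rep_para_landing_point}, finitely many periodic rays land at $p$, and together with short equipotential arcs and the boundaries of the attracting petals they carve out a nested family of ``parabolic puzzle pieces'' whose $\Gamma_a$-traces are connected and shrink to $\{p\}$ (here I would imitate the construction of the $\widetilde{P}_n$ at the deltoid cusp). Iterated preimages of $p$ are then handled by the well-defined inverse branches along the cycle, whose contraction is guaranteed by the petal geometry. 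Combined with the three previous steps this exhausts $\Gamma_a$ and yields local connectivity.
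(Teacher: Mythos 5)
Your argument is correct, but it takes a genuinely different route from the one the paper publishes. You prove local connectivity pointwise, building a Yoccoz-type puzzle out of equipotentials of $G_a=G_{\D}\circ\psi_a$ and the $0,\frac13,\frac23$-rays, and then shrinking puzzle pieces at each type of boundary point: at radial points via the inverse-branch/Koebe argument from Proposition~\ref{radial_lc}, at the singular points $\frac14,\alpha_a$ and their preimages via the parabolic-repelling-petal geometry (Propositions~\ref{cusp_asymp}, \ref{double_asymp}, \ref{wedge_at_cardioid_sing}), and, in the parabolic case, at the parabolic cycle via a separate Leau--Fatou ``parabolic puzzle.'' The paper instead adapts the Orsay-notes contraction argument: it parametrizes the boundaries $\partial E_a^{2n-1}$ of unions of tiles by loops $\gamma_n:\mathbb{T}\to\hat\C$, builds a single Riemannian metric $\mu$ (hyperbolic on a compact set $X$ avoiding the postcritical set, patched with $M\lvert dw\rvert$ near $\frac14$, $\alpha_a$, and the parabolic cycle), and shows $\{\gamma_n\}$ is Cauchy in $C(\mathbb{T},X')$ for $d_\mu$, so the Carath\'eodory loop of $\Gamma_a$ exists as a uniform limit. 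The two approaches buy different things: yours is local and conceptually modular, reusing the radial/non-radial dichotomy and the deltoid cusp analysis almost verbatim, but it requires a separate and somewhat delicate construction at the parabolic cycle (the ``parabolic puzzle pieces''), which is the thinnest part of your sketch. The paper's uniform-metric argument handles all boundary points in one estimate and avoids any case analysis, at the cost of the auxiliary-metric bookkeeping; it also recycles directly to the Misiurewicz case (Proposition~\ref{misi_loc_conn}) with only an orbifold modification, whereas your approach would need the same pointwise case split there as well.
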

\begin{proof}
The proof of \cite[Expos{\'e}~X, Theorem~1]{orsay} can be adapted for our situation, we only indicate the necessary modifications.

Recall that since $F_a$ is hyperbolic or parabolic, the non-escaping set $K_a$ is connected and every tile is unramified. In particular, there is a unique tile of rank $0$, and $3\cdot2^{n-1}$ tiles of rank $n$ (for $n\geq1$). We denote the union of all tiles of rank$~\leq n$ by $E_a^n$. Then $\partial E_a^n$ is a closed curve contained in the union of the tiling set $T_a^\infty$ and the iterated preimages of the singular points $\frac14$ and $\alpha_a$. Moreover, $F_a^{\circ 2}$ is a four-to-one covering from $\partial E_a^{n+2}$ onto $\partial E_a^n$. Choose a parametrization $\gamma_1:\mathbb{T}\to\partial E_a^1$ such that $\gamma_1(0)=\frac14$, and $\gamma_1(\theta)=\gamma_1(\theta')$ for $\theta\neq\theta'$ only if $\gamma_1(\theta)$ is a singular point of $T_a$ or one of its preimages. Now define parametrizations $\gamma_n:\mathbb{T}\to\partial E_a^{2n-1}$ inductively by lifting $\gamma_{n-1}:\mathbb{T}\to\partial E_a^{2n-3}$ (by the covering maps $F_a^{\circ 2}:\partial E_a^{2n-1}\to\partial E_a^{2n-3}$ and $\rho^{\circ 2}:\mathbb{T}\to\mathbb{T}$) such that $\gamma_{n}(0)=\frac14$.

Let $B$ be a neighborhood of the attracting cycle $A_{-}$ of $F_a$ (if $F_a$ is hyperbolic) or the union of attracting petals at the parabolic cycle $A_0$ of $F_a$ (if $F_a$ is parabolic) such that the post-critical set $\{F_a^{\circ n}(0)\}_{n\geq0}$ is contained in $B$. We define a compact set $X:=\widehat{\C}\setminus\left(\Int{E_a^1}\cup B\right).$ Let $Y$ be the set of singular points on $\partial E_a^1$; i.e. $Y:= \{\frac14,\alpha_a,\sigma^{-1}(\alpha_a),\sigma_a^{-1}(\frac14)\}$. Then, $X$ satisfies the following properties (compare \cite[Expos{\'e}~X, Proposition~1]{orsay}).

\begin{enumerate}
\item $X\cap\{F_a^{\circ n}(0)\}_{n\geq0}=\emptyset$, and in the hyperbolic case, $X\cap A_{-}=\emptyset$;

\item $A_0\cup Y\subset\partial X$ (where $A_0=\emptyset$ in the hyperbolic case);

\item Each of the sets $\Gamma_a, F_a^{-2}(X),$ and $\partial E_a^n$ ($n\geq1$) is contained in $\Int{X}\cup A_0\cup Y$ (where $A_0=\emptyset$ in the hyperbolic case).
\end{enumerate}

Let $U:=\Int{X}$, and $\mu_U$ the hyperbolic metric on each connected component of $U$. Fixing $\epsilon>0$ sufficiently small and $M>0$ sufficiently large, we define a metric $\mu$ on $U\cup N_\epsilon(A_0\cup\{\frac14,\alpha_a\})$ as follows
\begin{equation*}
\mu:=\left\{\begin{array}{ll}
                     \mathrm{inf}(\mu_U, M\vert dw\vert) & \mbox{on}\ U\cap N_\epsilon(A_0\cup\{\frac14,\alpha_a\}), \\
                      \mu_U & \mbox{on}\ U\setminus N_\epsilon(A_0\cup\{\frac14,\alpha_a\}),\\
                      M\vert dw\vert & \mbox{on}\ N_\epsilon(A_0\cup\{\frac14,\alpha_a\})\setminus U.
                                          \end{array}\right. 
\end{equation*}

Finally, let us set $X':=F_a^{-2}(X)$. The arguments of \cite[Expos{\'e}~X, Propositions~4,5]{orsay} now apply verbatim to show that if $M$ is large enough, then $\{\gamma_n\}_n$ is a Cauchy sequence in $C(\mathbb{T},X')$ equipped with the metric of uniform convergence for the distance $d_\mu$ on $X'$ associated with the Riemannian metric $\mu$. It follows that the sequence converges uniformly for $d_\mu$, and hence also for the Euclidean distance (as $X'$ is compact). The limit of the sequence $\{\gamma_n\}_n$ is the Caratheodory loop of $\Gamma_a$, proving local connectivity of $\Gamma_a$.
\end{proof}

The following improvement will be important in the mating discussion in the sequel.

\begin{theorem}\label{tiling_set_John_1}
If $F_a$ is hyperbolic, then the tiling set $T_a^\infty$ is a John domain. Hence, $\Gamma_a$ is conformally removable.
\end{theorem}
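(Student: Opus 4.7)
The plan is to adapt the proof of Theorem~\ref{tiling_set_John} (the John property for the deltoid tiling set) to the current hyperbolic setting in the C\&C family. All the structural ingredients used there are available here: the conformal uniformization $\psi_a:\D\to T_a^\infty$ conjugating $\rho$ to $F_a$ (Proposition~\ref{schwarz_group}), wedge properties of the tiling set at the two singular points (Proposition~\ref{wedge_at_cardioid_sing}), and parabolic-type local expansions at the singular points (Propositions~\ref{cusp_asymp} and~\ref{double_asymp}). The conformal removability of $\Gamma_a$ will follow from Theorem of Jones \cite{Jones} as soon as the John property of $T_a^\infty$ is established.

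First I would reduce John-ness to the hyperbolic-geodesic criterion (\ref{John_condition_equiv}): it suffices to find $M>0$ such that for all $z\in T_a^\infty$ and $w\in\gamma^z$ (the part of the hyperbolic geodesic in $T_a^\infty$ from $z$ to $\partial T_a^\infty$ passing through the base point $0$) with $d_{T_a^\infty}(z,w)\geq M$, we have $\delta(w)\leq\tfrac{1}{2}\delta(z)$. I would fix small ``cheese wheels'' $B^1=B(\tfrac14,\epsilon)$ and $B^2=B(\alpha_a,\epsilon)$ around the two singular points, and distinguished ``cheese slices'' $W_a^1, W_a^2\subset T_a^\infty$ given by Proposition~\ref{wedge_at_cardioid_sing}, together with slightly shrunken sub-slices (playing the role of $W$ vs.\ $W_0$ in Subsection~\ref{conf_remo_sec}) that are still forward invariant under the appropriate first-return map.

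Second, I would establish the two uniform estimates analogous to Lemmas~\ref{estimate_lc} and~\ref{estimate_wedge_parabolic}: (a) if $\delta(z')\geq\eta$ and $w'\in\gamma^{z'}$ with $d_{T_a^\infty}(w',z')\geq M$, then $\delta(w')\leq\varepsilon\,\delta(z')$; (b) if $F_a(z')$ (respectively $F_a^{\circ 2}(z')$) lies in the sub-slice near $\tfrac14$ (respectively $\alpha_a$), and $w'\in\gamma^{z'}$ with $d_{T_a^\infty}(w',z')\geq M$, then $\delta(w')\leq\varepsilon\,\delta(z')$. Estimate (a) is standard from comparability of Euclidean and hyperbolic metrics in regions where $\partial T_a^\infty$ is visible from all sides. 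Estimate (b) uses the wedge geometry: in $W_a^i$ the boundary of $T_a^\infty$ is pinched only in a narrow Stolz cone around the repelling direction, while the geodesic $\gamma^{z'}$ heads into the interior of the wedge.

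Third, I would prove a shadowing lemma analogous to Lemma~\ref{geodesic_distortion_rho}: if $z_0\in T_a^\infty$ has $\delta(z_0)<\eta$ and none of $z_1,\dots,z_N$ lie in the sub-slices, then $F_a^{\circ N}\circ\gamma^{z_0}\approx\gamma^{z_N'}$, where $z_N'=\psi_a(\zeta_N|\psi_a^{-1}(z_N)|)$ with $\zeta_N=\rho^{\circ N}(\psi_a^{-1}(z_0)/|\psi_a^{-1}(z_0)|)$. The proof proceeds exactly as before: by the Schwarz reflection principle across $\Gamma_a$ (which is locally real-analytic away from singular points, and where we can use the conformal model $\rho$ on $\D$ via $\psi_a$), the inverse branch $F_a^{-N}$ extends across $\Gamma_a$ to a round neighborhood of $\zeta_N$ of fixed Euclidean size, and the resulting annulus has definite modulus. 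The fact that the inverse branches avoid the fundamental tile (Corollary~\ref{circle_cardioid_inverse_branches}) and the post-critical set (which is compactly contained in $\Int{K_a}$ by Proposition~\ref{fatou_critical}, using hyperbolicity) is what allows the inverse branches to be well-defined and form a normal family.

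Finally, I would run the stopping-time argument of Subsection~\ref{proof_John_sec} verbatim. For $z$ with $\delta(z)<\eta$, let $N=N(z)$ be the largest integer such that $\delta(z_i)<\eta$ for $i\leq N$ and $z_{N-1}$ lies outside both sub-slices. Then either $\delta(z_{N+1})\geq\eta$ (Case 1) or $z_N$ lies in one of the sub-slices (Case 2). In Case 1, the shadowing lemma places $z_N'$ at Euclidean distance $\asymp\eta$ from $\Gamma_a$; apply estimate (a) to $z_N'$ and pull back via Koebe distortion applied to the well-defined inverse branch on $\Int{P_1}$ (a rank-one puzzle piece avoiding the post-critical set). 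In Case 2, back up one iterate, apply estimate (b) at $z_{N-1}'$, and pull back the same way.

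The main obstacle will be handling the two structurally different singular points simultaneously. At $\frac14$ the map $F_a$ itself is parabolic-like with a $3/2$-power expansion (Proposition~\ref{cusp_asymp}) and a single repelling direction, so one can imitate the deltoid analysis directly. At $\alpha_a$, however, one must pass to $F_a^{\circ 2}$ (Formula~(\ref{iterate_alpha})), which has \emph{two} repelling directions (the inward and outward normals) for $a\neq-\tfrac{1}{12}$; consequently the sub-slice $W_a^2$ excludes two Stolz cones rather than one, and the first-return map to each component of $W_a^2$ is a well-defined iterate of $F_a$. The local parabolic expansion of order $z+k_a(z-\alpha_a)^2+\cdots$ from Proposition~\ref{double_asymp} is exactly what makes estimate (b) work at $\alpha_a$, and hyperbolicity of $F_a$ (which excludes the exceptional parameter $a=-\tfrac{1}{12}$ by Corollary~\ref{att_para_count}(2)) ensures the quadratic (non-degenerate) asymptotics, avoiding the higher-order tangency case. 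Once the John property is proved, conformal removability of $\Gamma_a=\partial T_a^\infty$ is immediate from \cite{Jones}.
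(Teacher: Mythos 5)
Your proposal is correct and takes essentially the same route as the paper, which simply observes that since hyperbolicity forces the post-critical set of $F_a$ to be bounded away from $\Gamma_a$ (by Proposition~\ref{fatou_critical}), the proof of Theorem~\ref{tiling_set_John} carries over. You have correctly identified the two adaptations required—treating the cusp $\frac14$ via the $3/2$-Puiseux expansion and the double point $\alpha_a$ via the quadratic parabolic asymptotics of $F_a^{\circ 2}$—and correctly noted that hyperbolicity excludes $a=-\frac{1}{12}$, whose fourth-order tangency would break the estimate at $\alpha_a$.
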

\begin{proof}
Since the post-critical set of a hyperbolic map $F_a$ is bounded away from its limit set, the proof is completely analogous to that of Theorem~\ref{tiling_set_John}.
\end{proof}

\subsection{Misiurewicz maps}\label{misi_maps}

A parameter $a\in\C\setminus(-\infty,-\frac{1}{12})$ is called Misiurewicz if the critical point $0$ of $F_a$ is non-escaping and strictly preperiodic.

\subsubsection{First properties}\label{first_prop_sec_2}

\begin{proposition}\label{misi_first_prop}
Let $F_a$ be a Misiurewicz map. 

1) The critical point of $F_a$ eventually lands on a repelling cycle or on one of the singular points $\frac14, \alpha_a$. Consequently, $a\in\cC(\mathcal{S})\setminus\{-\frac{1}{12}\}$.

2) The non-escaping set $K_a$ of $F_a$ has empty interior; i.e. $K_a=\Gamma_a$. 
\end{proposition}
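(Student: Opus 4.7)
The plan is to rule out every non-repelling possibility for the landing cycle of the critical orbit. By strict pre-periodicity, there exist minimal integers $k, p \geq 1$ with $\beta := F_a^{\circ k}(0)$ periodic of period $p$, and $F_a^{\circ (k-1)}(0)$ off the cycle $\mathcal{O}$ through $\beta$. If $\beta \in \{\tfrac{1}{4}, \alpha_a\}$, Part~1 is immediate. Otherwise $\beta \in \Omega_a$ and the holomorphic iterate $F_a^{\circ 2p}$ has $\beta$ as a fixed point with some multiplier $\lambda$; I then rule out $|\lambda| \leq 1$ case by case.

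The super-attracting case $\lambda = 0$ forces $0$ onto $\mathcal{O}$ (since $0$ is the only critical point of $F_a$), contradicting strict pre-periodicity. For $0 < |\lambda| < 1$ (attracting) or $\lambda$ a root of unity (parabolic), Proposition~\ref{fatou_critical}(1) ensures that the critical orbit converges to $\mathcal{O}$ and so eventually enters the immediate basin $B_\beta$ of $\beta$ under $F_a^{\circ 2p}$. A Koenigs (respectively Fatou) linearization near $\beta$ shows that $\beta$ is the unique preimage of itself under $F_a^{\circ 2p}$ inside a neighborhood; combining this with a Riemann--Hurwitz degree count for $F_a^{\circ 2p}|_{B_\beta}$ (using that $0$ is the unique critical point of $F_a$, together with the knowledge of which iterate of $0$ lies in $B_\beta$), back-iteration of the orbit inside the basin yields a contradiction with the minimality of $k$.

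The irrationally indifferent case is the main obstacle. By Proposition~\ref{fatou_critical}(2), the boundary of any Siegel disk and any Cremer periodic point lies in the closure of the post-critical set $\{F_a^{\circ n}(0)\}_{n \geq 0}$, which here is finite. Since Siegel boundaries are non-degenerate continua, they cannot fit inside a finite set, and the Siegel case is excluded. For the Cremer case I will invoke Shishikura's quasi-regular surgery, exactly as in the proof of Proposition~\ref{non_repelling_count}: a local perturbation near the Cremer cycle produces a quasi-regular $G_a$ with the cycle turned into an attracting one, and Lemma~\ref{schwarz_qcdef} (adapted as in that earlier proof) qc-conjugates $G_a$ to some $F_{a'} \in \mathcal{S}$. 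The qc-conjugate $F_{a'}$ then has a strictly pre-periodic critical orbit landing on an attracting cycle, which is already ruled out in the previous paragraph. Hence $|\lambda| > 1$ and $\mathcal{O}$ is repelling.

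The remaining assertions follow quickly. The critical orbit being finite forces $a \in \cC(\mathcal{S})$, and $a \neq -\tfrac{1}{12}$ because for that parameter Proposition~\ref{fatou_critical}(3), together with the Fatou coordinate attached to the degenerate parabolic expansion of Proposition~\ref{fat_basilica_local_dyn}, shows that the critical orbit converges asymptotically to $-\tfrac{3}{4}$ through the $F_a^{\circ 2}$-attracting petals and cannot land there in finite time. Finally, Part~2 follows from Part~1 combined with Proposition~\ref{fatou_classification}: since no non-repelling cycle exists in $F_a$ (attracting, parabolic, and Siegel cycles are ruled out by Part~1, while any Cremer cycle would again be forced into the finite post-critical set and would therefore have to coincide with the repelling landing cycle, a contradiction), $F_a$ has no periodic Fatou components, and by the non-wandering statement of Proposition~\ref{fatou_classification}(1), no Fatou components at all. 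Hence $K_a = \Gamma_a$.
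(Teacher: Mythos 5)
Your proposal follows the same overall strategy as the paper: rule out every non-repelling type for the landing cycle. The super-attracting and Siegel cases, the exclusion of $a=-\frac{1}{12}$, and Part~2 are all handled essentially as the paper does them. Your Cremer argument is a genuinely different route: the paper asserts (with a deferred ``simple modification'') that a Cremer point must be an \emph{accumulation} point of the critical orbit, which is impossible for a finite orbit; you instead use Shishikura's surgery to convert the Cremer cycle into an attracting one and reduce to the attracting case. This is a workable alternative, provided you observe that the surgery region can be taken disjoint from the pre-periodic part of the critical orbit, so the straightened $F_{a'}$ is again strictly pre-periodic landing on the now-attracting cycle.

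The genuine gap is the attracting/parabolic case. The claim that ``$\beta$ is the unique preimage of itself inside a neighborhood,'' combined with ``a Riemann--Hurwitz degree count'' and ``back-iteration,'' does not produce a contradiction as stated. The point $F_a^{\circ(k-1)}(0)$ is indeed a preimage of $\beta$ in $B_\beta$, but nothing forces it to lie in the small neighborhood where the preimage is unique; $F_a^{\circ 2p}|_{B_\beta}$ has degree $d\geq 2$, so $\beta$ has $d-1\geq 1$ other preimages in $B_\beta$, and $F_a^{\circ(k-1)}(0)$ could be any of them. To close the attracting case one needs a sharper input: by Milnor's lemma on the maximal Koenigs domain, the boundary of the maximal univalent linearization domain $V$ contains a critical point $c^*$ of $F_a^{\circ 2p}|_{B_\beta}$, and the extended Koenigs coordinate satisfies $|\hat{\kappa}(c^*)|=r>0$. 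But since $0$ is the only critical point of $F_a$, every critical point of $F_a^{\circ 2p}$ in $B_\beta$ maps forward into the critical orbit, which is finite and lands on $\beta$; the functional equation $\hat{\kappa}\circ F_a^{\circ 2p}=\lambda\hat{\kappa}$ then forces $\hat{\kappa}(c^*)=0$, a contradiction. For the parabolic case there is a shorter route you did not use: $\beta\in\Gamma_a$, so if the finite critical orbit landed on $\beta$ then $0\in\Gamma_a$ by complete invariance, contradicting the fact (Fatou's argument, invoked in Proposition~\ref{fatou_critical}) that the parabolic basin must contain $0$.
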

\begin{proof}
1) It follows from Proposition~\ref{fatou_critical} that such a map cannot have a (super-)attracting or neutral cycle. Hence, the critical point $0$ of a Misiurewicz map must eventually land on a repelling cycle or on one of the singular points $\frac14, \alpha_a$. In particular, the entire forward orbit of $0$ (which is finite) stays in $K_a$; and hence $a\in\cC(\mathcal{S})\setminus\{-\frac{1}{12}\}$. 

2) Since the critical point of $F_a$ is strictly preperiodic, Proposition~\ref{fatou_classification} and Proposition~\ref{fatou_critical} rule out the existence of interior components of $K_a$; hence $\Int{K_a}=\emptyset$. Thus, $K_a=\Gamma_a$. 
\end{proof}

\subsubsection{Radial limit set of Misiurewicz maps}\label{radial_sec_misi} The next proposition describes the radial limit set (see Definition~\ref{def_radial}) for Misiurewicz maps. 

\begin{proposition}\label{radial_misi}
Let $a$ be a Misiurewicz parameter. The radial limit set of $F_{a}$ is equal to $\Gamma_a\setminus\displaystyle  \bigcup_{k=0}^{\infty} F_a^{-k}(\{0,1/4,\alpha_a\})$.
\end{proposition}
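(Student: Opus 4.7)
If $w\in\bigcup_{k\ge 0}F_a^{-k}(\{0,\frac{1}{4},\alpha_a\})$, some $F_a^{\circ\ell}(w)\in\{0,\frac{1}{4},\alpha_a\}$. The critical point $0$ and the singular points $\frac{1}{4},\alpha_a$ (cf.\ Propositions~\ref{cusp_asymp} and~\ref{double_asymp}) are ramification points for $F_a^{\circ n_k}$ when $n_k>\ell$, so no inverse branch of $F_a^{\circ n_k}$ sending $F_a^{\circ n_k}(w)$ back to $w$ extends univalently to a disk, and $w$ is not radial.

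\textbf{Inclusion.} Fix $w\in\Gamma_a\setminus\bigcup_{k\ge0}F_a^{-k}(\{0,\frac{1}{4},\alpha_a\})$ and set $P:=\{F_a^{\circ n}(0):n\ge1\}$ (a finite set by Proposition~\ref{misi_first_prop}). The plan is to exhibit $\delta>0$ and $n_k\to\infty$ with $B(F_a^{\circ n_k}(w),2\delta)$ disjoint from $P\cup\{\frac{1}{4},\alpha_a\}$. Since the critical values of $F_a^{\circ n}$ lie in $P$ while its singular values are exactly the fixed set $\{\frac{1}{4},\alpha_a\}$, the disk $B(F_a^{\circ n_k}(w),\delta)$ will then miss all critical and singular values of $F_a^{\circ n_k}$; combined with the fact that $w\notin\{0,\frac{1}{4},\alpha_a\}$, this allows the inverse branch of $F_a^{\circ n_k}$ that sends $F_a^{\circ n_k}(w)$ to $w$ to be defined on the whole disk, certifying that $w$ is radial.

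To produce such $(\delta,n_k)$, I first argue that the orbit of $w$ cannot accumulate on $\{\frac{1}{4},\alpha_a\}$: by Proposition~\ref{wedge_at_cardioid_sing} the tiling set fills wide angular wedges at these singularities, confining $\Gamma_a$ nearby to thin repelling cones from which points are pushed away by the Puiseux expansions of Propositions~\ref{cusp_asymp} and~\ref{double_asymp}, exactly as in the proof of Proposition~\ref{radial_parabolic}. By Proposition~\ref{misi_first_prop}, either $P$ terminates in a repelling cycle $R$ (Case A) or in a singular point (Case B). In Case B, any tail point of $P$ lying in $\omega(w)$ would, by forward-invariance of $\omega(w)$, carry $\frac{1}{4}$ or $\alpha_a$ into $\omega(w)$, contradicting the previous step; hence $\omega(w)\cap P=\emptyset$ and any $q\in\omega(w)$ supplies the desired $n_k$. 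In Case A, the orbit of $w$ may land on $R$ (this is allowed by our hypothesis since $R$ is disjoint from $\bigcup_k F_a^{-k}(\{0,\frac{1}{4},\alpha_a\})$); in that event, the linearizing coordinate of the anti-holomorphic iterate $F_a^{\circ 2}$ at a cycle point of $R$, whose multiplier has modulus $>1$, directly supplies uniform-radius inverse branches $F_a^{\circ -2n}$ about $R$, which are then composed with finitely many pull-backs along the preperiodic portion of $w$'s orbit to reach $w$. If the orbit does not land on $R$, then $\omega(w)\subset P$ would---by forward-invariance together with a preimage-in-$P$ argument ultimately forcing $0\in\omega(w)$ and contradicting $0\notin P$---reduce to $\omega(w)=R$, i.e.\ to convergence of the orbit to $R$; the same linearization rules this out by exponential repulsion. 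Thus some $q\in\omega(w)\setminus(P\cup\{\frac{1}{4},\alpha_a\})$ exists, and choosing $\delta>0$ with $B(q,2\delta)\cap(P\cup\{\frac{1}{4},\alpha_a\})=\emptyset$ together with $n_k$ satisfying $F_a^{\circ n_k}(w)\in B(q,\delta)$ finishes the construction.

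\textbf{Main obstacle.} The delicate point is that the hypothesis on $w$ does not prevent $w$ from being a preimage of the repelling cycle $R$, so the argument must bifurcate into ``$w$'s orbit lands on $R$'' versus ``it does not''. Both subcases close via the linearization of $F_a^{\circ 2}$ at $R$: directly in the first, and by contradicting $\omega(w)=R$ in the second.
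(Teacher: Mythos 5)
Your proof follows the same overall strategy as the paper's (exclusion via critical/singular ramification, inclusion via finding iterates that stay a definite distance from the singular points and the post-critical set, with linearization at the repelling cycle $R$), and you are in fact more careful than the paper about the case where $w$ eventually lands on $R$ without being in the forward orbit of $\infty$. However, there is a genuine gap in your opening claim for the inclusion: that ``the orbit of $w$ cannot accumulate on $\{\frac14,\alpha_a\}$,'' i.e.\ that $\omega(w)\cap\{\frac14,\alpha_a\}=\emptyset$. This is false. For the Chebyshev parameter $a=\frac14$, for example, $\Gamma_a=[-\infty,\frac14]$ and $F_a$ restricted there is a degree-$2$ expanding unimodal map; a generic $w$ has a dense orbit, so $\omega(w)=\Gamma_a\ni\frac14$, yet such $w$ is plainly not a preimage of $\{0,\frac14,\alpha_a\}$ and must be radial. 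The repelling-cone picture (Propositions~\ref{wedge_at_cardioid_sing},~\ref{cusp_asymp},~\ref{double_asymp}) only rules out \emph{convergence} of the orbit to $\frac14$ or $\alpha_a$, i.e.\ $\omega(w)\not\subset\{\frac14\}$ and $\omega(w)\not\subset\{\alpha_a\}$; it does not prevent the orbit from returning close to a singular point infinitely often.

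This matters because in your Case B you deduce $\omega(w)\cap P=\emptyset$ from the false disjointness, and then assert that \emph{any} $q\in\omega(w)$ works; but if $\frac14\in\omega(w)$, the choice $q=\frac14$ fails. What one actually needs is only that $\omega(w)\not\subset P\cup\{\frac14,\alpha_a\}$: then some $q\in\omega(w)$ lies at positive distance from this finite set, and infinitely many iterates near $q$ give the required disks. The clean way to get this is to observe that $\omega(w)$, being nonempty, closed, forward-invariant, and such that every point has an $\omega(w)$-preimage, would, if contained in the finite forward-invariant set $P\cup\{\frac14,\alpha_a\}$, have to be a union of cycles lying therein; the only cycles there are $\{\frac14\}$, $\{\alpha_a\}$, and (in Case A) $R$, and since these are pairwise disjoint compact invariant sets, $\omega(w)$ would equal exactly one of them. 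Each of these is then ruled out exactly as you say: the two singular fixed points by the repelling Puiseux dynamics, and $R$ by linearization (unless the orbit lands on $R$, which is your separate radial case). With this replacement, the rest of your argument — including the careful treatment of preimages of $R$ that the paper handles only implicitly — goes through.
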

\begin{proof}
Since critical values are obstructions to the existence of inverse branches of anti-holomorphic maps (on a full disk neighborhood), and $F_a$ does not have anti-holomorphic extensions in neighborhoods of the singular points, it follows that neither a precritical point nor an iterated preimage of a singular point can be a radial limit point of $F_a$.

Since the critical value $\infty$ eventually falls on a repelling cycle, points in the forward orbit of $\infty$ evidently satisfy the radial limit set condition. 

Finally, if $w\in\Gamma_a$ does not belong to the grand orbit of the critical point and the singular points, then infinitely many forward iterates of $w$ stay away from the singular points and the post-critical set. The arguments of Proposition~\ref{radial_parabolic} can now be applied verbatim to conclude that such a point belongs to the radial limit set.
\end{proof}

\subsubsection{Topological and analytic properties}\label{top_anal_sec_2} We now record a couple of basic topological and analytic properties of the non-escaping set of a Misiurewicz map.

\begin{proposition}\label{misi_measure_zero}
Let $a$ be some Misiurewicz parameter. Then, the area of $K_a$ is zero.
\end{proposition}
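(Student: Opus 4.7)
The plan is to mimic the argument used for the hyperbolic/parabolic case in Corollary~\ref{meas_zero}, taking advantage of the explicit description of the radial limit set for Misiurewicz maps provided by Proposition~\ref{radial_misi}.

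First, recall that by Proposition~\ref{misi_first_prop}(2), we have $K_a = \Gamma_a$, so it suffices to prove that $\Gamma_a$ has zero area. Next, by Proposition~\ref{radial_misi}, the radial limit set of $F_a$ is $\Gamma_a \setminus E$, where $E := \bigcup_{k \geq 0} F_a^{-k}(\{0, \tfrac14, \alpha_a\})$ is a countable set and hence has zero area. It therefore suffices to show that no radial point is a point of Lebesgue density for $\Gamma_a$; then the Lebesgue density theorem forces the radial limit set (and hence $\Gamma_a$) to have zero area.

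The main step is the no-density-point claim at any radial point $w$. By definition, there exist $\delta > 0$ and $n_k \to \infty$ such that a well-defined inverse branch $g_k$ of $F_a^{\circ n_k}$ is defined on $B(F_a^{\circ n_k}(w), \delta)$ sending $F_a^{\circ n_k}(w)$ back to $w$. Since $\Gamma_a = \partial T_a^{\infty}$ is closed with empty interior (as $T_a^{\infty}$ is open and dense in $\Omega_a$), the moderate-scale disk $B(F_a^{\circ n_k}(w), \delta/2)$ contains a Euclidean ball of definite radius disjoint from $\Gamma_a$. The Koebe distortion theorem, applied to the univalent maps $g_k$ on the slightly larger disks $B(F_a^{\circ n_k}(w), \delta)$, implies that $g_k$ distorts shapes by a bounded factor. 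Pulling back the definite gap yields, at scale comparable to $\mathrm{diam}\, g_k(B(F_a^{\circ n_k}(w), \delta/2))$ around $w$, a disk of proportional radius disjoint from $\Gamma_a$. Since these scales shrink to zero (by expansion of $F_a^{\circ n_k}$ at $w$ along the chosen subsequence), $w$ cannot be a Lebesgue density point of $\Gamma_a$.

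The only subtle point in this program is ensuring that the inverse branches $g_k$ at a radial point extend univalently to a slightly larger disk so that Koebe distortion is applicable — but this follows from the definition of radial point (one may simply start with a slightly larger $\delta'>\delta$ in the definition, or use the fact that a definite neighborhood of $F_a^{\circ n_k}(w)$ avoids the critical value and the singular points of $F_a^{\circ n_k}$). No additional difficulty arises from the Misiurewicz hypothesis itself; on the contrary, this hypothesis is exactly what guarantees (via Proposition~\ref{radial_misi}) that the non-radial part of $\Gamma_a$ is only countable.
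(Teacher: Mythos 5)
Your proposal is correct and follows essentially the same approach as the paper: reduce to showing $\Gamma_a$ has zero area (since $K_a=\Gamma_a$), invoke the description of the radial limit set from Proposition~\ref{radial_misi}, and show no radial point is a Lebesgue density point for $\Gamma_a$ by pulling back a definite gap under inverse branches with bounded distortion, exactly as in the proof of Corollary~\ref{meas_zero}. The paper's proof is terser (simply citing Proposition~\ref{radial_misi} and Corollary~\ref{meas_zero}), while you spell out the Koebe-distortion mechanics and the countability of the non-radial exceptional set, but the mathematical content is identical.
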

\begin{proof}
Note that since $K_a$ has empty interior, we only need to show that $\Gamma_a$ has measure zero. This follows from the description of the radial limit set of $F_a$ given in Proposition~\ref{radial_misi}, and the fact that no radial limit point is a point of Lebesgue density for $\Gamma_a$ (compare Proposition~\ref{meas_zero}).
\end{proof}

\begin{proposition}\label{misi_loc_conn}
Let $a$ be some Misiurewicz parameter. Then, $\Gamma_a$ is locally connected. Consequently, all external dynamical rays of $F_a$ land on $\Gamma_a$. Moreover, $\Gamma_a$ is a dendrite.
\end{proposition}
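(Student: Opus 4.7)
The plan is to concentrate on local connectivity of $\Gamma_a$, from which the other two assertions will follow quickly. Once $\Gamma_a$ is locally connected, Carath\'eodory's theorem extends the Riemann map $\psi_a^{-1} : \D \to T_a^\infty$ of Proposition~\ref{schwarz_group} continuously to $\overline{\D}$, so that every $\mathcal{G}$-ray lands and therefore every dynamical $\theta$-ray of $F_a$ (the $\psi_a$-preimage of a $\mathcal{G}$-ray) lands at a point of $\Gamma_a$. For the dendrite property, combine the connectedness of $K_a$ provided by Proposition~\ref{prop_critical_conn_locus} with $\Int K_a=\emptyset$ and $K_a=\Gamma_a$ from Proposition~\ref{misi_first_prop}: any simple closed curve $C\subset\Gamma_a$ would partition $\hat{\mathbb C}$ into two open discs, the connected set $T_a^\infty$ would lie in one of them, and the other disc would then be contained in $K_a$, contradicting empty interior.

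For local connectivity I propose to follow the Cauchy-sequence strategy of Proposition~\ref{geom_fin_local_conn}. Parametrize the boundary loops $\partial E_a^{2n-1}$ by maps $\gamma_n : \mathbb{T} \to \partial E_a^{2n-1}$, built inductively by lifting $\gamma_{n-1}$ along the two-to-one covering $F_a^{\circ 2}:\partial E_a^{2n-1}\to\partial E_a^{2n-3}$ and normalized so that $\gamma_n(0)=\frac14$. If $\{\gamma_n\}$ can be shown to be a uniform Cauchy sequence with respect to a suitable Riemannian metric $\mu$ on a compact set $X\supset\Gamma_a$, then the pointwise limit is the desired Carath\'eodory loop of $\Gamma_a$, yielding local connectivity.

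The essential new feature relative to the hyperbolic/parabolic setting is that the post-critical set $P_a=\{F_a^{\circ k}(0)\}_{k\geq0}$, although still finite, now lies entirely on $\Gamma_a$ itself rather than in Fatou components; by Proposition~\ref{misi_first_prop} the critical orbit eventually hits either a repelling cycle or a singular point. I will therefore replace the Fatou-component neighborhood $B$ of Proposition~\ref{geom_fin_local_conn} by a union of small round discs about the points of $P_a\cup\{\frac14,\alpha_a\}$, and take $\mu$ to be the Douady--Hubbard orbifold metric: the Poincar\'e metric of $U=\Int X$ on the complement, combined with model weights $|w-p|^{-1+1/\nu(p)}|dw|$ near each ramification point $p$, where $\nu(p)$ is chosen to match the local degree of the smallest iterate of $F_a^{\circ 2}$ that maps $p$ onto another ramification point through the critical point.

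The main obstacle will be verifying uniform expansion $|DF_a^{\circ 2}|_\mu\geq\lambda>1$ on $X':=F_a^{-2}(X)$. Away from the two singular points this reduces to a standard Schwarz-lemma calculation once the orbifold exponents are correctly chosen. Near the cusp $\frac14$ and the double point $\alpha_a$, however, the parabolic-type asymptotics of Propositions~\ref{cusp_asymp} and \ref{double_asymp} force a delicate matching between the orbifold weight exponents and the local contact orders of the reflection curves; the correct normalization is a direct analog of the $M|dw|$ cutoff introduced in the proof of Proposition~\ref{geom_fin_local_conn}. Once this uniform-expansion estimate is in place, the arguments of \cite[Propositions~4,5]{orsay} transfer verbatim, $\{\gamma_n\}$ converges uniformly to the Carath\'eodory loop, and local connectivity of $\Gamma_a$ follows.
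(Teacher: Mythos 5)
Your strategy is the same as the paper's: adapt the Cauchy-loop argument of Proposition~\ref{geom_fin_local_conn} to the fact that the post-critical set now lies on $\Gamma_a$ by passing to a Thurston orbifold metric over $U=\Int{X}$ (where $X=\hat{\C}\setminus\Int{E_a^1}$, with nothing further removed), keeping the $M\vert dw\vert$ cap in $N_\epsilon(\{\frac14,\alpha_a\})$, and invoking the arguments of \cite[Propositions~4,5]{orsay} on $X':=F_a^{-2}(X)$. Two small corrections are worth making. First, your rule for the orbifold degree $\nu(p)$ is stated backwards: the degree at $p$ is governed by the \emph{preimages} of $p$ (what comes into $p$ through the critical point), not by how $p$ maps forward; since $F_a$ has a single simple critical point, the answer is simply $\nu=2$ at each strictly post-critical point and $\nu=1$ elsewhere, which is precisely the function $v$ appearing in the paper's proof. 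Second, you anticipate a ``delicate matching'' between orbifold weights and contact orders near $\frac14$ and $\alpha_a$, but no such matching is required: the $M\vert dw\vert$ cap is imposed on the whole $\epsilon$-neighborhood of the singular points and uniformly overrides whatever the orbifold metric does there (even when a post-critical point coincides with $\frac14$ or $\alpha_a$), so the singular-point estimate is verbatim the one from Proposition~\ref{geom_fin_local_conn}. Your two follow-up arguments --- ray landing via Carath\'eodory, and the dendrite property from the observation that a Jordan curve inside $\Gamma_a$ would trap an open disc in $K_a$, contradicting $\Int{K_a}=\emptyset$ --- are correct and are exactly what the paper's closing sentence asserts.
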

\begin{proof}
The proof is similar to that of Proposition~\ref{geom_fin_local_conn} with one important modification (since $\Gamma_a$ contains the critical orbit).

Let $X:=\widehat{\C}\setminus\Int{E_a^1}$. Then $X$ satisfies the following properties
\begin{enumerate}
\item $\{\frac14,\alpha_a,\sigma^{-1}(\alpha_a),\sigma_a^{-1}(\frac14)\}\subset\partial X$; 

\item $\{F_a^{\circ n}(0)\}_{n\geq0}\subset\Int{X}$,

\item $\Gamma_a,\gamma_n, F_a^{-2}(X)\subset\Int{X}\cup\{\frac14,\alpha_a,\sigma^{-1}(\alpha_a),\sigma_a^{-1}(\frac14)\}$.
\end{enumerate}

Let us set $U:=\Int{X}$, and define a function $v:U\to\mathbb{Z}^+$ as
 $$v(w)=\left\{\begin{array}{ll}
                    2 & \mbox{if}\ w\in\{F_a^{\circ k}(0)\}_{k\geq1}, \\
                    1 & \mbox{otherwise}.
                                          \end{array}\right.$$ 
            
Then, $v(F_a(w))=\mathrm{deg}_w(F_a)\cdot v(w)$ for each $w$, where $\mathrm{deg}_w(F_a)$ denotes the local degree of $F_a$ at $w$. Let $U^*$ be a ramified covering of $U$, with ramification degree equal to $v(w)$ for every point above $w$, and $\widetilde{U}$ the universal covering of $U^*$. Let $\pi:\widetilde{U}\to U$ be the projection map. 

Denote by $\mu_{\widetilde{U}}$ the hyperbolic metric on $\widetilde{U}$ and $\mu_U$ the admissible Riemannian metric on $U$ such that $\pi:\widetilde{U}\to U$ is a local isometry. Now, fixing $\epsilon>0$ sufficiently small and $M>0$ sufficiently large, we define a metric $\mu$ on $U\cup N_\epsilon(\{\frac14,\alpha_a\})$ as follows
\begin{equation*}
\mu:=\left\{\begin{array}{ll}
                     \mathrm{inf}(\mu_U, M\vert dw\vert) & \mbox{on}\ U\cap N_\epsilon(\{\frac14,\alpha_a\}), \\
                      \mu_U & \mbox{on}\ U\setminus N_\epsilon(\{\frac14,\alpha_a\}),\\
                      M\vert dw\vert & \mbox{on}\ N_\epsilon(\{\frac14,\alpha_a\})\setminus U.
                                          \end{array}\right. 
\end{equation*}

Once again, the arguments of \cite[Expos{\'e}~X, Propositions~4,5]{orsay} show that if $M$ is large enough, then $\{\gamma_n\}_n$ (see the proof of Proposition~\ref{geom_fin_local_conn} for the definition of the curves $\gamma_n$) is a Cauchy sequence in $C(\mathbb{T},X')$ (where $X':=F_a^{-2}(X)$) equipped with the metric of uniform convergence for the distance $d_\mu$ on $X'$ associated with the Riemannian metric $\mu$. It follows that the sequence converges uniformly for $d_\mu$, and hence also for the Euclidean distance (as $X'$ is compact). The limit of the sequence $\{\gamma_n\}_n$ is the Caratheodory loop of $\Gamma_a$, proving local connectivity of $\Gamma_a$.

Landing of dynamical rays follows from local connectivity of $\Gamma_a$ and Caratheodory's theorem. The last statement follows from the fact that $K_a$ is full and has empty interior (see Proposition~\ref{misi_first_prop}).
\end{proof}

Theorem~\ref{geom_finite_limit_set}, which was announced in the introduction, follows from our analysis of geometrically finite maps.

\begin{proof}[Proof of Theorem~\ref{geom_finite_limit_set}]
1) Local connectivity of the limit sets of geometrically finite maps follows from Propositions~\ref{geom_fin_local_conn} and~\ref{misi_loc_conn}. The statement on zero area of limit sets follows from Proposition~\ref{meas_zero} and~\ref{misi_measure_zero}.

2) This follows from Propositions~\ref{density_loc_conn},~\ref{geom_fin_local_conn} , and~\ref{misi_loc_conn}.
\end{proof}

\subsubsection{Examples of Misiurewicz maps}\label{examples_sec_2}

\noindent Examples. i) The simplest Misiurewicz parameter in $\cC(\mathcal{S})$ is $a=\frac{1}{4}$. The critical orbit of the corresponding map is $0\mapsto\infty\mapsto\frac{1}{4}\circlearrowleft$. Note that $F_a:\left[-\infty,\frac14\right]\to\left[-\infty,\frac14\right]$ is a two-to-one surjective map. Hence, $\left[-\infty,\frac14\right]$ is a completely invariant subset of $\Gamma_a$.

By Propositions~\ref{density_loc_conn} and~\ref{misi_loc_conn}, the preimages of $\frac14$ are dense in $\Gamma_a$. It follows that $\Gamma_a=\left[-\infty,\frac{1}{4}\right]$. This map is the analogue of the Chebyshev anti-polynomial $\overline{z}^2-2$ whose Julia set is the interval $\left[-2,2\right]$.

ii) For $a=\frac{5}{36}$, the critical orbit of $F_a$ is $0\mapsto\infty\mapsto\frac{5}{36}\mapsto-\frac{3}{4}\circlearrowleft$ (by Relation~\eqref{schwarz_cardioid}). So, $\frac{5}{36}$ is a Misiurewicz parameter of $\cC(\mathcal{S})$. This map is the analogue of the anti-polynomial $\overline{z}^2-1.543689\cdots$, whose critical point also lands at the separating fixed point in three steps.

\section{An example of mating in the circle-and-cardioid family}\label{basilica_mating_sec}

Recall that in Section~\ref{deltoid_reflection}, we showed that the Schwarz reflection map of the deltoid is the unique conformal mating of the anti-polynomial $\overline{z}^2$ and the reflection map $\rho$. In \cite{LLMM2}, we will carry out a systematic study of the parameter space of the circle-and-cardioid family, and show that each geometrically finite Schwarz reflection map in the family $\mathcal{S}$ is a conformal mating of a quadratic anti-polynomial and the reflection map $\rho$. We conclude this paper by illustrating this mating phenomenon with a simple example.

We set $a_0=0$. The map $F_{a_0}$ has a super-attracting $2$-cycle $0\leftrightarrow\infty$. The dynamical $\frac13$ and $\frac23$-rays of $F_{a_0}$ land at $\alpha_a$, which is the unique common boundary point of the periodic super-attracting Fatou components. By \cite[Proposition 8.6]{LLMM2}, iterated pull-backs of the leaf connecting $\frac13$ and $\frac23$ (in $\overline{\D}$) under $\rho$ are pairwise disjoint and their closure in $\Q/\Z$ is the preperiodic lamination $\lambda(F_{a_0})$.

On the other hand, for $c_0=-1$, the Basilica anti-polynomial $f_{c_0}(z)=\overline{z}^2-1$ has a super-attracting $2$-cycle $0\leftrightarrow-1$. The dynamical $\frac13$ and $\frac23$-rays of $f_{c_0}$ land at the $\alpha$-fixed point of $f_{c_0}$, which is the unique common boundary point of the periodic super-attracting Fatou components.  By \cite[\S~25.7]{L6} (also compare \cite[Proposition~6.8]{kiwi}), iterated pull-backs of the leaf connecting $\frac13$ and $\frac23$ (in $\overline{\D}$) under $m_{-2}$ are pairwise disjoint and their closure in $\Q/\Z$ is the rational lamination $\lambda(f_{c_0})$.

Since $\mathcal{E}$ fixes $\frac13$ and $\frac23$, it follows that $\mathcal{E}_{\ast}(\lambda(F_{a_0}))=\lambda(f_{c_0})$. By \cite[Proposition~11.1]{LLMM2}, there exists a topological conjugacy between $F_{a_0}:K_{a_0}\to K_{a_0}$ and $f_{c_0}:\mathcal{K}_{c_0}\to\mathcal{K}_{c_0}$ that is conformal on the interior (compare Figure~\ref{schwarz_basilica}).

\begin{figure}[ht!]
\centering
\includegraphics[width=0.32\linewidth]{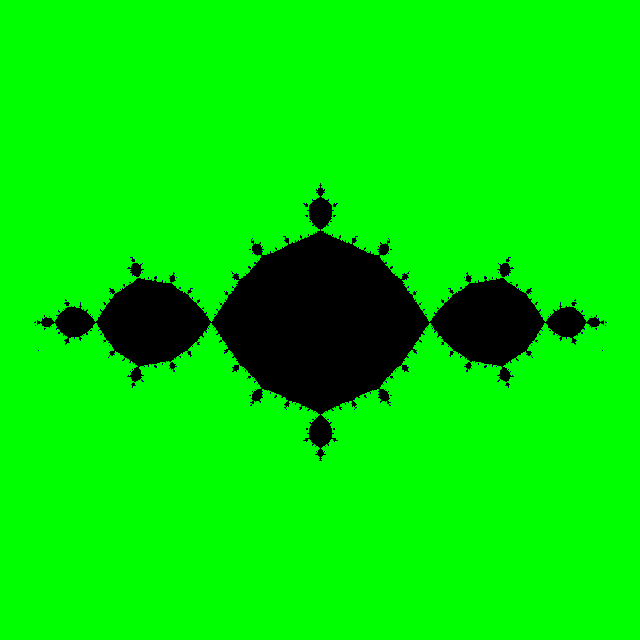}\ \includegraphics[width=0.32\linewidth]{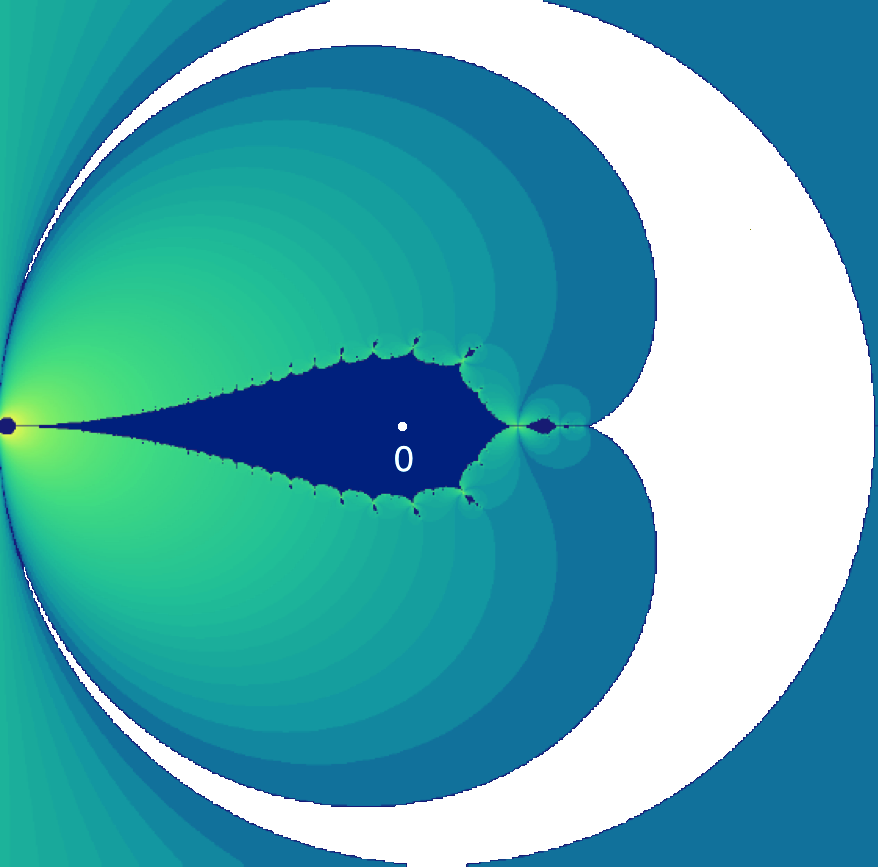}
\caption{The filled Julia set of the map $\overline{z}^2-1$ and the part of the non-escaping set of $F_0$ (in dark blue) inside the cardioid are shown. Both maps have a super-attracting $2$-cycle.}
\label{schwarz_basilica}
\end{figure}

Let us now consider the two conformal dynamical systems $f_{c_0}:\mathcal{K}_{c_0}\to\mathcal{K}_{c_0}$ and $\rho:\overline{\mathbb{D}}\setminus\Int{\Pi}\to\overline{\mathbb{D}}$. We use the mating tool $\xi:=\phi_{c_0}^{-1}\circ\mathcal{E}:\mathbb{T}\to\mathcal{J}_{c_0}$ (which semi-conjugates $\rho$ to $f_{c_0}$) to glue $\overline{\mathbb{D}}$ outside $\mathcal{K}_{c_0}$. 

Set $X~=~\overline{\mathbb D}~\vee_{\xi}~\mathcal{K}_{c_0}$, and $Y=X\setminus\Int{\Pi}.$ (This is a slight abuse of notation. We have denoted the image of $\Int{\Pi}\subset\D$ in $X$ under the gluing by $\Int{\Pi}$.)
We will argue that $X$ is a topological sphere. Since $\mathcal{K}_{c_0}$ is homeomorphic to $\overline{\mathbb{D}}/\lambda_{\R}(f_{c_0})$, it follows that $X$ is topologically the quotient of the $2$-sphere by a closed equivalence relation such that all equivalence classes are connected and non-separating, and not all points are equivalent. It follows by Moore's theorem that $X$ is a topological $2$-sphere \cite[Theorem~25]{Moore}. Moreover, $Y$ is the union of two closed Jordan disks with a single point of intersection in $X$.

The well-defined topological map $\eta~\equiv~\rho~\vee_{\xi}~f_{c_0}:~ Y\to X$
is the topological mating between $\rho$ and $f_{c_0}$.

The conjugacy between $F_{a_0}:K_{a_0}\to K_{a_0}$ and $f_{c_0}:\mathcal{K}_{c_0}\to\mathcal{K}_{c_0}$ and the conjugacy between $F_{a_0}:T_{a_0}^\infty\setminus T_{a_0}^0\to T_{a_0}^\infty$ and $\rho:\D\setminus\Pi\to\D$ obtained in Proposition~\ref{schwarz_group} glue together to produce a homeomorphism 
$H:  (X,Y) \rightarrow  (\widehat{\C}, \overline{\Omega}_{a_0}) $
which is conformal outside $H^{-1}(\Gamma_{a_0})$, and which conjugates $\eta$ to $F_{a_0}$.
It endows $X$ with a conformal structure compatible with the one on $X\setminus H^{-1}(\Gamma_{a_0})$
that turns $\eta$ into an anti-holomorphic map conformally conjugate to $F_{a_0}$. 
In this way, the mating $\eta$ provides us with a model for $F_{a_0}$.   

Moreover, there is only one conformal structure on $X$ compatible with the standard structure on $X\setminus H^{-1}(\Gamma_{a_0})$. Indeed, another structure would result in a non-conformal homeomorphism from $\widehat \C$ to itself which is conformal outside $\Gamma_{a_0}$, contradicting the conformal removability of $\Gamma_{a_0}$ (see Theorem~\ref{tiling_set_John_1}). 
In this sense, the map $F_{a_0}$ is the unique conformal mating of the map $\rho$ arising from the ideal triangle group and the Basilica anti-polynomial $\overline{z}^2-1$.

\end{document}